\documentclass[11pt, reqno]{amsart}
\usepackage{amsfonts}
\usepackage{hyperref}
\usepackage{graphicx}
\usepackage{tabularx}
\usepackage{array}
\usepackage[usenames,dvipsnames]{color}
\usepackage[T1]{fontenc}
\usepackage[utf8]{inputenc}
\usepackage[russian, french, english]{babel}
\usepackage{comment}
\usepackage{amsmath}
\usepackage{amsthm}
\usepackage{amssymb}
\usepackage{fullpage}
\usepackage{xcolor}
\usepackage{tikz}
\usepackage{listings}
\usepackage{soul}
\usepackage[noindentafter]{titlesec}
\usepackage{mathtools}
\usepackage{enumerate, enumitem}

\tikzstyle{legend_general}=[rectangle, rounded corners, thin,
                          top color= white,bottom color=lavander!25,
                          minimum width=2.5cm, minimum height=0.8cm,
                          violet]
\hypersetup{
    colorlinks=true,
    urlcolor=blue,
    linkcolor=blue,
    citecolor=OliveGreen,
    linktoc=page,
}

\usepackage[backend=biber, style=numeric, sorting=nyt, maxnames=100,backref=true,sortcites = true, firstinits=true]{biblatex}
\newcommand{\E}{\mathbb{E}}

\DeclareMathOperator{\keep}{Keep}

\DeclareMathOperator{\dist}{dist}

\DeclareMathOperator{\Eq}{Eq}
\renewcommand{\epsilon}{\varepsilon}

\newtheorem{theorem}{Theorem}[section]
\newtheorem*{theorem*}{Theorem}
\newtheorem{proposition}[theorem]{Proposition}

\newtheorem{conjecture}[theorem]{Conjecture}

\newtheorem{lemma}[theorem]{Lemma}
\newtheorem*{lemma*}{Lemma}
\newtheorem{corollary}[theorem]{Corollary}
\theoremstyle{definition}
\newtheorem{definition}[theorem]{Definition}

\theoremstyle{claim}

\newcounter{ForClaims}[section]
\newtheorem{claim}{Claim}[ForClaims]

\newtheorem*{claim*}{Claim}

\newcommand*{\myproofname}{Proof of claim}
\newenvironment{claimproof}[1][\myproofname]{\begin{proof}[#1]}{\end{proof}}

\titleformat{\section}[block]{\scshape\filcenter}{\thesection.}{1ex}{}
\titleformat{\subsection}[block]{\bfseries}{\thesubsection.}{1ex}{}
\titleformat{\subsubsection}[runin]{\itshape}{\bfseries\upshape\thesubsubsection.}{1ex}{}[.---]
\titleformat{\paragraph}[runin]{\normalfont\normalsize\bfseries}{}{1em}{}
\titlespacing*{\paragraph}{0pt}{3.25ex plus 1ex minus .2ex}{1em}

\usepackage{algorithm}
\usepackage{algpseudocode}
\counterwithin{algorithm}{section}



\title{The strong chromatic index of $K_{t,t}$-free graphs}

\thanks{Peter Bradshaw and Abhishek Dhawan received funding from NSF RTG grant DMS-1937241.}

\author{Richard Bi}
\address{Department of Mathematics, University of Illinois Urbana-Champaign}
\email{rbi3@illinois.edu}

\author{Peter Bradshaw}
\address{Department of Mathematics, University of Illinois Urbana-Champaign}
\email{pb38@illinois.edu}

\author{Abhishek Dhawan}
\address{Department of Mathematics, University of Illinois Urbana-Champaign}
\email{adhawan2@illinois.edu}

\author{Jingwei Xu}
\address{Department of Mathematics, University of Illinois Urbana-Champaign}
\email{jx6@illinois.edu}

\begin{document}
\begin{abstract}
    A \emph{strong edge coloring} of a graph $G$ is an edge coloring $\phi:E(G) \rightarrow \mathbb N$ such that each color class forms an induced matching in $G$. The \emph{strong chromatic index} of $G$, written $\chi'_s(G)$, is the minimum number of colors needed for a strong edge coloring of $G$. Erd\H{o}s and Ne\v{s}et\v{r}il conjectured in 1985 that if $G$ has maximum degree $d$, then $\chi'_s(G) \leq \frac 54 d^2$.

    Mahdian showed in 2000 that if $G$ is $C_4$-free, then $\chi'_s(G) \leq (2+o(1)) \frac{d^2}{\log d}$, and he conjectured that the same upper bound holds for $K_{t,t}$-free graphs. In this paper, we prove this conjecture and improve upon it to show the following: every $K_{t,t}$-free graph $G$ of maximum degree $d$ satisfies  $\chi'_s(G) \leq (1+o(1)) \frac{d^2}{\log d}$. We employ a variant of the R\"odl nibble method to prove this result. The key new ingredient in our adaptation of the method is an application of the K\H{o}v\'ari-S\'os-Tur\'an theorem to show that $H \coloneqq L(G)^2$ satisfies certain structural properties. These properties, in conjunction with a variant of Talagrand's inequality to handle exceptional outcomes, allow us to concentrate the sizes of certain vertex sets through the nibble, even when these vertex sets have order smaller than the maximum codegree of $H$. We encapsulate these structural properties into a more general statement on list coloring that we believe to be of independent interest. In light of the conjectured computational threshold for coloring random graphs arising in average-case complexity theory, we suspect that our result is best possible using this approach.
\end{abstract}
\maketitle

\section{Introduction}\label{section: intro}

\subsection{Background and main result}\label{subsection: background}

A \emph{strong edge coloring} of a graph $G$ is a coloring $\phi\,:\,E(G) \rightarrow \mathbb N$ with the property that if $\phi(e) = \phi(e')$, then no endpoint of $e$ is adjacent to an endpoint of $e'$. Equivalently, a strong edge coloring of $G$ is a proper coloring of $L(G)^2$, the square of the line graph of $G$. 
In a strong edge coloring of $G$, each color class is an induced matching, that is, a matching $M \subseteq E(G)$ with the property that $E(G[V(M)]) = M$.
The \emph{strong chromatic index} of $G$, denoted $\chi_s'(G)$, is the minimum number of colors needed for a strong edge coloring of $G$.

If $G$ is a graph of maximum degree $d$, then the square of the line graph of $G$ has maximum degree at most $2d(d-1)$, so Brooks' Theorem implies that $\chi'_s(G) \leq 2 d (d - 1)$. In 1985, Erd\H{o}s and Ne\v set\v ril proposed the following conjecture:

\begin{conjecture}\label{conj: erdos nesetril}
    Every graph $G$ of maximum degree $d$ satisfies $\chi'_s(G) \leq \frac 54 d^2$.
\end{conjecture}

If true, the above is best possible, as this value is attained by a \emph{$C_5$-blowup}, defined as a graph obtained from $C_5$ by replacing each vertex with an independent set of size $t$ and replacing each edge with the biclique $K_{t,t}$. 
Molloy and Reed \cite{MolloyReed} made the first asymptotic improvement to the trivial upper bound, proving that when $d$ is sufficiently large, $\chi_s'(G) \leq 1.998 d^2$.
This bound was subsequently improved by Bruhn and Joos \cite{BruhnJoos}, and again by Bonamy, Perrett, and Postle \cite{Bonamy}.
Currently, the best known upper bound is due to Hurley, de Verclos, and Kang \cite{Kang}, who showed that when $d$ is sufficiently large, $\chi_s'(G) \leq 1.772 d^2$ using a more general argument that applies to \textit{locally sparse graphs} $H$, i.e., where $H[N(v)]$ contains few edges for each vertex $v$.

When considering upper bounds for a graph coloring parameter on graphs of bounded degree, it is natural to narrow one's focus to graphs with specific structural properties. 
For example,  Molloy~\cite{MolloyTF} improved a result of Johansson \cite{Johansson} to show that triangle-free graphs of maximum degree $d$ have chromatic number at most $(1 + o(1))d/\log d$, which is an asymptotic improvement to the greedy bound of $d+1$.
Moving beyond triangles, the study of $F$-free graphs has seen a lot of interest in the last 30 years or so; we discuss highlights in Section~\ref{subsection: prior work}.

Following this trend, it is reasonable to ask whether such structural constraints lead to improved bounds on $\chi_s'(\cdot)$.
When considering the strong chromatic index of graphs with maximum degree $d$, no asymptotic improvement can be made by forbidding triangles, since the extremal example of Conjecture~\ref{conj: erdos nesetril} is itself $K_3$-free.
Instead, one aims to study graph classes forbidding certain subgraphs of a $C_5$-blowup.
For instance, Mahdian \cite{Mahdian} studied $C_4$-free graphs, proving the following result:

\begin{theorem}[{\cite{Mahdian}}]\label{thm:Mahdian}
    If $G$ is a $C_4$-free graph of maximum degree $d$, then $\chi_s'(G) \leq (2 +o(1)) d^2 / \log d$.
\end{theorem}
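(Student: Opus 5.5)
We follow Mahdian's strategy: show that $H \coloneqq L(G)^2$ is locally sparse in a strong sense, namely that for every edge $e$ the neighborhood $N_H(e)$ spans few edges of $H$. We then apply a coloring result for graphs with sparse neighborhoods. First, $\Delta(H) \le 2d(d-1) < 2d^2$.

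\textbf{Step 1: sparse neighborhoods.} Fix $e = uv \in E(G)$. An edge $f \in N_H(e)$ either meets $\{u,v\}$, or joins a vertex of $N(u)\cup N(v)$ to a vertex at distance at most two. We must bound the number of pairs $f,f' \in N_H(e)$ that are adjacent in $H$, that is, pairs in which some endpoint of $f$ equals or is adjacent to some endpoint of $f'$. The main observation is that $C_4$-freeness makes any two vertices have at most one common neighbor. We count such pairs by the path of length at most $3$ linking them through $e$. In a $C_4$-free graph, the number of walks that close up this way is limited.

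The cleanest formulation counts, for a fixed $f\in N_H(e)$, the neighbors $f'$ of $f$ that also lie in $N_H(e)$. Both $f$ and $f'$ lie within distance $2$ of $\{u,v\}$. Take a vertex $x$ of $f$ and a vertex $y$ of $f'$ with $x=y$ or $xy\in E(G)$. Then $y$ lies in $N(x)\cap B_2(\{u,v\})$. Since the graph is $C_4$-free, for $x$ at distance $\ge 1$ from $\{u,v\}$ only $O(1)$ neighbors of $x$ can be adjacent to each given vertex of $N(u)\cup N(v)$, except along the unique short path. Summing over the $O(d)$ choices of the linking structure, I expect $|E(H[N_H(e)])| = O(d^3)$. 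This is a $1/d$ fraction of the trivial $\binom{2d^2}{2}$, much sparser than the $d^4/f$ threshold needed.

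\textbf{Step 2: coloring locally sparse graphs.} We invoke the standard theorem of Alon--Krivelevich--Sudakov and Vu type: if $\Delta(H)\le \Delta$ and every neighborhood spans at most $\Delta^2/f$ edges, then $\chi(H) \le (1+o(1))\Delta/\log\sqrt f$. Here $\Delta=2d^2$ and $f = \Theta(d)$, so $\log \sqrt f = (\tfrac12+o(1))\log d$. This gives
\[
\chi(H) \le (1+o(1))\frac{2d^2}{\tfrac12 \log d} = (4+o(1))\frac{d^2}{\log d}.
\]
That loses a factor of $2$ against the claimed constant. To reach $(2+o(1))$ we need $f \approx d^2$ up to $d^{o(1)}$, so that $\log\sqrt f \approx \log d$. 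Equivalently, each neighborhood must span only $d^{2+o(1)}$ edges, or at least be nearly independent in the sense needed by a nibble or entropy-compression argument.

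\textbf{Main obstacle.} The crux is sharpening Step 1 to $|E(H[N_H(e)])| \le d^{2+o(1)}$, or else running a Rödl-nibble/Johansson–Molloy argument adapted directly to $H$ that needs only codegree and local bounds. My first attempt would be the refined count. For $f,f'\in N_H(e)$ adjacent in $H$, $C_4$-freeness forces the configuration $e,f,f'$ to be nearly tree-like, so each adjacency is determined by $O(1)$ choices at each of about two branching vertices. If that count still gives $d^3$, I would fall back on the nibble and track only the quantities that $C_4$-freeness controls. These are codegrees in $H$, which are $O(d)$ for most pairs, and the expected number of surviving colors, using a Talagrand concentration argument in the style of Molloy's triangle-free proof. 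This would recover the constant $1$ for the effective degree $2d^2$, giving $(2+o(1))d^2/\log d$.
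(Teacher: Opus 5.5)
\textbf{There is a genuine gap.} Your Step 1 bound is true, but the proposal does not prove the statement, and the obstacle you identify is misdiagnosed. With the $\log\sqrt f$ form of the sparse-neighborhood theorem you get only $(4+o(1))d^2/\log d$, and neither of your remedies works.

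\emph{The sharpening is false.} The bound $e(H[N_H(e)])\le d^{2+o(1)}$ fails in every $d$-regular graph, even one of large girth. For $e=uv$, each edge $ux$ with $x\in N(u)\setminus\{v\}$ is adjacent in $H$ to every other edge $x'y'$ with $x'\in N(u)$, since the two are joined by the edge $ux'$. All of these lie in $N_H(e)$, so $H[N_H(e)]$ always has $\Omega(d^3)$ edges.

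\emph{The nibble fallback is only a hope.} Two incident edges of $G$ can have $\Theta(d^2)$ common neighbors in $H$, which is comparable to $\Delta(H)$. Concentrating color degrees in the presence of such friend pairs, whose structure is not preserved as degrees shrink through the nibble, is exactly what Mahdian's argument and the $\mathcal F$, $\mathcal X$, $B^X$ machinery of this paper are built to handle. Your arithmetic there is also off: $(1+o(1))\Delta/\log\Delta$ with $\Delta=2d^2$ equals $(1+o(1))d^2/\log d$, because $\log(2d^2)\sim 2\log d$. That stronger bound is Theorem~\ref{thm:main} with $t=2$.

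\textbf{The fix.} The missing ingredient is the sharper theorem of Hurley and Pirot quoted in the introduction. If $\Delta(H)\le D$ and every neighborhood spans at most $D^2/f$ edges, then $\chi(H)\le(1+o(1))D/\log f$, with $\log f$ rather than $\log\sqrt f$. With $D=2d^2$ and $f=\Theta(d)$ from Step 1, this gives $(1+o(1))\,2d^2/\log d$ immediately. This is exactly how the paper recovers the statement: Theorem~\ref{thm:Mahdian Ktt} with $t=2$, where the neighborhood bound $O(d^{4-1/(t-1)})=O(d^3)$ comes from K\H{o}v\'ari--S\'os--Tur\'an.

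You should also replace ``I expect'' in Step 1 with an actual count, which is short for $C_4$-free $G$. Fix $e=uv$.
\begin{enumerate}
\item The at most $2d$ edges of $N_H(e)$ meeting $\{u,v\}$ each have $H$-degree at most $2d^2$, so they contribute at most $4d^3$ adjacencies.
\item Take any other $g=xy\in N_H(e)$, so $x,y\notin\{u,v\}$. An $H$-neighbor $g'\in N_H(e)$ has an endpoint $x'\in N[u]\cup N[v]$ and an endpoint $z\in N[x]\cup N[y]$.
\item If $z=x'$: either $x'\in\{u,v,x,y\}$, or $x'$ is the unique common neighbor of one of the four pairs in $\{u,v\}\times\{x,y\}$, because distinct vertices share at most one neighbor. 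This gives at most $8$ choices of $x'$, hence at most $8d$ choices of $g'$.
\item If $z\neq x'$: then $g'=x'z$, with at most $2d+2$ choices of $z$. Unless $z\in\{u,v\}$ (at most $2d$ such $g'$ in total), there are at most $4$ choices of $x'\in N(z)\cap(N[u]\cup N[v])$.
\end{enumerate}
Hence each such $g$ has $O(d)$ neighbors in $N_H(e)$. So $e(H[N_H(e)])=O(d^3)$, i.e.\ $f=\Theta(d)$, and the Hurley--Pirot bound finishes the proof.
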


We discuss Mahdian's proof strategy in detail in Section~\ref{subsection: proof overview} as our approach builds upon his ideas.
Roughly speaking, he employs a variant of the R\"odl nibble method, a semirandom graph coloring procedure, on $H \coloneqq L(G)^2$, taking advantage of various codegree constraints satisfied by $H$ (the codegree of a pair of vertices $u, v \in V(H)$ is the size of the set $N(u) \cap N(v)$).
In fact, Theorem~\ref{thm:Mahdian} holds for $C_4$ replaced by $K_{2, t}$ (see the discussion at the end of \cite[Section 8]{Mahdian}; the proof is identical, \textit{mutatis mutandis}).
Replacing $K_{2, t}$ by $K_{t, t}$, however, leads to challenges.
Nevertheless, one can prove an asymptotic improvement to the greedy bound by showing that $H$ is locally sparse.
Indeed, if $G$ has no subgraph isomorphic to $K_{t,t}$, then it follows from the K\H{o}v\'ari-S\'os-Tur\'an theorem (Lemma~\ref{lem:KST}) that for each $v \in V(H)$, the number of edges in $N(v)$ is at most $O(d^{4-\frac{1}{t-1}})$.
Building upon the seminal works of Alon, Krivelevich, and Sudakov \cite{AKSConjecture} and Vu \cite{vu2002general}, Hurley and Pirot \cite{HurleyPirot} showed in 2021 that a graph of maximum degree at most $D$ in which the neighborhood of each vertex contains at most $D^2 /f$ edges (for $1 \leq f \leq D^2+1$) has chromatic number at most $(1+o(1)) D / \log f$ (see also \cite{davies2020graph}).
Given a $K_{t,t}$-free graph $G$ of maximum degree $d$, if we set $D = 2d^2$ and $f = \Omega( d^{\frac{1}{t-1}})$, then we obtain the following result:

\begin{theorem}\label{thm:Mahdian Ktt}
    If $G$ is a $K_{t,t}$-free graph of maximum degree $d$, then
    $\chi_s'(G) \leq (2t-2 +o(1)) \dfrac{d^2}{\log d}$.
\end{theorem}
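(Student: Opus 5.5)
The plan is to apply the Hurley--Pirot theorem directly to $H \coloneqq L(G)^2$. We take $D = 2d^2$ and choose $f$ so that every neighborhood $N_H(e)$ spans at most $D^2/f$ edges of $H$. The conclusion $\chi(H) \le (1+o(1))\, D/\log f$ then translates into the bound on $\chi'_s(G)$. There are two steps: bound the maximum degree of $H$, and bound the number of edges inside each neighborhood using the K\H{o}v\'ari--S\'os--Tur\'an theorem (Lemma~\ref{lem:KST}).

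\textbf{Degree bound.} For an edge $e = uv$ of $G$, its $H$-neighbors are edges with an endpoint in $N_G(u) \cup N_G(v)$. There are at most $2d(d-1) \le D$ of them.

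\textbf{Local sparsity.} Fix $e = uv$ and let $S = N_G(u) \cup N_G(v)$, so $|S| \le 2d$. Every edge of $N_H(e)$ is an edge of $G$ with at least one endpoint in $S$. Two such edges $f_1, f_2$ are adjacent in $H$ if they share an endpoint or are joined by an edge of $G$. Count these pairs by the connecting structure.

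\emph{Pairs sharing an endpoint, or with the connecting edge incident to $S$.} These are of the form ``path of length at most $3$ with a vertex in $S$ near the start.'' Consider a pair $f_1 = xy$, $f_2 = zw$ with $x, z \in S$ and the connecting edge touching $S$ (for instance $x = z$, $xz \in E(G)$, $yz \in E(G)$, and so on). The number of such pairs is controlled by the number of edges of $G$ inside $S$, or between $S$ and $N(S)$, times $d^2$.

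The key input is that $G$ is $K_{t,t}$-free. Take the bipartite graph between $S$ and any vertex set. Any bipartite subgraph of $G$ with parts of size $m$ and $n$ has $O(mn^{1-1/t} + n)$ edges by Lemma~\ref{lem:KST}. Consequently the number of edges of $G$ with both endpoints in $S$ is $O(d^{2-1/t})$, rather than the trivial $d^2$.

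\emph{The bulk case.} Here $f_1 = xy$ and $f_2 = zw$ with $x, z \in S$, $y, w \notin S$, and $yw \in E(G)$. Count triples $(x, y, w)$ with $x \in S$, $y \in N(x)$, $w \in N(y)$, together with a $z \in S \cap N(w)$. This equals $\sum_{y, w} |N(y) \cap S|\,|N(w) \cap S|$ over edges $yw$. We bound it via the number of $4$-vertex configurations and apply Lemma~\ref{lem:KST} to the bipartite graph between $S$ and $T = N(S)$, where $|T| \le 2d^2$. The goal is a total of $O(d^{4 - 1/(t-1)})$, matching the exponent stated in the text. This gives $f = \Omega(d^{1/(t-1)})$ and
\[
\chi'_s(G) \le (1+o(1)) \frac{2d^2}{\log\bigl(\Omega(d^{1/(t-1)})\bigr)} = (2t-2+o(1)) \frac{d^2}{\log d}.
\]

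\textbf{Main obstacle.} I expect the bulk-case count to be the hardest step: getting exactly the exponent $1/(t-1)$ and not something weaker.

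My first attempt is the following. For each $y \in T$, write $a_y = |N(y) \cap S|$, so $\sum_y a_y \le 2d^2$. The pair count is at most
\[
\sum_{y} a_y \sum_{w \in N(y)} a_w \;\le\; d \sum_y a_y^2 \quad \text{(after symmetrizing)}.
\]
Next, $\sum_y a_y^2$ counts paths $x - y - z$ with $x, z \in S$.

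Every pair $x, z$ of vertices in $S$ has at most $d$ common neighbors. Moreover, $K_{t,t}$-freeness forbids $t$ vertices of $S$ having $t$ common neighbors. Applying Lemma~\ref{lem:KST} to the bipartite graph $S \times T$ gives the edge count $e(S,T) = O(|T|\,|S|^{1-1/t} + |S|)$. Then convexity, applied to $(t-1)$-tuples, should yield $\sum_y a_y^2 = O(d^{3-1/(t-1)})$, and hence a total of $O(d^{4-1/(t-1)})$.

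If this convexity step only gives a weaker exponent, the fallback is to count $K_{2,t-1}$-type structures: $y$ with many $S$-neighbors forces many $(t-1)$-subsets of $S$ with a common neighbor, and each $(t-1)$-subset has fewer than $t$ common neighbors only after one more pigeonhole. Even then we get local sparsity $D^2/f$ with $f = d^{c/t}$ for some constant $c > 0$, which already yields the claimed form with a constant depending on $t$.
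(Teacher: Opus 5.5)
Your route is the one the paper sketches: Hurley--Pirot applied to $H=L(G)^2$ with $D=2d^2$ and $f=\Omega(d^{1/(t-1)})$, where the paper simply attributes the local sparsity bound $O(d^{4-1/(t-1)})$ to Lemma~\ref{lem:KST}. Your degree bound and your symmetrization $\sum_{yw\in E(G)}a_ya_w\le d\sum_y a_y^2$ are correct.

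\textbf{Gap: the second-moment bound is asserted, not derived.} The local sparsity count is the whole content here, and the estimate $\sum_y a_y^2=O(d^{3-1/(t-1)})$ is only claimed. The ingredients you cite cannot give it. The edge bound $e(S,T)=O(|T|\,|S|^{1-1/t}+|S|)=O(d^{3-1/t})$ is weaker than the trivial $e(S,T)\le |S|d\le 2d^2$. In any case, a bound on $\sum_y a_y$ cannot control $\sum_y a_y^2$. Nor does $K_{t,t}$-freeness constrain $(t-1)$-tuples, since $K_{t-1,d}$ is $K_{t,t}$-free.

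What works is the double count inside the proof of Lemma~\ref{lem:KST}, applied to $t$-subsets of $S$. Each $t$-subset has at most $t-1$ common neighbors, so $\sum_y\binom{a_y}{t}\le (t-1)\binom{2d}{t}$. Since $\binom{a}{t}\ge (a/t)^t$ for $a\ge t$, this gives $\sum_{y:\,a_y\ge t}a_y^t=O_t(d^t)$. H\"older interpolation between this and $\sum_y a_y\le 2d^2$ then yields
\[
\sum_y a_y^2\;\le\;2td^2+\Bigl(\sum_y a_y\Bigr)^{\frac{t-2}{t-1}}\Bigl(\sum_{y:\,a_y\ge t}a_y^t\Bigr)^{\frac{1}{t-1}}\;=\;O_t\bigl(d^{3-\frac{1}{t-1}}\bigr).
\]
This is exactly where the exponent $\frac{1}{t-1}$ comes from.

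\textbf{Gap: the first category is mis-bounded.} Take $x,z\in S$ and a pair whose connecting edge is $yz$ (or symmetrically $xw$). It corresponds to a path $x,y,z,w$ with $y\in N(x)\cap N(z)$ and $w\in N(z)$ arbitrary. There are up to $d\sum_y a_y^2$ such pairs, the same order as your bulk case. Your bound, ``edges between $S$ and $N(S)$ times $d^2$'', is $\Theta(d^4)$ and gives no sparsity at all. Only the case $xz\in E(G)$ is handled by $e(G[S])=O(d^{2-1/t})$.

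Once the second-moment bound above is in place, all cases together contribute $O(d^3)+O(d^{4-1/t})+O_t(d^{4-1/(t-1)})$, as required.

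\textbf{The fallback does not prove the statement.} Local sparsity with $f=d^{c/t}$ gives the constant $2t/c$. This equals $2t-2$ only when $c=t/(t-1)$, that is, only if you already have the sharp exponent.
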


Note that the above recovers Theorem~\ref{thm:Mahdian} for $t = 2$.
Mahdian conjectured that his result holds for all $t \geq 2$:

\begin{conjecture}[\cite{Mahdian}]\label{conj:Mahdian}
    Let $t \geq 2$ be fixed and let $G$ be a $K_{t, t}$-free graph of maximum degree $d$.
    Then $\chi'_s(G) \leq (2 + o(1))\dfrac{d^2}{\log d}$.
\end{conjecture}

Our main result improves the constant factor in Theorem~\ref{thm:Mahdian Ktt}, resolving Conjecture~\ref{conj:Mahdian} in a stronger form.

\begin{theorem}\label{thm:main}
    Let $\epsilon > 0$ and $t \geq 2$ be fixed.
    Let $G$ be a $K_{t, t}$-free graph of maximum degree $d$ sufficiently large in terms of $\epsilon$ and $t$.
    Then $\chi'_s(G) \leq (1+\epsilon) \dfrac{d^2}{\log d}$.
\end{theorem}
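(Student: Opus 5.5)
We will prove Theorem~\ref{thm:main} by list coloring $H \coloneqq L(G)^2$ with a R\"odl-nibble / wasteful-coloring procedure, in the spirit of Molloy's proof for triangle-free graphs and Mahdian's argument for $C_4$-free graphs. Every vertex of $H$ receives a list of $k = (1+\epsilon)d^2/\log d$ colors. Note that $H$ has maximum degree $\Delta \le 2d^2$. The target bound $(1+\epsilon)\Delta/(2\log d) = (1+\epsilon)\Delta/\log \Delta$ is exactly the Johansson--Molloy bound for $H$. So we must show that $H$ behaves, for the purposes of the nibble, like a triangle-free graph of degree $\Delta$. Naive local sparsity, as in Theorem~\ref{thm:Mahdian Ktt}, only gives $\Delta/\log f$ with $f = d^{1/(t-1)}$.

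The first step is structural. For an edge $e = uv$ of $G$, its $H$-neighborhood consists of edges incident to $N_G(u)\cup N_G(v)$. Using the K\H{o}v\'ari--S\'os--Tur\'an theorem (Lemma~\ref{lem:KST}), we will show that for each $e$ and each vertex set $S \subseteq N_H(e)$ of size $\ge d^{2-\delta}$, only an $o(1)$ fraction of pairs in $S$ are adjacent in $H$, with $\delta = \delta(t)>0$. More precisely, the bipartite graphs between $N_G(u)$ and second neighborhoods have $O(d^{2-1/t})$-type edge counts. We will also bound codegrees in $H$: most pairs $e,f$ have codegree $O(d)$, while the pairs with codegree close to $d^2$ are few per vertex. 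We package this as a general list-coloring theorem. The hypotheses are: $\Delta(H)\le \Delta$; each neighborhood spans at most $\Delta^2/f$ edges; and for each $v$ and each color, the sets arising in the nibble can only have non-negligible internal density when they are small relative to $\Delta^{1-\delta}$.

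Second, we run the nibble for roughly $\log d \cdot \log\log d$ rounds. In each round, each uncolored vertex is activated with probability $\eta$ and picks a random color from its list. It keeps that color if no neighbor picked the same one. We remove colors to equalize the keep probabilities, as in Molloy's coin-flip equalization. We track two quantities: list sizes $\ell_i(v)$ and color degrees $t_i(v,c)$, the number of uncolored neighbors still having $c$ in their list. The key point is that the expected updates satisfy $\ell_{i+1}\approx \ell_i \cdot \keep_i$ and $t_{i+1}\approx t_i(1-\eta \keep_i)\keep_i$. These recursions hold exactly when neighborhoods are nearly independent, and there the structural lemma enters. Solving them shows that $t_i/\ell_i$ drops below $1/10$ while $\ell_i \ge d^{\epsilon/2}$, so the remaining vertices can be finished greedily, or by the standard local lemma argument with lists of size twice the color degree.

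Third, and this is the main obstacle, we must concentrate $t_{i+1}(v,c)$ via Talagrand's inequality. The codegree of $H$ can be as large as $\Theta(d^2)$, which may exceed the size of the set being concentrated. So a single random choice can affect many of the counted vertices, and the Lipschitz constant is too large. The plan is to use a Talagrand variant with exceptional outcomes, in the style of Bruhn--Joos or Kelly--Postle. We declare an outcome exceptional if some vertex makes a choice affecting more than $d^{1-\delta/2}$ members of the tracked set. Using the K\H{o}v\'ari--S\'os--Tur\'an-derived bound on pairs of high codegree, we show this happens with probability $\exp(-d^{\Omega(1)})$. Off the exceptional set the Lipschitz constant is small, and certificates have size $O(1)$ per change, giving deviations $o(t_i)$ as long as $t_i \ge d^{2-\delta/4}$. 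Once the tracked sets fall below that threshold, we stop tracking exactly and use a cruder upper bound, which suffices for the final phase. The Lov\'asz Local Lemma then makes all events hold simultaneously in each round, since dependencies are polynomial in $d$. Iterating completes the coloring with $(1+\epsilon)d^2/\log d$ colors.
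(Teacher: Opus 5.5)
There is a genuine gap, and it sits in your third step. Your concentration only runs while $t_i \ge d^{2-\delta/4}$, and no ``cruder upper bound'' can finish from there. $T_i$ and $L_i$ shrink together. When $T_i$ first falls to about $\Delta^{1-\delta/8}$, the ratio $T_i/L_i$ has only dropped from $\log\Delta/(1+\epsilon)$ by a factor $1-O(\delta)$, so it is still of order $\log\Delta$. With $(1+\epsilon)\Delta/\log\Delta$ colors, pushing the ratio below a constant forces you to keep concentrating $t_i(v,c)$ until $T_i$ and $L_i$ are of order $\Delta^{\Theta(\epsilon)}$. That is far below your threshold, and far below the number of high-codegree partners of $v$ (already the $\approx 2d \approx \sqrt{2\Delta}$ edges sharing an endpoint with $v$, each with codegree about $\Delta/2$). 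Monotonicity of $t_i$ gives no decrease of the ratio. This is exactly the barrier that costs Mahdian his factor $2$ and the local-sparsity route its factor $2t-2$. As written, your plan gives a constant depending on $\delta$, hence on $t$, not $1+\epsilon$.

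The exceptional-outcome device cannot push the threshold down either, because high-influence choices are not rare.

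\textbf{With your threshold.} Take $v = xy$ and let $f$ be any edge with an endpoint sharing a neighbor $z$ with $x$. Then $v$ and $f$ share all $d$ edges at $z$ as common $H$-neighbors, so if $f$ picks $c$ it removes $c$ from up to $d > d^{1-\delta/2}$ members of $T_i(v,c)$. For high-girth $G$ (which is allowed) there are about $d^3$ such $f$. The expected number of them activated with $c$ is about $d^3\eta/L_1 = \Theta(d)$, so your exceptional event is nearly certain.

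\textbf{With a threshold of $\Delta/\mathrm{polylog}\,\Delta$.} The $\approx 2d$ edges sharing an endpoint with $v$ pick $c$ with probability about $2d\eta/L_i$. This is only polynomially small at the start. It is not small at all once $L_i \lesssim \sqrt{\Delta}$, unless you know how many of those partners still hold $c$.

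K\H{o}v\'ari--S\'os--Tur\'an bounds how many such vertices there are, not how likely their choices are. The local lemma needs failure probabilities like $\exp(-\log^2\Delta)$. Likewise, your density statement about large subsets of $N_H(e)$ is not what the nibble needs; it needs codegree control inside the tracked sets at every scale.

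The missing idea, as the paper does it, has four parts:
\begin{enumerate}
\item \emph{One-sided handling of heavy vertices.} For an upper bound on the survivors of a tracked set $S$, simply drop the choices of the set $R_{high}$ of vertices with at least $|S|/\log^8\Delta$ neighbors in $S$. Exceptional outcomes are reserved for the genuinely rare event that $\log^4\Delta$ members of $S$ pick a common color.
\item \emph{Few heavy vertices.} K\H{o}v\'ari--S\'os--Tur\'an applied to the $R_{high}$--$S$ incidences gives $|R_{high}| \le \alpha/\log^2\Delta$. This only works if members of $S$ have pairwise small codegree.
\item \emph{Splitting into strangers.} So $S$ is first partitioned into mutual strangers via Hajnal--Szemer\'edi. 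This needs every vertex to have few friends still holding $c$: the invariant $|X_i(c)| \le T_i^{1/2}$ on the friend-covering sets $X$, which are stars at a vertex of $G$ split into mutually stranger edges.
\item \emph{Maintaining the invariant.} $|X_i(c)|$ is itself concentrated using $B^X = E_3(v)$, inside which friend pairs of $X$ have small codegree, together with the invariant $t_i - b^X_i \le T_i - B_i$.
\end{enumerate}
This bookkeeping is what lets the concentration run at all scales down to $\Delta^{\Theta(\epsilon)}$, and it is absent from your plan.
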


The proof of the above result follows the strategy of Mahdian, i.e., employing the nibble method on $L(G)^2$.
However, we require several modifications to incorporate the $K_{t, t}$-freeness constraint as well as to improve the constant factor from $2$ to $1$; see Section~\ref{subsection: proof overview} for an informal description of these modifications.

\subsection{Relation to prior work}\label{subsection: prior work}

When considering Conjecture~\ref{conj: erdos nesetril}, there are a number of results for special graph classes.
We highlight a few in this section and direct the reader to the survey by Cranston \cite{cranston2023coloring} for a more thorough history.
The conjecture was verified for graphs of maximum degree $3$ by Andersen \cite{andersen1992strong}, and
independently by Horek, Qing, and Trotter \cite{horak1993induced}. 
For $d = 4$, Cranston \cite{cranston2006strong} achieved a bound of $\chi_s'(G) \leq 22$, which is off by $2$.
Huang, Santana, and Yu \cite{huang2018strong} improved the bound to $21$ nearly answering the question in this setting.
Furthermore, a number of works address degenerate graphs, the earliest of which is by Faudree, Schelp, Gy\'arf\'as, and Tuza \cite{faudree1990strong}, who established the bound $\chi_s'(G) \leq 4d(G) + 4$ for planar graphs.
Kaiser and Kang \cite{kaiser2014distance} study distance-$t$ edge colorings, a generalization of strong edge colorings where alike colored edges have to be even farther apart; we discuss this further in Section~\ref{subsection: general statement}.

Mahdian's result on $C_4$-free graphs was the first paper to consider the $F$-freeness constraint in relation to Conjecture~\ref{conj: erdos nesetril}.
As mentioned earlier, his proof translates over almost verbatim to $K_{2, t}$-free graphs.
In the same paper, he conjectures that $\chi_s'(G) \leq d^2$ for $C_5$-free graphs $G$ of maximum degree $d$, which would be tight since $\chi_s'(K_{d,d}) = d^2$.
D{\k{e}}bski, Junosza-Szaniawski, and {\'S}leszy{\'n}ska-Nowak showed that $\chi_s'(G) \leq (2 - 1/(t-2))d^2$ when $G$ contains no \textit{induced} copy of $K_{1, t}$ for $t \geq 4$.
To the best of our knowledge, no other works have explored the (induced) $F$-freeness constraint with respect to this problem.

With regards to our proof strategy, we build upon a number of works from the past $\sim30$ years which employ the R\"odl nibble method to graph coloring problems.
Kim \cite{kim1995brooks} first applied this strategy in his proof of the bound $\chi(G) \leq (1 + o(1))d/\log d$ for graphs of girth at least $5$.
Johansson built upon Kim's work to prove $\chi(G) = O(d/\log d)$ for $K_3$-free graphs \cite{Johansson} and, more generally, $\chi(G) = O(d\log\log d/\log d)$ for $K_r$-free graphs \cite{JohanssonKr} (see \cite{MolloyTF, Bernshteyn} for simpler proofs of this result; see also \cite{davies2020graph, dhawan2025bounds} for more recent improvements in terms of the leading constant factor).
For $K_{t, t}$-free graphs, Anderson, Bernshteyn, and the third named author of this manuscript \cite{anderson2023colouring} employed a variant of Kim's nibble approach to prove that $\chi(G) \leq (1 + o(1)) d/\log d$, matching the best-known $K_3$-free upper bound.
In follow-up work \cite{anderson2025coloring}, they prove the bound $\chi(G) \leq (4 + o(1)) d/\log d$ for $K_{1, t, t}$-free graphs by adapting a strategy of Pettie and Su \cite{PS15} who consider the case where $t = 1$.

We conclude this section with a discussion of the optimality of our result with respect to our proof technique.
With regards to ordinary vertex coloring, it is known that there exist $d$-regular graphs of arbitrarily high girth with chromatic number at least $d/(2\log d)$ \cite{BollobasBound}.
However, proving that a pseudorandom graph class $\mathcal{G}$ satisfies $\chi(G) < \Delta(G)/\log \Delta(G)$ for all $G \in \mathcal{G}$ remains a tantalizing open problem.
It turns out that the value $\Delta(G)/\log \Delta(G)$ is a natural threshold for upper bounds on $\chi(G)$, coinciding with the so-called \textit{shattering threshold} for colorings of random graphs of average degree $\Delta$ \cite{Zdeborova,Achlioptas}.
It is conjectured that no efficient algorithms can beat the bound $\Delta/\log \Delta$ for random regular graphs, a heavily studied problem in average-case complexity theory (the study of NP-hard problems on random instances).
A folklore result states that proving this conjecture unconditionally is equivalent to proving a statement stronger than $P \neq NP$ and so researchers have focused on providing evidence toward it instead: for a number of restricted classes of algorithms, one cannot beat this bound \cite{RV, wein2020optimal}.
It turns out that such factor-$2$ gaps are ubiquitous in random optimization problems.
For a broad overview of such gaps both in the context of random graphs and beyond, we direct the reader to the surveys~\cite{bandeira2018notes,gamarnik2021overlap,gamarnik2022disordered,gamarnik2025turing}.

When considering strong edge colorings, Mahdian showed that by deleting a small number of edges from the sparse random graph $G(n,d/n)$, one can obtain a high-girth graph with maximum degree at most $(1+o_d(1))d$ and strong chromatic index at least $(1 + o(1)) d^2/(2\log d)$.
While this problem has not been studied from the lens of average-case complexity, one expects a similar gap to exist.
As we will see in the following subsection, our proof follows iterative applications of the Lov\'asz Local Lemma.
In particular, thanks to algorithmic variants of the local lemma, our proof yields an efficient algorithm to compute such a coloring.
Recalling the conjectured hardness of surpassing this bound, our result is optimal with respect to the approach.
Indeed, the nibble method falls under the framework of \textit{local algorithms}, which have been proven unable to surpass the shattering threshold for graph coloring \cite{RV} (the referenced result is stated in terms of independent sets, however, one simply obtains the coloring analogue as a corollary).

\subsection{Proof overview}\label{subsection: proof overview}

In this section, we will provide an overview of our proof techniques.
It should be understood that the presentation in this section deliberately ignores certain minor technical issues, and so the actual arguments and formal definitions given in the rest of the paper may be slightly different from how they are described here. 
However, the differences do not affect the general conceptual framework underlying our approach.

As mentioned earlier, we follow the strategy of Mahdian in his proof of Theorem~\ref{thm:Mahdian}.
For the remainder of this section, let us fix a graph $G$ and let $H \coloneqq L(G)^2$.
The goal is to find a proper vertex coloring of $H$ using few colors.
To do so, we employ a variant of the so-called ``R\"odl Nibble'' method, in which we randomly color a small portion of $V(H)$ and then iteratively repeat the same procedure with the vertices that remain uncolored; see \cite{Nibble} for a survey of applications of this method to (hyper)graph coloring.

Before we provide the details of our adaptation of the method, we make a few definitions.
Introduced independently by Vizing \cite{vizing1976coloring} and Erd\H{o}s, Rubin, and Taylor \cite{erdos1979choosability}, \textit{list coloring} is a generalization of graph coloring in which each vertex is assigned a color from its own predetermined list of colors. 
Formally, $L\,:\,V(H) \to 2^{\mathbb{N}}$
\footnote{For a set $S$, we let $2^S$ denote the powerset of $S$.} is a \textit{list assignment} for $H$, and an \textit{$L$-coloring} of $H$ is a proper coloring of $H$ such that each vertex $v \in V(H)$ receives a color from its list $L(v)$.
The \textit{list chromatic number} of $H$, denoted $\chi_\ell(H)$, is the minimum value $q$ such that $H$ admits a proper $L$-coloring for any list assignment $L$ satisfying $|L(v)|\geq q$ for all $v$.
Given a vertex $v \in V(H)$ and a color $c \in L(v)$, the \textit{color degree} of $c$ at $v$ is $d_L(v, c) = |N_L(v, c)|$ where $N_L(v, c)$ is the set of neighbors $u$ of $v$ such that $c \in L(u)$.

At the heart of our argument is a proposition (see Proposition~\ref{prop: nibble}) which roughly says the following:
Let $H$ be a graph and let $L\,:\, V(H) \to 2^{\mathbb{N}}$ be a list assignment for the vertices of $H$ satisfying certain properties (we are being deliberately vague at this point and will elaborate more on these properties shortly).
Then there exists a proper partial $L$-coloring $\phi$ of $H$ and a list assignment $L_\phi$ for $H_\phi \coloneqq H[V(H) \setminus \mathrm{dom}(\phi)]$ such that the following hold:
\begin{enumerate}
    \item\label{item: list condition} for each $v \in V(H_\phi)$, $L_{\phi}(v) \subseteq L(v) \setminus \phi(N(v))$, and
    \item\label{item: ratio condition} $\dfrac{\max_{v, c}d_{L_{\phi}}(v, c)}{\min_v|L_{\phi}(v)|} \leq \zeta \dfrac{\max_{v, c}d_{L}(v, c)}{\min_v|L(v)|}$, for some $\zeta < 1$.
\end{enumerate}
In particular, (1) $\phi \cup \phi'$ is a proper $L$-coloring of $H$ for any proper $L_\phi$-coloring $\phi'$ of $H_{\phi}$, and (2)~the ratio of the maximum color degree to the minimum list size decreases.
We prove the above result by employing Kim's variant of the ``Wasteful Coloring Procedure'' \cite{kim1995brooks} (see \cite[Chapter 12]{MolloyReed} for a textbook treatment of the argument).
Before we discuss the procedure in detail, let us show how this proposition will be used to prove our main result.
To prove Theorem~\ref{thm:main}, we aim to repeatedly apply the above result until we reach a stage where we can complete the coloring.
More formally, let $H_1 = H$ and $L_1(v) = [q]$ for each $v$, where $q = (1+\epsilon) \Delta(H) / \log \Delta(H)$.
Recursively define $H_{i+1}$ and $L_{i+1}$ by applying the above result to $H_i$ and $L_i$ until we have the following for some $i^* \in \mathbb{N}$:
\[\dfrac{\max_{v, c}d_{L_{i^*}}(v, c)}{\min_v|L_{i^*}(v)|} < \frac{1}{8}, \qquad \text{and} \qquad \min_v|L_{i^*}(v)| \gg 1.\]
The second condition above is necessary as we require nonempty lists.\footnote{We note that this condition may not be met if we were to use $q' = (1-\epsilon) \Delta(H) / \log \Delta(H)$ colors instead.
Indeed, for $q$ as defined, we can ensure $\min_v|L_{i^*}(v)| \geq \Delta(H)^{\Theta(\epsilon)} = \omega(1)$; while for $q'$, we cannot guarantee $L_{i^*}(v) \neq \emptyset$.} 
At this point, we may complete the coloring via the following proposition:

\begin{proposition}\label{prop: final blow}
    Let $H$ be a graph with a list assignment $L$ such that $|L(v)| \geq 8d$ for every $v \in V(H)$, where $d = \max_{v, c}d_L(v, c)$. 
    Then there exists a proper $L$-coloring of $H$.
\end{proposition}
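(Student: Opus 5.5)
We apply the Lovász Local Lemma to a uniformly random coloring. We may assume $d \geq 1$; if $d = 0$, any choice of colors from the lists is proper. We may also truncate each list to exactly $\ell \coloneqq 8d$ colors, which can only decrease color degrees. Each vertex $v$ then independently picks a color $\phi(v)$ uniformly from $L(v)$, and we want to show that with positive probability $\phi$ is a proper $L$-coloring.

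For each edge $uv \in E(H)$ and each color $c \in L(u) \cap L(v)$, let $A_{uv,c}$ be the event that $\phi(u) = \phi(v) = c$. Then
\[
\Pr[A_{uv,c}] = \frac{1}{|L(u)||L(v)|} = \frac{1}{\ell^2} =: p,
\]
and $\phi$ is proper exactly when none of these events occurs. The event $A_{uv,c}$ is determined by the choices at $u$ and $v$ only. It is therefore mutually independent of all events $A_{u'v',c'}$ with $\{u',v'\} \cap \{u,v\} = \emptyset$.

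Next we bound the number of events sharing a vertex with a given event. Fix a vertex $w$. The events involving $w$ are indexed by pairs $(c, x)$ with $c \in L(w)$ and $x \in N_L(w,c)$. There are at most $\ell$ choices of $c$ and, for each, at most $d$ choices of $x$, so at most $\ell d$ events involve $w$. Consequently each $A_{uv,c}$ depends on at most $D \coloneqq 2\ell d$ other events. This counting, which uses color degrees rather than ordinary degrees, is the only step where the hypothesis really enters, and it is the main point to get right.

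The symmetric Local Lemma requires $e p (D+1) \leq 1$. We have
\[
e\,p\,(D+1) \leq e \cdot \frac{2\ell d + 1}{\ell^2} \leq e\left(\frac{2d}{\ell} + \frac{1}{\ell^2}\right) = e\left(\frac14 + \frac{1}{64 d^2}\right) < 1,
\]
so with positive probability no event $A_{uv,c}$ occurs. Hence a proper $L$-coloring exists. The constant $8$ leaves a comfortable margin over the $2e$ needed.
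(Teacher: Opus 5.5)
Your argument is correct. It is the standard Local Lemma proof that the paper cites from Molloy--Reed without writing out: uniform random choices from the lists, bad events $A_{uv,c}$ of probability $\ell^{-2}$, and a dependency bound of $2\ell d$ obtained from color degrees rather than ordinary degrees. With the paper's form of the Local Lemma (Lemma~\ref{lem:LLL}, which requires $4pD \leq 1$), your count gives $4\cdot \ell^{-2}\cdot 2\ell d = 8d/\ell \leq 1$, which is exactly where the constant $8$ comes from.
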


This proposition is standard and proved using the Lov\'asz Local Lemma (see \cite[Chapter 4.1]{MolloyReed} for a textbook treatment).
We note that a result of Haxell \cite{haxell2001note} allows one to replace the constant $8$ with $2$.
We now describe the procedure we employ to prove the existence of the coloring $\phi$ satisfying conditions~\ref{item: list condition}~and~\ref{item: ratio condition} mentioned earlier (this constitutes a single nibble iteration).

\paragraph{The wasteful coloring procedure}
The procedure is a randomized algorithm for computing a proper partial coloring.
Informally, the algorithm is as follows:
\begin{enumerate}\label{pageref}
    \item Each vertex in $H$ is \emph{activated} with some small probability.
    \item Each activated vertex is assigned a color $\phi(v)$ uniformly at random from $L(v)$.
    \item For each vertex $v$, let $L_{\phi}(v) = L(v) \setminus \{\phi(u)\,:\, u \in N(v)\}$.
    \item If an activated vertex's color survives the previous step, it is assigned the color permanently.
\end{enumerate}
Note that the above procedure is wasteful in the sense that we may be deleting a color $c$ from $L(v)$ when no vertex in $N(v)$ is permanently assigned $c$.
It turns out that this ``wastefulness'' greatly simplifies the analysis of this procedure.
Specifically, quantities such as $\Pr(c\in L_{\phi}(v))$ have a much cleaner formulation. This streamlining simplifies the recursive analysis of parameters, making the computations considerably less cumbersome. 
(See \cite[Chapter 12.2]{MolloyReed} for a more in-depth discussion of the utility of such wastefulness).

Define the following parameters:
\[\ell \coloneqq \min_v|L(v)|, \qquad \ell' \coloneqq \min_v|L_\phi(v)|, \qquad d \coloneqq \max_{v, c}d_L(v, c), \qquad d' \coloneqq \max_{v, c}d_{L_{\phi}}(v, c).\]
The goal is to show that, with positive probability, the ratio $d'/\ell'$ is noticeably smaller than $d/\ell$.
It turns out that this holds locally in expectation, i.e.,
\[\frac{\mathbb{E}[\deg_{L_{\phi}}(v,c)]}{\mathbb{E}[|L_{\phi}(v)|]} \,\leq\, \zeta \, \frac{d}{\ell},\]
for some $\zeta < 1$.
To see this we note the following:
\begin{itemize}
    \item $L_\phi(v)$ consists of colors $c \in L(v)$ such that (1) no neighbor of $v$ is activated and assigned $c$.
    \item $N_{L_{\phi}}(v, c)$ consists of those vertices $u \in N_L(v, c)$ such that (2) no neighbor of $u$ is activated and assigned $c$, and (3) $u$ is not colored.
\end{itemize}
Conditions (1) and (2) above have the same probability of occurring.
Therefore, condition (3) is what determines $\zeta < 1$.

Recall the goal of showing that, with positive probability, we have $d'/\ell' \leq \zeta\,d/\ell$.
To do so, it is enough to show that the random variables $|L_{\phi}(v)|$ and $d_{L_{\phi}}(v,c)$ are concentrated about their expected values.
The claim then follows by an application of the Lov\'asz Local Lemma.

\paragraph{Key challenge and our main contribution: concentration of $d'$}
The concentration of $\ell'$ follows a straightforward application of Talagrand's inequality, a powerful concentration tool for random variables satisfying certain Lipschitz-like conditions.
When considering $d'$, it turns out the relevant Lipschitz parameter is the maximum codegree of $H$, denoted $\Delta_2(H)$.
Recall the original application of this procedure by Kim: coloring graphs of girth at least $5$.
Here, $\Delta_2(H) = 1$ and so concentration is achieved by Talagrand's inequality.
The situation is a lot more delicate in other settings.

We begin by discussing Mahdian's approach toward proving Theorem~\ref{thm:Mahdian}.
Note that while $G$ is $C_4$-free, $H$ need not be!
In fact, $H$ can contain many copies of $C_4$.
The key structural property of $H$ that Mahdian takes advantage of is the following: for every vertex $v \in V(H)$, every vertex $u \in V(H) - v$ apart from $O(\sqrt{\Delta(H)})$ neighbors of $v$ (which we refer to as \textit{friends} of $v$) satisfy $|N(u) \cap N(v)| = O(\sqrt{\Delta(H)})$.
With this in hand, he is able to effectively ignore the contribution of the friends of $v$ to show that
\[d' \leq (1+o(1))\mathbb{E}[d'] + O(\sqrt{\Delta(H)}),\]
with high probability.
Unfortunately, this structural property is not hereditary, i.e, for $H' \subseteq H$ it does not hold with $\Delta(H)$ replaced by $\Delta(H')$.
For this reason, Mahdian needs to ensure that $\Delta(H_i) = \omega(\sqrt{\Delta(H_1)})$ for all $i$ when recursively applying the result.
This is where the constant factor $2$ is necessary in his argument.\footnote{We remark that in \cite{anderson2023colouring}, the authors employ a similar argument for $K_{t, t}$-free graphs $H$. As $K_{t, t}$-freeness is hereditary, the desired condition is met, allowing them to achieve the optimal constant $1$.}

In our proof of Theorem~\ref{thm:main}, we prove a similar structural property for $H$: for every vertex $v \in V(H)$, every vertex $u \in V(H) - v$ apart from $O(\Delta(H)^{1-1/t})$ friends of $v$ satisfy $|N(u) \cap N(v)| = O(\Delta(H)^{1-1/t})$.
With this in hand, an identical approach to Mahdian's would prove Theorem~\ref{thm:Mahdian Ktt}.
To improve the constant factor from $2t-2$ to $1$, we need to additionally control two parameters for each vertex $v$:
\begin{itemize}
    \item the size of the set $Q(u, v) = N(u) \cap N(v)$ for \textit{stranger} vertices $u$, and
    \item the size of the set $X_v$ denoting the friends of $v$.
\end{itemize}
Our proof of the first bullet is inspired by recent work of Methuku, Wigal, and the second and third named authors of this manuscript \cite{BDMW}.
The proof of the second bullet represents the key technical novelty of this work.

Let $v \in V(H)$ be fixed and consider the set $X_v$.
A curious feature of the codegree concentration in \cite{BDMW} is that it relies on the interaction between vertices inside the set of interest as opposed to the influence of those outside as has been standard in earlier such approaches.
It turns out that there exists a partition $X_{1, v} \sqcup\cdots \sqcup X_{s, v}$ of $X_v$ and a collection of sets $B_{1, v}, \ldots, B_{s, v}$ such that the following hold for each $i \in [s]$:
\begin{enumerate}
    \item $X_{i, v}$ is not ``too large,''
    \item ``most'' of the neighbors of the vertices in $X_{i, v}$ lie in $B_{i, v}$, and
    \item the codegrees of pairs of vertices in $X_{i,v}$ with respect to the graph $H[X_{i,v},B_{i,v}]$ is ``small.''
\end{enumerate}
Hopefully, we can restrict our attention to $B_{i,v}$ to concentrate $|X_{i,v}|$, which implies concentration of $|X_v|$.
As it happens, initially, $|B_{i,v}|$ is ``large.''
Furthermore, we are able to show that $|B_{i,v}|$ does not shrink ``too fast'' through the applications of the Wasteful Coloring Procedure and hence, concentrate $|X_{i,v}|$. (We remark that the actual argument for $B_{i,v}$ is a little more subtle, but unimportant for this overview.)

In summary: to concentrate $d'$, we must show that $|Q(u, v)|$ is small for all pairs of strangers $u$ and $v$, and $X_v$ is small for all $v$; to assist with the latter, we must also show that $|B_{i,v}|$ does not shrink too fast for each $i \in [s]$.
It turns out one can encapsulate all of these conditions into a more general theorem.
We state this general result in the following subsection and certain consequences of it, which we prove later in the paper.
We remark that while the statement of this general theorem is tedious, we believe it to be of independent interest.

\subsection{A general theorem on list-coloring}\label{subsection: general statement}
In the following theorem, we consider a graph in which certain unordered pairs of vertices are \emph{friends}.
We assume that a vertex is always its own friend.
Formally, a friend relation is a symmetric reflexive binary relation $\mathcal F \subseteq V(G) \times V(G)$.
Given a graph $G$ with a friend relation $\mathcal F$, we say that two vertices that are not friends are \emph{strangers}.

\begin{theorem}
\label{thm:general}
Let $\gamma, \epsilon \in (0,1)$ and $N \in \mathbb N$.
Then, there exists $\Delta_0 \in \mathbb N$ for which the following holds.
Suppose $G$ is a graph of maximum degree at most $\Delta \geq \Delta_0$
with a symmetric reflexive friend relation
$\mathcal F \subseteq V(G) \times V(G)$, satisfying the following:
\begin{enumerate}
    \item 
    \label{item:main-1}
    Any two strangers have at most $\frac{\Delta}{\log^{20} \Delta}$ common neighbors.
    \item \label{item:main-2} There is a family $\mathcal X$ of subsets $X \subseteq V(G)$ satisfying the following:
    \begin{enumerate}
        \item \label{item:main-2a} Each $X \in \mathcal X$ satisfies $|X| \leq \Delta^{\gamma}$,
        \item \label{item:main-2b} For each $v \in V(G)$, there exists a family $\mathcal X_v \subseteq \mathcal X$ of size at most $\Delta^{(1 - \gamma )\epsilon/12}$
        such that all friends of $v$ belong to $\bigcup_{X \in \mathcal X_v} X$,
        \item \label{item:main-2c} Each $v \in V(G)$ belongs to at most $\Delta^{N}$ sets $X \in \mathcal X$.
    \end{enumerate}
    \item \label{item:main-3}
    For each $X \in \mathcal X$ with at least two vertices, there is a set $B^X \subseteq V(G)$ satisfying the following:
    \begin{enumerate}
        \item \label{item:main-3a} Each $v \in X$ has at least $d(v) - \Delta(1 - \gamma + \log^{-10}\Delta)$
            neighbors in $B^X$,
        \item \label{item:main-3b} For each distinct friend pair $u,v \in X$, $u$ and $v$ have at most $\frac{\Delta}{\log^{20} \Delta}$ common neighbors in $B^X$. 
    \end{enumerate}
\end{enumerate}
Then, $\chi_{\ell} (G) \leq (1 + \epsilon) \frac{\Delta}{\log \Delta}$.
\end{theorem}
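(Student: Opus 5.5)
We run the Wasteful Coloring Procedure iteratively, in the form of Proposition~\ref{prop: nibble}, and finish with Proposition~\ref{prop: final blow}. Set $L_1(v)$ to be any list of size $q=(1+\epsilon)\Delta/\log\Delta$ and $H_1=G$. At step $i$ we track $\ell_i\approx\min_v|L_i(v)|$ and $d_i\approx\max_{v,c}d_{L_i}(v,c)$. We also track two auxiliary quantities: $x_i$, an upper bound on the number of surviving uncolored vertices in each $X\in\mathcal X$ carrying a fixed color, and $b_i$, a lower-bound proxy for the surviving portion of each $B^X$. With activation probability $\kappa/\log\Delta$ per round, the standard recursions are
\[
\ell_{i+1}\approx \ell_i\,\mathrm{Keep}_i, \qquad d_{i+1}\approx d_i\,\mathrm{Keep}_i\,(1-\mathrm{Uncolor}_i),
\]
with $\mathrm{Keep}_i\approx e^{-\kappa d_i/(\ell_i\log\Delta)}$. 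Solving them as in Kim's argument (cf.\ \cite[Ch.~12]{MolloyReed}) shows that after $i^*=O(\log\Delta\log\log\Delta)$ rounds we reach $d_{i^*}/\ell_{i^*}<1/8$. At that point $\ell_{i^*}\ge\Delta^{\Omega(\epsilon)}$, and this is where the slack $(1+\epsilon)$ is used. So the work is to verify, for a single round, that each tracked quantity is within a $(1\pm\log^{-C}\Delta)$ factor of its expectation, or at worst exceeds it by an additive $\Delta^{1-\Omega(\epsilon)}$-type error that is negligible against $d_i\ge\Delta^{\Omega(\epsilon)}$. Once that holds, the Lov\'asz Local Lemma gives a good outcome for the round.

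Concentration of $|L_{i+1}(v)|$ is routine via Talagrand's inequality. For $d_{L_{i+1}}(v,c)$ we split $N_{L_i}(v,c)$ into the friends of $v$ and the strangers. For a stranger $w$ of $v$, the event that $w$ loses $c$ or gets colored depends on trials at $N(w)$. A single trial at a vertex $u$ affects at most $|N(u)\cap N(v)|$ terms, which is at most $\Delta/\log^{20}\Delta$ by~\eqref{item:main-1} whenever $u$ is a stranger of $v$. The trials at friends of $u$ are handled through an exceptional-outcomes version of Talagrand's inequality. The stranger part then concentrates with error $O(\Delta/\log^{9}\Delta)$, which stays negligible as long as $d_i\gg\Delta/\log^{8}\Delta$; below that scale we switch to a cruder bound. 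The friend part is contained in $\bigcup_{X\in\mathcal X_v}X$, so by~\eqref{item:main-2b} it suffices to show that each $|X\cap N_{L_{i+1}}(v,c)|$ is at most about $x_{i+1}$, times at most $\Delta^{(1-\gamma)\epsilon/12}$ sets. Since $x_1\le\Delta^\gamma$, the friend contribution is at most $\Delta^{\gamma+(1-\gamma)\epsilon/12}$ times the accumulated decay, which must stay well below $d_i$ throughout.

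The main obstacle is concentrating $|X\cap U_{i+1}\cap\{w: c\in L_{i+1}(w)\}|$ for $X\in\mathcal X$, since $|X|\le\Delta^\gamma$ can be far smaller than the codegrees involved. We would follow the overview. By~\eqref{item:main-3a}, each $w\in X$ keeps or loses $c$ mostly according to trials in $B^X$, and only about $(1-\gamma)\Delta$ neighbors lie outside $B^X$. We condition on the randomness outside $B^X$ and view the count as a function of trials inside $B^X$. By~\eqref{item:main-3b}, a trial at a single vertex of $B^X$ affects few pairs in $X$, which gives a Lipschitz/certificate structure good enough for Talagrand. The first thing to try is to use only the $B^X$-neighborhood to drive the decay: $w$ survives with probability roughly $\mathrm{Keep}_i^{\,|N(w)\cap B^X|/\deg(w)}$. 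So $x_i$ decays like $\Delta^\gamma\cdot(\text{full decay})^{\gamma-o(1)}$ rather than the full decay. Together with the extra $\Delta^{(1-\gamma)\epsilon/12}$ factor, the choice of exponents should make the friend contribution $o(d_i)$ until $d_i/\ell_i<1/8$. We must also keep $|B^X\cap U_i|$ from collapsing, by tracking it as another concentrated quantity. Finally, condition~\eqref{item:main-2c} makes each bad event depend on only $\mathrm{poly}(\Delta)$ others, which the LLL requires given failure probabilities $\exp(-\log^{\Omega(1)}\Delta)$.
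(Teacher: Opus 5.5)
You have a genuine gap in the color-degree concentration, and it is the central difficulty of the theorem. For the stranger part, your Lipschitz parameter is the static codegree bound $\Delta/\log^{20}\Delta$ from Condition~\eqref{item:main-1}. That gives an additive error of order $\Delta/\log^{9}\Delta$. You note this is only useful while $d_i\gg\Delta/\log^{8}\Delta$, and you defer everything below that to an unspecified ``cruder bound''. But in the early rounds $\mathrm{Keep}_i\approx e^{-\kappa/(1+\epsilon)}$ is a constant, so $d_i$ shrinks by a constant factor per round, while $d_i/\ell_i$ shrinks only by a factor $1-\Theta(1/\log\Delta)$. Hence after $O(\log\log\Delta)$ rounds you already have $d_i<\Delta/\log^{8}\Delta$, yet $d_i/\ell_i$ is still $\frac{\log\Delta}{1+\epsilon}-O(\log\log\Delta)$. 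Essentially the whole descent of the ratio to $1/8$ therefore happens at scales $\Delta^{\Theta(\epsilon)}\le d_i\le \Delta/\log^{8}\Delta$, where your argument gives nothing. This is the same obstruction that limits Mahdian's method: a codegree bound stated relative to the initial $\Delta$ does not pass to the residual instances.

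The same defect affects your treatment of $X$. Condition~\eqref{item:main-3b} is again a fixed bound $\Delta/\log^{20}\Delta$, whereas the sets $X_i(c)$ you must concentrate have size at most $T_i^{\gamma}$. Moreover, \eqref{item:main-3b} says that a \emph{pair} in $X$ shares few vertices of $B^X$. It does not say that a \emph{single} vertex of $B^X$ meets few vertices of $X$ (one vertex may be adjacent to all of $X$). So the Lipschitz/certificate structure you invoke does not follow from it.

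Two ingredients are missing.

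First, codegrees must be part of the induction. The paper tracks $q_i(u,v,c)=|T_i(u,c)\cap T_i(v,c)|$ for strangers and $q_i^X(u,v,c)=|T_i(u,c)\cap T_i(v,c)\cap B^X|$ for friend pairs in $X$. Both are bounded by $Q_i$, which decays at the same rate as $T_i$, so $T_i/Q_i=\frac{10}{\gamma}\log^{18}\Delta$ stays fixed.

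Second, you need a concentration lemma whose Lipschitz parameter comes from codegrees \emph{inside} the set $S$ being measured, not from codegrees with $v$. Suppose every pair in $S$ shares at most $\alpha/(4\log^{18}\Delta)$ neighbors in $R$. Then the K\H{o}v\'ari--S\'os--Tur\'an bound shows that at most $\alpha/\log^{2}\Delta$ vertices have at least $|S|/\log^{8}\Delta$ neighbors in $S$. Ignoring their trials gives a superset $S''\supseteq S'$ with essentially the same expectation, and with observability parameter $|S|/\log^{8}\Delta$, which scales with $|S|$ rather than with $\Delta$. After partitioning $S$ into classes of mutual strangers (Hajnal--Szemer\'edi, using $|X_i(c)|\le X_i$ to bound the degree of the friend graph), this one lemma handles $t_{i+1}$, $q_{i+1}$, $q^X_{i+1}$, $|X_{i+1}(c)|$ and $t_{i+1}-b^X_{i+1}$.

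Because this lemma yields only upper tails, the paper does not track the surviving part of $B^X$ from below, as you propose. Instead it tracks $t_i(v,c)-b^X_i(v,c)\le T_i-B_i$ from above. The equalizing coin flip $(1-\eta L_i^{-1})^{T_i-t_i(v,c)}$ then turns this into the effective exponent $T_i-t_i(v,c)+b^X_i(v,c)\ge B_i$, which drives the decay of $|X_i(c)|$.
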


Let us provide some intuition for the above result.
Condition~\eqref{item:main-1} states that stranger pairs of vertices have low codegree.
Condition~\eqref{item:main-2} states that there is a collection $\mathcal{X}$ of ``small'' subsets of $V(G)$ such that for each vertex $v$, (a)~the friends of $v$ are covered by a ``small'' subset of $\mathcal{X}$, and (b)~$v$ is not contained in ``too many'' subsets in $\mathcal{X}$.
Condition~\eqref{item:main-3} states that for each $X \in \mathcal{X}$, there is a set $B^X \subseteq V(G)$ such that the subgraph $G_X\coloneqq G[X, B^X]$ satisfies (a)~$\deg_{G_X}(x)$ is ``large'' for each $x \in X$, and (b)~the codegree of $u, v \in X$ in $G_X$ is ``small'' for distinct friend pairs $u, v$. (These properties will be key to concentrating $|X|$ through the nibble procedure.)
We derive Theorem~\ref{thm:main} from Theorem~\ref{thm:general} in Section~\ref{subsection: main proof from general}.
The argument roughly applies Theorem~\ref{thm:general} with $\mathcal{X}$ consisting of the sets $X_{i, v}$ and $B^{X_{i, v}} = B_{i, v}$, where $X_{i, v}$ and $B_{i, v}$ are as described in Section~\ref{subsection: proof overview}.

As mentioned in the previous section, we believe Theorem~\ref{thm:general} is of independent interest.
Indeed, one recovers the classical result of Kim on graphs of girth at least $5$ \cite{kim1995brooks} by setting
\begin{itemize}
    \item $\mathcal{F} = \{(v,v): v \in V(G)\}$,
    \item $\mathcal X = \{\{v\} : v\in V(G)\}$,
    \item $\mathcal X_v = \{\{v\}\}$ for each $v \in V(G)$, and
    \item $\gamma, N$ arbitrary.
\end{itemize}
With the same parameterization, one also recovers (and slightly improves upon) a recent result of Methuku, Wigal, and the second and third named authors of this manuscript.

\begin{theorem}[\cite{BDMW}] \label{thm:codeg}
    If $G$ is a graph of maximum degree $d$ and with codegree at most $\frac{d}{\log^{20} d}$,
    then $\chi_{\ell}(G) \leq (1 + o(1)) \frac{d}{\log d}$.
\end{theorem}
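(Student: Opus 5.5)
The plan is to derive Theorem~\ref{thm:codeg} directly from Theorem~\ref{thm:general}, using the degenerate parameterization described just before the statement. Let $\epsilon > 0$ be given, set $\Delta = d$, and take the friend relation $\mathcal F = \{(v,v) : v \in V(G)\}$. Then every pair of distinct vertices is a stranger pair. Take $\mathcal X = \{\{v\} : v \in V(G)\}$ and $\mathcal X_v = \{\{v\}\}$, and fix, say, $\gamma = 1/2$ and $N = 1$. The conclusion of Theorem~\ref{thm:general} is then $\chi_\ell(G) \leq (1+\epsilon)\frac{d}{\log d}$ for all $d \geq \Delta_0(\gamma,\epsilon,N)$. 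Since $\epsilon$ is arbitrary, this is exactly the claimed bound $(1+o(1))\frac{d}{\log d}$.

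The hypotheses of Theorem~\ref{thm:general} are checked one at a time.
\begin{itemize}
\item For \eqref{item:main-1}: any two strangers are distinct vertices. By assumption they have at most $\frac{d}{\log^{20} d} = \frac{\Delta}{\log^{20}\Delta}$ common neighbors.
\item For \eqref{item:main-2a}: each $X \in \mathcal X$ has $|X| = 1 \leq \Delta^{\gamma}$.
\item For \eqref{item:main-2b}: $|\mathcal X_v| = 1 \leq \Delta^{(1-\gamma)\epsilon/12}$, and the only friend of $v$, namely $v$ itself, lies in $\{v\}$.
\item For \eqref{item:main-2c}: each $v$ lies in exactly one set of $\mathcal X$, and $1 \leq \Delta^N$.
\item For \eqref{item:main-3}: it is vacuous, because no $X \in \mathcal X$ has at least two vertices.
\end{itemize}

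Two edge cases need a remark. First, the maximum degree is exactly $d$, hence "at most $\Delta$" holds. Second, for $d < \Delta_0$ the $o(1)$ term absorbs everything; in the asymptotic statement only large $d$ matters.

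I expect essentially no obstacle here, since all the difficulty lives inside Theorem~\ref{thm:general}. The only point needing care is reading the $o(1)$ correctly: for every fixed $\epsilon$ there is a threshold beyond which the bound holds, which is precisely what Theorem~\ref{thm:general} provides.
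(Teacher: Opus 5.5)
Your proposal is correct and is essentially the paper's own argument. The paper obtains Theorem~\ref{thm:codeg} from Theorem~\ref{thm:general} with exactly this parameterization: the identity friend relation, $\mathcal X = \{\{v\} : v \in V(G)\}$, $\mathcal X_v = \{\{v\}\}$, and $\gamma, N$ arbitrary. Under this choice Condition~\eqref{item:main-3} is vacuous and the codegree hypothesis is precisely Condition~\eqref{item:main-1} with $\Delta = d$.
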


Using a similar setup with $\gamma = 1 - \frac 1c$ and $\Delta = cd$,
one obtains the same result with a larger coefficient when 
we allow a small number of exceptions to the codegree bound for each vertex.

\begin{theorem}
\label{thm:bdd-friends}
    Let $c > 1$ be fixed. Suppose that $G$ is a graph of maximum degree $d$, and suppose that for each $v \in V(G)$, at most $d^{1 - 1/c}$ vertices $w \in V(G)$ satisfy $|N(v) \cap N(w)| > \frac{d}{\log^{20} d}$. Then, $\chi_{\ell}(G) \leq (c + o(1)) \frac{d}{\log d}$.
\end{theorem}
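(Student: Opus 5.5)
We derive the statement directly from Theorem~\ref{thm:general}. We set $\Delta = cd$ and $\gamma = 1 - \frac1c$, so that $\Delta^\gamma = (cd)^{1-1/c} \geq d^{1-1/c}$. The friend relation is defined by declaring $v$ and $w$ friends if $v = w$ or $|N(v)\cap N(w)| > \frac{d}{\log^{20} d}$; this relation is symmetric and reflexive. For the family we take $\mathcal X = \{X_v : v \in V(G)\}$, where $X_v$ is the set of friends of $v$, and we put $\mathcal X_v = \{X_v\}$. If Theorem~\ref{thm:general} applies with some fixed $\epsilon'$, it gives $\chi_\ell(G) \le (1+\epsilon')\frac{cd}{\log(cd)} = (c+o(1))\frac{d}{\log d}$ once $\epsilon' \to 0$, which is the claim. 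The main point is that $G$ has maximum degree $d \le \Delta$, while condition~\eqref{item:main-3a} only demands $d(v) - \Delta(1-\gamma+\log^{-10}\Delta) = d(v) - d - o(d) < 0$ neighbours in $B^X$. This holds vacuously, and it is exactly why the coefficient becomes $c$.

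We check the conditions of Theorem~\ref{thm:general} in turn.
\begin{itemize}
\item[\eqref{item:main-1}] Two strangers have at most $\frac{d}{\log^{20} d} \le \frac{\Delta}{\log^{20}\Delta}$ common neighbours, since $\Delta/\log^{20}\Delta$ is increasing and $\Delta \ge d$.
\item[\eqref{item:main-2a}] By hypothesis $|X_v| \le d^{1-1/c} + 1$; the $+1$ accounts for $v$ itself in case $v$ is not already counted. This is at most $(cd)^{1-1/c} = \Delta^\gamma$ for large $d$, because $c^{1-1/c} > 1$.
\item[\eqref{item:main-2b}] The family $\mathcal X_v$ has size $1 \le \Delta^{(1-\gamma)\epsilon/12}$.
\item[\eqref{item:main-2c}] A vertex $u$ lies in $X_v$ only when $v$ is a friend of $u$. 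So $u$ lies in at most $|X_u| \le \Delta$ sets, and $N = 1$ suffices.
\item[\eqref{item:main-3}] Take $B^X = \emptyset$. Condition~\eqref{item:main-3a} holds vacuously as computed above, since $\Delta(1-\gamma) = d$. Condition~\eqref{item:main-3b} holds trivially, since there are no common neighbours in $\emptyset$.
\end{itemize}

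For the constants, we are given a fixed target $\eta > 0$. We apply Theorem~\ref{thm:general} with $\epsilon = \eta/c$, the above $\gamma$, and $N = 1$, and take $d$ large enough that $cd \ge \Delta_0$. This yields $\chi_\ell(G) \le (1+\eta/c)\, cd/\log(cd) \le (c+\eta)\, d/\log d$.

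The only real subtlety I expect is one of bookkeeping. The hypothesis gives a bound of $\frac{d}{\log^{20} d}$ in terms of $d$, while Theorem~\ref{thm:general} requires bounds in terms of $\Delta = cd$. One must check that all comparisons go the right way (monotonicity of $x/\log^{20}x$, and $c^{1-1/c}>1$ absorbing the $+1$). Also, $\Delta$ in Theorem~\ref{thm:general} is only an upper bound on the maximum degree, so using $\Delta = cd > d$ is legitimate.
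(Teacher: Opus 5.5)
Your proposal is correct and is essentially the argument the paper intends. The paper only records the parameter choice $\gamma = 1-\frac1c$, $\Delta = cd$. Your setup fills this in with every hypothesis of Theorem~\ref{thm:general} verified correctly: friends are pairs of high codegree, $\mathcal X_v = \{X_v\}$ with $X_v$ the friend set of $v$, and $B^X=\emptyset$, so Condition~\eqref{item:main-3a} is vacuous because $\Delta(1-\gamma)=d$. The only cosmetic fix is to take $\epsilon=\min\{\eta/c,\tfrac12\}$, so that $\epsilon\in(0,1)$ as Theorem~\ref{thm:general} requires.
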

Mahdian showed that the assumption of Theorem~\ref{thm:bdd-friends} holds for $c = 2$, so Theorem~\ref{thm:bdd-friends} immediately implies
Theorem~\ref{thm:Mahdian}.

Moving beyond strong edge colorings, our general theorem also implies a result for distance-$t$ edge colorings of high girth graphs.
A distance-$t$ edge coloring of a graph $G$ is a coloring $\phi\,:\,E(G) \rightarrow \mathbb N$ with the property that if $\phi(e) = \phi(e')$, then the distance between any endpoint of $e$ and one of $e'$ is at least $t$.
(For $t = 2$, this is just a strong edge coloring.)
Kaiser and Kang showed that $d$-regular graphs with girth at least $2t+1$ admit a distance-$t$ edge coloring with $O(d^t / \log d)$ colors \cite[Theorem~1.2]{kaiser2014distance}.
In their proof, they show that $L(G)^t$ is locally-sparse and employ a result of Alon, Krivelevich, and Sudakov \cite{AKSConjecture} on the chromatic number of such graphs.
Employing the improved bound of Hurley and Pirot \cite{HurleyPirot} instead yields a hidden constant of $1+o(1)$ in their result.
We improve this constant to $(1+o(1))2/t$ which, by another result of Kaiser and Kang \cite[Proposition 1.3]{kaiser2014distance}, is optimal up to a factor of $2$.

\begin{theorem}\label{thm: girth}
    For each value $\epsilon > 0$ and integer $t \geq 2$,  there exists a value $d_0 \in \mathbb N$ such that the following holds for every $d \geq d_0$. 
    Let $G$ be a graph of maximum degree at most $d$ with girth at least $2t+1$. Then, $G$ admits a distance-$t$ edge coloring with at most $(1 +\epsilon) \frac{2 d^t}{t \log d}$ colors.
\end{theorem}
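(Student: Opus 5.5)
We will derive Theorem~\ref{thm: girth} from Theorem~\ref{thm:general} applied to $H \coloneqq L(G)^t$, whose proper colorings are exactly distance-$t$ edge colorings of $G$. The plan is to take $\Delta$ equal to (an upper bound on) $\Delta(H)$, use the trivial friend relation $\mathcal F = \{(v,v)\}$, and set $\mathcal X = \{\{v\}\}$ and $\mathcal X_v = \{\{v\}\}$. With these choices, conditions~\eqref{item:main-2} and~\eqref{item:main-3} are vacuous or trivial: singletons need no $B^X$, and the size bounds hold for any $\gamma$. Everything then rests on counting $\Delta(H)$ and the codegrees in $H$.

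\textbf{Step 1: the degree of $H$.} Fix an edge $e = xy$ of $G$. An edge $f$ is adjacent to $e$ in $H$ when some endpoint of $f$ is within distance $t-1$ of $\{x,y\}$. Girth at least $2t+1$ makes the ball of radius $t$ around $\{x,y\}$ a tree. Counting the edges at distance $j$ from $e$ gives $2(d-1)^{j}$ for $j = 1,\dots,t-1$. The total is therefore $\Delta(H) \leq 2\sum_{j=1}^{t-1}(d-1)^j = (2+o(1))d^{t-1}$.

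This count shows the naive parameterization fails. Theorem~\ref{thm:general} with $\Delta = 2d^{t-1}$ yields only $(2+o(1))d^{t-1}/((t-1)\log d)$ colors. That exponent is $t-1$, not $t$, so I have miscounted the distance convention. The definition requires endpoints of equally colored edges to be at distance at least $t$. Two edges conflict when some pair of endpoints is at distance at most $t-1$, which corresponds to $L(G)^{t}$ adjacency. That gives $2\sum_{j=1}^{t}(d-1)^{j-1}\cdot(\text{branching})$, and redoing the count gives $\Delta(H) = (2+o(1))d^{t}$. The target $(1+\epsilon)\frac{2d^t}{t\log d}$ then matches $(1+\epsilon)\Delta/\log\Delta$ exactly, since $\log \Delta = (t+o(1))\log d$. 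So the main computation is to set $\Delta \coloneqq 2d^t$ and carefully verify $\Delta(H) \leq \Delta$, with lower-order terms absorbed by rounding up.

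\textbf{Step 2: the codegree condition~\eqref{item:main-1}.} We must show that any two distinct edges $e, e'$ have at most $\Delta/\log^{20}\Delta$ common $H$-neighbors; in fact we aim for $O(d^{t-1})$. Let $r$ be the distance between $e$ and $e'$ in $L(G)$. A common neighbor $f$ is within $L(G)$-distance $t$ of both. Because the relevant ball, of radius about $t$ around $e$, is a tree (by girth $\geq 2t+1$), the path from $f$ to $e$ and the path from $f$ to $e'$ must, after the branch point, follow the unique tree path between $e$ and $e'$. Parametrize $f$ by where its geodesic leaves the $e$--$e'$ path, at position $i$, and by the remaining depth $k$. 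The constraints $i+k \leq t$ and $(r-i)+k \leq t$ with $r \geq 1$ force $k \leq t - \lceil r/2\rceil \leq t-1$. Hence at most $O(t\, d^{t-1})$ such edges exist. This is $o(\Delta/\log^{20}\Delta)$ for large $d$.

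The main obstacle is the tree argument near the radius boundary. Common neighbors can lie at distance up to $t$ from $e$, and cycles of length $2t+1$ could in principle close up there, so I would check carefully that girth $\geq 2t+1$ suffices for uniqueness of the geodesics used. If it fails at the extreme layer, I would bound those exceptional configurations separately. A cycle through $f$, $e$, $e'$ of length $\leq 2t+2$ can occur only in a bounded number of patterns, each contributing $O(d^{t-1})$. Finally, Theorem~\ref{thm:general} gives $\chi_\ell(H) \leq (1+\epsilon)\Delta/\log\Delta \leq (1+\epsilon)\frac{2d^t}{t\log d}$, which is the desired bound.
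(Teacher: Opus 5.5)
Your reduction to Theorem~\ref{thm:general} with $\Delta = 2d^t$ is the right frame, and your corrected count $\Delta(L(G)^t) \le 2\sum_{j=1}^{t}(d-1)^j < 2d^t$ is fine. The trivial friend relation cannot work, however, because Condition~\eqref{item:main-1} fails for every pair of incident edges. Take $e = vw$ and $e' = vw'$. Every edge with an endpoint within distance $t-1$ of $v$ is adjacent in $L(G)^t$ to both $e$ and $e'$. When $G$ is $d$-regular of girth at least $2t+1$, there are at least $(d-2)(d-1)^{t-1} = (\tfrac12 - o(1))\Delta$ such edges, far more than $\Delta/\log^{20}\Delta$.

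The error in your Step~2 is treating $L(G)$ as locally a tree. It never is: the $d$ edges at a vertex form a clique in $L(G)$. So a common neighbor $f$ of $vw$ and $vw'$ can reach both through a third edge $vu$ with $u \notin \{w,w'\}$, and $vu$ lies on no $e$--$e'$ geodesic. Such an $f$ is at $L(G)$-distance exactly $t$ from both $e$ and $e'$. Hence the additivity behind your constraints $i+k\le t$ and $(r-i)+k \le t$ fails for $r=1$. The $d-2$ choices of $u$ supply precisely the factor of $d$ missing from your $O(td^{t-1})$ bound. This has nothing to do with cycles in $G$---it happens even when $G$ is a tree---so handling exceptional configurations near the boundary of the ball will not repair it. For non-incident pairs the codegree is indeed $o(\Delta/\log^{20}\Delta)$; the paper proves this using $|N_{t-1}(u)\cap N_{t-1}(v)| < td^{t-2}$ from the girth, together with the Bondy--Simonovits bound.

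What is missing is exactly the structure that Conditions~\eqref{item:main-2} and~\eqref{item:main-3} are designed for. Incident edges must be friends: the paper calls two edges friends when they have at least $\Delta/\log^{20}\Delta$ common neighbors, and shows non-incident edges are strangers. Then $\mathcal X$ is the family of stars $X_v = E_1(v)$, with $|X_v| \le d \le \Delta^{1/2}$, so $\gamma = \tfrac12$ and $\mathcal X_{uv} = \{X_u, X_v\}$.

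The real work is Condition~\eqref{item:main-3} with $B^{X_v} = E_{t+1}(v)$, which needs two estimates.
First, each $vw$ has at least $d|N_{t-1}(w)| - d^t\log^{-12}(2d^t)$ neighbors in $E_{t+1}(v)$, essentially all of its neighbors on $w$'s side. Combined with $d_{L(G)^t}(vw) < d|N_{t-1}(w)| + d^t$, this gives~\eqref{item:main-3a}.
Second, two edges $vw, vw'$ have fewer than $2d^t/\log^{24}d$ common neighbors inside $E_{t+1}(v)$, which gives~\eqref{item:main-3b}.
These estimates are what let the nibble concentrate the number of edges at $v$ still holding a given color, despite the huge codegrees inside a star.

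If you instead just treat the $2(d-1) = \Delta^{1/t+o(1)}$ incident edges as exceptions via Theorem~\ref{thm:bdd-friends}, you only get a constant slightly above $\frac{t}{t-1}$ rather than $1+\epsilon$.
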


We also prove a similar result for graphs that are good spectral expanders.

\begin{theorem}\label{thm: spectral}
    For each $\epsilon > 0$ and $C \geq 2$, there exists a value $d_0 \in \mathbb N$ such that the following holds for every $d \geq d_0$. 
    If $G$ is a $d$-regular graph with $n \geq d^{2t}$ vertices such that the second largest eigenvalue of the adjacency matrix $A(G)$ satisfies $\lambda(G) \leq C\sqrt {d}$,
    then $G$ admits a distance-$t$ edge coloring with at most $(1+\epsilon) \frac{2d^t}{t \log d}$ colors.
\end{theorem}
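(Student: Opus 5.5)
The plan is to apply Theorem~\ref{thm:general} to $H \coloneqq L(G)^t$, the graph on $E(G)$ in which two edges are adjacent when some endpoint of one is within distance $t-1$ of some endpoint of the other. A proper list coloring of $H$ is a distance-$t$ edge coloring of $G$. Counting non-backtracking walks from the two endpoints of an edge gives $\Delta(H) \leq (1+o(1))\,2d^t$. Since $\log \Delta(H) = (1+o(1))\, t\log d$, it suffices to verify the hypotheses of Theorem~\ref{thm:general} with $\Delta = (1+o(1))2d^t$ and some fixed $\gamma$, $N$. The bound $(1+\epsilon)\frac{\Delta}{\log\Delta}$ then becomes $(1+\epsilon')\frac{2d^t}{t\log d}$.

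\textbf{Spectral walk count.} The basic tool is the walk-count consequence of the spectral hypothesis. For vertices $x,y$ and $k \leq 2t$,
\[
A^k(x,y) \leq \frac{d^k}{n} + \lambda^k \leq 1 + C^k d^{k/2},
\]
using $n \geq d^{2t}$. Consequently, any two vertices are joined by only $O_C(d^{k/2})$ walks of length $k$, rather than the $d^{k-1}$ walks that a dense local structure would allow. From this I would derive the local statement that plays the role of girth: for every vertex $x$, the ball of radius $t$ around $x$ is tree-like except for $O(d^{t-1/2})$ ``defective'' edges. Here a defective edge is one reached from $x$ by two distinct non-backtracking paths of length at most $t$. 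Equivalently, few short cycles pass near any vertex, in a counted sense.

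\textbf{Friend relation.} Declare two edges $e,f$ of $G$ friends if their codegree in $H$ exceeds $\Delta/\log^{20}\Delta$. Hypothesis~\eqref{item:main-1} of Theorem~\ref{thm:general} then holds by definition. For $e \in E(G)$, take $\mathcal X_e = \{X_e\}$ with $X_e$ the set of friends of $e$, and let $\mathcal X = \{X_e\}$.

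\begin{itemize}
\item Hypothesis~\eqref{item:main-2c} is immediate: every set containing $f$ is $X_e$ for some friend $e$ of $f$, so $f$ lies in at most $|X_f| \leq \Delta^\gamma \leq \Delta^N$ sets.
\item Hypothesis~\eqref{item:main-2b} is immediate, since $|\mathcal X_e| = 1$.
\item Hypothesis~\eqref{item:main-2a} requires $|X_e| \leq \Delta^\gamma$ for some fixed $\gamma<1$. A common $H$-neighbour $g$ of $e$ and $f$ yields a walk of length at most $2t$ between endpoints of $e$ and $f$ passing through $g$. A codegree of order $d^t/\log^{20}d$ therefore forces many such walks. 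Summing $A^k(x,y)$ over the $O(d^{t})$ candidate endpoints $y$, I would show that only $O(d^{t-1/2}\cdot\mathrm{polylog})$ edges can be friends of $e$. This suggests taking $\gamma \approx 1-\frac{1}{2t}$.
\end{itemize}

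\textbf{Witness sets and the main obstacle.} For hypothesis~\eqref{item:main-3}, I would take $B^{X_e}$ to be the set of edges $g$ that are $H$-neighbours of some edge of $X_e$ and are joined to it along a unique shortest path avoiding the defective edges near $e$. Edges reached only through short cycles near $e$ are exactly what inflate codegrees among friends, so discarding them should keep friend pairs at codegree below $\Delta/\log^{20}\Delta$ inside $B^{X_e}$ (hypothesis~\eqref{item:main-3b}). Removing only the defective part costs $O(d^{t-1/2})$ neighbours per vertex, far less than the $(1-\gamma)\Delta$ loss allowed by hypothesis~\eqref{item:main-3a}.

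I expect this step, together with the bound on $|X_e|$, to be the main obstacle. A naive walk count only gives codegree $O(d^t)$, comparable to $\Delta$ itself. One must exploit that both endpoints of a common neighbour $g$ are constrained, and sum the spectral bound over the two free lengths $a+b \leq 2t-2$ rather than over vertices. If a single $\gamma$ fails, I would fall back on splitting $X_e$ into a bounded number of pieces according to the cycle length responsible for the friendship, each piece with its own $B^X$, as in the $K_{t,t}$ argument.
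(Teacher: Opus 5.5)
Your reduction to Theorem~\ref{thm:general} on $L(G)^t$ with $\Delta\approx 2d^t$ is fine. The failure is in your treatment of hypothesis~\eqref{item:main-3}, and with it your choice $\gamma\approx 1-\frac{1}{2t}$.

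In $L(G)^t$, large codegree between friends comes primarily from shared endpoints, not from short cycles. Two edges $vw',vw''$ with common endpoint $v$ have as common neighbours every edge with an endpoint within distance $t-1$ of $v$. When the $t$-ball around $v$ is a tree (which the hypothesis allows, e.g.\ Ramanujan graphs of large girth), that is about $d|N_{t-1}(v)|\approx d^t\approx\Delta/2$ edges. Such a pair is a friend pair inside your $X_e$ whenever $e$ is incident to $v$, and discarding ``defective'' edges leaves its common neighbourhood intact.

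So \eqref{item:main-3b} forces $B^{X_e}$ to omit almost all of this ball, and each member of $X_e$ at $v$ loses about $\Delta/2$ neighbours. Hypothesis \eqref{item:main-3a} tolerates a loss of only $(1-\gamma+\log^{-10}\Delta)\Delta$, so, since $\gamma$ is fixed, $\gamma\le\frac12$. Then \eqref{item:main-2a} caps $|X|$ at $\Delta^{1/2}\approx\sqrt2\,d^{t/2}$. Your $X_e$ violates this: your own friend bound is $O(d^{t-1/2}\,\mathrm{polylog})$, and for $t=2$ even the $2d-1$ edges at the endpoints of $e$ exceed $\sqrt2\,d$.

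Separately, your claim that a $t$-ball has only $O(d^{t-1/2})$ defective edges does not follow from $\lambda\le C\sqrt d$. The bound $A^{2t}(x,x)\le 1+C^{2t}d^t$ exceeds the tree count (about $\mathrm{Cat}_t\,d^t<4^td^t$) by a constant factor when $C\ge 2$. So it cannot rule out a constant fraction of the ball being reached twice.

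What is missing is organising $\mathcal X$ around hub vertices, together with a polylogarithmic friend count. The paper defines friendship on $V(G)$: two vertices are friends if their codegree in $G^t$ is at least $d^t/\log^{25}d$. Pointwise walk counts give only a polynomial bound on the number of friends. Instead, the paper applies the expander mixing lemma to the multigraph with adjacency matrix $A^t$, whose nontrivial eigenvalues are at most $C^td^{t/2}$, between the friend set of $v$ and its $A^t$-neighbourhood. This yields at most $4C^{2t}\log^{60}(d^t)$ friends.

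The cut bound $e(S,T)<2\lambda\sqrt{|S||T|}$ gives $|N_{t-1}(w)\cap N_{t-1}(w')|<d^{t-1}/\log^{24}d$ for strangers $w,w'$. This yields \eqref{item:main-1} once two edges are declared friends when some endpoint of one is a friend of some endpoint of the other.

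For each $v$, let $\mathcal Y_v$ consist of:
\begin{itemize}
\item a singleton $\{vw\}$ for each friend $w\in N(v)$;
\item the sets $X=\{vw:w\in S_{v,i}\}$, where $S_{v,1},S_{v,2},\dots$ partition the stranger neighbours of $v$ into polylogarithmically many classes of mutual strangers. Each such $X$ has $|X|\le d<\Delta^{1/2}$ and is given $B^X=E_{t+1}(v)$.
\end{itemize}
Each $vw\in X$ then loses at most about $d|N_{t-1}(v)|\le\Delta/2$ neighbours, so $\gamma=\frac12$ satisfies \eqref{item:main-3a}. Inside $E_{t+1}(v)$, the common neighbours of $vw,vw'$ are essentially the edges at $N_{t-1}(w)\cap N_{t-1}(w')$ plus terms bounded by the cut estimate, which gives \eqref{item:main-3b}. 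Let $F(e)$ be the set of friends of the endpoints of $e$. The family $\mathcal X_e=\bigcup_{u\in F(e)}\mathcal Y_u$ has polylogarithmic size, as \eqref{item:main-2b} requires.

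Your fallback of splitting by cycle length does not address the obstruction. The split has to be by the shared endpoint, and then into mutual-stranger classes.
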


The derivations of Theorems~\ref{thm:main},~\ref{thm: girth},~and~\ref{thm: spectral} from Theorem~\ref{thm:general} are in Section~\ref{section: corollaries}.
In light of these results, we believe Theorem~\ref{thm:general} will find applications to other graph coloring problems.

\subsubsection*{Structure of the paper}
The remainder of the paper is structured as follows: in Section~\ref{section: prelim}, we collect some of the basic notation and facts used in our proofs; in Section~\ref{section: corollaries}, we detail the derivations of our main results from Theorem~\ref{thm:general}; in Section~\ref{sec: general-proof}, we prove Theorem~\ref{thm:general} modulo a technical lemma, which we prove in Section~\ref{sec:great-expectations}.

\section{Preliminaries}\label{section: prelim}

We further split this section into two subsections: in the first, we introduce the basic notation and definitions we will use in the rest of the paper; in the next, we state certain lemmas that will be employed in our proofs.

\subsection{Notation}

For $n \in \mathbb N$, we let $[n] \coloneqq \{1, \ldots, n\}$.
Given a graph $G$,
we write $L(G)$ for the line graph of $G$, which is the graph on the vertex set $E(G)$ in which two vertices $e,e' \in E(G)$ are adjacent in $L(G)$ if and only if $e$ and $e'$ are incident in $G$. We write $G^t$ for the $t$-th power of $G$, 
defined as the graph on $V(G)$
in which $u,v \in V(G)$ are adjacent if and only if $\dist_G(u,v) \leq t$.

Given a vertex $v \in V(G)$, we write $E_1(v)$ for the set of edges incident to $v$. 
For $i \geq 2$, we write $E_i(v)$ for the edges in $E(G) \setminus (E_1(v) \cup \dots \cup E_{i-1}(v))$
that are incident to some edge of $E_{i-1}(v)$. 
For each $i \geq 1$, we write $N_i(v)$ for the set of vertices at distance exactly $i$ from $v$. In particular, $N(v) = N_1(v)$.

For the remainder of the paper, we ignore ceilings and floors wherever necessary with the understanding that it does not affect our overall arguments.

\subsection{Tools}
We establish some results that will be useful later. The inequalities below can be verified using basic calculus and Taylor expansions.

\begin{lemma}
\label{lem:1-x LB}
    For each constant $\alpha > 0$, if  $x > 0$ is sufficiently small, then 
    \[1 - x > \exp(-x (1+x)) > \exp(-(1+\alpha)x).\]
\end{lemma}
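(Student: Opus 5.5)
We prove both inequalities separately, writing $x$ for the small positive quantity.

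\emph{Second inequality.} The map $y \mapsto \exp(-y)$ is strictly decreasing. For $0 < x < \alpha$ we have $x(1+x) < (1+\alpha)x$, and hence
\[\exp(-x(1+x)) > \exp(-(1+\alpha)x).\]
So it suffices to take $x < \alpha$.

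\emph{First inequality.} Take logarithms, which is legitimate since $1 - x > 0$ for $x < 1$. The inequality $1 - x > \exp(-x(1+x))$ is then equivalent to $\log(1-x) > -x - x^2$. The Taylor expansion gives
\[\log(1-x) = -x - \tfrac{x^2}{2} - \tfrac{x^3}{3} - \cdots,\]
so the target becomes
\[f(x) \coloneqq x^2 - \sum_{k \geq 2} \frac{x^k}{k} > 0.\]
For $0 < x \leq 1/2$, compare the tail with a geometric series:
\[\sum_{k \geq 2} \frac{x^k}{k} \leq \frac{x^2}{2}\sum_{j \geq 0} x^j = \frac{x^2}{2(1-x)} \leq x^2.\]
Both inequalities in this chain are strict for $x > 0$, because the terms with $k \geq 3$ satisfy $x^k/k < x^k/2$. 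Therefore $f(x) > 0$.

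For a proof that avoids series, set $g(x) = \log(1-x) + x + x^2$. Then $g(0) = 0$ and
\[g'(x) = -\frac{1}{1-x} + 1 + 2x = \frac{x(1-2x)}{1-x}.\]
This is positive for $0 < x < 1/2$, so $g$ is increasing there and $g(x) > 0$.

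Combining the two parts, both inequalities hold whenever $0 < x < \min\{1/2, \alpha\}$, which proves Lemma~\ref{lem:1-x LB}. No step is a real obstacle. The only point needing care is the strictness of the tail bound, and the derivative argument settles it cleanly.
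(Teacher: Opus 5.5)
Your proof is correct and takes the route the paper has in mind: the paper gives no written proof beyond saying these inequalities follow from basic calculus and Taylor expansions, and you use exactly those tools (monotonicity of $\exp$ for the second inequality, and the series bound or the derivative $g'(x) = \frac{x(1-2x)}{1-x}$ for the first). One small slip: $\frac{x^2}{2(1-x)} \le x^2$ is an equality at $x = 1/2$, so your claim that both steps of the chain are strict is false there; the conclusion still holds because the first step is strict for every $x > 0$, and your derivative argument settles it independently.
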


\begin{lemma}
\label{lem:1/(1+x)-LB}
    For all $x > 0$, $\frac{1}{1+x} > 1-x$.
\end{lemma}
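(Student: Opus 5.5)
The inequality $\frac{1}{1+x} > 1-x$ for $x>0$ is an elementary algebraic fact, and the plan is to reduce it to a polynomial inequality by clearing denominators. Since $x>0$, the quantity $1+x$ is strictly positive, so multiplying both sides by $1+x$ preserves the direction of the inequality. The claim is therefore equivalent to
\[
1 > (1-x)(1+x) = 1 - x^2 .
\]

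This last inequality is the same as $x^2 > 0$, which holds for every $x \neq 0$ and in particular for all $x>0$. Reversing the chain of equivalences yields $\frac{1}{1+x} > 1-x$.

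As a cross-check, a series argument gives the same conclusion. For $x<1$, the geometric expansion $\frac{1}{1+x} = 1 - x + x^2 - x^3 + \cdots$ is an alternating series whose terms decrease in absolute value. Its partial sums therefore bound the limit from alternate sides, and the sum lies strictly above the partial sum $1-x$. For $x \ge 1$, the right-hand side satisfies $1-x \le 0$, while the left-hand side $\frac{1}{1+x}$ is positive, so the inequality is immediate.

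The algebraic route is the cleaner one, since it handles all $x>0$ at once and needs no split into cases. The only point requiring care is confirming that the multiplier $1+x$ is positive, so that multiplying through does not flip the inequality.
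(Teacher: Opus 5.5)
Your proof is correct: since $1+x>0$, multiplying through reduces the claim to $1>1-x^2$, i.e.\ $x^2>0$, which is a complete argument for every $x>0$. The paper gives no explicit proof and only remarks that this inequality, along with its neighbouring elementary inequalities, follows from basic calculus and Taylor expansions. Your algebraic route is a more direct way to verify it, and your series cross-check is the Taylor-expansion argument the paper alludes to.
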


The next lemma is often called Bernoulli's inequality.

\begin{lemma}
\label{lem:bernoulli}
    Let $x \in (0,1)$, and let $r$ be a positive integer. Then, 
    $(1 - x)^r \geq 1 - rx$.
    Furthermore, if $rx < \frac 12$, then 
    $(1-x)^{-r} \leq \frac 1{1-rx} < 1 + 2rx$.
\end{lemma}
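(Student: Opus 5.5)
The first inequality, $(1-x)^r \geq 1 - rx$, I will prove by induction on $r$. The case $r=1$ is an equality. For the inductive step, assume $(1-x)^r \geq 1-rx$. Since $x \in (0,1)$, the factor $1-x$ is positive, so multiplying preserves the inequality:
\[(1-x)^{r+1} \geq (1-rx)(1-x) = 1 - (r+1)x + rx^2 \geq 1-(r+1)x.\]
Alternatively, one could observe that $f(x) = (1-x)^r - 1 + rx$ satisfies $f(0)=0$ and $f'(x) = r\bigl(1-(1-x)^{r-1}\bigr) \geq 0$ on $(0,1)$. Either route is routine.

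For the second statement, assume $rx < \frac12$. Then $1 - rx > \frac12 > 0$. The first part gives $(1-x)^r \geq 1-rx > 0$, and taking reciprocals of these positive quantities reverses the inequality:
\[(1-x)^{-r} \leq \frac{1}{1-rx}.\]

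It remains to show $\frac{1}{1-rx} < 1+2rx$. Set $y = rx \in (0,\frac12)$. Since $1-y>0$, the claim $\frac{1}{1-y} < 1+2y$ is equivalent to
\[1 < (1+2y)(1-y) = 1 + y - 2y^2.\]
This reduces to $y(1-2y) > 0$, which holds because $0<y<\frac12$. Note that $y>0$ follows from $x>0$ and $r \geq 1$.

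There is no real obstacle here. The only points requiring care are keeping track of signs when multiplying and inverting, which is exactly what the hypotheses $x<1$ and $rx<\frac12$ guarantee, and noting that the final inequality is strict because $y>0$.
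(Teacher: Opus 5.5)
Your proof is correct and complete. The induction (or the derivative argument) gives the first inequality, and taking reciprocals followed by the reduction to $y(1-2y)>0$ handles the second part with the right attention to signs. The paper gives no proof of this lemma beyond remarking that such inequalities follow from basic calculus, so your argument is a careful write-up of that same routine verification.
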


The following is a well-known lemma of K\H{o}v\'ari, S\'os, and Tur\'an.
\begin{lemma}[\cite{KST}]
\label{lem:KST}
    Let $H$ be a bipartite graph with partite sets $A$ and $B$ of respective sizes $m$ and $n$. Suppose that $H$ has no subset $X \subseteq A$ of size $t$ and $Y \subseteq B$ of size $s$ inducing a $K_{s,t}$. Then,
    \[|E(H)| < (t-1)^{1/s} nm^{1-1/s} + (s-1) m.\]
\end{lemma}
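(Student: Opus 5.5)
The plan is the standard double-counting argument. I will count pairs $(a,T)$ with $a \in A$ and $T \subseteq N(a) \subseteq B$, $|T| = s$, in two ways.

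\emph{Counting from $B$.} Each fixed $s$-subset $T \subseteq B$ lies in the neighbourhoods of at most $t-1$ vertices $a \in A$. Otherwise $t$ common neighbours of $T$ in $A$, together with $T$, would form a forbidden $K_{s,t}$ with the $t$-set in $A$ and the $s$-set in $B$. Hence
\[
\sum_{a \in A} \binom{d(a)}{s} \;\leq\; (t-1)\binom{n}{s} \;\leq\; (t-1)\frac{n^s}{s!}.
\]

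\emph{Counting from $A$.} I need a lower bound on the left side in terms of $e \coloneqq |E(H)| = \sum_{a} d(a)$. Define $f(x) = (x-s+1)^s/s!$ for $x \geq s-1$ and $f(x) = 0$ otherwise. This $f$ is convex on $\mathbb R$. For every nonnegative integer $x$ it satisfies $f(x) \leq \binom{x}{s}$: each factor $x-j$ with $0 \le j \le s-1$ is at least $x-s+1$, and for integers $x<s$ we have $f(x)=0$. By Jensen's inequality,
\[
\sum_a \binom{d(a)}{s} \;\geq\; m\, f(e/m).
\]

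\emph{Combining.} If $e/m \leq s-1$, then $e \leq (s-1)m$ and the claim is immediate (strictly, as soon as $n,m \geq 1$ and $t\ge 2$). Otherwise
\[
m\,(e/m - s + 1)^s \;\leq\; (t-1)\, n^s .
\]
Taking $s$-th roots gives $e/m - (s-1) \leq (t-1)^{1/s} n\, m^{-1/s}$, that is,
\[
e \;\leq\; (t-1)^{1/s} n m^{1-1/s} + (s-1)m .
\]

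The only delicate point is the strict inequality. For $s \geq 2$ and $n \geq 1$ we have $\binom{n}{s} < n^s/s!$, so the bound becomes strict. When $s=1$, the bound reads $e \le (t-1)n$, which can hold with equality, for example when every vertex of $B$ has degree exactly $t-1$. In that case I would either note that the paper only needs the non-strict form, or observe that the applications use $s \geq 2$. Everything else is routine, so I expect no real obstacle beyond checking the convexity and integer-comparison step for $f$.
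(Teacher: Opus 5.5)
Your proof is correct, and it is the classical K\H{o}v\'ari--S\'os--Tur\'an argument: double-count $s$-subsets of neighbourhoods, bound the count by $(t-1)\binom{n}{s}$, then apply convexity. The paper does not reprove this lemma and simply cites it, so this is essentially the intended proof. Your caveat on strictness is also right: the strict inequality needs $s\ge 2$, $t\ge 2$ and $m,n\ge 1$, and fails for $s=1$ or $t=1$, but every application in the paper (with $(s,t)=(t,2t)$, $(t,t)$, or $s=2$ with at least two forbidden common neighbours) meets these conditions.
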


For $C_{2k}$-free graphs, Bondy and Simmonovits proved the following improvement on the above (this result will be essential in our proof of Theorem~\ref{thm: girth}):

\begin{lemma}[\cite{bondy1974cycles}]\label{BondySimonovits}
    Let $H$ be an $n$-vertex $C_{2k}$-free graph for $k \geq 2$.
    Then,
     \[|E(H)| < 100kn^{1+1/k}.\]
\end{lemma}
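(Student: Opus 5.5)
Since this is a classical extremal result, I would reprove it by the breadth-first-search layering argument of Bondy and Simonovits, in the streamlined form of Pikhurko. The constant $100$ is generous, so I would not optimise anything.

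\emph{Reduction to large minimum degree.} Suppose for contradiction that $H$ is $C_{2k}$-free with $|E(H)| \geq 100k n^{1+1/k}$. Take a bipartite subgraph with at least half the edges. Then repeatedly delete vertices of degree less than a quarter of the current average degree. This yields a nonempty bipartite $C_{2k}$-free subgraph $H'$ with minimum degree $\delta \geq c\,k\,n^{1/k}$, where $c$ is an absolute constant (I expect about $20$ or more survives). Its order $n' \leq n$ is all we need, since the contradiction will be a vertex count exceeding $n$.

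\emph{BFS layers.} Fix a root $x \in V(H')$ and let $V_i$ be the set of vertices at distance exactly $i$ from $x$ in $H'$. Because $H'$ is bipartite, every edge joins consecutive layers. The goal is the growth estimate
\[
|V_{i+1}| \geq \frac{\delta}{4k}\,|V_i| \qquad \text{for } 0 \le i \le k-1.
\]
Iterating gives $|V_k| \geq (\delta/4k)^k \geq (c/4)^k n > n$, which is the desired contradiction.

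\emph{Proving the growth estimate.} Suppose the estimate fails for some $i \le k-1$. Every vertex of $V_i$ has at least $\delta$ neighbours in $V_{i-1}\cup V_{i+1}$. I would pick the smallest such $i$, so that the earlier layers have grown and $V_{i-1}$ is small compared with $V_i$. It follows that the bipartite graph $F = H'[V_i, V_{i+1}]$ has average degree at least roughly $\delta/2 \geq 4k$. Passing to a subgraph, $F$ may be assumed to have minimum degree at least $2k$.

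\emph{Main obstacle: the cycle-closing (theta) lemma.} The hard step is to turn such a dense $F$ into a copy of $C_{2k}$. I would use the Bondy--Simonovits lemma in the following form. Let $T$ be the BFS tree, and consider a graph $F$ of minimum degree at least $2k$ lying between two consecutive layers. Let $W \subseteq V_i$ be the vertices of $F$ in layer $i$, and let $y$ be their deepest common ancestor in $T$, at depth $j < i$. Split $W$ into two nonempty parts $A$ and $B$ according to which child subtree of $y$ contains them. Then $F$ contains an $A$--$B$ path of every even length up to about $4k$, and in particular one of length $2k - 2(i-j)$, provided this quantity is positive. Closing such a path through the two vertex-disjoint tree paths to $y$ gives a cycle of length exactly $2k$.

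To prove the lemma, take a longest path in $F$ and use the minimum degree $2k$ at its endpoint to produce chords. This yields a theta-graph with many parity-compatible path lengths between the two sides, and in a bipartite theta with bipartition-compatible colouring, the paths between $A$ and $B$ realise all consecutive even lengths. The degenerate case in which $2k-2(i-j)$ is nonpositive would be excluded by the choice of $i \le k-1$, possibly after adjusting the root or the choice of $y$. I expect this length bookkeeping, and the requirement that both $A$ and $B$ be nonempty, to be the most delicate part.
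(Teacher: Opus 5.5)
The paper does not prove this lemma; it simply cites Bondy and Simonovits. Your outline follows the classical breadth-first-search/theta-graph argument behind that citation. As written, however, two steps fail, although both can be repaired within your framework.

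\emph{Growth step.} Minimality of $i$ gives $|V_{i-1}|\le \frac{4k}{\delta}|V_i|$, but this does not bound $e(V_{i-1},V_i)$. There is no maximum-degree bound, so a vertex of $V_{i-1}$ may have very many neighbours in $V_i$. Then all $\delta$ edges at every vertex of $V_i$ could go backwards, leaving $H'[V_i,V_{i+1}]$ nearly empty.

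Even when every edge goes forwards, the guaranteed average degree of $H'[V_i,V_{i+1}]$ is only about $2\delta|V_i|/(|V_i|+|V_{i+1}|)\approx 8k$, not $\delta/2$, because $|V_{i+1}|$ may be almost $\frac{\delta}{4k}|V_i|$. So there is no slack to absorb back-edges.

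The fix is to run the theta argument on \emph{every} pair of consecutive layers. Show $e(V_{i'},V_{i'+1})<4k(|V_{i'}|+|V_{i'+1}|)$ for each $i'\le k-1$; otherwise that bipartite graph contains a subgraph of minimum degree at least $2k$, which yields a $C_{2k}$. Then $\delta|V_i|\le e(V_{i-1},V_i)+e(V_i,V_{i+1})$ together with the inductive bound $|V_{i-1}|\le|V_i|$ gives $|V_{i+1}|\ge(\delta/(4k)-3)|V_i|$. This still forces $|V_k|>n$ when $\delta\ge 20kn^{1/k}$.

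\emph{Cycle-closing lemma.} In the form you state it, the lemma is false. You fix $y$ as the deepest common ancestor of all of $W=V(F)\cap V_i$. If $F$ is disconnected, its components may realise exactly the split $A\cup B$, so there is no $A$--$B$ path at all. Even for connected $F$, the theta graph you extract from a longest path may have all its layer-$i$ vertices in $A$, and then it yields nothing.

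The order must be reversed. First take a longest path $v_0v_1\cdots v_m$ in $F$. The at least $2k$ neighbours of $v_0$ lie at odd positions, so the farthest one closes a cycle of length at least $4k$ and another one gives a chord; call this theta graph $\Theta$. Only then let $y$ be the deepest common ancestor of $V(\Theta)\cap V_i$ and split these vertices by the child subtrees of $y$. This makes the split nontrivial automatically. Since $0\le j<i\le k-1$, the target length $2k-2(i-j)$ always lies in $[2,2k-2]$, so your degenerate case never arises and $\Theta$ is long enough.

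What remains is the theta lemma itself, which your sketch only asserts. It can be proved by a periodicity argument on the cycle with its chord. If no path of length $2\ell\le 2k-2$ joins the two colours, then arcs of the cycle and paths through the chord force the colouring of that bipartition class to be invariant under a shift by $2$, hence constant. Either supply this or cite a precise statement, since it is the real content of the theorem.
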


The following result, often called the Hajnal-Szemer\'edi theorem,
establishes conditions for the existence of an \emph{equitable $k$-coloring}, defined as a $k$-coloring on a graph in which the sizes of any two color classes differ by at most $1$.

\begin{lemma}[\cite{Hajnal}]
\label{lem:HS}
    If $G$ is a graph with maximum degree at most $\Delta$, then $G$ 
    has an equitable $(\Delta+1)$-coloring.
\end{lemma}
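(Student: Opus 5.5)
We prove Lemma~\ref{lem:HS} (equitable $k$-coloring with $k = \Delta+1$) by the short argument of Kierstead and Kostochka: induction on the number of edges, with one recoloring lemma doing all the work.

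\textbf{Reduction to $k \mid n$.} Let $n = |V(G)|$ and $k = \Delta+1$. If $n \equiv r \pmod k$ with $r \neq 0$, add a disjoint clique on $k-r$ new vertices. Each new vertex has degree $k-r-1 \leq \Delta$, so the maximum degree stays at most $\Delta$, and the number of vertices becomes divisible by $k$. In any proper coloring the clique vertices receive distinct colors, so deleting them removes at most one vertex from each class. An equitable coloring of the enlarged graph therefore restricts to an equitable coloring of $G$. So assume $n = ks$, in which case equitable means every class has size exactly $s$.

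\textbf{Induction on edges.} The base case $E(G) = \emptyset$ is trivial. Otherwise pick an edge $xy$ and take an equitable $k$-coloring of $G - xy$ by induction. If $x$ and $y$ lie in different classes, we are done. If both lie in the same class $V$, note that $x$ has at most $\Delta - 1 < k-1$ neighbors outside $V$ in $G$. So some class $W \neq V$ contains no neighbor of $x$, and we move $x$ into $W$. The result is a proper coloring that is \emph{nearly equitable}: every class has size $s$ except $|V| = s-1$ and $|W| = s+1$.

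\textbf{Key lemma.} It remains to show: if $\Delta(G) \leq k-1$, then any nearly equitable $k$-coloring can be converted into an equitable one. We prove this by induction on $k$, using an auxiliary digraph $\mathcal{H}$ on the color classes. Put $X \to Y$ when some vertex $z \in X$ has no neighbor in $Y$; such a $z$ is called \emph{movable} to $Y$.

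Let $\mathcal{A}$ be the family of classes from which $V$ is reachable in $\mathcal{H}$, and set $A = \bigcup \mathcal{A}$. If $W \in \mathcal{A}$, take a directed path $W = X_1 \to \cdots \to X_j = V$. Shift one movable vertex along each arc; this keeps the coloring proper, decreases $|W|$ by one and increases $|V|$ by one, so the coloring becomes equitable.

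Otherwise let $\mathcal{B}$ be the remaining classes, $B = V(G) \setminus A$, with $a = |\mathcal{A}|$ and $b = |\mathcal{B}|$. By definition no vertex of $B$ is movable into $A$, so every $y \in B$ has at least one neighbor in each class of $\mathcal{A}$. This gives $e(A,B) \geq a|B| = a(bs+1)$.

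\textbf{Main obstacle: the counting argument.} The hard step is to contradict this lower bound, or else to find a rearrangement that moves $W$ into the reachable family. Call a class $X \in \mathcal{A}$ \emph{terminal} if $V$ is still reachable from every class of $\mathcal{A} \setminus \{X\}$. For $z$ in a terminal class, call a neighbor $y \in B$ of $z$ \emph{solo} if $z$ is the unique neighbor of $y$ in $X$, and call $z$ \emph{solo} if it has a solo neighbor. The plan proceeds in three steps.
\begin{enumerate}
\item Show that if some solo vertex $z$ has few neighbors in $B$, then some $y \in B$ can be swapped with $z$. If that fails, apply the induction hypothesis (on $k$) to the recolored $G[B \cup \{z\}]$, or to $G[B]$ after adjusting, to get $W$ into the accessible family.
\item Otherwise every solo vertex has many neighbors in $B$. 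Then the degree bound $\Delta \leq k-1$, applied to vertices of a terminal class, upper bounds the number of edges from terminal classes to $B$.
\item Compare the bounds from steps 1 and 2 to get $e(A,B) < a(bs+1)$, the desired contradiction.
\end{enumerate}
I expect most of the effort to go into this case analysis: choosing the terminal class correctly and carrying out the solo-vertex swaps without destroying near-equitability.
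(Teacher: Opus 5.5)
Your reduction to $k\mid n$, the induction on edges producing a nearly equitable colouring, and the setup of the accessible family $\mathcal A$ with $e(A,B)\ge a(bs+1)$ are all correct. But the proposal stops where the content of the theorem begins. The key lemma (nearly equitable $\Rightarrow$ equitable) is only announced as a three-step plan, and the plan is not right as written.

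The exchange that actually works is this. Suppose $V\in\mathcal A$ is terminal, $z\in V$ is the unique neighbour in $V$ of some $y\in Y\in\mathcal B$, and $z$ is movable to another accessible class $W$. Move $z$ to $W$, move $y$ to $V$, and shift along a $W\to V^-$ path in $\mathcal H-V$; this is exactly what terminality buys. Every accessible class then has size $s$. You finish by applying the induction hypothesis to $G[B\setminus\{y\}]$ with the $b$ colours of $\mathcal B$, or you are already done if $Y=V^+$. This is legitimate only because each vertex of $B$ has a neighbour in all $a$ accessible classes, so $\Delta(G[B])\le (k-1)-a=b-1$. You never record this bound, and $G[B\cup\{z\}]$ has neither the right order nor this degree bound.

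Moreover, $z$ has no neighbour in its own class, so $z$ is movable within $\mathcal A$ as soon as $d_B(z)>b$. Thus the swap is available when a solo vertex has \emph{many} neighbours in $B$. The obstruction is when every solo vertex of a terminal class has \emph{at most} $b$ neighbours in $B$. Your steps 1 and 2 have this dichotomy reversed, and ``many neighbours in $B$'' in step 2 cannot give an upper bound on edges into $B$.

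In that remaining case the count cannot be run against $e(A,B)\ge a(bs+1)$. Vertices of non-terminal accessible classes are unconstrained and may each send $k-1$ edges into $B$, so bounds on terminal classes never force $e(A,B)<a(bs+1)$. The count must be done inside one terminal class $V$. Let $S\subseteq V$ be the solo vertices and $S_z$ the solo neighbours of $z$. Every $y\in B\setminus\bigcup_{z\in S}S_z$ has at least two neighbours in $V$, so $2|B|-\sum_{z\in S}|S_z|\le e(V,B)$. Using $|S_z|\le d_B(z)\le b$ for $z\in S$, this gives $2(bs+1)\le 2b|S|+(k-1)|V\setminus S|$. That contradicts $|V|\le s$ only when $k-1\le 2b$, i.e.\ $a\le b+1$ in your notation.

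When $a>b+1$, vertices of $V$ with no solo neighbour may have up to $k-1>2b$ neighbours in $B$. This case needs a genuinely separate argument (Kierstead and Kostochka treat it on its own), and your plan contains nothing for it. As it stands, the proposal proves only the routine reductions, not the lemma. The paper itself simply cites the theorem, which is all it needs.
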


We also utilize the Lov\'asz Local Lemma, which is stated as follows in
\cite[Theorem 6.11]{Mitzenmacher}:
\begin{lemma}\label{lem:LLL}
    Let $A_1, A_2, \ldots, A_n$ be events in a probability space. Suppose there exists $p \in [0,1)$ such that
    $\Pr(A_i) \leq p$
    for all $1 \leq i \leq n$. Further suppose each $A_i$ is mutually independent from all but fewer than $d$ other events $A_j$. If $4pd \le 1$, then with positive probability none of the events $A_1, \ldots, A_n$ occur.  
\end{lemma}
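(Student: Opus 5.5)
The plan is the standard inductive proof of the symmetric Local Lemma via conditional probabilities. For each $i$, let $\Gamma(i)$ be the set of fewer than $d$ indices $j \neq i$ such that $A_i$ is mutually independent of $\{A_j : j \notin \Gamma(i) \cup \{i\}\}$. Note $d \geq 1$ necessarily, so $4pd \le 1$ gives $p \le 1/4$.

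The key claim is the following. For every $i \in [n]$ and every $S \subseteq [n] \setminus \{i\}$ with $\Pr\bigl(\bigcap_{j \in S} \overline{A_j}\bigr) > 0$,
\[\Pr\Bigl(A_i \,\Bigm|\, \bigcap_{j \in S} \overline{A_j}\Bigr) \le 2p.\]
I would prove this by induction on $|S|$, simultaneously verifying that all conditioning events have positive probability. (This positivity will follow from the same chain-rule computation as in the final step, applied to smaller sets.)

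For the inductive step, split $S$ into $S_1 = S \cap \Gamma(i)$ and $S_2 = S \setminus S_1$. If $S_1 = \emptyset$, mutual independence gives the bound $p \le 2p$ immediately. Otherwise, write the conditional probability as the ratio
\[\frac{\Pr\bigl(A_i \cap \bigcap_{S_1} \overline{A_j} \,\bigm|\, \bigcap_{S_2} \overline{A_j}\bigr)}{\Pr\bigl(\bigcap_{S_1} \overline{A_j} \,\bigm|\, \bigcap_{S_2} \overline{A_j}\bigr)}.\]
The numerator is at most $\Pr\bigl(A_i \mid \bigcap_{S_2} \overline{A_j}\bigr) = \Pr(A_i) \le p$, using independence of $A_i$ from the events indexed by $S_2$. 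For the denominator, the union bound gives at least
\[1 - \sum_{j \in S_1} \Pr\Bigl(A_j \,\Bigm|\, \bigcap_{S_2} \overline{A_j}\Bigr).\]
Each term is at most $2p$ by the induction hypothesis, since $|S_2| < |S|$. Because $|S_1| \le d-1$, the denominator is at least $1 - 2p(d-1) \ge 1 - 2pd \ge 1/2$, using $4pd \le 1$. Hence the ratio is at most $2p$.

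To conclude, apply the chain rule:
\[\Pr\Bigl(\bigcap_{i=1}^n \overline{A_i}\Bigr) = \prod_{i=1}^n \Pr\Bigl(\overline{A_i} \,\Bigm|\, \bigcap_{j<i} \overline{A_j}\Bigr) \ge (1-2p)^n.\]
Since $p \le 1/4$, this is at least $2^{-n} > 0$. The same product over the first $k$ indices shows, inductively, that each $\Pr\bigl(\bigcap_{j<k} \overline{A_j}\bigr) > 0$, which justifies the conditioning throughout.

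The main obstacle is the bookkeeping in the inductive step. One must pick the right split of $S$ so that the numerator uses only independence while the denominator uses only the induction hypothesis on a strictly smaller set. One must also handle the degenerate case $S_1 = \emptyset$ and the positivity of the conditioning events. The arithmetic itself is routine once $4pd \le 1$ is used to make the denominator at least $1/2$.
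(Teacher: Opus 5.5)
Your proof is correct. It is the standard inductive argument: bound $\Pr(A_i \mid \bigcap_{j\in S}\overline{A_j}) \le 2p$ by splitting $S$ into dependent and independent indices, then finish with the chain rule. The paper does not prove this lemma itself but cites it from Mitzenmacher--Upfal, and the proof given there is exactly this one. Your remark that ``$d\ge 1$ necessarily'' tacitly treats $d$ as an integer, as that source does; if $d\in(0,1)$ were allowed, every $S_1$ is empty and the bound $(1-p)^n>0$ finishes the argument instead.
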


For concentrating random variables, we will employ two concentration inequalities.
First, we use the following form of the Chernoff bound, which can be found, for instance, in \cite[Chapter 4]{Mitzenmacher}.

\begin{lemma}
\label{lem:chernoff}
Let $Y$ be a random variable that is the sum of pairwise independent indicator variables, and let $\mu = \E[Y]$. Then for any value $\delta > 0$,
$$\Pr(Y > (1 + \delta) \mu) \leq \exp \left( - \frac{ \delta^2 \mu }{2 + \delta }\right).$$
\end{lemma}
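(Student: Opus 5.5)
The plan is the standard exponential-moment (Chernoff) argument. I will run it so that it is clear exactly which property of the indicators is used, since pairwise independence on its own does not control the moment generating function.

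Write $Y=\sum_{i=1}^n X_i$ with $X_i\in\{0,1\}$, $p_i=\Pr(X_i=1)$, and $\mu=\sum_i p_i$. For $\lambda>0$, Markov's inequality gives
\[
\Pr\bigl(Y>(1+\delta)\mu\bigr)\le e^{-\lambda(1+\delta)\mu}\,\E\bigl[e^{\lambda Y}\bigr].
\]
Since each $X_i$ is an indicator, $e^{\lambda X_i}=1+(e^\lambda-1)X_i$. Expanding the product then gives
\[
\E\bigl[e^{\lambda Y}\bigr]=\sum_{S\subseteq[n]}(e^\lambda-1)^{|S|}\,\E\Bigl[\prod_{i\in S}X_i\Bigr].
\]
If every term satisfies $\E[\prod_{i\in S}X_i]\le\prod_{i\in S}p_i$, the sum is at most $\prod_i(1+(e^\lambda-1)p_i)\le\exp\bigl((e^\lambda-1)\mu\bigr)$, using $1+x\le e^x$. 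Choosing $\lambda=\log(1+\delta)$ yields
\[
\Pr\bigl(Y>(1+\delta)\mu\bigr)\le\Bigl(\tfrac{e^\delta}{(1+\delta)^{1+\delta}}\Bigr)^{\mu}.
\]
The final step is the calculus inequality $(1+\delta)\log(1+\delta)-\delta\ge\frac{\delta^2}{2+\delta}$ for $\delta>0$. It follows from $\log(1+\delta)\ge\frac{2\delta}{2+\delta}$, which holds because both sides vanish at $0$ and the derivatives compare as $\frac1{1+\delta}\ge\frac{4}{(2+\delta)^2}$. Substituting gives $\exp\bigl(-\frac{\delta^2\mu}{2+\delta}\bigr)$.

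The main obstacle is the bound on $\E[\prod_{i\in S}X_i]$ for all $S$. Pairwise independence gives equality $\E[\prod_{i\in S}X_i]=\prod_{i\in S}p_i$ only for $|S|\le 2$. The higher-order terms are not controlled, and with pairwise independence alone an exponential tail of this form can genuinely fail; only Chebyshev-type tails are available in general. So the proof cannot use pairwise independence alone. At this step I would instead invoke the product condition $\E[\prod_{i\in S}X_i]\le\prod_{i\in S}p_i$ for every $S$, which is the hypothesis the Chernoff bound actually requires (as in Panconesi--Srinivasan).

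This product condition holds in particular when the $X_i$ are mutually independent, or negatively correlated in this product sense. I expect that to be the situation in every application in the paper, because the indicators there are determined by disjoint sets of independent random choices in the coloring procedure. I would therefore record the lemma with this reading of the independence hypothesis, and check the product condition at each place the lemma is applied.
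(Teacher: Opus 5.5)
Your proof is correct and is the standard moment-generating-function argument behind the bound the paper simply cites from Mitzenmacher--Upfal (Chapter~4), where it is proved for mutually independent indicators; your subset-product condition $\E\bigl[\prod_{i\in S}X_i\bigr]\le\prod_{i\in S}p_i$ is a harmless generalization, and the same computation also justifies the paper's remark that any $\mu\ge\E[Y]$ may be used, since $(e^\lambda-1)\E[Y]\le(e^\lambda-1)\mu$. You are also right that ``pairwise independent'' is a misstatement. For example, with probability $(n-1)^{-2}$ set every $X_i=1$, and otherwise set a single uniformly random $X_i=1$: the $X_i$ are pairwise independent with $p_i=\frac{1}{n-1}$, yet $\Pr(Y=n)=(n-1)^{-2}$, far above the claimed $e^{-\Theta(n)}$. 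The lemma's only use in the paper is bounding $\Pr(\Omega^*)$, where the indicators that distinct $v\in S$ are activated and assigned $c'$ depend on distinct independent coordinates $\omega_v$ and so are mutually independent; your corrected reading is therefore all the paper needs.
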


Mitzenmacher \cite{Mitzenmacher} points out that 
it is enough to let $\mu \geq \E[Y]$.
We also use the concentration inequality \cite[Theorem~3.4]{li2022chromatic}, 
which is a special case of the linear Talagrand's inequality from \cite{delcourt2022finding}.
Before we state the inequality, we need to make a few definitions.

\begin{definition}\label{defn: r verifiable}
    Let $\{ (\Omega_i, \Sigma_i, \Pr_i) \}_{i=1}^n$ be a set of probability spaces, and let $(\Omega, \Sigma, \Pr)$ be their product space. Let $\Omega^* \subseteq \Omega$ be a set of \emph{exceptional outcomes}, and let $Y:\Omega \rightarrow \{0,1\}$ be a random variable. We say that $Y$ is \emph{$r$-verifiable} with verifier $R:\Omega \setminus \Omega^* \rightarrow 2^{\{1, \dots, n\}}$ if:
    \begin{itemize}
        \item $R(\omega) = \emptyset$ whenever $Y(\omega) = 0$; 
        \item $|R(\omega)| \leq r$ for every $\omega \in \Omega \setminus \Omega^*$ for which $Y(\omega) = 1$;
        \item If $Y(\omega) = 1$, then 
        $Y(\omega') = 1$ for every $\omega' \in \Omega \setminus \Omega^*$ satisfying $\omega_i = \omega_i'$ for all $i \in R(\omega)$.
    \end{itemize}
\end{definition}
We often say that a set $R(\omega)$ is a \emph{witness set} as 
the outcomes from the spaces $\Omega_i$
indexed by 
$R(\omega)$ \textit{witness} the event $Y(\omega) = 1$.

\begin{definition}\label{defn: rd observable}
    Let $\{ (\Omega_i, \Sigma_i, \Pr_i) \}_{i=1}^n$ be a set of probability spaces, and let $(\Omega, \Sigma, \Pr)$ be their product space. Let $\Omega^* \subseteq \Omega$ be a set of \emph{exceptional outcomes}, and let $r,d \geq 0$. We say that a random variable $Z$ is \emph{$(r,d)$-observable} with respect to $\Omega^*$ if:
    \begin{itemize}
        \item $Z = \sum_{j=1}^m Y_j$, where for every $j \in \{1, \dots, m\}$, $Y_j$ is a binary random variable in $\Omega$ that is $r$-verifiable with verifier $R_j$,
        \item For every $\omega \in \Omega \setminus \Omega^*$ and $i \in \{1, \dots, n\}$, 
        $i \in R_j(\omega )$ for at most $d$ values $j$.
    \end{itemize}
\end{definition}

The concentration inequality \cite[Theorem~3.4]{li2022chromatic} is stated as follows.

\begin{lemma}
\label{lem:conc-ineq}
     Let $\{ (\Omega_i, \Sigma_i, \Pr_i) \}_{i=1}^n$ be a set of probability spaces, and let $(\Omega, \Sigma, \Pr)$ be their product space. Let $\Omega^* \subseteq \Omega$ be a set of \emph{exceptional outcomes}, and let $r,d \geq 0$. Let $Z: \Omega \rightarrow \mathbb R_{\geq 0}$ be a nonnegative random variable. If $Z$ is $(r,d)$-observable with respect to $\Omega^*$, then for any $\tau$ satisfying
    \begin{equation}
    \label{eqn:conc-tau-sqrt}
     \tau > 96 \sqrt{rd \E[Z]} + 128 rd + 8 \Pr(\Omega^*)\sup(Z),
     \end{equation}
     the following concentration inequality holds:
     \[\Pr(|Z - \E[Z]| > \tau ) \leq 4 \exp \left ( - \frac{\tau^2}{8rd (4\E[Z] + \tau )} \right ) + 4 \Pr(\Omega^*).\]
\end{lemma}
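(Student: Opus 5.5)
This is \cite[Theorem~3.4]{li2022chromatic}, so we could simply cite it. For completeness, here is how I would derive it directly from Talagrand's convex distance inequality, following the ``linear Talagrand'' argument of \cite{delcourt2022finding}. The plan has three steps: a convex-distance lower bound for outcomes where $Z$ is large, concentration around a median, and then the passage from the median to $\E[Z]$.

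\textbf{Step 1: convex distance.} Fix a threshold $a$ and set $A \coloneqq \{\omega \notin \Omega^* : Z(\omega) \leq a\}$. Take $\omega \notin \Omega^*$ with $Z(\omega) \geq a + \tau$, and let $J$ be the set of indices $j$ with $Y_j(\omega) = 1$. Define weights $\alpha_i \coloneqq |\{j \in J : i \in R_j(\omega)\}|$. Since each witness set has size at most $r$ and each coordinate lies in at most $d$ witness sets,
\[\sum_i \alpha_i \le r Z(\omega), \qquad \|\alpha\|_2^2 \le d\sum_i \alpha_i \le rd\,Z(\omega).\]
Now let $\omega' \in A$. Because $\omega' \notin \Omega^*$, the verifiability property applies: if $\omega'$ agreed with $\omega$ on all of $R_j(\omega)$, then $Y_j(\omega') = 1$. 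Since $Z(\omega') \le a$, at least $Z(\omega) - a \geq \tau$ of the indices $j \in J$ must have $R_j(\omega)$ containing a coordinate where $\omega$ and $\omega'$ disagree. Each such index contributes at least $1$ to $\sum_{i:\omega_i\neq\omega'_i}\alpha_i$, so this sum is at least $\tau$. Normalizing $\alpha$ gives convex distance
\[d_T(\omega, A) \geq \frac{\tau}{\sqrt{rd\,Z(\omega)}}.\]
This bound is monotone in the right direction, so it yields a clean bound on the event $\{Z \ge a+\tau\}\setminus \Omega^*$.

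\textbf{Step 2: concentration around a median.} Talagrand's inequality $\Pr(A)\,\Pr(d_T(\cdot,A) \ge s) \le e^{-s^2/4}$ then gives
\[\Pr(A)\,\Pr\big(\{Z \ge a+\tau\}\setminus\Omega^*\big) \le \exp\!\Big(-\frac{\tau^2}{4rd(a+\tau)}\Big).\]
I would apply this twice with $a$ a median $M$ of $Z$. Once with $a = M$ bounds the upper tail. Once with $a = M-\tau$ and the roles swapped bounds the lower tail. In each case $\Pr(A) \ge \tfrac12 - \Pr(\Omega^*)$, and adding back $\Pr(\Omega^*)$ accounts for exceptional outcomes. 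This yields
\[\Pr(|Z-M| > \tau) \le 4\exp\!\Big(-\frac{\tau^2}{8rd(M+\tau)}\Big) + 4\Pr(\Omega^*),\]
assuming $\Pr(\Omega^*)$ is small; otherwise the statement is trivial.

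\textbf{Step 3: median to mean (main obstacle).} I expect the main obstacle to be replacing $M$ by $\E[Z]$ in both the centering and the denominator. I would bound
\[|\E[Z]-M| \le \E\big[|Z-M|\,\mathbf 1_{\Omega\setminus\Omega^*}\big] + \Pr(\Omega^*)\sup Z.\]
The first term is handled by integrating the tail bound from Step 2, which gives $O(\sqrt{rd M} + rd)$. Using $M \le 2\E[Z]$ (Markov, since $Z \ge 0$) converts this into $O\big(\sqrt{rd\,\E[Z]} + rd\big)$. Therefore, whenever $\tau$ exceeds the threshold in \eqref{eqn:conc-tau-sqrt}, the shift $|\E[Z]-M|$ is at most a small fraction of $\tau$, say $\tau/8$. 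Rerunning Step 2 with $\tau$ replaced by $\tau - |\E[Z]-M|$ and $M$ replaced by $\E[Z]$ then gives the stated bound. The constants $96$, $128$, and $8$ come out of this bookkeeping.
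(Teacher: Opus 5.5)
This matches the paper, which gives no proof and simply imports the statement as \cite[Theorem~3.4]{li2022chromatic}, a special case of the linear Talagrand inequality of \cite{delcourt2022finding}; your outline (convex distance, then concentration about a median, then passing to the mean) is the standard argument behind that result. If you write it out, two fixes are needed. First, keep the Step~1 bound as $(Z(\omega)-a)/\sqrt{rd\,Z(\omega)}$, which is increasing in $Z(\omega)$; the weaker $\tau/\sqrt{rd\,Z(\omega)}$ you wrote is decreasing, so it does not give the uniform bound you use. Second, in Step~3 recenter with the unmerged one-sided exponents $t^2/(4rd(M+t))$ rather than the merged Step~2 inequality. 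A shift $|\E[Z]-M|$ of order $\tau/8$, which the $8\Pr(\Omega^*)\sup(Z)$ term permits, costs a constant factor in the exponent that the merged denominator $8rd(M+\tau)$ cannot absorb when $\tau \gg \E[Z]$.
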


We will apply Lemma~\ref{lem:conc-ineq} with the following 
probability space $(\Omega_i,\Sigma_i,\Pr_i)$ during the $i$-th iteration of the Wasteful Coloring Procedure (see Section~\ref{subsection: proof overview} for an informal description of the procedure).
For each vertex $v \in V(G)$, we define a set $\Omega_{i,v} = L_i(v) \cup \{0\}$. 
The element $0$ of $\Omega_{i,v}$ satisfies $\Pr_{i,v}(0) = 1 - \eta$ and corresponds to an outcome of the experiment in which $v$ is 
not activated.
Each element $c \in L_i(v)$ satisfies $\Pr_{i,v}(c) = \frac{\eta}{|L_i(v)|}$;
the element $c$
corresponds to an outcome of the experiment in which $v$ is activated and assigned $c$.
Then, $\Sigma_{i,v} = 2^{\Omega_{i,v}}$ is the set of probability-measurable subsets of $\Omega_{i,v}$, and $\Pr_{i,v}\,:\,\Sigma_{i,v} \rightarrow [0,1]$ gives the probabilities defined by the experiment above.
Hence, $(\Omega_{i,v},\Sigma_{i,v},\Pr_{i,v})$ gives a probability space describing the random activation and color assignment at an uncolored vertex $v$.

The product of the probability spaces $(\Omega_{i,v},\Sigma_{i,v},\Pr_{i,v})$ (denoted $(\Omega_i, \Sigma_i, \Pr_i)$) consists of all outcomes of the $i$-th iteration of our Wasteful Coloring Procedure.
When applying Lemma~\ref{lem:conc-ineq}, the set $\Omega_i^* \subseteq \Omega_i$ of exceptional events that we use depends on the random variable in question.
In general, we either choose $\Omega_i^* = \emptyset$, or we let $\Omega_i^*$ consist of unlikely outcomes of the procedure in which many vertices in some local part of $G$ are activated and assigned the same color.

\section{Corollaries of Theorem~\ref{thm:general}}\label{section: corollaries}

In this section, we will derive Theorems~\ref{thm:main},~\ref{thm: girth},~and~\ref{thm: spectral} from Theorem~\ref{thm:general}.
We further split this section into subsections to deal with each result separately.

\subsection{Proof of Theorem~\ref{thm:main}}\label{subsection: main proof from general}

Let $\epsilon > 0$ and $t \geq 2$ be fixed.
Let $G$ be a $K_{t,t}$-free graph of maximum degree $d$
(we reserve the symbol $\Delta$ as an upper bound for the maximum degree of $L(G)^2$, the square of the line graph of $G$).
We aim to show that $\chi'_s(G) \leq (1 + \epsilon) \frac{d^2}{\log d}$, provided that $d$ is sufficiently large.
For this, we will apply Theorem~\ref{thm:general} 
to $L(G)^2$
with our value of $\epsilon$, and with $\gamma = \frac 12$ and $\Delta = 2d^2$.

As $G$ can be embedded in a $d$-regular $K_{t,t}$-free graph, we assume without loss of generality that $G$ is $d$-regular.
We also assume that $d$ is sufficiently large in terms of $t$ and $\epsilon$.
In particular, we assume that 
$t = o \left ( \frac{\log d}{\log \log d} \right )$ so that
\begin{equation}
\label{eqn:t-bounds}
d^{1/t} = (\log d)^{\omega(1)}, \qquad \text{ and } \qquad (\log d)^t = d^{o(1)}.
\end{equation}

We will show that $L(G)^2$ satisfies the conditions of Theorem~\ref{thm:general}.
Before we define the friend relation for $V(L(G)^2) = E(G)$, we define one for $V(G)$.
We say that two vertices $u,v \in V(G)$ are \emph{friends}
if $|N(u) \cap N(v)| \geq \frac{d}{\log^{40} d}$.
We let every vertex be its own friend.
Then, given two edges $e, e' \in E(G)$, $e$ and $e'$ are friends if some endpoint of $e$ is a friend of some endpoint of $e'$ in $G$. 
In particular, if $e$ and $e'$ are incident, then $e$ and $e'$ are friends.
This gives a friend relation for the vertex set of $L(G)^2$.
We make some observations about friend pairs of vertices and edges.

\begin{lemma}
\label{lem:few-friends}
    Each $v \in V(G)$ has at most $4 \log^{40t} d $ friends.
\end{lemma}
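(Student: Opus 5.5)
Fix $v \in V(G)$ and let $F$ be the set of friends of $v$ other than $v$ itself, so $|N(u)\cap N(v)| \geq d/\log^{40} d$ for each $u \in F$. Suppose for contradiction that $|F| \geq 4\log^{40t} d - 1$, and pass to a subset $F$ of size exactly $m \coloneqq 4\log^{40t} d$ (up to an additive $1$, which does not matter). The plan is to count edges between $F$ and $N(v)$ and apply Lemma~\ref{lem:KST}.

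Consider the bipartite graph $H$ with parts $A = F$ and $B = N(v)$, where $u \in A$ and $w \in B$ are joined if $uw \in E(G)$. (If $A$ and $B$ overlap, duplicate vertices so that $H$ is genuinely bipartite; then a $K_{t,t}$ in $H$ with parts $X \subseteq A$, $Y \subseteq B$ has $X \cap Y = \emptyset$, since $G$ has no loops, and so it yields a $K_{t,t}$ in $G$.) Thus $H$ is $K_{t,t}$-free. The sizes are $|A| = m$ and $|B| \le d$, and
\[
|E(H)| = \sum_{u \in F} |N(u)\cap N(v)| \geq m\cdot \frac{d}{\log^{40} d}.
\]
On the other hand, Lemma~\ref{lem:KST} with $s = t$ gives
\[
|E(H)| < (t-1)^{1/t}\, d\, m^{1-1/t} + (t-1)m .
\]

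Compare the two bounds. Since $(t-1)m = d^{o(1)}$ by \eqref{eqn:t-bounds}, which is negligible against $md/\log^{40} d$, we would need
\[
\frac{md}{\log^{40} d} \lesssim (t-1)^{1/t}\, d\, m^{1-1/t},
\]
which is equivalent to $m^{1/t} \lesssim (t-1)^{1/t}\log^{40} d$, i.e.\ $m \lesssim (t-1)\log^{40t} d$. The bounded factor $(t-1)^{1/t} \le 2$ and the lower-order term must be tracked with care, but $m = 4\log^{40t}d$ gives $m^{1/t} = 4^{1/t}\log^{40} d$. The most delicate step is making the constants close. The KST comparison reads $4^{1/t}\log^{40} d \cdot(\text{small loss}) < (t-1)^{1/t}\log^{40}d$, so one must have $4^{1/t} > (t-1)^{1/t}$, that is $t < 5$. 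That condition fails for large $t$, so the constant $4$ has to be absorbed differently.

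To fix this, I would instead apply the first form of Lemma~\ref{lem:KST} with the roles reversed: put $B = N(v)$ on the side of size $m$ in the lemma and let the exponent act on $|F|$. This gives
\[
|E(H)| < (t-1)^{1/t}\, m\, d^{1-1/t} + (t-1)d,
\]
and this is $O(d)$ whenever $m \ll d^{1/t}$. That is true for our $m$ by \eqref{eqn:t-bounds}. Hence $md/\log^{40} d \le 2(t-1)d$, which forces $m \le 2(t-1)\log^{40} d$. Since $(\log d)^{40t} \gg t\log^{40} d$ for $t \ge 2$ and large $d$, this contradicts $m = 4\log^{40t} d$. So the cleaner route is to choose the orientation in KST that makes the $F$-side contribution linear in $d$, and the stated bound then follows with plenty of room. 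The only real care needed is checking that $m$ stays below $d^{1/t}$, which \eqref{eqn:t-bounds} provides.
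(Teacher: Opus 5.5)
Your final argument is correct, and it differs from the paper's at the one step that matters: the orientation of Lemma~\ref{lem:KST}.

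The paper puts the friends on the side of Lemma~\ref{lem:KST} that carries the exponent, and forbids $K_{t,2t}$ to handle possible overlap with $N(v)$. It reaches $|A|d/\log^{40} d < 2(2t-1)^{1/t} d|A|^{1-1/t}$. This rearranges to $|A|^{1/t} < 2(2t-1)^{1/t}\log^{40} d$, which gives only $|A| < 2^t(2t-1)\log^{40t} d$, not the claimed $4\log^{40t} d$. That is exactly the obstruction you found in your first attempt. It is harmless for the paper, since every later use only needs a bound of the form $\log^{O(t)} d = d^{o(1)}$.

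Your reversed orientation puts the exponent on $|N(v)|$, giving $|E(H)| < (t-1)^{1/t}|F|d^{1-1/t} + (t-1)d$. This proves the lemma with the stated constant and in fact gives the much stronger bound $|F| = O(t\log^{40} d)$.

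Some clean-ups:
\begin{itemize}
\item You do not need to pass to a subset of size $4\log^{40t} d$ or use $m \ll d^{1/t}$. That step really relies on $t$ being fixed; it does not follow from the two relations in \eqref{eqn:t-bounds} alone. Since $d^{1/t} \geq 4\log^{40} d$, the term $(t-1)^{1/t}|F|d^{1-1/t}$ is at most $\tfrac12 |F| d/\log^{40} d$ for every $|F|$. This yields $|F| < 2(t-1)\log^{40} d$ directly.
\item Your phrase ``let the exponent act on $|F|$'' is backwards. In your display the exponent acts on $d = |N(v)|$, which is precisely what makes the argument work.
\item Your observation that a $K_{t,t}$ in $H$ automatically has disjoint sides, because $G$ has no loops, is a simpler way to deal with overlaps than the paper's $K_{t,2t}$ device.
\end{itemize}
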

\begin{proof}
    Let $B \coloneqq N(v)$ and
    let 
    $A \subseteq N(N(v))$ be the set of friends of $v$.
    Let $H$ be a bipartite graph with disjoint partite sets $A'$ and $B'$.
    For each $a \in A$, add a vertex $a'$ to $A'$,
    and for each $b \in B$, add a vertex $b'$ to $B'$.
    For each adjacent pair $(a,b) \in A \times B$,
    add an edge $a'b'$ to $H$.
    If a subset $X' \subseteq A'$ of size $2t$ and a subset $Y' \subseteq B'$ of size $t$ induce a $K_{t,2t}$ in 
    $H$, 
    then by taking the corresponding set $Y \subseteq B$ and a $t$-subset of the corresponding $X \subseteq A$ disjoint from $Y$,
    we find a $K_{t,t}$ in $G$, a contradiction.
    Therefore,
    no $X' \subseteq A'$ of size $2t$ and $Y' \subseteq B'$ of size $t$ induce a $K_{t,2t}$ in $H$.

    Now, 
    since each $a \in A$ is a friend of $v$,
    each $a' \in A'$ has degree at least $\frac{d}{\log^{40} d}$
    in $H$.
    Therefore, $|E(H)| \geq \frac{|A| d}{\log^{40} d}$.
    On the other hand, 
    by
    applying 
    Lemma~\ref{lem:KST} with $t$ replacing $s$ and $2t$ replacing $t$, and with $n = |B| = |N(v)| = d$ and $m = |A|$, we have
    \[
    \frac{|A| d}{\log^{40} d} \leq e(H) < (2t-1)^{1/t} d |A|^{1-1/t} + (t-1) |A|.
    \]
    If $(t-1)|A| > (2t-1)^{1/t}d |A|^{1-1/t}$, 
    then
    $
    \frac{|A| d}{\log^{40} d} <  2(t-1) |A|,
    $
    contradicting our upper bound on $t$.
    Therefore,
    \[
    \frac{|A| d}{\log^{40} d} < 2(2t - 1)^{1/t}  d |A|^{1-1/t} < 4 d |A|^{1-1/t}.
    \]
    Rearranging, we have $|A| < 4 \log^{40t} d$, completing the proof.
\end{proof}

For each edge $e \in E(G)$, we write $N(e) = N_{L(G)^2}(e)$ for the neighbors of the vertex $e$ in $L(G)^2$.

\begin{lemma}
\label{lem:Q1-non-incident}
    If $e,e' \in E(G)$ are strangers, then $|N(e) \cap N(e')| < \frac{\Delta}{\log^{20} \Delta}$.
\end{lemma}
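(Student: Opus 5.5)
Write $e = ab$ and $e' = a'b'$. Two edges $e,e'$ are strangers exactly when no endpoint of $e$ is a friend of any endpoint of $e'$ in $G$. In particular $e$ and $e'$ share no endpoint (each vertex is its own friend), and no endpoint of $e$ is adjacent to an endpoint of $e'$ (adjacent vertices need not be friends, so this case needs separate care).

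The plan is to bound $|N(e)\cap N(e')|$ by enumerating where a common neighbor $f = xy$ can sit. The edge $f$ is adjacent in $L(G)^2$ to $e$ iff some endpoint of $f$ lies in $\{a,b\}\cup N(a)\cup N(b)$, i.e. $f$ meets $N[a]\cup N[b]$. The same holds for $e'$ with $N[a']\cup N[b']$. We have $\Delta = 2d^2$, so $\Delta/\log^{20}\Delta \ge d^2/(2^{20}\cdot 3\log^{20} d)$ roughly, and the target is $O(d^2/\log^{40} d)\cdot O(1)$, which is much smaller.

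Case 1: a single endpoint of $f$ lies in both $N[u]$ for some $u\in\{a,b\}$ and $N[w]$ for some $w\in\{a',b'\}$. Then that endpoint lies in $N[u]\cap N[w]$. Since $u,w$ are not friends and are distinct, $|N(u)\cap N(w)| < d/\log^{40} d$. The extra contributions from $u$ or $w$ themselves (when $u\sim w$) add at most $2$ vertices. There are $4$ choices of $(u,w)$, each endpoint has degree $d$, so this case contributes at most $4d(d/\log^{40}d + 2)$ edges $f$.

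Case 2: $f = xy$ with $x \in N[u]$ and $y\in N[w]$, where $x$ does not already fall under Case 1. Fix $x\in N[u]$; then $y\in N(x)\cap N[w]$. If $x\ne w$ and $x$ is not a friend of $w$, then $|N(x)\cap N(w)| < d/\log^{40}d$. Summing over at most $d+1$ choices of $x$ gives $O(d^2/\log^{40} d)$. The remaining $x$ are friends of $w$ (including $w$ itself). By Lemma~\ref{lem:few-friends} there are at most $4\log^{40t}d$ of them, each contributing at most $d$ edges. That gives $O(d\log^{40t}d) = d^{1+o(1)}$ by \eqref{eqn:t-bounds}, which is negligible.

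Summing over the $O(1)$ choices of $(u,w)$ gives $|N(e)\cap N(e')| = O(d^2/\log^{40}d) + d^{1+o(1)} < \Delta/\log^{20}\Delta$ for large $d$. The main obstacle is Case 2's bookkeeping, in particular making sure the friends of $w$ inside $N[u]$ are controlled by Lemma~\ref{lem:few-friends} and not by the codegree. That bookkeeping is exactly where the $K_{t,t}$-freeness enters.
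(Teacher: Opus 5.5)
Your proof is correct, but it bounds the main term by a different route than the paper.

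Both arguments treat edges with an endpoint in some $N[u]\cap N[w]$ the same way. The paper also counts separately the at most $4d$ edges touching $\{u,v,u',v'\}$, which your use of closed neighborhoods absorbs.

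The difference is in the cross edges $xy$, with $x$ near $e$ and $y$ near $e'$. The paper notes that these run between the disjoint sets $N(w)\setminus N(w')$ and $N(w')\setminus N(w)$. It applies the K\H{o}v\'ari--S\'os--Tur\'an theorem (Lemma~\ref{lem:KST}) directly to this $K_{t,t}$-free bipartite graph and gets $O(d^{2-1/t})$ such edges.

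You count them vertex by vertex as $\sum_{x\in N[u]}|N(x)\cap N[w]|$. You use the friendship threshold for pairs $(x,w)$ with $x$ a stranger of $w$. You use Lemma~\ref{lem:few-friends} for the at most $4\log^{40t}d$ friends of $w$, each contributing at most $d$. This gives $O(d^2/\log^{40}d)+O(d\log^{40t}d)$ in total.

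In your version, $K_{t,t}$-freeness enters only through Lemma~\ref{lem:few-friends}. So your argument shows the lemma follows from two facts alone: the codegree threshold defining friendship, and a polylogarithmic bound on the number of friends per vertex. It would therefore transfer to any setting with those two properties. The paper's route does not need Lemma~\ref{lem:few-friends} here, but invokes the forbidden subgraph a second time. Both need $t$ small compared with $\log d/\log\log d$: you need $\log^{40t}d=d^{o(1)}$, and the paper needs $d^{1/t}\gg\log^{40}d$. Both are supplied by \eqref{eqn:t-bounds}.

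One sentence in your write-up is false and should be deleted: the claim that no endpoint of $e$ is adjacent to an endpoint of $e'$. Adjacent vertices can have fewer than $d/\log^{40}d$ common neighbors, so stranger edges may have adjacent endpoints. Your cases never use this claim, since Case~1 allows $u\sim w$ and Case~2 allows $x=w$. It is only a misstatement.
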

\begin{proof}
    By~\eqref{eqn:t-bounds}, we have $d^{\frac{1}{t}} \geq \log^{40}d$.
    Let $e = uv, e' = u'v'$. 
    We count the edges
     $xy\in N(e)\cap N(e')$.
    There are at most $4 d$
    edges $xy$ for which $x$ or $y$ belongs to $\{u,v,u',v'\}$.
    Since $e$ and $e'$ are strangers,
    $|N(u) \cap N(u')| \leq \frac{d}{\log^{40} d}$.
    Therefore, there are at most 
    $\frac{d^2}{\log^{40} d}$
    edges $xy$ for which $x \in N(u) \cap N(u')$.
    By repeating the argument for all pairs in $\{u,v\} \times \{u',v'\}$, there are at most $\frac{4d^2}{\log^{40} d}$ 
    edges $xy$
    with an endpoint that is adjacent to an endpoint of both $e$ and $e'$.
    Finally, we count the edges $xy$ with  $x \in N(u)$ and $y \in N(u')$.
    As we already counted edges with an endpoint that is adjacent to an endpoint of both $e$ and $e'$,
    may assume that $x \in N(u) \setminus N(u')$ and $y \in N(u') \setminus N(u)$.
    We note that $A \coloneqq N(u) \setminus N(u')$ and $B \coloneqq N(u') \setminus N(u)$
    are disjoint and $G[A, B]$ does not contain a copy of $K_{t,t}$.
    Therefore, by applying Lemma~\ref{lem:KST} with $m, n \leq d$ and 
    with our value $t = o \left (\frac{\log d}{\log\log d} \right )$
    representing both $s$ and $t$,
    the number of such edges
    $xy$ is at most
    $$
    (t-1)^\frac{1}{t}d^{2-\frac{1}{t}} + (t-1)d = (2+o(1))d^{2-\frac{1}{t}}\leq \frac{(2+o(1))d^2}{\log^{40}d} < \frac{d^2}{\log^{30} d}.
    $$
    By repeating this argument for all pairs in $\{u,v\} \times \{u',v'\}$,
    we find at most
    $\frac{4d^2}{\log^{30}d}$
    edges of this last type.
    Putting all of these cases together,
    \[|N(e) \cap N(e')| < \frac{4d^2}{\log^{40} d} +  \frac{d^2}{ \log^{25} d} < \frac{\Delta}{\log^{20} \Delta},\]
    as desired.
\end{proof}

The following lemma will play a key role in our proof of Condition~\eqref{item:main-3a} of Theorem~\ref{thm:general} with our choice of the sets $B^X$.

\begin{lemma}
\label{lem:B1}
    Let $v \in V(G)$, and 
    let
    $vw \in E_1(v)$.
    If $v$ and $w$ are not friends, then 
    $vw$ is adjacent to at least  $\frac 12 \Delta - \frac{\Delta}{\log^{10} \Delta} $ edges in $E_3(v)$.
\end{lemma}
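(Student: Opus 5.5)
We will count, for the edge $vw$, the edges $xy$ of $G$ with $x \in N(w) \setminus \{v\}$ and $y \in N(x) \setminus (N(v)\cup\{v\})$. Every such edge is at distance at most $2$ from $vw$ in $L(G)$, because it shares the endpoint $x$ with $wx$. It remains to check when such an edge lies in $E_3(v)$. Recall that $E_1(v)$ consists of the edges at $v$, and $E_2(v)$ consists of the edges with exactly one endpoint in $N(v)$ and the other outside $\{v\}$, together with the edges inside $N(v)$. An edge $xy$ lies in $E_3(v)$ precisely when neither endpoint is $v$ or in $N(v)$ and some endpoint lies in $N_2(v)$. So we restrict to $x \in N(w)\setminus (N(v)\cup\{v\})$, which puts $x \in N_2(v)$, and $y \notin N(v)\cup \{v\}$. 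Each such $xy$ is then in $E_3(v)$ and adjacent to $vw$ in $L(G)^2$.

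The count proceeds in three steps.
\begin{enumerate}
\item Since $v$ and $w$ are not friends, $|N(v)\cap N(w)| < \frac{d}{\log^{40} d}$. Hence the number of choices of $x \in N(w)\setminus(N(v)\cup\{v\})$ is at least $d - 1 - \frac{d}{\log^{40}d}$.
\item For each such $x$, the number of $y \in N(x)$ with $y \notin N(v)\cup\{v\}$ is $d - |N(x)\cap N(v)|$; here $v \notin N(x)$, since $x \notin N(v)$.
\item Summing over $x$ gives at least
\[
\left(d - 1 - \tfrac{d}{\log^{40} d}\right)d \;-\; \sum_{x} |N(x)\cap N(v)| \;-\; (\text{double counting}).
\]
An edge $xy$ with both $x,y \in N(w)\setminus N(v)$ is counted twice, and there are at most $e(G[N(w)])$ such edges.
\end{enumerate}

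The main obstacle is bounding the two error terms $\sum_x |N(x)\cap N(v)|$ and $e(G[N(w)])$. We need each to be $O(d^2/\log^{30} d)$, which is far below $\Delta/\log^{10}\Delta$.

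For the first term, $\sum_{x \in N(w)\setminus N(v)} |N(x)\cap N(v)|$ is the number of edges of $G$ between $A \coloneqq N(w)\setminus N(v)$ and $B \coloneqq N(v)$. If $A$ and $B$ are not disjoint we remove the intersection, which by step 1 costs at most $d\cdot\frac{d}{\log^{40}d}$. The bipartite graph $G[A,B]$ is $K_{t,t}$-free, so Lemma~\ref{lem:KST}, as in the proof of Lemma~\ref{lem:Q1-non-incident}, bounds it by $(2+o(1))d^{2-1/t} < \frac{d^2}{\log^{30} d}$ using \eqref{eqn:t-bounds}.

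For the double counting, $e(G[N(w)])$ is likewise $O(d^{2-1/t})$. To see this, split $N(w)$ into two halves; each of the three resulting pieces is $K_{t,t}$-free, and Lemma~\ref{lem:KST} applies after a standard bipartite doubling. Since $d^{1/t} \ge \log^{40} d$, this is also at most $\frac{d^2}{\log^{30}d}$.

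Combining, the number of edges of $E_3(v)$ adjacent to $vw$ is at least $d^2 - O(d^2/\log^{30} d)$. Since $\Delta = 2d^2$, this equals $\frac12\Delta - O(\Delta/\log^{30}\Delta)$. For large $d$ this exceeds $\frac12\Delta - \frac{\Delta}{\log^{10}\Delta}$.
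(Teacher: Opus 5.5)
Your argument is correct, and it controls the edges that fall back into $N(v)$ differently from the paper.

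\textbf{The paper's route.} The paper sets $N^* = N(w)\setminus N[v]$ and uses Lemma~\ref{lem:few-friends} to discard the at most $4\log^{40t} d$ vertices of $N^*$ that are friends of $v$. It then uses a \emph{per-vertex} bound: every remaining $v'\in N^*$ has at least $d(1-\log^{-40} d)$ neighbours outside $N[v]$.

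\textbf{Your route.} You bound the \emph{aggregate} quantity $\sum_x |N(x)\cap N(v)| = e(N(w)\setminus N[v],\,N(v))$ by a single application of Lemma~\ref{lem:KST} to a $K_{t,t}$-free bipartite graph. This is self-contained: it needs only \eqref{eqn:t-bounds}, not the friend-counting lemma.

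\textbf{What each buys.} The paper's version reuses the friend structure it needs anyway, and it gives the stronger pointwise statement about individual $x\in N^*$. Your version handles something the paper's proof glosses over. The paper multiplies the number of admissible $v'$ by the number of admissible $w'$, which counts ordered pairs $(v',w')$. An edge with both endpoints among the non-friends of $v$ in $N^*$ is therefore counted twice. Your explicit subtraction of $e(G[N(w)]) = O(d^{2-1/t})$ closes this small but genuine omission.

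\textbf{Two harmless simplifications.} First, $N(w)\setminus N(v)$ is already disjoint from $N(v)$, so there is no intersection to remove. Second, the bipartite double cover of $G[N(w)]$ suffices on its own, without splitting $N(w)$ into halves. Since $G$ has no loops, any $K_{t,t}$ in the double cover automatically has disjoint sides and so yields a $K_{t,t}$ in $G$.
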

\begin{proof}
    Let $N^*=N(w)\setminus N[v]$, and note that $N^* \subseteq N_2(v)$. Since $v$ and $w$ are not friends, $|N^*|\geq d \left (1 - \frac{1}{\log^{40}d} \right )$. 
    By Lemma~\ref{lem:few-friends}, there are at least $|N^*|-4\log^{40t}d$ vertices $v' \in N^*$ that are not friends of $v$.
    Each such vertex $v'$ belongs to $N_2(v)$ and has 
    at least $d \left (1-\frac{1}{\log^{40}d}\right )$ neighbors $w'\not \in N[v]$.
    Each such pair $v'w'$ is an edge in $E_3(v)$.
    Therefore, the number of desired edges is at least
    \[\left(d\left(1 - \frac{1}{\log^{40}d}\right) -4\log^{40t}d \right)\cdot d\left(1 - \frac{1}{\log^{40}d}\right)
    >
    d^2 \left(1 - \frac{1}{\log^{30}d}\right)^2 > \frac 12\Delta - \frac{\Delta}{\log^{10} \Delta},\]
    as desired.
\end{proof}

The following lemma will play a key role in our proof of Condition~\eqref{item:main-3b} of Theorem~\ref{thm:general} with our choice of the sets $B^X$.

\begin{lemma}
\label{lem:Q1-incident}
    If $e=vw$ and $e' = vw'$ are two edges in $G$ for which $w$ and $w'$ are not friends, then $|N(e) \cap N(e') \cap E_3(v)| \leq \frac{\Delta}{\log^{20} \Delta} $.
\end{lemma}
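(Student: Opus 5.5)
Fix $e=vw$ and $e'=vw'$ with $w$ and $w'$ not friends, so $|N(w)\cap N(w')| < \frac{d}{\log^{40} d}$. The plan is to describe exactly which edges of $E_3(v)$ can lie in $N(e)\cap N(e')$ and then bound that set by the small codegree of $w$ and $w'$, in the same spirit as Lemma~\ref{lem:Q1-non-incident}.

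First, any edge $f=xy\in E_3(v)$ has both endpoints at distance at least $1$ from $v$. Moreover, $f$ is not incident to any edge of $E_1(v)$, so neither endpoint is $v$. Also, $f$ is incident to an edge of $E_2(v)$, so some endpoint lies in $N(v)\cup N_2(v)$. Since $f$ is not itself in $E_2(v)$, no endpoint lies in $N(v)$. Hence $f$ has an endpoint in $N_2(v)$ and no endpoint in $N[v]$.

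Next, $f\in N(e)$ means $f$ is at distance at most one from $e$ in the line graph. In other words, $f$ is incident to $e$ or to an edge incident to $e$. Since no endpoint of $f$ is in $\{v\}\cup N(v)$, $f$ is not incident to $e$. So some endpoint of $f$ is adjacent to $v$ or to $w$. It cannot be adjacent to $v$, because it is not in $N[v]$. Therefore some endpoint of $f$ lies in $N(w)\setminus N[v]$. By the same argument, some endpoint of $f$ lies in $N(w')\setminus N[v]$.

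There are now two cases for $f=xy$. In the first, a single endpoint $x$ lies in $N(w)\cap N(w')$. The number of such edges is at most $d\cdot|N(w)\cap N(w')| \le \frac{d^2}{\log^{40} d}$. In the second, $x\in N(w)\setminus N(w')$ and $y\in N(w')\setminus N(w)$. I expect this case to be the main obstacle, since naively it allows $d^2$ edges. Here I would use that $G$ is $K_{t,t}$-free. Apply Lemma~\ref{lem:KST} to the bipartite graph $G[A,B]$ with $A=N(w)\setminus N(w')$ and $B=N(w')\setminus N(w)$; these sets are disjoint and each has size at most $d$. Exactly as in Lemma~\ref{lem:Q1-non-incident}, this bounds the number of such edges by $(2+o(1))d^{2-1/t}\le \frac{d^2}{\log^{30} d}$, using \eqref{eqn:t-bounds}.

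Adding the two cases gives $|N(e)\cap N(e')\cap E_3(v)| \le \frac{d^2}{\log^{40}d}+\frac{d^2}{\log^{30}d}$. Since $\Delta=2d^2$, we have $\log\Delta\le 3\log d$ for large $d$, so this total is less than $\frac{\Delta}{\log^{20}\Delta}$. The restriction to $E_3(v)$ is what rules out the otherwise uncontrolled edges through $v$ and $N(v)$.
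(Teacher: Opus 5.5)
Your proof is correct and follows the paper's argument. Both show that every such edge has an endpoint in $N(w)$ and an endpoint in $N(w')$. Both then split on whether some endpoint is a common neighbor of $w$ and $w'$, handled by the codegree bound, or not, handled by K\H{o}v\'ari--S\'os--Tur\'an on $N(w)\setminus N(w')$ versus $N(w')\setminus N(w)$. You justify more explicitly than the paper why no endpoint can be adjacent to $v$. The only slip is the phrase ``distance at most one from $e$ in the line graph,'' which should read distance at most two in $L(G)$; the characterization you actually use (incident to $e$ or to an edge incident to $e$) is the correct one.
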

\begin{proof}
Fix $xy \in N(e) \cap N(e') \cap E_3(v)$. Since $xy \in E_3(v)$, we have that $\{x, y\} \cap \{w, w', v\} = \emptyset$. Additionally, since $xy \in N(e) \cap N(e')$, we have that $\{x, y\} \cap N(w) \neq \emptyset$ and $\{x, y\} \cap N(w') \neq \emptyset$. There are two cases to consider: $\{x, y\} \cap N(w) \cap N(w') \neq \emptyset$ and $\{x, y\} \cap N(w) \cap N(w') = \emptyset$. 

If $\{x, y\} \cap N(w) \cap N(w') \neq \emptyset$, then either $x$ or $y$ is a common neighbor of $w$ and $w'$. Since $w$ and $w'$ are strangers, $|N(w) \cap N(w')| \leq \frac{d}{\log^{40} d}$. Then, we have 
\begin{align*}
    |\{xy \colon \{x, y\} \cap N(w) \cap N(w') \neq \emptyset\}| \leq \sum_{z \in N(w) \cap N(w')} \deg_G(z) \leq \frac{d^2}{\log^{40}d}.
\end{align*}

If $\{x, y\} \cap N(w) \cap N(w') = \emptyset$, then without loss of generality, we have that $x \in N(w) \setminus N(w')$ and $y \in N(w') \setminus N(w)$. We have that $|N(w) \setminus N(w')| \leq d$ and $|N(w') \setminus N(w)| \leq d$, so applying Lemma~\ref{lem:KST} with a forbidden $K_{t, t}$ along with~\eqref{eqn:t-bounds}, the number of edges with one endpoint in $N(w) \setminus N(w')$ and one endpoint in $N(w') \setminus N(w)$ is less than 
\begin{align*}
    (t - 1)^{1/t} d^{2 - 1/t} + (t - 1)d &\leq (2 + o(1)) d^{2 - 1/t} < \frac{ d^2}{\log^{40} d}.
\end{align*}
Therefore,
\[
    |N(e) \cap N(e') \cap E_3(v)| \leq \frac{d^2}{\log^{40}d} + \frac{ d^2}{\log^{40} d} < \frac{ \Delta}{ \log^{20}\Delta },
\]
as claimed.
\end{proof}

With these lemmas in hand, we  now show that $L(G)^2$ satisfies the conditions of 
Theorem~\ref{thm:general} with $\gamma = \frac 12$, $N = 1$, and our value $\epsilon >0$.
As $\Delta = 2d^2$, $L(G)^2$ has maximum degree at most $\Delta$.
We verify each condition separately:
\begin{enumerate}
    \item By Lemma~\ref{lem:Q1-non-incident}, $|N_{L(G)^2}(e) \cap N_{L(G)^2}(e')|< \frac{\Delta}{\log^{20} \Delta}$ for each stranger pair $e,e' \in E(G)$, so Condition~\eqref{item:main-1} is satisfied.

    \item Next, 
    we check Condition~\eqref{item:main-2}.
    We define a family $\mathcal X$ as follows.
    First, for each vertex $v \in V(G)$, we define a family $\mathcal Y_v$.
    For each $w \in N(v)$,
    if $v$ and $w$ are friends, then add a set $X_{vw} = \{vw\}$ to $\mathcal Y_v$.
    Next, let $N'(v) \subseteq N(v)$ be the set of all neighbors of $v$ which are not friends with $v$.
    Using Lemma~\ref{lem:few-friends}, 
    we partition $N'(v)$ 
    into subsets $S_{v,1}, \dots, S_{v,r}$
    of mutual strangers
    for some $r < 5 \log^{40 t} d$.
    For each $S_{v,i}$, we add a set $X_{v,i} = \{v w: w \in S_{v,i}\}$ to $\mathcal Y_v$.
    As Lemma~\ref{lem:few-friends} implies that $v$ has at most $4 \log^{40 t}d$ friends,
    there are at most $4 \log^{40 t} d$ sets $X_{vw}$ in $\mathcal Y_v$, and there are at at most $5 \log^{40 t} d$ sets $S_{v,i}$ in $\mathcal Y_v$.
    Altogether, $\mathcal Y_v$ has at most $9 \log^{40t} d $ sets.
    We let $\mathcal X = \bigcup_{v \in V(G)} \mathcal Y_v$.
    (For technical reasons, if a set $X$ occurs multiple times in this union, we let $X$ appear with multiplicity in $\mathcal X$. 
    It is straightforward to see that if the conditions of Theorem~\ref{thm:general} hold for a multiset $\mathcal X$, then the conditions still hold after the multiplicity of each $X \in \mathcal X$ is set to $1$.)

    We now verify that Condition~\eqref{item:main-2} holds for our choice of $\mathcal X$.
    Since each set $X \in \mathcal X$ consists of edges incident to a common vertex $v \in V(G)$, $|X| \leq d < \Delta^{\gamma}$, so Condition~\eqref{item:main-2a} holds.
    Next, for each $e \in E(G)$,
    we write $F(e)$ for the set of vertices $w \in V(G)$ that are friends of an endpoint of $e$, and 
    we define $\mathcal X_e = \bigcup_{w \in F(e)} \mathcal Y_w$.
    Lemma~\ref{lem:few-friends} implies that
    $|F(e)| \leq 8 \log^{40t} d$.
    Furthermore, as observed earlier, we have $|\mathcal Y_w| \leq 9 \log^{40 t} d$ for each $w \in F(e)$.
    Therefore,
    $|\mathcal X_e | \leq 72 \log^{80 t} d < \Delta^{(1-\gamma) \epsilon/12}$. As all friends of $e$ belong to a set in $\mathcal X_e$, Condition~\eqref{item:main-2b} holds.
    Finally, for each $e \in E(G)$ with endpoints $u$ and $v$, $e$ only belongs to sets in $\mathcal Y_u \cup \mathcal Y_v$. Therefore, 
    as $d$ is sufficiently large,
    $e$ belongs to at most $18\log^{40 t} d < \Delta$ sets $X \in \mathcal X$.
    Thus, Condition~\eqref{item:main-2c} holds.

    \item Finally, 
    we check that Condition~\eqref{item:main-3} holds.
    Consider a set $X \in \mathcal X$ for which $|X| \geq 2$, and let $v \in V(G)$ be the unique vertex for which $X \in \mathcal Y_v$. (This is precisely where we must allow $\mathcal X$ to be a multiset, as the families $\mathcal Y_v$ may not be disjoint.)
    We let $B^X = E_3(v)$.
    Note that as $|X| \geq 2$, it must be the case that $w$ and $w'$ are strangers for any distinct $vw, vw' \in X$.
    Therefore, by Lemma~\ref{lem:B1}, 
    each edge $e \in X$ has at least 
    \[d^2 \left (1- \frac{1}{\log^{400}d} \right )^2 > \Delta \left ( \frac 12 - \log^{-10} \Delta \right ) \geq d_{L(G)^2}(e) - \left (1 - \gamma + \log^{-10} \Delta \right ) \Delta\]
    neighbors in $B^X$, so Condition~\eqref{item:main-3a} holds.
    Furthermore, by Lemma~\ref{lem:Q1-incident},
    for any two edges $e,e' \in X$, we have
    \[|N(e) \cap N(e') \cap B^X| < \frac{\Delta}{\log^{20} \Delta},\]
    so Condition~\eqref{item:main-3b} holds.

\end{enumerate}
Therefore, by Theorem~\ref{thm:general}, $\chi_{\ell}(L(G)^2) \leq (1 + \epsilon) \frac{\Delta}{\log \Delta}$.
As a result, we have
\[\chi'_s(G) \leq (1 + \epsilon) \frac{2d^2}{\log (2d^2)} < (1 + \epsilon) \frac{d^2}{\log d}.\]
completing the proof of Theorem~\ref{thm:main}.

\subsection{Proof of Theorem~\ref{thm: girth}}

Let $\epsilon > 0$ and $t \geq 2$ be fixed.
Let $G$ be a graph of maximum degree $d$ with girth at least $2t+1$
(we reserve the symbol $\Delta$ as an upper bound for the maximum degree of $L(G)^t$).
We assume without loss of generality that $G$ is $d$-regular.
We aim to show that $G$ admits a distance-$t$ edge coloring with $(1 + \epsilon) \frac{d^2}{\log d}$ colors, provided that $d$ is sufficiently large in terms of $\epsilon$ and $t$.
For this, we will apply Theorem~\ref{thm:general} 
to $L(G)^t$
with our value of $\epsilon$, and with $\gamma = \frac 12$ and $\Delta = 2d^t$.
To this end, we say that two edges $e,e' \in E(G)$ are \emph{friends} if they have at least $\frac{\Delta}{\log^{20}\Delta}$ common neighbors in $L(G)^t$.

\begin{lemma}
\label{claim:t-1 girth}
    For every distinct pair $u,v \in V(G)$,
    $|N_{t-1}(u) \cap N_{t-1}(v)| < td^{t-2}$.
\end{lemma}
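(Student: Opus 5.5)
I would prove the bound by classifying each common vertex $x \in N_{t-1}(u)\cap N_{t-1}(v)$ according to where its geodesics to $u$ and to $v$ separate. The main tool is the girth hypothesis. Since $G$ has girth at least $2t+1$, any two vertices at distance at most $t$ are joined by a unique shortest path. Also, two distinct walks of total length at most $2t$ between the same endpoints cannot both be paths without creating a short cycle. In particular, the ball of radius $t$ around any vertex induces a tree.

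Fix $x$ in the intersection, and let $P_u$ and $P_v$ be the unique geodesics from $x$ to $u$ and from $x$ to $v$; both have length $t-1$. These paths share an initial segment from $x$ to a last common vertex $z$, and after $z$ they are internally disjoint. Write $s := \dist(z,u) = \dist(z,v) = t-1-\dist(x,z)$. Since $u \neq v$, we have $1 \le s \le t-1$. Call $z$ an \emph{$s$-branch point} of the pair $(u,v)$: it lies in $N_s(u)\cap N_s(v)$, and its geodesics to $u$ and $v$ leave $z$ through distinct neighbours. Conversely, $x$ is a vertex at distance $t-1-s$ from $z$ that is reached from $z$ by continuing away from both geodesics. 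Because the ball of radius $t$ around $z$ is a tree, there are at most $d^{\,t-1-s}$ such $x$ for each $z$. This gives
\[
|N_{t-1}(u)\cap N_{t-1}(v)| \le \sum_{s=1}^{t-1} f(s)\, d^{\,t-1-s},
\]
where $f(s)$ denotes the number of $s$-branch points.

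It then suffices to show that $f(s) \le d^{\,s-1}$ for each $s \le t-1$. Each term is then at most $d^{t-2}$, and since there are only $t-1 < t$ terms, the bound $t\,d^{t-2}$ follows. To bound $f(s)$, I would fix the vertex $y$ adjacent to $u$ on the $u$-side geodesic toward $z$, and then the path from $y$ onward. For two distinct $s$-branch points $z, z'$, the concatenation of their $u$–$z$–$v$ and $u$–$z'$–$v$ paths gives a closed walk of length $4s$. By girth, the parts of these walks lying within distance $t$ of $u$ (respectively $v$) must coincide wherever they overlap. I would use this to argue that, after fixing the first $s-1$ steps out of $u$, at most one branch point remains. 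Those first $s-1$ steps can be chosen in at most $d^{\,s-1}$ ways, which gives the required bound.

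The main obstacle is exactly this estimate on $f(s)$. When $4s > 2t$, the closed walk formed by two branch points is too long to contradict the girth directly. I would first try to control the configuration through the unique $u$–$v$ geodesic, of length $k = \dist(u,v) \le 2t-2$. If the $u$–$z$–$v$ path differs from this geodesic, then their union contains a cycle of length at most $2s+k$. Combined with the tree structure of the radius-$t$ balls around $u$ and $v$, this should pin down all but $s-1$ of the steps. If this does not give $d^{s-1}$ outright, the crude bound $f(s) \le d^{\,s}$ together with a sharper estimate of $d^{\,t-2-s}$ for the number of extensions $x$ would give the same total, so I would balance the two counts accordingly.
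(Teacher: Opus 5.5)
Your reduction is sound. The branch point $z$ of each $x$ is well defined. Each $s$-branch point accounts for at most $d^{t-1-s}$ vertices $x$; this is just a count of vertices at distance $t-1-s$ from $z$. (The radius-$t$ ball need not induce a tree, as $C_{2t+1}$ shows, but you never need more than radius $t-2$.) And $f(s)\le d^{s-1}$ would give $(t-1)d^{t-2}<td^{t-2}$.

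The gap is that you never establish $f(s)\le d^{s-1}$. You flag this yourself as the main obstacle, and none of the routes you offer works as written:
\begin{itemize}
\item Comparing the two full $u$--$v$ paths gives a closed walk of length $4s$, which can exceed $2t$.
\item The $u$--$v$ geodesic gives cycles of length up to $2s+k\le 4t-4$, again no contradiction with the girth, and the claimed ``pinning down'' is never carried out.
\item The fallback is false. For $s\le t-2$, every vertex reached from $z$ by $t-1-s$ non-backtracking steps, starting away from both geodesic neighbours, lies in $N_{t-1}(u)\cap N_{t-1}(v)$: a shorter route to $u$ or $v$ would close a cycle of length less than $2t$. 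So $z$ can have $(d-2)(d-1)^{t-2-s}$ extensions, not $d^{t-2-s}$. For $s=t-1$ the proposed bound $d^{-1}$ is even below the true count $1$.
\end{itemize}

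The missing idea is to strip off the common prefix before invoking the girth. Let $P_z=(p_0=u,p_1,\dots,p_s=z,\dots,p_{2s}=v)$ be the $u$--$z$--$v$ path. Suppose distinct branch points $z\neq z'$ had $P_z$ and $P_{z'}$ agreeing in $p_1,\dots,p_{s-1}$. Then their tails from $p_{s-1}$ to $v$ are two distinct paths of length $s+1$ each. Their union contains a cycle of length at most $2s+2\le 2t$, contradicting girth at least $2t+1$. Hence $z\mapsto(p_1,\dots,p_{s-1})$ is injective and $f(s)\le d^{s-1}$; for $s=1$ this just says $u$ and $v$ have at most one common neighbour.

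With this observation your argument is complete, and it is a genuinely different route from the paper's. The paper takes the tree $T$ of vertices joined to $u$ by a path of length at most $t-1$ in $G-v$. It notes that each vertex outside $T$ has at most one neighbour in $T$, and counts paths from $v$ into $T$ according to where they first enter $T$. Both approaches spend one factor $d^{t-2}$ per level. Yours organizes the count around the point where the two geodesics from $x$ separate, and makes explicit that only cycles of length at most $2t$ are being excluded.
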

\begin{proof}
    We follow the approach of Kaiser and Kang \cite{kaiser2014distance}.
    Let $T$ be the tree induced by vertices $w$ for which a path of length at most $t-1$ joins $w$ to $u$ in $G-v$. 
    The girth condition on $G$ guarantees that $T$ is a tree.
    Note that any two vertices in $T$ have a distance of at most $2t-2$ in $T$. Therefore, each vertex of $V(G) \setminus V(T)$ has at most one neighbor in $T$.

    Now, we count the number of paths of length at most $t-1$ that begin at $v$ and end in $V(T)$. The number of paths of length at most $t-2$ is at most $1+(d-1)+(d-1)^2 + \cdots + (d-1)^{t-2} < d^{t-2}$.
    Now, consider a path $P$ of length $t-1$ that begins at $v$ and ends in $V(T)$. 
    Write $V(P) = (v,v_1,v_2,\dots, v_{t-1})$.
    Since $v \not \in V(T)$, one of the vertices $v_i$ is the first in $V(T)$. Since each vertex in $V(G) \setminus V(T)$ has at most one neighbor in $V(T)$, the number of  paths $P$ for which $v_i$ is the first in $V(T)$ is at most $d^{t-2}$. Taking a union, the number of paths of length at most $t-1$ beginning at $v$ and ending in $V(T)$ is at most $(t-1)d^{t-2} + d^{t-2} = td^{t-2}$.
\end{proof}

The following lemma will play a key role in our proof of Condition~\eqref{item:main-3a} of Theorem~\ref{thm:general} with our choice of the sets $B^X$.

\begin{lemma}
\label{claim:B-girth}
Let $v,w \in V(G)$ be neighbors. Then, at least $d|N_{t-1}(w)| - d^t \log^{-12} (2d^t)$
edges
of $E_{t+1}(v)$ are adjacent to $vw$ in $L(G)^t$.
\end{lemma}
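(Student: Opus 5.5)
Since $G$ has girth at least $2t+1$, we work with the tree-like structure around $v$. Every edge $xy$ with $x \in N_{t-1}(w)$ and $y \notin N_{t-2}(w)$ is within distance $t$ of $vw$ in $L(G)^t$: the path from $w$ to $x$ has length $t-1$, and adding $xy$ gives $t$ edges between $vw$ and $xy$. For each $x \in N_{t-1}(w)$ there are at least $d-1$ such edges. We will count only those pairs $(x,y)$ with $x \in N_{t-1}(w)\cap N_t(v)$ and $y \in N_{t+1}(v)$, since such edges lie in $E_{t+1}(v)$. Each edge is counted at most twice this way, but in the girth setting the count will in fact be essentially injective because $x$ is the endpoint closer to $v$. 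The lower bound then has the form
\[\sum_{x \in N_{t-1}(w) \cap N_t(v)} |N(x) \cap N_{t+1}(v)|,\]
and we must show that each of the two restrictions discards only $O(t d^{t-1})$ terms. Since $t$ is fixed, $O(td^{t-1}) = o(d^t \log^{-12}(2d^t))$.

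\textbf{First loss.} We bound the number of $x \in N_{t-1}(w)$ with $x \notin N_t(v)$. Because $v$ and $w$ are adjacent, $x \in N_{t-1}(w)$ gives $\dist(v,x) \in \{t-2, t-1, t\}$. So a bad $x$ lies in $N_{\le t-1}(v)$. Moreover, if $x\in N_{t-1}(w)$ and $\dist(v,x)\le t-1$, then $x$ is reached from $v$ by a path avoiding $w$. By Lemma~\ref{claim:t-1 girth} (or its variant for smaller radii), such $x$ number at most $t d^{t-2}$. Alternatively, one can argue directly: girth $\ge 2t+1$ makes the ball of radius $t$ around the edge $vw$ a tree, so $N_{t-1}(w) \cap N_{\le t-1}(v) = \emptyset$ except for trivial cases. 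Either way, each bad $x$ costs at most $d$ edges, so this loss is at most $td^{t-1}$.

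\textbf{Second loss.} For good $x \in N_t(v)$, we bound the number of neighbours $y$ of $x$ not in $N_{t+1}(v)$. These $y$ lie in $N_{t-1}(v) \cup N_t(v)$. Exactly one neighbour lies in $N_{t-1}(v)$, namely the parent on the unique geodesic: uniqueness holds since two such parents would create a cycle of length at most $2t$. Neighbours of $x$ inside $N_t(v)$ would create an odd cycle of length at most $2t+1$. This is where the girth constraint is tight and needs care, since girth exactly $2t+1$ allows it.

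This second loss is the main obstacle. To handle it, count edges inside $N_t(v)$ globally rather than per $x$. Each such edge $xy$ closes a cycle of length $2t+1$ through $v$. Two such edges sharing the same pair of branches at $v$ would produce a shorter cycle. This should bound the number of such edges incident to $N_{t-1}(w)$ by $O(d^{t-1})$, since fixing $x$'s branch through $w$ forces $y$'s path back to $v$ to be essentially unique. Alternatively, apply Lemma~\ref{claim:t-1 girth} to the pair $(w, w')$ for each other neighbour $w'$ of $v$ and sum over $w'$. This gives at most $d \cdot t d^{t-2}$ coincidences, so the total loss is $O(t d^{t-1})$.

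Summing the two losses gives at least $d|N_{t-1}(w)| - O(td^{t-1})$ edges of $E_{t+1}(v)$ adjacent to $vw$ in $L(G)^t$. This exceeds $d|N_{t-1}(w)| - d^t\log^{-12}(2d^t)$ for $d$ sufficiently large.
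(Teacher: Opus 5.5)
Your second loss does not hold, so the proof has a genuine gap. By requiring $y\in N_{t+1}(v)$ you discard every edge $xy$ with $x\in N_{t-1}(w)\cap N_t(v)$ and $y\in N_t(v)$. You then need these edges to number $O(td^{t-1})$. That bound is false, and neither of your justifications works.

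Girth at least $2t+1$ forces the edges between the depth-$t$ layers of two different branches at $v$ to form a matching: two such edges at the same $x$ would close a cycle of length at most $2t$. But two disjoint edges between the same pair of branches create no short cycle. So $x$ can have one neighbour in $N_t(v)$ in each of the other $d-1$ branches. Separately, Lemma~\ref{claim:t-1 girth} applied to $(w,w')$ bounds the common vertices $N_{t-1}(w)\cap N_{t-1}(w')$, not the edges between these two sets, so it gives you nothing here.

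Here is a concrete counterexample for $t=2$. Let $v$ have neighbours $w=w_1,\dots,w_d$, give each $w_j$ further neighbours $a_{j,1},\dots,a_{j,d-1}$, and add the edges $a_{1,i}a_{j,i}$ for $2\le j\le d$. This graph has maximum degree $d$ and girth $5$. The new $5$-cycles are $v w_1 a_{1,i}a_{j,i}w_j v$. Two edges between the same pair of branches only give $6$-cycles, such as $a_{1,1}a_{2,1}w_2a_{2,2}a_{1,2}w_1a_{1,1}$. The $a_{1,i}$ already have degree $d$, so extending to a $d$-regular graph leaves their neighbourhoods unchanged. Yet $\sum_x|N(x)\cap N_3(v)|=0$, while $d|N_1(w)|=d^2$.

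The lemma survives because those edges never needed to be discarded. $E_{t+1}(v)$ is the set of edges whose nearer endpoint is at distance exactly $t$ from $v$, so an edge with both ends in $N_t(v)$ already lies in $E_{t+1}(v)$. In the example, the $(d-1)^2$ edges $a_{1,i}a_{j,i}$ are exactly what the lemma counts. The repair is to keep every $y\notin N_{t-1}(v)$, which is what the paper does: it takes $S=N_{t-1}(w)\setminus N_{t-1}(v)$ and subtracts only $e(S,N_{t-1}(v))$, bounded via Lemma~\ref{BondySimonovits}.

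In your setup the repair is even simpler. Each $x\in N_{t-1}(w)\cap N_t(v)$ loses only its unique neighbour in $N_{t-1}(v)$. No edge is counted twice, because two adjacent vertices of $N_{t-1}(w)$ would give a closed walk of odd length $2t-1$.

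One smaller point: your ``alternatively'' remark in the first loss is wrong. $N_{t-1}(w)\cap N_{t-2}(v)$ is far from empty; it contains roughly $(d-1)^{t-2}$ vertices lying in the other branches at $v$. Your main count there is still fine, since $|N_{t-2}(v)|\le d^{t-2}$ and $N_{t-1}(w)\cap N_{t-1}(v)=\emptyset$ by the same odd-walk argument.
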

\begin{proof}
    We write $S = N_{t-1}(w) \setminus N_{t-1}(v)$.
    By Lemma~\ref{claim:t-1 girth}, we have
    \[|S| > |N_{t-1}(w)| - td^{t-2}.\] 
    For each $x \in S$, if an edge $xy$ is incident to $x$ and $y \not \in N_{t-1}(v)$,
    then $xy \in E_{t+1}(v) \cap N_{L(G)^t}(vw)$.
    Thus, we aim to count the edges $xy$ with $x \in S$ and $y \not \in N_{t-1}(v)$.
    To this end, we let $T = N_{t-1}(v)$, and we estimate $e(S,T)$.
    By Lemma~\ref{BondySimonovits}, $e(S,T) = O(t(|S|+|T|)^{1+\frac 1t}) = O(d^{t-\frac 1t})$. 
    Therefore, 
    \[
        |E_{t+1}(v) \cap N_{L(G)^t}(vw)| > d \left (|N_{t-1}(w)| - t d^{t-2}\right ) - O( d^{t-\frac 1t} ) > d|N_{t-1}(w)| - d^t \log^{-12} (2d^t),
    \]
    as desired.
\end{proof}

\begin{lemma}
\label{claim:A123-girth}
    Let $uv, u'v' \in E(G)$ and let $A \subseteq N_{L(G)^t}(uv) \cap N_{L(G)^t}(u'v')$ be arbitrary.
    Define 
    \[
        A_1 \coloneqq \bigcup_{(w,w') \in \{u,v\} \times \{u',v'\} } \left \{ xy \in A: x \in N_{t-1}(w) \cap N_{t-1} (w') \right \}.
    \]
    Then,
    $|A| < |A_1| + \frac{d^t}{\log^{24}d }$.
\end{lemma}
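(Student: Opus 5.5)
The plan is to classify each edge $xy \in A$ according to which of its endpoints are close to which endpoints of $uv$ and $u'v'$. Every edge outside $A_1$ should fall into a class that is small for one of two reasons: either a ball of radius $t-2$ is small, or Lemma~\ref{BondySimonovits} applies to the $C_{2t}$-free graph $G$. Recall that $xy \in N_{L(G)^t}(uv)$ means some endpoint of $xy$ is at distance at most $t-1$ from some endpoint of $uv$. For $w \in V(G)$ write $B_{\le s}(w)$ for the set of vertices at distance at most $s$ from $w$. Since $G$ has maximum degree $d$, $|B_{\le t-2}(w)| \le 1 + d + \dots + d^{t-2} \le 2d^{t-2}$ and $|N_{t-1}(w)| \le d^{t-1}$.

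Fix $xy \in A \setminus A_1$. Then there are $w \in \{u,v\}$ and $w' \in \{u',v'\}$, and endpoints $a,b \in \{x,y\}$ (possibly $a=b$), with $\dist(a,w) \le t-1$ and $\dist(b,w') \le t-1$. I would split into three cases.

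\emph{Case 1: some endpoint of $xy$ lies in $B_{\le t-2}(z)$ for some $z \in \{u,v,u',v'\}$.} The number of such edges is at most $4 \cdot 2d^{t-2} \cdot d = O(d^{t-1})$.

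\emph{Case 2: not Case 1, and $a=b$.} Then $a$ is at distance exactly $t-1$ from both $w$ and $w'$, so $a \in N_{t-1}(w)\cap N_{t-1}(w')$. Hence $xy \in A_1$, which was excluded, so this case does not occur.

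\emph{Case 3: not Case 1, and $a \neq b$.} Now $xy$ is an edge with $a \in N_{t-1}(w)$ and $b \in N_{t-1}(w')$. Taking $S = N_{t-1}(w)$ and $T = N_{t-1}(w')$, the edge $xy$ joins a vertex of $S$ to a vertex of $T$. Every such edge lies in the subgraph $G[S\cup T]$, which has at most $2d^{t-1}$ vertices. Since $G$ has girth at least $2t+1$, this subgraph is $C_{2t}$-free. Lemma~\ref{BondySimonovits} with $k=t$ then gives
\[ e(G[S\cup T]) < 100t\,(2d^{t-1})^{1+1/t} = O\bigl(d^{(t-1)(t+1)/t}\bigr) = O\bigl(d^{t-1/t}\bigr). \]
Summing over the four pairs $(w,w')$, Case 3 contributes $O(d^{t-1/t})$ edges.

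Combining the three cases,
\[ |A \setminus A_1| = O(d^{t-1}) + O(d^{t-1/t}) < \frac{d^t}{\log^{24} d} \]
for $d$ large in terms of $t$, and this is the claimed bound. The only point needing care is the bookkeeping: the definition of $A_1$ uses exact spheres $N_{t-1}$, while adjacency in $L(G)^t$ involves distance at most $t-1$. The boundary cases, where a relevant distance is strictly less than $t-1$, are exactly those absorbed by Case 1.

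The main substantive step is Case 3. Here the girth hypothesis enters through Lemma~\ref{BondySimonovits}, and the exponent saving $d^{-1/t}$ over the trivial bound $d^t$ is what beats the polylogarithmic loss.
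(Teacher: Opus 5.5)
Your proof is correct and follows essentially the same route as the paper, which also writes $A = A_1 \cup A_2 \cup A_3$, where $A_2$ is the set of edges with an endpoint within distance $t-2$ of one of $u,v,u',v'$ (at most $4d^{t-1}$ edges) and $A_3$ is the set of edges joining $N_{t-1}(w)$ to $N_{t-1}(w')$, bounded by $O(td^{t-1/t})$ via Bondy--Simonovits on a $C_{2t}$-free graph with fewer than $2d^{t-1}$ vertices. Your explicit case analysis for why these three sets cover $A$ is slightly more careful than the paper's, which calls it a simple observation; in particular, your balls $B_{\le t-2}$ include distance $0$, whereas the paper's formula for $A_2$ starts at $N_1$.
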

\begin{proof}
    We define two additional edge sets $A_2, A_3 \subseteq A$ so that $A = A_1 \cup A_2 \cup A_3$. Let
    \[
        A_2 \coloneqq \bigcup_{ \substack{w \in \{u,v,u',v'\} \\ 1 \leq i \leq t-2}} \{xy \in A: x \in N_i(w) \}
    \]
    denote the set of edges in $A$ with an endpoint within distance $t-2$ of one of $u,v,u',v'$. We define 
    \[
    A_3 \coloneqq \bigcup_{(w,w') \in \{u,v\} \times \{u',v'\} } \{xy \in A: x \in N_{t-1}(w) \text{ and } y \in N_{t-1}(w') \}.
    \]
    It is a simple observation that $A = A_1 \cup A_2 \cup A_3$.

    As $G$ is $d$-regular, the number of vertices in $\bigcup_{1 \leq i \leq t-2} N_i(w)$ for a given vertex $w \in V(G)$ is at most $d^{t-2}$. Therefore, the number of edges with an endpoint in $\bigcup_{1 \leq i \leq t-2} N_i(w)$ is at most $d^{t-1}$, implying $|A_2| \leq 4d^{t-1}$.

    To
    estimate $|A_3|$,
    observe that 
    every edge in $A_3$ has both endpoints in 
    $ N_{t-1}(w) \cup N_{t-1}(w')$.
    As $| N_{t-1}(w) \cup N_{t-1}(w')|<2d^{t-1}$, 
    Lemma~\ref{BondySimonovits} implies that $|A_3|= O(td^{t-1/t})$.
    Therefore, $|A| < |A_1| + |A_2| + |A_3| < |A_1| + \frac{d^t}{\log^{24} d}$ for $d$ sufficiently large.
\end{proof}

\begin{lemma}
    If $e$ and $e'$ are not incident, then they are strangers.
\end{lemma}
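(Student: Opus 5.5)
We must show that if $e=uv$ and $e'=u'v'$ share no endpoint, then $|N_{L(G)^t}(e)\cap N_{L(G)^t}(e')| < \frac{\Delta}{\log^{20}\Delta}$, where $\Delta = 2d^t$. The plan is to apply Lemma~\ref{claim:A123-girth} with $A = N_{L(G)^t}(e)\cap N_{L(G)^t}(e')$. That lemma reduces the task to bounding $|A_1|$, since
\[
|A| < |A_1| + \frac{d^t}{\log^{24} d}.
\]
Here $A_1$ consists of edges $xy\in A$ having an endpoint $x\in N_{t-1}(w)\cap N_{t-1}(w')$ for some $(w,w')\in\{u,v\}\times\{u',v'\}$.

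Because $e$ and $e'$ are not incident, every such pair satisfies $w\neq w'$. Lemma~\ref{claim:t-1 girth} therefore applies to each pair and gives
\[
|N_{t-1}(w)\cap N_{t-1}(w')| < t\,d^{t-2}.
\]
Each vertex $x$ has degree $d$, so it lies in at most $d$ edges $xy$. Summing over the four pairs $(w,w')$ gives
\[
|A_1| \le 4\cdot t d^{t-2}\cdot d = 4t\,d^{t-1}.
\]

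Combining the two bounds,
\[
|A| < 4t\,d^{t-1} + \frac{d^t}{\log^{24} d}.
\]
For fixed $t$ and $d$ large, $4t\,d^{t-1} = o(d^t/\log^{24} d)$, so $|A| < \frac{2d^t}{\log^{24} d}$. Since $\log \Delta = \log 2 + t\log d \le (t+1)\log d$, we get
\[
\frac{\Delta}{\log^{20}\Delta} \ge \frac{2d^t}{(t+1)^{20}\log^{20} d},
\]
and this exceeds $\frac{2d^t}{\log^{24} d}$ once $\log^4 d > (t+1)^{20}$. Hence $e$ and $e'$ have fewer than $\frac{\Delta}{\log^{20}\Delta}$ common neighbours in $L(G)^t$, so they are not friends and are therefore strangers.

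The step needing the most care is confirming that Lemma~\ref{claim:t-1 girth} applies. This requires $w\ne w'$ for each of the four pairs, which is exactly the non-incidence hypothesis. One must also note that $A_1$ counts edges through their endpoint $x$, so multiplying by the degree $d$ is the correct bound. Everything else is absorbed by taking $d$ sufficiently large relative to the fixed parameter $t$.
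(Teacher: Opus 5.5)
Your proof is correct and follows the paper's argument: you apply Lemma~\ref{claim:A123-girth} to $A = N_{L(G)^t}(e)\cap N_{L(G)^t}(e')$, and you bound $|A_1| = O(td^{t-1})$ using Lemma~\ref{claim:t-1 girth}, which applies because non-incidence forces $w \neq w'$. The only difference is presentation: the paper argues by contradiction (it assumes $|A| \geq \Delta/\log^{20}\Delta > d^t/\log^{21} d$ and derives $|A_1| > d^t/(2\log^{21} d)$), whereas you argue directly and spell out the final comparison between $\Delta/\log^{20}\Delta$ and $2d^t/\log^{24} d$.
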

\begin{proof}
    Suppose that $e$ and $e'$ are friends. Then, 
    \[|N_{L(G)^t} (e) \cap N_{L(G)^t} (e') | > \frac{\Delta}{\log^{20} \Delta} > \frac{d^t}{\log^{21} d}.\]
    Letting $A = N_{L(G)^t} (e) \cap N_{L(G)^t} (e')$, the above 
    implies that  for the set $A_1$ defined in Lemma~\ref{claim:A123-girth} with respect to $A$,
    $|A_1| > \frac{d^t}{2 \log^{21 }d } $. 
    However, $|A_1| = O(td^{t-1})$ by Lemma~\ref{claim:t-1 girth}, a contradiction.
\end{proof}

The following lemma will play a key role in our proof of Condition~\eqref{item:main-3b} of Theorem~\ref{thm:general} with our choice of the sets $B^X$.

\begin{lemma}
    Let $v \in V(G)$, and let $w,w'\in N(v)$.
    Then, \[|N_{L(G)^t}(vw) \cap N_{L(G)^t}(vw') \cap E_{t+1}(v)| < \frac{2d^t}{\log^{24} d}.\]
\end{lemma}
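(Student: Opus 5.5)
The plan is to apply Lemma~\ref{claim:A123-girth} to the two edges $uv = vw$ and $u'v' = vw'$ with
\[
A \coloneqq N_{L(G)^t}(vw) \cap N_{L(G)^t}(vw') \cap E_{t+1}(v).
\]
This gives $|A| < |A_1| + \frac{d^t}{\log^{24} d}$. It therefore suffices to show that $|A_1| < \frac{d^t}{\log^{24} d}$. I assume $w \neq w'$, which is the case in which the lemma is used (for distinct edges of a set $X$).

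Recall that $A_1$ consists of the edges $xy \in A$ (with either endpoint playing the role of $x$) such that $x \in N_{t-1}(a) \cap N_{t-1}(b)$ for some $(a,b) \in \{v,w\} \times \{v,w'\}$. The first step is to rule out every pair involving $v$. By the definition of the layers $E_i(v)$, an edge of $E_{i}(v)$ has both endpoints at distance at least $i-1$ from $v$. An induction on $i$ gives this: an edge of $E_i(v)$ is not in $E_1(v)\cup\dots\cup E_{i-1}(v)$, so neither endpoint is closer. Hence both endpoints of an edge of $E_{t+1}(v)$ are at distance at least $t$ from $v$. In particular, no endpoint of an edge of $A$ lies in $N_{t-1}(v)$. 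This eliminates the pairs $(v,v)$, $(v,w')$ and $(w,v)$, and leaves only $(a,b) = (w,w')$.

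For the remaining pair, $w$ and $w'$ are distinct vertices. Lemma~\ref{claim:t-1 girth} gives $|N_{t-1}(w) \cap N_{t-1}(w')| < t d^{t-2}$. Each such vertex $x$ is an endpoint of at most $d$ edges. Therefore
\[
|A_1| \leq d \cdot t d^{t-2} = t d^{t-1} < \frac{d^t}{\log^{24} d}
\]
for $d$ sufficiently large in terms of $t$. Combining this with Lemma~\ref{claim:A123-girth} yields $|A| < \frac{2d^t}{\log^{24} d}$, as claimed.

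The only step requiring care is the distance claim for $E_{t+1}(v)$. One must confirm that the layered definition of the sets $E_i(v)$ indeed puts both endpoints outside $N_{\le t-1}(v)$, so that every pair involving $v$ contributes nothing to $A_1$. Everything else is a direct citation of Lemmas~\ref{claim:t-1 girth} and~\ref{claim:A123-girth}.
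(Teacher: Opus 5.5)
Your proposal is correct and matches the paper's proof: you apply Lemma~\ref{claim:A123-girth} to $vw$ and $vw'$, use that no edge of $E_{t+1}(v)$ has an endpoint in $N_{t-1}(v)$ to reduce $A_1$ to the $(w,w')$ term, and bound that term by $d\cdot td^{t-2}$ via Lemma~\ref{claim:t-1 girth}. Your explicit assumption $w\neq w'$ is warranted. The paper's argument implicitly needs it too, since Lemma~\ref{claim:t-1 girth} is stated only for distinct pairs and the bound fails when $w=w'$.
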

\begin{proof}
For $A = N_{L(G)^t}(vw) \cap N_{L(G)^t}(vw') \cap E_{t+1}(v)$, let
\[
A_1 \coloneqq  \left \{ xy \in A: x \in N_{t-1}(w) \cap N_{t-1} (w') \right \}.
\]
By Lemma~\ref{claim:t-1 girth}, $|A_1| < td^{t-1}$.
Furthermore, by the definition of distance, 
no edge of $E_{t+1}(v)$ has an endpoint in $N_{t-1}(v)$.
Therefore, as $A \subseteq E_{t+1}(v)$,
the set $A_1$ defined here is the same as the set $A_1$ defined in Lemma~\ref{claim:A123-girth}, and hence 
\[|A| < |A_1| + \frac{d^t}{\log^{24} d} < \frac{2d^t}{\log^{24} d},\]
as desired.
\end{proof}

With these lemmas in hand, we  now show that $L(G)^t$ satisfies the conditions of 
Theorem~\ref{thm:general} with $\gamma = \frac 12$, $N = 1$, and our value $\epsilon >0$.
As $\Delta = 2d^2$, $L(G)^2$ has maximum degree at most $\Delta$.
We verify each condition separately:
\begin{enumerate}
    \item We have defined a symmetric friend relation $\mathcal F \subseteq E(G) \times E(G)$
    so that two edges are strangers if and only if they have at most $\Delta/\log^{20} \Delta$ common neighbors in $L(G)^t$, so Condition~\eqref{item:main-1} holds.

    \item Next, 
    we check Condition~\eqref{item:main-2}.
    We define a family $\mathcal X$ as follows.
    For each $v \in V(G)$,
    we define a set $X_v$ consisting of all edges incident to $v$. Observe that $|X_v| = d < \Delta^{\gamma}$, so Condition~\eqref{item:main-2a} holds.
    Next, for each $e \in E(G)$ with endpoints $u$ and $v$,
    all friends of $e$ belong to the size-$2$ family $\{X_u,X_v\}$;
    therefore,
    Condition~\eqref{item:main-2b} holds.
    Finally, for each $e \in E(G)$ with endpoints $u$ and $v$, $e$ does not belong to any set in $\mathcal X$ except for $X_u$ and $X_v$;
    therefore, Condition~\eqref{item:main-2c} holds as well.

    \item Finally, we check that Condition~\eqref{item:main-3} holds.
    For each set $X_v \in \mathcal X$,
    let $B^{X_v} = E_{t+1}(v)$.
    By Lemma~\ref{claim:B-girth},
    each edge $e \in X_v$ of the form $uw$ has at least $d|N_{t-1}(w)| - d^t \log^{-12}(2d^t)$ 
    neighbors in $B^X$.
    As $d_{L(G)^t}(e) < d|N_{t-1}(w)| + d^t$, we have 
    \begin{align*}
        d_{L(G)^t}(e) -  \Delta (1 - \gamma + \log^{-10} \Delta) &< d|N_{t-1}(w)|
    - \frac{\Delta}{\log^{10} \Delta} \\
    &< d|N_{t-1}(w)| - d^t \log^{-12}(2d^t),
    \end{align*}
    implying Condition~\eqref{item:main-3a} holds.
    Furthermore, for any pair $w,w' \in N(v)$,
    Lemma~\ref{claim:B-girth} implies that $vw$ and $vw'$ have at most $\frac{\Delta}{\log^{24} d} < \frac{\Delta}{\log^{20} \Delta} $ common neighbors in $B^X$. 
    Therefore Condition~\eqref{item:main-3b} holds.
\end{enumerate}
Therefore, by Theorem~\ref{thm:general}, $\chi_{\ell}(L(G)^t) \leq (1 + \epsilon) \frac{\Delta}{\log \Delta}$.
As a result, $G$ admits a proper distance-$t$ edge coloring with at most
\[(1 + \epsilon) \frac{2d^t}{\log (2d^t)} < (1 + \epsilon) \frac{2d^t}{t\log d}\]
colors, completing the proof of Theorem~\ref{thm:main}.

\subsection{Proof of Theorem~\ref{thm: spectral}}

We consider a $d$-regular $n$-vertex graph with $n \geq d^{2t}$ and with second eigenvalue $\lambda \leq C\sqrt d$, where $C \geq 2$ is some constant.
We  aim to show that the graph $L(G)^t$
has a proper coloring with $(1 + o(1)) \frac{2d^t}{t \log d}$ colors using Theorem~\ref{thm:general}.
Similar to the proof of Theorem~\ref{thm: girth},
we let $\Delta = 2d^t$, and we observe that the maximum degree of $L(G)^t$ is at most $\Delta$.
We aim to apply Theorem~\ref{thm:general} 
to $L(G)^t$
with an arbitrary value $\epsilon > 0$,
$\gamma = \frac 12$, and 
$\Delta = 2d^t$.

Before considering $L(G)^t$, we first observe some properties of $G$.
As $G$ is a spectral expander, many important properties of $G$ can be derived from the expander mixing lemma:
\begin{lemma}[{\cite[Lemma 2.5]{Hoory}}]
\label{lem:expander-mixing}
 Let $S,T \subseteq V(G)$, let $e(S,T)$ be the number of ordered pairs $(u,v)$ for which $u \in S$, $v \in T$, and $uv \in E(G)$.
 Then, $ \left | e(S,T) - \frac dn |S| |T| \right | \leq \sqrt{\lambda |S| |T|}$.
\end{lemma}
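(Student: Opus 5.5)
The plan is to follow the standard spectral proof of the expander mixing lemma, which gives the stated inequality exactly when $\lambda$ is read as the quantity the argument naturally produces. Write $A = A(G)$ with orthonormal eigenvectors $v_1 = \mathbf 1/\sqrt n, v_2, \dots, v_n$ and eigenvalues $d = \lambda_1 \geq \lambda_2 \geq \cdots \geq \lambda_n$. Let $\lambda$ be the largest of $|\lambda_2|, \dots, |\lambda_n|$. The steps are:
\begin{enumerate}
    \item Write $e(S,T) = \mathbf 1_S^{\top} A \mathbf 1_T$, which counts ordered pairs as in the statement.
    \item Expand $\mathbf 1_S = \sum_i a_i v_i$ and $\mathbf 1_T = \sum_i b_i v_i$, where $a_1 = |S|/\sqrt n$ and $b_1 = |T|/\sqrt n$.
    \item This gives $e(S,T) = \sum_i \lambda_i a_i b_i$, and the $i=1$ term is exactly $\frac dn |S||T|$.
    \item Bound the remainder by Cauchy--Schwarz and Parseval:
    \[\Bigl|\sum_{i \geq 2} \lambda_i a_i b_i\Bigr| \leq \lambda \Bigl(\sum_{i\ge2} a_i^2\Bigr)^{1/2}\Bigl(\sum_{i\ge 2} b_i^2\Bigr)^{1/2} \leq \lambda \sqrt{|S|\,|T|},\]
    since $\sum_i a_i^2 = |S|$ and $\sum_i b_i^2 = |T|$.
\end{enumerate}
Keeping the factors $(1-|S|/n)^{1/2}(1-|T|/n)^{1/2}$ from Parseval gives a slightly sharper form if it is needed later.

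The main obstacle is the step from $\lambda\sqrt{|S||T|}$ to the stated form $\sqrt{\lambda|S||T|}$. This cannot be done if $\lambda$ means the second eigenvalue of $A$ itself. For example, take $T = \{v\}$ and $S = N(v)$. Then $e(S,T) = d$, while $\frac dn|S||T| = d^2/n$ is negligible, and $\sqrt{\lambda d} \leq \sqrt C\, d^{3/4}$ is smaller than $d$.

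So the stated form holds once $\lambda$ in the statement is read as the spectral parameter whose square root equals the largest nontrivial eigenvalue modulus of $A$. This is the same as running the argument above with $A$ replaced by $A^2$, whose nontrivial eigenvalues are the $\lambda_i^2$, and renaming the parameter. Under that reading the display in step 4 is literally $\sqrt{\lambda|S||T|}$.

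Later in the proof of Theorem~\ref{thm: spectral} I would therefore use this lemma only in the form $\bigl|e(S,T) - \frac dn|S||T|\bigr| \leq C\sqrt d\,\sqrt{|S||T|}$. I would check that every subsequent estimate survives with this bound. This should be harmless: the relevant sets there have size about $d^{t-1}$, so the error $O(d^{t-1/2})$ is still $o(d^t/\log^{24} d)$.
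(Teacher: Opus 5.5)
Your spectral argument (expand $\mathbf 1_S,\mathbf 1_T$ in an orthonormal eigenbasis, split off the $\mathbf 1/\sqrt n$ term, and apply Cauchy--Schwarz to the rest, with $\lambda=\max_{i\ge 2}|\lambda_i|$) is exactly the standard proof of the cited Hoory--Linial--Wigderson lemma, which the paper quotes without proof. You are also right that the display should read $\lambda\sqrt{|S||T|}$. Your example $T=\{v\}$, $S=N(v)$ refutes the literal $\sqrt{\lambda|S||T|}$, and the paper itself uses the corrected form in Lemma~\ref{lem:bdd-cut}, where it writes $e(S,T)\le\sqrt{|S||T|}\,\bigl(\frac dn\sqrt{|S||T|}+\lambda\bigr)$.

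The one slip is your aside about $A^2$. Running the argument on $A^2$ controls $\mathbf 1_S^{\top}A^2\mathbf 1_T$, the number of length-two walks from $S$ to $T$, not $e(S,T)$. So you should treat this as a typo in the statement, not as a reinterpretation of $\lambda$.
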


We often use the following weaker corollary of the expander mixing lemma.

\begin{lemma}
\label{lem:bdd-cut}
Let $S, T \subseteq V(G)$, and suppose that $|S||T| \leq d^{4t-2}$.
    Then, $e(S,T) < 2 \lambda\sqrt{|S||T|}$.
\end{lemma}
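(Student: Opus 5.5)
We will derive this directly from Lemma~\ref{lem:expander-mixing} by comparing the main term $\frac dn|S||T|$ with the error term. Lemma~\ref{lem:expander-mixing} gives
\[
e(S,T) \leq \frac dn |S||T| + \sqrt{\lambda |S||T|}.
\]
(In the standard form the error term is $\lambda\sqrt{|S||T|}$, and we will use that form, which is what the conclusion $2\lambda\sqrt{|S||T|}$ suggests. Since $\lambda \geq 1$ for nontrivial regular graphs, $\sqrt{\lambda|S||T|} \leq \lambda\sqrt{|S||T|}$ anyway.) So it suffices to show that $\frac dn |S||T| < \lambda\sqrt{|S||T|}$.

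Write $x = \sqrt{|S||T|}$, so the hypothesis reads $x \leq d^{2t-1}$. The needed inequality becomes $\frac dn x < \lambda$. Using $n \geq d^{2t}$, we get
\[
\frac dn x \leq \frac{d \cdot d^{2t-1}}{d^{2t}} = 1.
\]
It then remains to check that $\lambda > 1$. Here we may need a lower bound on $\lambda$; if $\lambda$ denotes the second largest eigenvalue in absolute value, the standard bound for $d$-regular graphs with $n \gg d$ vertices is $\lambda \geq (1-o(1))\sqrt{d}$, certainly at least $1$ for large $d$ (for instance, from $\mathrm{tr}(A^2) = nd \leq d^2 + (n-1)\lambda^2$). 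This gives $e(S,T) \leq x + \lambda x \leq 2\lambda x$.

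Strictness can be secured by the slack in the lower bound $\lambda^2 \geq \frac{nd-d^2}{n-1}$, which is much larger than $1$. If $S$ or $T$ is empty, both sides are zero, so we assume they are nonempty (or note the degenerate case separately).

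The only genuine obstacle is pinning down which form of the mixing lemma and which definition of $\lambda$ are in use, so that $\lambda \geq 1$ is justified. The trace argument handles that step cleanly, and the rest is the one-line computation above.
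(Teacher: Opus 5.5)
Your proof is correct and follows the paper's argument: apply the expander mixing lemma in its standard form with error term $\lambda\sqrt{|S||T|}$, use $n \ge d^{2t}$ and $|S||T| \le d^{4t-2}$ to get $\frac dn\sqrt{|S||T|} \le 1$, and absorb the main term using $\lambda > 1$. The paper simply asserts $1 < \lambda$, so two of your additions fill small gaps without changing the approach: the trace bound $\lambda^2 \ge \frac{nd-d^2}{n-1}$ (with $\lambda$ the second largest eigenvalue in absolute value, which is what the mixing lemma requires), and your observation that the strict inequality needs $S,T \neq \emptyset$.
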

\begin{proof}
    By the expander mixing lemma (Lemma~\ref{lem:expander-mixing}),
    \[
 e(S,T) \leq \sqrt{|S| |T|}  \left ( \frac dn  \sqrt{|S| |T|}+ \lambda \right ).
    \]
    Since $n \geq d^{2t}$ and  $|S||T| \leq d^{4t-2}$, we have $\frac dn \sqrt{|S||T|} \leq 1 < \lambda$. Therefore, $e(S,T) < 2\lambda \sqrt{|S||T|}$.
\end{proof}

We say that two vertices in $V(G)$ are friends if their codegree in $G^t$ is at least $\frac{d^t}{\log^{25} d}$.
The following lemma shows that each $v \in V(G)$ has few friends.

\begin{lemma}
\label{lem:few-friends-dist-t}
    Each $v \in V(G)$ has at most $4C^{2t} \log^{60} (d^t)$ friends.
\end{lemma}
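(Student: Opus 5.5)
We prove the bound by double counting pairs at distance at most $t$, bounding the count from above with the spectral decomposition of powers of the adjacency matrix. This is the walk-counting analogue of the expander mixing lemma (Lemma~\ref{lem:expander-mixing}).

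\textbf{Setup and lower bound.} Fix $v$ and let $S$ be its set of friends. Let $T = N_{G^t}(v)$, so $|T| \le 1 + d + \dots + d^t \le 2d^t$. Every friend $a \in S$ has at least $\frac{d^t}{\log^{25} d}$ common neighbours with $v$ in $G^t$, and all of them lie in $T$. Hence the number $P$ of ordered pairs $(a,b) \in S \times T$ with $\dist_G(a,b) \le t$ satisfies $P \ge |S| \frac{d^t}{\log^{25} d}$.

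\textbf{Upper bound.} Every such pair is joined by a walk of length exactly $j$ for some $1 \le j \le t$, so
\[
P \le \sum_{j=1}^t \mathbf 1_S^{\top} A(G)^j \mathbf 1_T .
\]
Expand $\mathbf 1_S$ and $\mathbf 1_T$ in an orthonormal eigenbasis of $A(G)$, whose top eigenvector is the constant vector with eigenvalue $d$. This gives
\[
\mathbf 1_S^{\top} A(G)^j \mathbf 1_T \le \frac{d^j}{n}|S||T| + \lambda^j \sqrt{|S||T|}.
\]
Here we need every nontrivial eigenvalue to have absolute value at most $\lambda$. This is the convention under which Lemma~\ref{lem:expander-mixing} is stated, and we interpret $\lambda(G)$ this way. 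Summing over $j$ and using $\lambda \le C\sqrt d$ with $C \ge 2$, the second terms total at most $2C^t d^{t/2}\sqrt{|S||T|} \le 2\sqrt2\, C^t d^t \sqrt{|S|}$.

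The first terms total at most $2d^t|S||T|/n \le 4 d^{2t}|S|/n \le 4|S|$, using $n \ge d^{2t}$. This is negligible against the lower bound $|S| d^t/\log^{25} d$, so we absorb it, for instance into half of the left-hand side.

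\textbf{Conclusion.} Combining the two bounds gives $\frac{|S| d^t}{2\log^{25} d} \le 2\sqrt 2\, C^t d^t \sqrt{|S|}$, hence $\sqrt{|S|} \le 4\sqrt2\, C^t \log^{25} d$. Therefore
\[
|S| \le 32\, C^{2t}\log^{50} d \le 4C^{2t}\log^{60}(d^t)
\]
for $d$ large in terms of $t$.

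The only delicate step is the walk-counting inequality for $A(G)^j$. It relies on the interpretation of $\lambda$ as the largest nontrivial eigenvalue in absolute value, since a very negative eigenvalue would break the bound for odd $j$. If the paper means only the second largest eigenvalue, we would instead need a separate bound on the smallest eigenvalue. Everything else is routine bookkeeping of constants.
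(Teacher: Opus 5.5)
Your proof is correct and is essentially the paper's. Both compare the friend-codegree lower bound with an expander-mixing upper bound on walk counts between the friend set and the $t$-neighbourhood of $v$. The paper first discards common neighbours at distance less than $t$, which weakens $\log^{25}d$ to $\log^{30}d$, and then applies the mixing lemma once to the multigraph with adjacency matrix $A^t$; you instead keep all distances and sum the spectral bound over $j=1,\dots,t$. Your reading of $\lambda$ as the largest nontrivial eigenvalue in absolute value is the convention the paper's own proof, and the expander mixing lemma it cites, implicitly rely on, so your caveat applies equally to the paper's argument.
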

\begin{proof}
Consider a friend $w$ of $v$. Since at most $2d^{t-1}$ vertices in $G$ have a distance of at most $t-1$ from $v$ or from $w$,
it follows that at least $\frac{d^t}{\log^{25} d} - 2d^{t-1} > \frac{d^t}{\log^{30} d} \eqqcolon h$ 
vertices of $G$ have a distance of exactly $t$ from both $v$ and $w$.
Now, let $A$ be the adjacency matrix of $G$.
Let $G(A^t)$
be the multigraph (possibly with loops and parallel edges) whose adjacency matrix is equal to $A^t$.
In other words, 
the number of edges joining
two vertices $u,u' \in V(G(A^t)) = V(G)$
is equal to the number of walks
in $G$ of length exactly $t$ from $u$ to $u'$.
We write $N$ for the neighborhood of $v$ in the graph $G(A^t)$,
and we note that $|N| \leq d^t$.
We observe that for each friend $w$ of $v$, $w$ has at least $h$ neighbors in $N$
in the graph $G(A^t)$.

Note that $G(A^t)$ is $d^t$-regular and has a spectrum equal to the spectrum of $A$ raised to the power of $t$.
Furthermore, by the Perron-Frobenius theorem,
if $G$ is not bipartite or if $t$ is odd, then the largest eigenvalue of $G(A^t)$ is $d^t$, and the second largest eigenvalue of $G(A^t)$ is $\lambda^t \leq C^t d^{t/2}$.
If $G$ is bipartite and $t$ is even, then $G^t$ contains two components, each with a largest eigenvalue of $d^t$ and a second largest eigenvalue of $\lambda^t \leq C^t d^{t/2}$.

Now,  let $F \subseteq V(G)$
be the set of vertices that are friends with
$v$.
Since each $w \in F$ has at least $h$ neighbors in $F$ via $G(A^t)$,
the number of edges of $G(A^t)$ with one endpoint in  $F$ and the other $N$, counted with multiplicity, 
is at least $h |F|$.
By the expander mixing lemma,
\[
h |F| \leq e(F,N) \leq \frac{d^t |N| |F| }{n} + \lambda^t \sqrt{|N| |F|} \leq \frac{d^{2t} |F| }{n} + C^t d^{t} \sqrt{ |F|} .
\]
As each $w \in F$ has distance at most $2$ from $v$ in $G(A^t)$,
$\sqrt{|F|} \leq d^t$, so that 
\[
h |F| \leq \frac{ d^{3t} \sqrt{|F|} }{n} + C^t d^{t} \sqrt{|F|}  < 2C^t d^{t} \sqrt{|F|},
\]
where we use the fact that $n \geq d^{2t}$.
Rearranging, we have
\[
|F| \leq \frac{4C^{2t} d^{2t}}{h^2} = 4C^{2t} \log^{60} d,
\]
as desired.
\end{proof}

\begin{lemma}
\label{lem:t-1 intersection}
    If $v,w \in V(G)$ are strangers, then $|N_{t-1}(v) \cap N_{t-1}(w)| < \frac{d^{t-1}}{\log^{24} d}$.
\end{lemma}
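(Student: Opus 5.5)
We prove the contrapositive: if $S \coloneqq N_{t-1}(v) \cap N_{t-1}(w)$ satisfies $|S| \geq \frac{d^{t-1}}{\log^{24} d}$, then $v$ and $w$ have codegree at least $\frac{d^t}{\log^{25} d}$ in $G^t$, so they are friends. The idea is that almost every neighbor of a vertex of $S$ lies at distance exactly $t$ (or at most $t$) from both $v$ and $w$. So the codegree of $v,w$ in $G^t$ should be about $d|S|$, up to a small error controlled by Lemma~\ref{lem:bdd-cut}.

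The first step is to consider the set $T \coloneqq N(S) = \{y : y \text{ adjacent to some } x \in S\}$. Every $y \in T$ is within distance $t$ of both $v$ and $w$, hence lies in $N_{G^t}(v) \cap N_{G^t}(w)$, apart from $v,w$ themselves. So it suffices to show $|T| \geq \frac{d^t}{\log^{25} d}$.

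The second step double-counts edges between $S$ and $T$. Since $G$ is $d$-regular, $e(S,T) = d|S| \geq \frac{d^t}{\log^{24} d}$. We also have $|S| \leq d^{t-1}$, and we may assume $|T| \leq d^t$: if $|T| > d^t$ we are already done, or we replace $T$ by a subset and argue directly. Then $|S||T| \leq d^{2t-1} \leq d^{4t-2}$, so Lemma~\ref{lem:bdd-cut} gives
\[
e(S,T) < 2\lambda\sqrt{|S||T|} \leq 2C\sqrt{d}\,\sqrt{|S||T|}.
\]
Combining the two, $d|S| < 2C\sqrt{d|S||T|}$, so $|T| > \frac{d|S|}{4C^2}$.

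The remaining difficulty is that this bound is off by a constant factor $4C^2$, and $\log^{24}$ against $\log^{25}$ absorbs it only if the loss is constant. Here it is: $|T| > \frac{d \cdot d^{t-1}}{4C^2 \log^{24} d} = \frac{d^t}{4C^2\log^{24} d} \geq \frac{d^t}{\log^{25} d}$ once $\log d \geq 4C^2$, which holds for $d$ large. The codegree of $v$ and $w$ in $G^t$ is therefore at least $|T| - 2 \geq \frac{d^t}{\log^{25} d}$ after a tiny adjustment, so $v$ and $w$ are friends. This contradicts the assumption that they are strangers.

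The main point to check is the application of Lemma~\ref{lem:bdd-cut}: it needs $|S||T| \leq d^{4t-2}$, which holds since $|T| \leq |S|\,d \leq d^t$ automatically. It also counts ordered pairs, which for disjoint or overlapping $S,T$ still upper-bounds $d|S|$, because every edge from $S$ lands in $T$ by definition. No case analysis on bipartiteness is needed, because we only use the upper bound from the expander mixing lemma.
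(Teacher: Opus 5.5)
Your proof is correct and is the paper's argument run as a contrapositive. Both take $S = N_{t-1}(v)\cap N_{t-1}(w)$ and $T = N(S)$, use $e(S,T) = d|S|$ with Lemma~\ref{lem:bdd-cut} to get $|T| > d|S|/(4C^2)$, and compare against the stranger bound $|T| \lesssim d^t/\log^{25} d$. You are slightly more careful than the paper: you verify the hypothesis $|S||T| \le d^{4t-2}$ of Lemma~\ref{lem:bdd-cut}, and you discount $v,w$ from $T$, which matters when $t=2$.
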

\begin{proof}
    Write $S = N_{t-1}(v) \cap N_{t-1}(w)$.
    Let $T = N_G(S)$, so that $e(S,T) = d |S|$.
    By Lemma~\ref{lem:bdd-cut},
    \[
    d|S| = e(S,T)  < 2 \lambda \sqrt{|S||T|}.
    \]
    Rearranging, we have
    \[
    |S| < \frac{4 \lambda^2 |T|}{d^2} = \frac{4C^2 |T|}{d}.
    \]
    Since $T \subseteq N_{G^t}(v) \cap N_{G^t}(w)$ 
    and $v$ and $w$ are strangers, $|T| \leq \frac{d^t}{\log^{25} d}$.
    Therefore,
    \[
    |S| < \frac{4C^2 d^{t-1} }{\log^{25} d} < \frac{d^{t-1}}{\log^{24} d},
    \]
    for $d$ sufficiently large.
\end{proof}

\begin{lemma}
    For each $v \in V(G)$ and $i \in \{1, \dots, t\}$, $|N_{i}(v)| \geq \frac{d^i}{(5C^2)^{i-1}}$.
\end{lemma}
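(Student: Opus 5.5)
We prove the bound by induction on $i$, using Lemma~\ref{lem:bdd-cut} to show that each sphere grows by a factor of roughly $d/(5C^2)$ over the previous one. For $i = 1$ the claim is $|N_1(v)| = d$, which holds since $G$ is $d$-regular.

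For the inductive step, fix $1 \leq i \leq t-1$. Let $S = N_i(v)$ and $T = N_{i-1}(v) \cup N_i(v) \cup N_{i+1}(v)$. Every neighbor of a vertex of $S$ lies in $T$, so
\[
e(S,T) = d|S|.
\]
We then check the size hypothesis of Lemma~\ref{lem:bdd-cut}. Ignoring the constant $1+d+\dots$ factors, which only shift exponents harmlessly, we have $|S| \leq d^{i} \le d^{t-1}$ and $|T| \leq 3d^{i+1} \le 3d^{t}$. Hence
\[
|S||T| \leq 3d^{2t-1} \leq d^{4t-2}
\]
for $t \ge 1$ and $d$ large. Lemma~\ref{lem:bdd-cut} therefore gives
\[
d|S| < 2\lambda\sqrt{|S||T|},
\]
and squaring,
\[
|T| > \frac{d^2 |S|}{4\lambda^2} \geq \frac{d|S|}{4C^2}.
\]
This lower-bounds $|N_{i-1}(v)| + |N_i(v)| + |N_{i+1}(v)|$ rather than $|N_{i+1}(v)|$ alone, so we must subtract the inner spheres. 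The sizes $|N_{i-1}(v)|$ and $|N_i(v)| = |S|$ are of order $d^{i-1}$ and $d^{i}$, which is $O(|S|)$ against a target of order $d|S|/C^2$. One way to justify $|N_{i-1}(v)| \leq |S|$ is to use the fact that, by induction, the spheres grow; alternatively, bound $|N_{i-1}(v)|\le d^{i-1}\le d^{i-1}(5C^2)^{i-1}\cdot |S|/d^{i}\cdot$ const. Either way,
\[
|N_{i+1}(v)| > \frac{d|S|}{4C^2} - 2|S| - d^{i-1} \geq \frac{d|S|}{5C^2}
\]
for $d$ large enough in terms of $C$ and $t$. Combining this with the inductive hypothesis $|S| \ge d^i/(5C^2)^{i-1}$ yields $|N_{i+1}(v)| \ge d^{i+1}/(5C^2)^{i}$.

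The main obstacle is this subtraction step: the expander mixing lemma controls all of $N_G(S)$, not just the next sphere. The fix is the slack between the constants $4C^2$ and $5C^2$, which absorbs the $O(|S|)$ contribution of the inner layers, since $d/(4C^2) - d/(5C^2) = d/(20C^2) \gg 2 + d^{i-1}/|S|$. By the induction hypothesis $d^{i-1}/|S| \le (5C^2)^{i-1}/d$, which is $o(1)$ because $C$ and $t$ are constants.

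A secondary point to confirm is that $\lambda \ge 1$ is used inside Lemma~\ref{lem:bdd-cut}; this holds for $d$ large. We also need $\lambda > 0$ so that the division above is valid. If $\lambda$ is tiny, replace it by $C\sqrt d$ throughout, since Lemma~\ref{lem:bdd-cut} holds with $\lambda$ replaced by any upper bound that is at least $1$.
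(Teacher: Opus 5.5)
Your proof is correct and follows essentially the same route as the paper. You induct on the sphere index and apply Lemma~\ref{lem:bdd-cut} with $S$ the current sphere and $T$ a set containing all neighbors of $S$, giving $|T| \geq d|S|/(4C^2)$; the slack between $4C^2$ and $5C^2$ then absorbs the inner layers and yields the stronger ratio bound $|N_{i+1}(v)| \geq \frac{d}{5C^2}|N_i(v)|$. The only difference is cosmetic: the paper takes $T$ to be the whole ball and bounds its inner part by $2|S|$ using geometric growth, whereas your three-sphere choice $T = N_{i-1}(v)\cup N_i(v)\cup N_{i+1}(v)$ only requires bounding $|N_{i-1}(v)|$, which you do via the inductive hypothesis.
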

\begin{proof}
    We use induction to prove the stronger statement
    $|N_i(v)| \geq \frac{d}{5C^2} |N_{i-1}(v)|$ for $i \in [t]$, where $N_0(v) = \{v\}$.
    The base case $i = 1$ is trivial as $G$ is $d$-regular.
    For $2 \leq i \leq t$,
    letting $S = N_{i-1}(v)$ and $T = \{v\} \cup N_1(v) \cup \dots \cup N_i(v)$,
    we have $e(S,T) = |S| d$.
    Therefore, by Lemma~\ref{lem:bdd-cut}, we have
    \[
        |S| d = e(S,T)  < 2  \lambda  \sqrt{  |S| |T|} \implies |T| \geq \frac{|S| d }{4C^2}.
    \]
    By the induction hypothesis,
    $|N_j(v)| > 2|N_{j-1}(v)|$ for $2 \leq j \leq i-1$ and $d$ sufficiently large,
    implying
    \[|T| < |N_i(v)| + 2|N_{i-1}(v)| = |N_i(v)| + 2|S|.\] 
    Therefore,
    \[
    |N_i(v)| > |T| - 2|S| \geq \left ( \frac{d}{4C^2} - 2 \right ) |S| > \frac{d}{5C^2} |S|,
    \]
    as desired.
\end{proof}

With the above results in hand, let us turn our attention to the main goal of this section: applying Theorem~\ref{thm:general} to $L(G)^t$ in order to prove Theorem~\ref{thm: spectral}.
First, we estimate the degree of an edge $vw \in E(G)$ in the graph $L(G)^t$.
\begin{lemma}
\label{lem:spec-d-bd}
    For each $vw \in E(G)$, $d_{L(G)^t}(vw) <  d|N_{t-1}(v)| + d^t + O(d^{t-1})$.
\end{lemma}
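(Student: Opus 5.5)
The plan is a direct count: split the edges adjacent to $vw$ in $L(G)^t$ according to where their endpoints lie relative to $v$ and $w$. Throughout, $G$ is $d$-regular, as assumed in this subsection.

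First I will translate adjacency in $L(G)^t$ into a condition in $G$. An edge $xy \ne vw$ is adjacent to $vw$ in $L(G)^t$ if and only if $\dist_{L(G)}(xy, vw) \le t$. A path of length $k$ in $L(G)$ from $vw$ to $xy$ yields a walk in $G$ of length at most $k-1$ from $\{v,w\}$ to $\{x,y\}$. Therefore every such $xy$ has an endpoint within $G$-distance $t-1$ of $v$ or of $w$. Hence every neighbor of $vw$ in $L(G)^t$ has an endpoint in
\[
\Big(\bigcup_{i=0}^{t-2} N_i(v) \cup N_i(w)\Big) \cup N_{t-1}(v) \cup N_{t-1}(w),
\]
where $N_0(u)=\{u\}$.

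Next I will bound the contribution of each part, using that each vertex has degree $d$.
\begin{itemize}
\item Edges with an endpoint at distance at most $t-2$ from $v$ or $w$ number at most $2d(1+d+\dots+d^{t-2}) = O(d^{t-1})$, since $|N_i(u)| \le d^i$.
\item Edges with an endpoint in $N_{t-1}(v)$ number at most $d|N_{t-1}(v)|$.
\item The remaining edges have an endpoint in $N_{t-1}(w)$. Because $|N_{t-1}(w)| \le d^{t-1}$, there are at most $d \cdot d^{t-1} = d^t$ of them.
\end{itemize}
Summing the three bounds gives $d|N_{t-1}(v)| + d^t + O(d^{t-1})$. To secure the strict inequality I will use the slack in the count: the edge $vw$ itself is counted but excluded, and the geometric sum bound is strict.

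I do not expect a real obstacle here. The only delicate point is the first step: converting $L(G)$-distance into $G$-distance with the correct offset ($t-1$ rather than $t$). An off-by-one error there would change the main term from $d|N_{t-1}(v)|$ to $d|N_t(v)|$, so I will check that step against the definition of $E_i$ and $N_i$ in the Notation subsection.
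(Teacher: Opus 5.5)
Your proposal is correct and follows essentially the same route as the paper. Every $L(G)^t$-neighbor of $vw$ has an endpoint within $G$-distance $t-1$ of $v$ or $w$; the layers up to distance $t-2$ contribute $O(d^{t-1})$ edges, and the two layers at distance $t-1$ contribute at most $d|N_{t-1}(v)|$ and $d|N_{t-1}(w)| \le d^t$ (the strict inequality needs no special care, since the $O(d^{t-1})$ term absorbs it).
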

\begin{proof}
Every edge in $N_{L(G)^t}(vw)$ is incident either to $N_i(v)$ or $N_i(w)$ for some $1 \leq i \leq t-1$.
    As $\left |\bigcup_{i=1}^{t-2} N_i(v) \cup N_i(w) \right | = O(d^{t-2})$, the number of edges incident to $N_i(v)$ or $N_i(w)$ for $i \leq t-2$ is $O(d^{t-1})$.
    Therefore, 
    \[d(vw) \leq d |N_{t-1}(v)| + d|N_{t-1}(w)| + O(d^{t-1}) < d|N_{t-1}(v)| + d^t + O(d^{t-1}),\]
    completing the proof.
\end{proof}

The following lemma will play a key role in our proof of Condition~\eqref{item:main-3a} of Theorem~\ref{thm:general} with our choice of the sets $B^X$.

\begin{lemma}
\label{lem:spec-N-BX}
Let $vw \in E(G)$. If $v$ and $w$ are strangers,
then at least $d|N_{t-1}(w)| - d^t \log^{-12} (2d^t)$
edges
of $E_{t+1}(v)$ are adjacent to $vw$ in $L(G)^t$.
\end{lemma}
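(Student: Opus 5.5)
We follow the template of Lemma~\ref{claim:B-girth}, with Lemma~\ref{BondySimonovits} replaced by the expander mixing lemma and the girth bound replaced by Lemma~\ref{lem:t-1 intersection}. Set $S = N_{t-1}(w) \setminus N_{t-1}(v)$ and $T = \bigcup_{i=0}^{t-1} N_i(v)$. For $x \in S$ and an edge $xy$ with $y \notin T$, we check two facts. First, $x$ is at distance at least $t$ from $v$. Second, $x$ is at distance at most $t-2$ from $v$ only if $x \in N_{\le t-2}(v)$, which the definition of $S$ rules out up to a negligible correction. We handle this by removing all of $N_{\le t-1}(v)$ from $N_{t-1}(w)$, which costs $O(d^{t-2})$ vertices. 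Then both endpoints of $xy$ lie at distance at least $t$ from $v$, so $xy \in E_{t+1}(v)$. Also $x \in N_{t-1}(w)$ gives a path of $t-1$ edges from $w$ to $x$, so $vw$ and $xy$ are joined by a path of at most $t$ edges, i.e.\ they are adjacent in $L(G)^t$. Each such edge is counted at most twice (once per endpoint in $S$), so it suffices to show that the number of pairs $(x,y)$ with $x\in S$ and $y \notin T$ is at least $d|N_{t-1}(w)| - \tfrac12 d^t\log^{-12}(2d^t)$, say.

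\textbf{Step 1: $S$ is almost all of $N_{t-1}(w)$.} Since $v,w$ are strangers, Lemma~\ref{lem:t-1 intersection} gives $|N_{t-1}(v)\cap N_{t-1}(w)| < d^{t-1}/\log^{24} d$. The sets $N_i(v)$ with $i \le t-2$ have total size $O(d^{t-2})$. Hence $|S| \ge |N_{t-1}(w)| - d^{t-1}/\log^{24}d - O(d^{t-2})$. Each $x\in S$ has exactly $d$ incident edges, so the total number of pairs is $d|S| \ge d|N_{t-1}(w)| - d^t/\log^{24} d - O(d^{t-1})$.

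\textbf{Step 2: few edges from $S$ into $T$.} We need $e(S,T) = o(d^t/\log^{12} d)$. Here $|S| \le d^{t-1}$ and $|T| \le 2d^{t-1}$, so $|S||T| \le 2d^{2t-2} \le d^{4t-2}$ and Lemma~\ref{lem:bdd-cut} applies. It gives $e(S,T) < 2\lambda\sqrt{|S||T|} \le 2C\sqrt d\cdot \sqrt2\, d^{t-1} = O(d^{t-1/2})$, which is far below $d^t\log^{-12}(2d^t)$. Subtracting this from Step 1 and using $\log(2d^t) = \Theta(\log d)$ for fixed $t$ completes the count.

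The main obstacle is bookkeeping rather than any real difficulty. The crucial input is Step 1's use of Lemma~\ref{lem:t-1 intersection}, which is exactly where the stranger hypothesis enters. Without it, $N_{t-1}(w)$ could overlap $N_{t-1}(v)$ substantially, and those vertices $x$ would only give edges in $E_t(v)$, not in $E_{t+1}(v)$. The remaining care is to verify that the distance-$t$ adjacency in $L(G)^t$ uses the paper's convention, in which edges joined by a path of at most $t$ edges, counting from an endpoint of $vw$, are adjacent. This matches the convention used in Lemma~\ref{claim:B-girth}.
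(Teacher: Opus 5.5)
Your route is the paper's. You remove the ball of radius $t-1$ around $v$ from $N_{t-1}(w)$, bound the loss with Lemma~\ref{lem:t-1 intersection}, and discard the $O(d^{t-1/2})$ edges into $N_{\le t-1}(v)$ with Lemma~\ref{lem:bdd-cut}. Your removal of $N_{\le t-2}(v)$ is a correction the paper's proof omits: its $S=N_{t-1}(w)\setminus N_{t-1}(v)$ can contain vertices at distance $t-2$ from $v$. As you say, this costs only $O(d^{t-2})$ vertices.

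The step that fails as written is the passage from pairs $(x,y)$ to edges. You note that an edge with both endpoints in $S$ is counted twice, then conclude it suffices that the number of pairs be at least $d|N_{t-1}(w)|-\frac12 d^t\log^{-12}(2d^t)$. That does not follow. ``Counted at most twice'' only gives (number of edges) $\ge\frac12$(number of pairs), roughly $\frac12 d|N_{t-1}(w)|$, which is useless here. The slack you reserve would absorb the double count only if few edges have both ends in $S$, and you never show this.

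Since $S\cap T=\emptyset$, the exact relation is: number of edges $=$ number of pairs $-$ number of edges inside $S$. The missing bound is one more application of Lemma~\ref{lem:bdd-cut}, with both sets equal to $S$. Since $|S|^2\le d^{2t-2}\le d^{4t-2}$, the number of edges inside $S$ is $\frac12 e(S,S)<\lambda|S|\le C d^{t-1/2}$, which is negligible against $d^t\log^{-12}(2d^t)$.

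The paper's own proof also passes silently from $d|S|-e(S,T)$ to a count of edges, so it needs the same one-line repair. In the girth template you copied, Lemma~\ref{claim:B-girth}, the issue does not arise: girth at least $2t+1$ forbids edges inside $N_{t-1}(w)$. With this bound added, your argument is complete.
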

\begin{proof}
    We write $S = N_{t-1}(w) \setminus N_{t-1}(v)$.
    By Lemma~\ref{lem:t-1 intersection},
    $|S| > |N_{t-1}(w)| - \frac{d^{t-1}}{\log^{24} d}$. 
    For each $x \in S$, if an edge $xy$ satisfies $y \not \in N_{t-1}(v)$,
    then $xy \in E_{t+1}(v) \cap N_{L(G)^t}(vw)$.
    Next, we bound the number of edges $xy$ with $x \in S$ and $y \not \in N_{t-1}(v)$.
    To this end, we let $T = N_{t-1}(v)$, and apply Lemma~\ref{lem:bdd-cut} to obtain
    \[
    e(S,T) < 2 \lambda \sqrt{|S| |T|} \leq 2C \sqrt{d}  \cdot d^{t-1} < d^{t - \frac 13}.
    \]
    From here, it follows that
    \[
|E_{t+1}(v) \cap N_{L(G)^t}(vw)| > d \left (|N_{t-1}(w)| - \frac{d^{t-1}}{\log^{24} d}\right ) - d^{t-\frac 13}  > d|N_{t-1}(w)| - \frac{d^t}{\log^{12} (2d^t)},
    \]
    as desired.
\end{proof}

\begin{lemma}
\label{lem:spec123}
    Let $uv, u'v' \in E(G)$ and let $A \subseteq N_{L(G)^t}(uv) \cap N_{L(G)^t}(u'v')$ be arbitrary.
    Define 
    \[
        A_1 \coloneqq \bigcup_{(w,w') \in \{u,v\} \times \{u',v'\} } \left \{ xy \in A: x \in N_{t-1}(w) \cap N_{t-1} (w') \right \}.
    \]
    Then,
    $|A| < |A_1| + \frac{d^t}{\log^{24}d }$.
\end{lemma}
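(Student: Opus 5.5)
I will mirror the proof of Lemma~\ref{claim:A123-girth}, replacing the Bondy--Simonovits input by the expander mixing lemma in the form of Lemma~\ref{lem:bdd-cut}. The plan is to cover $A$ by three sets, $A = A_1 \cup A_2 \cup A_3$, and show that the two extra pieces are small.

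First I define the pieces. Let
\[
A_2 \coloneqq \bigcup_{\substack{w \in \{u,v,u',v'\} \\ 0 \leq i \leq t-2}} \{xy \in A : x \in N_i(w)\}
\]
be the edges of $A$ with an endpoint within distance $t-2$ of one of $u,v,u',v'$. Let
\[
A_3 \coloneqq \bigcup_{(w,w') \in \{u,v\}\times\{u',v'\}} \{xy \in A : x \in N_{t-1}(w),\ y \in N_{t-1}(w')\}.
\]
To check the cover, take $xy \in A$. Since $xy$ is adjacent in $L(G)^t$ to $uv$, some endpoint of $xy$ lies in $\bigcup_{i \le t-1} N_i(u)\cup N_i(v)$, and likewise for $u'v'$. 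If either of these endpoints is at distance at most $t-2$ from its center, then $xy \in A_2$. Otherwise both endpoints are at distance exactly $t-1$. If it is the same endpoint of $xy$, then $xy \in A_1$; if the endpoints differ, then $xy \in A_3$.

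Next I bound $A_2$. By $d$-regularity, $|\bigcup_{i\le t-2} N_i(w)| = O(d^{t-2})$. Each such vertex meets at most $d$ edges, so $|A_2| = O(d^{t-1})$.

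The key step is bounding $A_3$. Fix a pair $(w,w')$ and set $S = N_{t-1}(w)$ and $T = N_{t-1}(w')$. Both have size at most $d^{t-1}$, so $|S||T| \le d^{2t-2} \le d^{4t-2}$ and Lemma~\ref{lem:bdd-cut} applies. It gives
\[
e(S,T) < 2\lambda\sqrt{|S||T|} \le 2C\sqrt d\, d^{t-1} = 2C d^{t-1/2}.
\]
Summing over the four pairs yields $|A_3| \le 8C d^{t-1/2}$. The only point needing care is the counting convention: $e(S,T)$ counts ordered pairs, so it dominates the number of edges with one endpoint in $S$ and the other in $T$, including edges whose endpoints both lie in $S\cap T$.

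Finally I combine the bounds:
\[
|A| \le |A_1| + O(d^{t-1}) + 8Cd^{t-1/2} < |A_1| + \frac{d^t}{\log^{24} d}
\]
for $d$ sufficiently large in terms of $C$ and $t$. I expect no real obstacle beyond verifying the covering case analysis and the hypothesis $|S||T|\le d^{4t-2}$, which holds comfortably.
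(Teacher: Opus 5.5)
Your proposal is correct and follows the paper's proof essentially step for step. It uses the same cover $A = A_1 \cup A_2 \cup A_3$, the trivial $O(d^{t-1})$ bound on $A_2$, and Lemma~\ref{lem:bdd-cut} applied to $S = N_{t-1}(w)$, $T = N_{t-1}(w')$ to get $|A_3| = O(d^{t-1/2})$. Your version is slightly more careful than the paper's: you include $i = 0$ in $A_2$, which covers edges incident to $u,v,u',v'$ themselves, and you account for the factor of $4$ from the union over the pairs $(w,w')$.
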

\begin{proof}
    We define two additional edge sets $A_2, A_3 \subseteq A$ so that $A = A_1 \cup A_2 \cup A_3$. Let
    \[
        A_2 \coloneqq \bigcup_{ \substack{w \in \{u,v,u',v'\} \\ 1 \leq i \leq t-2}} \{xy \in A: x \in N_i(w) \}
    \]
    denote the set of edges in $A$ with an endpoint within distance $t-2$ of one of $u,v,u',v'$. We define 
    \[
    A_3 \coloneqq \bigcup_{(w,w') \in \{u,v\} \times \{u',v'\} } \{xy \in A: x \in N_{t-1}(w) \text{ and } y \in N_{t-1}(w') \}.
    \]
    It is a simple observation that $A = A_1 \cup A_2 \cup A_3$.

    As $G$ is $d$-regular, the number of vertices in $\bigcup_{1 \leq i \leq t-2} N_i(w)$ for a given vertex $w \in V(G)$ is at most $d^{t-2}$. Therefore, the number of edges with an endpoint in $\bigcup_{1 \leq i \leq t-2} N_i(w)$ is at most $d^{t-1}$, implying $|A_2| \leq 4d^{t-1}$.

    To
    estimate $|A_3|$, we let $S = N_{t-1}(w)$, $T = N_{t-1}(w')$, 
    and observe that $|S|, |T| < d^{t-1}$.
    By Lemma~\ref{lem:bdd-cut}, we have
    \[|A_3| \leq e(S,T) \leq 2 \lambda \sqrt{ |S| |T|} < 2 C \sqrt d\cdot  d^{t-1} < d^{t-1/3}.\]
    Therefore, $|A| \leq |A_1| + |A_2| + |A_3| < |A_1| + \frac{d^t}{\log^{24} d}$ for $d$ sufficiently large.
\end{proof}

The following lemma will play a key role in our proof of Condition~\eqref{item:main-3b} of Theorem~\ref{thm:general} with our choice of the sets $B^X$.

\begin{lemma}
    \label{lem:spec-eeBX}
    Let $v \in V(G)$, and let $w,w'\in N(v)$ be strangers.
    Then, \[|N_{L(G)^t}(vw) \cap N_{L(G)^t}(vw') \cap E_{t+1}(v)| < \frac{2d^t}{\log^{24} d}.\]
\end{lemma}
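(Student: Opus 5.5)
The plan mirrors the proof of the analogous girth lemma: apply Lemma~\ref{lem:spec123} to the edge pair $vw, vw'$ with
\[
A = N_{L(G)^t}(vw) \cap N_{L(G)^t}(vw') \cap E_{t+1}(v).
\]
That lemma gives $|A| < |A_1| + \frac{d^t}{\log^{24} d}$. So it suffices to show $|A_1| < \frac{d^t}{\log^{24} d}$, up to lower-order terms that fit within the factor $2$.

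\textbf{Simplifying $A_1$.} In Lemma~\ref{lem:spec123} the set $A_1$ is a union over pairs $(z,z') \in \{v,w\}\times\{v,w'\}$ of the edges $xy \in A$ with $x \in N_{t-1}(z)\cap N_{t-1}(z')$. Every edge of $E_{t+1}(v)$ has both endpoints at distance at least $t$ from $v$. Hence no endpoint of an edge of $A$ lies in $N_{t-1}(v)$, and every term with $z = v$ or $z' = v$ vanishes. The only surviving term is
\[
A_1 = \{xy \in A : x \in N_{t-1}(w)\cap N_{t-1}(w')\}.
\]

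\textbf{Bounding $A_1$.} Since $w$ and $w'$ are strangers, Lemma~\ref{lem:t-1 intersection} gives
\[
|N_{t-1}(w)\cap N_{t-1}(w')| < \frac{d^{t-1}}{\log^{24} d}.
\]
Each such $x$ has degree $d$, so it is an endpoint of at most $d$ edges $xy$. Therefore $|A_1| < \frac{d^t}{\log^{24} d}$, and combining with the bound from Lemma~\ref{lem:spec123},
\[
|A| < |A_1| + \frac{d^t}{\log^{24} d} < \frac{2d^t}{\log^{24} d}.
\]

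\textbf{Main obstacle.} The key step is confirming that the $A_1$ of Lemma~\ref{lem:spec123}, with $u=u'=v$, collapses to the single $(w,w')$ term. The check is that $E_{t+1}(v)$ avoids $N_{t-1}(v)$, which follows directly from the definition of $E_i(v)$: an edge of $E_{t+1}(v)$ touching a vertex of $N_{t-1}(v)$ would already lie in $E_{t}(v)$ or earlier. Everything else is direct substitution into Lemmas~\ref{lem:spec123} and~\ref{lem:t-1 intersection}.
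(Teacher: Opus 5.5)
Your proposal is correct and is essentially the paper's own proof. You apply Lemma~\ref{lem:spec123} to $vw, vw'$, and you note that no edge of $E_{t+1}(v)$ has an endpoint in $N_{t-1}(v)$, so $A_1$ collapses to the single $(w,w')$ term; you then bound that term by $d\cdot|N_{t-1}(w)\cap N_{t-1}(w')| < d^t/\log^{24} d$ using Lemma~\ref{lem:t-1 intersection}.
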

\begin{proof}
For $A = N_{L(G)^t}(vw) \cap N_{L(G)^t}(vw') \cap E_{t+1}(v)$, let
\[
A_1 =  \left \{ xy \in A: x \in N_{t-1}(w) \cap N_{t-1} (w') \right \}.
\]
As $w$ and $w'$ are strangers, Lemma~\ref{lem:t-1 intersection} implies that $|A_1| < \frac{d^t}{\log^{24} d}$.
Furthermore, by the definition of distance, 
no edge of $E_{t+1}(v)$ has an endpoint in $N_{t-1}(v)$.
Therefore, as $A \subseteq E_{t+1}(v)$,
the set $A_1$ defined here is the same as the set $A_1$ defined in Lemma~\ref{lem:spec123}, and hence 
\[|A| < |A_1| + \frac{d^t}{\log^{24} d} < \frac{2d^t}{\log^{24} d},\]
as desired.
\end{proof}

With these lemmas in hand, we check the conditions of Theorem~\ref{thm:main} for the graph $L(G)^t$ with $N=1$, $\Delta = 2d^t$, and $\gamma = \frac 12$.

\begin{enumerate}
    \item Recall that two vertices of $G$ are friends if they have at least $\frac{d^t}{\log^{25} d}$ 
common neighbors in $G^t$.
We define a symmetric friend relation $\mathcal F \subseteq E(G) \times E(G)$ so that two edges $e,e'$ are friends if and only if
an endpoint $v$ of $e$ is a friend in $G$ 
of an endpoint $v'$ of $e'$.
Now, consider two stranger edges $e = uv$ and $e' = u'v'$ in $G$.
By Lemma~\ref{lem:spec123}, the number of common neighbors of $e$ and $e'$ in $L(G)^t$ is less than $|A_1| + \frac{d^t}{\log^{24} d}$, where
\[
A_1 \coloneqq \bigcup_{(w,w') \in \{u,v\} \times \{u',v'\} } \left \{ xy \in A: x \in N_{t-1}(w) \cap N_{t-1} (w') \right \}.
\]
As $e$ and $e'$ are strangers, Lemma~\ref{lem:t-1 intersection} implies that $|A_1| < \frac{4d^{t}}{\log^{24} d}$.
Therefore, the number of common neighbors of $e$ and $e'$ in $L(G)^t$ is less than $\frac{5d^t}{\log^{24} d} < \frac{\Delta}{\log^{20} \Delta}$, and Condition~\eqref{item:main-1} holds.
\item Next, we check Condition~\eqref{item:main-2}.
We define a family $\mathcal X$
in a manner similar to the proof of Theorem~\ref{thm:main}.
For each $v \in V(G)$, we define a family $\mathcal Y_v$. For each friend $w \in N(v)$, we add a set $X_{vw} = \{vw\}$ to $\mathcal Y_v$.
Next, we use Lemma~\ref{lem:few-friends-dist-t} to partition the set $N'(v) \subseteq N(v)$ of stranger neighbors of $v$ into subsets $S_{v,1}, \dots, S_{v,r}$ of mutual strangers for some value $r < 5C^{2t} \log^{60} (d^t)$, and we add the sets $\{vw\,:\, w \in S_{v, i}\}_{i = 1}^r$ to $\mathcal Y_v$.
As Lemma~\ref{lem:few-friends-dist-t} implies that $v$ has at most $4C^{2t} \log^{60} (d^t)$ friends, $|\mathcal Y_v| < 9C^{2t} \log{60}(d^t)$.
We let $\mathcal X = \bigcup_{v \in V(G) }\mathcal Y_v$ (and as in the proof of Theorem~\ref{thm:main}, we consider $\mathcal X$ to be a multiset).

We now  verify that Condition~\eqref{item:main-2} holds for our choice of $\mathcal X$. Since each $X \in \mathcal X$ consists of edges  incident to a common vertex $v \in V(G)$, $|X| \leq d < \Delta^{\gamma}$, so Condition~\eqref{item:main-2a} holds.
Next, for each $e \in E(G)$, we write $F(e)$  for the set of vertices $w \in V(G)$  that are friends of an endpoint  of $e$, and we define  $\mathcal X_e = \bigcup_{w \in F(e)} \mathcal Y_w$. Lemma~\ref{lem:few-friends-dist-t}
implies that $|F(e)| \leq 8C^{2t} \log^{60} (d^t)$. Furthermore, as observed earlier, we have 
$|\mathcal Y_w| < 9C^{2t} \log{60}(d^t)$ for each $w \in F(e)$.
Therefore,
$|\mathcal X_e| \leq 72C^{4t} \log^{120}(d^t) < \Delta^{(1-\gamma) \epsilon / 12}$ for $d$ sufficiently large.
As all friends of $e$ belong to  a set in $\mathcal X_e$, Condition~\eqref{item:main-2b} holds.
Finally, for each $e \in E(G)$ with endpoints $u$ and $v$, $e$ only belongs to sets in $\mathcal Y_u \cup \mathcal Y_v$. Therefore, as $d$ is sufficiently large, $e$  belongs to at most $18C^{2t} \log^{60}(d^t)$ sets $X \in \mathcal X$. Thus, Condition~\eqref{item:main-2c} holds.
\item Finally, we check that Condition~\eqref{item:main-3} holds.
Consider a set $X \in \mathcal  X$ for which $|X| \geq 2$, and let $v \in V(G)$  be the unique vertex for which $X \in \mathcal Y_v$.
We let $B^X = E_{t+1}(v)$.
By Lemmas~\ref{lem:spec-d-bd}~and~\ref{lem:spec-N-BX},
each edge $vw \in X$ has at least
\[
d|N_{t-1}(w)| - d^t \log^{-12}(2d^t) > d_{L(G)^t}(vw) - (1 - \gamma + \log^{-10} \Delta) \Delta
\]
neighbors in $B^X$ via $L(G)^t$, so Condition~\eqref{item:main-3a} holds.
Furthermore,
 as $|X| \geq 2$, it must be the case  that $w$ and $w'$ are strangers for any distinct $vw,vw' \in X$. Therefore,
for any two edges $e,e' \in X$, Lemma~\ref{lem:spec-eeBX}
implies that 
\[
|N_{L(G)^t}(e) \cap N_{L(G)^t}(e') \cap B^X| < \frac{2d^t}{\log^{24} d} < \frac{\Delta}{\log^{20} \Delta},
\]
verifying Condition~\eqref{item:main-3b}.
\end{enumerate}

Therefore, by Theorem~\ref{thm:main}, $\chi_{\ell}(L(G)^t) \leq (1+\epsilon) \frac{\Delta}{\log \Delta}$. As a result, $G$ admits a proper distance-$t$ edge coloring with at most 
\[
(1+\epsilon) \frac{2d^t}{\log(2d^t)} < (1+\epsilon) \frac{2d^t}{t\log d}
\]
colors, completing the proof of Theorem~\ref{thm: spectral}.

\section{Proof of Theorem~\ref{thm:general}}\label{sec: general-proof}

We restate the result for the reader's convenience:

\begin{theorem*}
[{Restatement of Theorem~\ref{thm:general}}]
Let $\gamma, \epsilon \in (0,1)$ and $N \in \mathbb N$.
Then, there exists $\Delta_0 \in \mathbb N$ for which the following holds.
Suppose $G$ is a graph of maximum degree at most $\Delta \geq \Delta_0$
with a symmetric reflexive friend relation
$\mathcal F \subseteq V(G) \times V(G)$, satisfying the following:
\begin{enumerate}
    \item
    Any two strangers have at most $\frac{\Delta}{\log^{20} \Delta}$ common neighbors.
    \item There is a family $\mathcal X$ of subsets $X \subseteq V(G)$ satisfying the following:
    \begin{enumerate}
        \item Each $X \in \mathcal X$ satisfies $|X| \leq \Delta^{\gamma}$,
        \item For each $v \in V(G)$, there exists a family $\mathcal X_v \subseteq \mathcal X$ of size at most $\Delta^{(1 - \gamma )\epsilon/12}$
        such that all friends of $v$ belong to $\bigcup_{X \in \mathcal X_v} X$,
        \item Each $v \in V(G)$ belongs to at most $\Delta^{N}$ sets $X \in \mathcal X$.
    \end{enumerate}
    \item
    For each $X \in \mathcal X$ with at least two vertices, there is a set $B^X \subseteq V(G)$ satisfying the following:
    \begin{enumerate}
        \item Each $v \in X$ has at least $d(v) - \Delta(1 - \gamma + \log^{-10}\Delta)$
            neighbors in $B^X$,
        \item For each distinct friend pair $u,v \in X$, $u$ and $v$ have at most $\frac{\Delta}{\log^{20} \Delta}$ common neighbors in $B^X$. 
    \end{enumerate}
\end{enumerate}
Then, $\chi_{\ell} (G) \leq (1 + \epsilon) \frac{\Delta}{\log \Delta}$.
\end{theorem*}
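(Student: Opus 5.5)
We will run the Wasteful Coloring Procedure iteratively, as outlined in Section~\ref{subsection: proof overview}. Set $q = (1+\epsilon)\Delta/\log\Delta$ and $L_1(v) = [q]$ for all $v$; WLOG each $L(v)$ has size exactly $q$. In iteration $i$ each uncolored vertex is activated with probability $\eta = \epsilon/\log\Delta$ (say) and, if activated, picks a uniform color from $L_i(v)$, using the product space $(\Omega_i,\Sigma_i,\Pr_i)$ described above. Conflicting colors are removed wastefully. We also apply the standard equalizing coin flips, so that each color survives in $L_i(v)$ with probability exactly $\keep_i$. Then we track four parameters for every vertex $v$, color $c$, stranger pair $u,v$, and set $X\in\mathcal X$:
\begin{itemize}
\item[] $\ell_i \le |L_i(v)|$;
\item[] $d_i \ge d_{L_i}(v,c)$;
\item[] the codegree $|N_{L_i}(u,c)\cap N_{L_i}(v,c)|$, kept below $\Delta_{2,i}\approx \keep\cdot\text{uncolor}$ products times $\Delta/\log^{20}\Delta$;
\item[] the quantity $x_i(X,c)=|\{u\in X: c\in L_i(u),\ u\text{ uncolored}\}|$, together with $|N(u)\cap B^X\cap \{\text{uncolored}, c\in L_i\}|$ for $u\in X$.
\end{itemize}
The recursion gives $d_{i+1}/\ell_{i+1}\approx (1-\eta\keep_i)\,d_i/\ell_i$. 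We iterate until $d_{i^*}/\ell_{i^*}<1/8$ while $\ell_{i^*}\ge \Delta^{\Theta(\epsilon)}$, which is where the $1+\epsilon$ slack is spent. We then finish with Proposition~\ref{prop: final blow}. A single step is justified by the Lov\'asz Local Lemma (Lemma~\ref{lem:LLL}): every bad event is determined by outcomes within bounded distance, hence has dependency degree $\mathrm{poly}(\Delta)$ (here (2c) bounds the number of sets $X$ relevant to a vertex by $\Delta^N$). It therefore suffices to give $\exp(-\log^{2}\Delta)$-type tail bounds for each tracked quantity.

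The concentration of $|L_{i+1}(v)|$ is a routine application of Lemma~\ref{lem:conc-ineq}. For $d_{L_{i+1}}(v,c)$, we split $N_{L_i}(v,c)$ into strangers and friends of $v$. For the stranger part, a vertex $w$ activating with color $c$ affects at most $\Delta_{2,i}$ terms. We make this rigorous by declaring exceptional the outcomes in which some $w$ has more than $\log^2\Delta$ neighbors assigned the same color. Lemma~\ref{lem:conc-ineq} then gives deviation $\ll d_i\log^{-5}\Delta$, as long as $\Delta_{2,i}\ll d_i\log^{-10}\Delta$. This is where the $\log^{20}$ in Condition~(1) is used, and the codegree bound itself is concentrated by the same argument. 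Condition~(1) is hereditary under the procedure in the sense that codegrees shrink by the same factors as degrees. The friend part is at most $\sum_{X\in\mathcal X_v} x_i(X,c)$ by (2b). So it suffices to show that $x_i(X,c)\le \Delta^{\gamma}\prod_{j<i}(\text{decay})$ up to $1+o(1)$. Together with $|\mathcal X_v|\le\Delta^{(1-\gamma)\epsilon/12}$, this is negligible compared with $d_i$ throughout, because $d_i\ge \Delta^{1-\epsilon/\Theta(1)}$ while $x_i\le\Delta^{\gamma}\cdot(\text{same decay})$.

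The main obstacle is concentrating $x_{i+1}(X,c)$, since $|X|$ may be far smaller than the codegree scale. Following the idea of \cite{BDMW}, we concentrate via the interaction inside $B^X$. The event that $u\in X$ gets colored, or loses $c$, is decided mostly by its neighbors in $B^X$; by (3a) these account for all but a $(1-\gamma)$ fraction of $\Delta$. By (3b), distinct friends in $X$ share few $B^X$-neighbors. We write $x_{i+1}$ as a sum of verifiable indicators witnessed only by activations in $B^X\cap N(u)$ and at $u$. The remaining neighbors outside $B^X$ are handled by an upper bound that is monotone in those outcomes, i.e.\ we bound their effect by the worst case. The crux is that $|N(u)\cap B^X|$ restricted to uncolored vertices retaining $c$ must stay large through the iterations. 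We track it as a fifth parameter via the same stranger/codegree argument. We use $\gamma$ to absorb the loss: after $i$ rounds the uncontrolled part costs at most a factor related to $\Delta^{(1-\gamma)\cdot(\cdot)}$, which the exponent $(1-\gamma)\epsilon/12$ in (2b) is designed to beat. Verifying that this bookkeeping closes, i.e.\ that the lost fraction $(1-\gamma)$ only costs $\Delta^{o(\epsilon)}$ in $x_i$ relative to $d_i$, is the step I expect to be hardest; I would first attempt it by bounding $\Pr(u\text{ survives with }c)$ from above using only $B^X$-neighbors, which decays like $\keep^{\gamma}$-type powers.
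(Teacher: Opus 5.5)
There is a genuine gap in the concentration step, which is the heart of the theorem.

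\textbf{Codegrees and $x_i$ cannot be concentrated by the argument you give.} You concentrate $d_{L_{i+1}}(v,c)$ by Talagrand with Lipschitz constant $\Delta_{2,i}$, and then say the codegree $|N_{L_i}(u,c)\cap N_{L_i}(v,c)|$ ``is concentrated by the same argument.'' It is not. A single vertex $w$ activated with $c$ may be adjacent to every vertex of $Q_i(u,v,c)=T_i(u,c)\cap T_i(v,c)$, since the only bound on how many of them $w$ can hit is again a codegree bound of order $\Delta_{2,i}$. So the Lipschitz constant equals the size of the set being concentrated, and the error term $\sqrt{rd\,\E[Z]}$ in Lemma~\ref{lem:conc-ineq} is of order $\Delta_{2,i}$ itself. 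This is not a side issue: $T_i$ eventually falls to $\Delta^{\Theta(\epsilon)}$, far below $\Delta/\log^{20}\Delta$, so codegrees must shrink with $T_i$.

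The same obstruction hits $x_{i+1}(X,c)$. Condition (3b) bounds \emph{pairwise} codegrees in $X$, not how many vertices of $X$ one $w\in B^X$ can hit. So witnessing through $B^X\cap N(u)$ does not give a small Lipschitz constant either. Also, for $d_{L_{i+1}}(v,c)$ the relevant dichotomy is whether the \emph{witness} $w$ is a friend of $v$, not whether the neighbor $u$ is. A friend witness can delete $c$ from $\Theta(T_i)$ vertices of $T_i(v,c)$, so your claim that each $w$ affects at most $\Delta_{2,i}$ terms is false as stated.

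\textbf{The missing idea is Lemma~\ref{lem:great-expectations}.} Take a set $S$ whose members pairwise have small common $R$-neighborhood. Discard the set $R_{high}$ of witnesses with at least $|S|/\log^8\Delta$ neighbors in $S$. K\H{o}v\'ari--S\'os--Tur\'an with $s=2$, using exactly that pairwise codegree condition, gives $|R_{high}|\le\alpha/\log^2\Delta$, so ignoring them costs only a negligible factor in expectation. Every remaining witness then has Lipschitz effect at most $|S|/\log^8\Delta$. This works for $|S|$ as small as $\log^{16}\Delta$, far below the codegree scale.

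To meet the hypothesis for $S=T_i(v,c)$ or $Q_i(u,v,c)$, the paper first splits $S$ into classes of mutual strangers via Hajnal--Szemer\'edi (Lemma~\ref{lem:stranger-partition}). Friend-degrees there are bounded by (2b) together with the tracked bound $|X_i(c)|\le X_i$. This one tool then handles $t_i$, $q_i$, $q_i^X$, $|X_i(c)|$ and $t_i-b^X_i$ uniformly.

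\textbf{Your mechanism for the decay of $x_i$ would not close.} You propose to keep a lower bound on the number of uncolored $B^X$-neighbors retaining $c$, and to bound survival using only $B^X$-neighbors. This fails because $t_i(u,c)$ is never bounded below: deletions and the arbitrary trimming in (T1) can make $b^X_i(u,c)$ tiny. Then $B^X$-witnesses alone leave $(u,c)$ surviving with probability near $1$.

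The paper instead combines the $B^X$-witnesses with the equalizing coin flip. The pair $(u,c)$ survives with probability at most $(1-\eta L_i^{-1})^{T_i-t_i(u,c)+b_i^X(u,c)}$. The invariant carried is the monotone \emph{upper} bound $t_i(u,c)-b_i^X(u,c)\le T_i-B_i$, with $B_i=(\gamma-\log^{-2}\Delta)T_i$, itself concentrated by the same lemma. This makes the exponent at least $B_i$, so $x_{i+1}\lesssim\keep_i^{\gamma}(1-\eta\keep_i)\,x_i$, i.e.\ $x_i\le T_i^{\gamma}$.

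\textbf{Your quantitative picture is also off.} $d_i$ does not stay above $\Delta^{1-\epsilon/\Theta(1)}$; it falls to $\Delta^{\Theta(\epsilon)}$. The $(1-\gamma)$-loss in $x_i$ relative to $d_i$ is not $\Delta^{o(\epsilon)}$ but the factor $(\Delta/T_i)^{1-\gamma}$. What actually makes the friend contribution negligible is $\Delta^{(1-\gamma)\epsilon/12}T_i^{\gamma}\le T_i\,\Delta^{-5(1-\gamma)\epsilon/12}$, which holds because $T_i\ge\Delta^{\epsilon/2}$.
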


We will prove Theorem~\ref{thm:general} by repeatedly applying Kim's variant of the Wasteful Coloring Procedure (see page~\pageref{pageref} for an informal description of the procedure).
Initially, each vertex $v \in V(G)$ has a list $L_1(v)$ of $k$ colors, where $k = (1+\epsilon) \frac{\Delta}{\log \Delta}$.
After each iteration, we update 
the lists $L_i(v)$
for each uncolored $v \in V(G)$, so that each color $c \in L_i(v)$ has the property that no $u \in N(v)$ is colored $c$.
We additionally track sets $T_i(v,c) \subseteq N(v)$ for each uncolored $v \in V(G)$ and $c \in L_i(v)$.
This set contains the uncolored neighbors $u \in N(v)$ for which $c \in L_i(u)$.
We let $t_i(v, c) = |T_i(v, c)|$.
We show that
after sufficiently many iterations, we reach a point at which there are integers $d, \ell \in \mathbb N$ satisfying $\ell \geq 8d$ such that $|L_i(v)| \geq \ell$ for each uncolored $v \in V(G)$, and $t_i(v,c) \leq d$ for each uncolored $v \in V(G)$ and $c \in L_i(v)$.
At this point, we may apply Proposition~\ref{prop: final blow} to the subgraph of $G$ induced by the uncolored vertices and the list assignment $L_i(\cdot)$.

Before we formally describe our coloring procedure, we introduce a few relevant parameters.
First, we fix $K \coloneqq \frac{\epsilon}{1000}$
and $\eta \coloneqq \frac{K}{\log\Delta}$. Then, for each $i \geq 1$,
we define:
\begin{eqnarray*}
    L_1 &=& (1+\epsilon) \frac{\Delta}{\log \Delta} = k, \\
    T_1 &=& \Delta, \\
    \keep_i &=& \left (1 - \eta L_i^{-1}  \right )^{T_i}, \\
    L_{i+1} &=& L_i \keep_i \left (1  - \frac{1}{\log^{2} \Delta} \right ), \\
    T_{i+1} &=& T_i \keep_i \left (1 - \eta \keep_i \right )\left (1  + \frac{1}{\log^{2} \Delta} \right ).
\end{eqnarray*}
We note that these values are well defined, as $T_1$ and $L_1$ define $\keep_1$, which in turn define $T_2$ and $L_2$, which in turn define $\keep_2$, and so on.

Let us now formally state our coloring procedure.
For each $i \geq 1$, we execute the following two-part procedure: 

\begin{algorithm}[htb!]
    \caption{\textbf{Nibble (iteration i)}}\label{algorithm: nibble}
    \begin{enumerate}[itemsep = .2cm, label=(N\arabic*)]
        
        \item \label{step:activate} \emph{Activate} each uncolored $v \in V(G)$ independently with probability $\eta$.

        \item \label{step:copy} For each uncolored $v \in V(G)$, make a copy $L_{i+1}(v) = L_i(v)$.
        
        \item \label{step:assignment} For each activated vertex $v$, assign a color $c \in L_i(v)$ to $v$ uniformly at random, and delete $c$ from $L_{i+1}(u)$ for all $u \in T_i(v,c)$.

        \item \label{step: color} For each activated vertex $v$ with assigned color $c$, if $c$ is not deleted from $L_{i+1}(v)$, then color $v$ with $c$. 
        
        \item \label{step:coin_flip} For each uncolored $v \in V(G)$ and $c \in L_i(v)$,  perform an independent \emph{equalizing coin flip}, by which $(v,c)$ \emph{survives} with probability 
        \[\Eq_i(v,c) \coloneqq \min\left\{1,\, \left (1 - \eta L_i^{-1} \right )^{T_i - t_i(v,c)}\right\}.\]
        If $(v,c)$ does not survive, then delete $c$ from $L_{i+1}(v)$.

    \end{enumerate}
\end{algorithm}

\begin{algorithm}[htb!]
    \caption{\textbf{Trim (iteration i)}}\label{algorithm: trim}
    \begin{enumerate}[itemsep = .2cm, label=(T\arabic*)]
        \item \label{step:trim} For each uncolored $v \in V(G)$, delete $\max\{|L_{i+1}(v)| - L_{i+1},0\}$ colors from $L_{i+1}(v)$  arbitrarily. 

        \item \label{step:prune-edges}
        For each edge $uv \in E(G)$ for which $L_{i+1}(u) \cap L_{i+1}(v) = \emptyset$, delete the edge $uv$ from $E(G)$.

    \end{enumerate}
\end{algorithm}

Note that we make a distinction between \emph{assigning} a color $c$ to a vertex $v$ and \emph{coloring} a vertex $v$ with the color $c$ (Steps~\ref{step:assignment} and~\ref{step: color} of Algorithm~\ref{algorithm: nibble} respectively). During an iteration,
a vertex $v$ that is colored with the color $c$ is necessarily assigned the color $c$, but 
a vertex $v$ that is assigned the color $c$ may not be colored with the color $c$ (if some other vertex $u \in N(v)$ is also assigned the color $c$).
We let $\phi_i$ be the partial coloring computed during the $i$-th iteration.

The quantities $T_i$ and $L_i$
defined earlier approximate the color degrees and list sizes, respectively,
throughout the iterations of our procedure.
We will see that
before each iteration $i$, it will hold for each uncolored $v \in V(G)$ and $c \in L_i(v)$ that $|L_i(v)| \geq L_i$ and $t_i(v,c) \leq T_i$.
Furthermore, we will see shortly that $ 1\ll T_{i^*} \leq \frac 18 L_{i^*}$ for some integer $i^* \geq 1$. Therefore, if the inequalities $|L_i(v)| \geq L_i$ and $t_i(v,c) \leq T_i$
hold throughout each iteration of our procedure,
then after $i^*$ iterations, we reach a situation in which Proposition~\ref{prop: final blow} can be applied to extend our partial coloring of $V(G)$ to a 
proper $L$-coloring of $G$. 
As mentioned in Section~\ref{subsection: proof overview}, while showing that the list sizes $|L_i(v)|$ are concentrated close to their expected values is fairly straightforward, showing that the color degrees $t_i(v,c)$ are concentrated from above is difficult. 
To do so, we need to introduce several additional parameters:
\begin{itemize}
     \item For each $X \in \mathcal X$ and $c \in \bigcup_{v \in X} L(v)$, let 
     \[X_i(c) = \{v \in X \,:\, \text{$v$ is uncolored and $c \in L_i(v)$}\} .\] 
    \item 
    For each $X \in \mathcal X$ of size at least $2$, $v \in X$, and $c \in L(v)$,
    let 
    \[
        B_i^X(v,c) = T_i(v,c) \cap B^X.
    \]
        \item 
        For each stranger pair $u,v \in V(G)$ and $c \in L(u) \cap L(v)$,
        define 
        \[
            Q_i(u,v,c) = T_i(u,c) \cap T_i(v,c).
        \]
    \item For each $X \in \mathcal X$ and distinct friend pair $u,v \in X$,
    define 
    \[
        Q^X_i(u,v,c) = B_i^X(u,c) \cap B_i^X(v,c) = T_i(u,c) \cap T_i(v,c) \cap B^X.
    \]
\end{itemize}
We write 
$q_i(u,v,c) = |Q_i(u,v,c)|$, $q_i^X(u,v,c) = |Q_i^X(u,v,c)|$, and $b_i^X(v,c) = |B_i^X(v,c)|$.

We also define quantities $B_i$, $Q_i$, and $X_i$ for $i \geq 1$
(the purposes of these quantities will be made clear shortly):
\begin{eqnarray*}
Q_1& = & \frac{\gamma \Delta}{10\log^{18} \Delta},   \\
Q_{i+1} & = &Q_i  \keep_i  \left (1 - \eta \keep_i \right )\left (1  + \frac{1}{\log^{2} \Delta} \right ),  \\
X_{i} &=& T_i^{\gamma}, \\
B_1 &=&  \Delta (\gamma  - \log^{-2} \Delta), \\
B_{i+1} & =&  B_i \keep_i \left (1 - \eta \keep_i  \right ) \left (1+\frac{1}{\log^2 \Delta} \right ).
\end{eqnarray*}

The goal is to show the following property holds throughout our nibble procedure.

\begin{definition}[Property $P(i)$]
Consider the partial coloring $\cup_{j < i}\phi_j$.
The following list contains the desired structural properties of the uncolored subgraph and the lists $L_i(\cdot)$.
\begin{enumerate}[label=(P\arabic*)]
    \item For each uncolored $v \in V(G)$, $ |L_i(v)| = L_i$.
    \item For each uncolored $v \in V(G)$ and $c \in L_i(v)$, $t_i(v,c) \leq T_i$.
    \item \label{item:stranger-Q} For each stranger pair $u,v \in V(G)$ and color $c \in L_i(u) \cap L_i(v)$,
    $q_i(u,v,c)  \leq Q_i$.
    \item \label{item:friend-QX} For each $X \in \mathcal X$, each distinct friend pair $u,v \in X$, and color $c \in L_i(u) \cap L_i(v)$, $q^X_i(u,v,c) \leq Q_i$.
    \item For each $X \in \mathcal X$ and color $c$, $|X_i(c)| \leq X_i$.
    \item For each $X \in \mathcal X$ of size at least $2$, $v \in X$, and $c \in L_i(v)$, $t_i(v,c) - b^X_i(v,c) \leq T_i - B_i$.
\end{enumerate}

\end{definition}

Note that if Property $P(i)$ holds, then we have $\Eq_i(v, c) = (1 - \eta L_i^{-1})^{T_i - t_i(v, c)}$.
Let us begin by showing that $P(1)$ holds.
\begin{lemma}\label{lemma: P(1)}
    $P(1)$ holds. 
\end{lemma}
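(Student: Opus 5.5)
Initially no vertex is colored, every list is $L_1(v)=[k]$ with $k=L_1$, and every edge survives. So $T_1(v,c)=N(v)$ for every $v$ and $c$, and each property reduces to a statement about $G$ itself. I would check (P1)--(P6) one at a time, in that order.

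(P1) holds by definition. For (P2), $t_1(v,c)=d(v)\le \Delta=T_1$.

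For (P3), $q_1(u,v,c)=|N(u)\cap N(v)|\le \Delta/\log^{20}\Delta$ by Condition~\eqref{item:main-1}. For $\Delta$ large this is at most $\gamma\Delta/(10\log^{18}\Delta)=Q_1$, because $\gamma$ is fixed and $\log^2\Delta\ge 10/\gamma$ eventually.

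(P4) goes the same way. By Condition~\eqref{item:main-3b}, $q_1^X(u,v,c)=|N(u)\cap N(v)\cap B^X|\le \Delta/\log^{20}\Delta\le Q_1$. Here $|X|\ge 2$ holds automatically since $u\ne v$, so $B^X$ is defined.

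For (P5), $|X_1(c)|=|X|\le\Delta^\gamma=T_1^\gamma=X_1$ by Condition~\eqref{item:main-2a}.

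For (P6), take $|X|\ge 2$ and $v\in X$. Then $t_1(v,c)-b_1^X(v,c)=d(v)-|N(v)\cap B^X|$, and Condition~\eqref{item:main-3a} bounds this by $\Delta(1-\gamma+\log^{-10}\Delta)$. We need this to be at most $T_1-B_1=\Delta(1-\gamma+\log^{-2}\Delta)$, which holds since $\log^{-10}\Delta\le\log^{-2}\Delta$.

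There is no real obstacle. The only care needed is confirming that the constants line up, and choosing $\Delta_0$ large enough in terms of $\gamma$ so that the inequality needed for (P3) and (P4) holds.
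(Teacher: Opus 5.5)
Your proof is correct and follows the paper's verification item by item, including deriving (P6) from Condition~\eqref{item:main-3a} via $\log^{-10}\Delta\le\log^{-2}\Delta$; you are also more careful than the paper about choosing $\Delta_0$ for (P3)--(P4) and about $B^X$ being defined. One small imprecision: Theorem~\ref{thm:general} concerns list coloring, so $L_1(v)$ is an arbitrary $k$-set rather than $[k]$, which means $T_1(v,c)\subseteq N(v)$ in general and your equalities such as $t_1(v,c)=d(v)$ and $|X_1(c)|=|X|$ should be inequalities, but upper bounds are all the argument needs.
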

\begin{proof}
    We check the six conditions of $P(1)$ separately.
\begin{itemize}
    \item For all $v \in V(G)$, $|L_1(v)| = k = (1+\epsilon) \frac{\Delta}{\log \Delta} = L_1$.
   \item  For each $v \in V(G)$ and $c \in L_1(v)$,
    $t_1(v,c) \leq |N(v)| = \Delta = T_1$.
    \item For each stranger pair $u,v$ and color $c \in L_1(u) \cap L_1(v)$,
    it follows from the definition of strangers that 
    $q_1(u,v,c) \leq |N(u) \cap N(v)| \leq \frac{\Delta}{\log^{20} \Delta} < Q_1$.
    \item For each $X \in \mathcal X$, distinct pair $u,v \in X$, and $c \in L_1(u) \cap L_1(v)$,
    it follows from our assumption that $q^X_1(u,v,c) \leq |N(u) \cap N(v) \cap B^X| \leq \frac{\Delta}{\log^{20} \Delta} < Q_1$.
    \item For each $X \in \mathcal X$ and color $c$,
    $|X_1(c)| \leq |X| \leq \Delta^{\gamma} = T_1^{\gamma} = X_1$.
    \item
    Consider $X \in \mathcal X$ of size at least $2$, $v \in X$, and $c \in L_1(v)$.
    By assumption,
    $v$ has at least $B_1 + d(v) - \Delta$
    neighbors in $B^X$.
    Therefore, 
    \[|T_1(v,c) \setminus B_1^X(v,c)| \leq d(v) - (B_1 + d(v) - \Delta) = \Delta - B_1=  T_1 - B_1.\]
    
\end{itemize}
Therefore, $P(1)$ holds.
\end{proof}

The goal is to show that $P(i)$ holds for all $i \leq i^*$, where $i^*$ is the minimum integer such that $L_{i^*} \geq 8T_{i^*}$.
Before we state the main result regarding our coloring procedure, we state certain properties of our parameters which will be useful in the proof of the result.

\begin{lemma}
\label{lemma: properties}
    Let $r_i = T_i/L_i$.
    For $i \leq \min\{\log^{3/2} \Delta,\,i^*\}$, we have
    \begin{enumerate}[label=(\roman*)]
        \item \label{item:keep} $\keep_i \geq \keep_1 > e^{-K} $.
        \item \label{item:r} $r_{i} \leq r_1 \left ( 1 - \eta \keep_1 \right)^{i-1} \exp \left ( 4 (i-1) \log^{-2} \Delta \right ) \leq r_1 < \log \Delta$.
        \item \label{item:L} $L_{i} > 10 \Delta^{\epsilon/2}$.
        \item \label{item:T} $T_i > \Delta^{\epsilon/2}$.
        \item \label{item:B-and-T} $ \frac{B_i}{T_i}= \gamma - \log^{-2} \Delta $. 
        \item \label{item:T-and-Q} $\frac{T_i}{Q_i} = \frac{10}{\gamma} \log^{18} \Delta$. 
        \item \label{item:B-and-Q} $\frac{B_i}{Q_i} > 5\log^{18} \Delta$.
        \item \label{item:Q-and-X} $\frac{Q_i}{X_i} > \Delta^{(1 - \gamma) \epsilon / 3}$.
    \end{enumerate}
\end{lemma}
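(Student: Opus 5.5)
The plan is to prove all eight items together by induction on $i$, using only the recursive definitions of $L_i, T_i, Q_i, B_i, X_i$ and Lemmas~\ref{lem:1-x LB} and~\ref{lem:bernoulli}. Write $x \coloneqq \log^{-2}\Delta$ and note that $r_1 = T_1/L_1 = \frac{\log \Delta}{1+\epsilon} < \log\Delta$. The ratio items follow directly from the recursions, so I would dispose of them first.

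\textbf{Ratio items (v), (vi), (vii).} The quantities $T_{i+1}$, $Q_{i+1}$ and $B_{i+1}$ are obtained from $T_i$, $Q_i$ and $B_i$ by multiplying by the same factor $\keep_i(1-\eta\keep_i)(1+x)$. Hence $B_i/T_i = B_1/T_1 = \gamma - \log^{-2}\Delta$ and $T_i/Q_i = T_1/Q_1 = \frac{10}{\gamma}\log^{18}\Delta$ for every $i$. Multiplying these gives
\[
\frac{B_i}{Q_i} = \frac{10(\gamma-\log^{-2}\Delta)}{\gamma}\log^{18}\Delta > 5\log^{18}\Delta
\]
for $\Delta$ large. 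This is (v), (vi) and (vii).

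\textbf{Items (i) and (ii), by simultaneous induction.} The recursion gives
\[
r_{i+1} = r_i\,(1-\eta\keep_i)\,\frac{1+x}{1-x}.
\]
Suppose $\keep_j \ge \keep_1$ for all $j \le i$. Bounding $\frac{1+x}{1-x} \le e^{4x}$ and iterating yields the first inequality in (ii). For the second inequality, $\eta\keep_1 \ge K e^{-K}/\log\Delta$, which is much larger than $4x = 4/\log^2\Delta$. So each factor $(1-\eta\keep_1)e^{4x}$ is less than $1$, and therefore $r_i \le r_1$. Conversely, $r_i \le r_1$ gives $T_i/L_i \le T_1/L_1$. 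By Lemma~\ref{lem:1-x LB} with a small $\alpha$,
\[
\keep_i = (1-\eta/L_i)^{T_i} \ge \exp\bigl(-(1+\alpha)\eta r_i\bigr) \ge \exp\bigl(-(1+\alpha)\eta r_1\bigr) = \exp\Bigl(-\frac{(1+\alpha)K}{1+\epsilon}\Bigr) > e^{-K}.
\]
Since $\keep_1$ also satisfies $\keep_1 \ge \exp(-(1+\alpha)\eta r_1)$, I would phrase the monotonicity $\keep_i \ge \keep_1$ through this common lower bound. Equivalently, I would simply carry the bound $\keep_i \ge \exp(-(1+\alpha)\eta r_1) > e^{-K}$ through the induction, which is all that is used afterwards.

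\textbf{Item (iii), the main computation.} We have $L_i = L_1(1-x)^{i-1}\prod_{j<i}\keep_j$. Because $i \le \log^{3/2}\Delta$, the factor $(1-x)^{i-1}$ is $1-o(1)$. For the product, use $\keep_j \ge \exp(-(1+\alpha)\eta r_j)$ together with the geometric decay in (ii):
\[
\sum_{j<i} \eta r_j \;\le\; \frac{\eta r_1}{1-(1-\eta\keep_1)e^{4x}} \;\le\; (1+\alpha)\frac{r_1}{\keep_1} \;\le\; (1+\alpha)e^{K}\frac{\log\Delta}{1+\epsilon}.
\]
Choosing $\alpha$ small relative to $K$, this gives $\prod_{j<i}\keep_j \ge \Delta^{-(1+3K)/(1+\epsilon)}$. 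Therefore
\[
L_i \ge \Delta^{(\epsilon-3K)/(1+\epsilon) - o(1)}.
\]
Since $K = \epsilon/1000$ and $\epsilon < 1$, we have $(\epsilon - 3K)/(1+\epsilon) > \epsilon/2$ with room to spare, which gives $L_i > 10\Delta^{\epsilon/2}$. I expect this step to be the main obstacle, because the constants must be tracked carefully enough that the exponent stays above $\epsilon/2$.

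\textbf{Items (iv) and (viii).} For (iv), note that $i \le i^*$ and $i^*$ is the first index with $L_{i^*} \ge 8T_{i^*}$. So for $i < i^*$ we have $T_i > L_i/8 > \Delta^{\epsilon/2}$ by (iii). At $i = i^*$ itself I would verify the bound directly from one step of the recursion, using $T_{i^*} \ge T_{i^*-1}e^{-K}/2$. Finally, for (viii), combine (vi) with $X_i = T_i^\gamma$ and (iv):
\[
\frac{Q_i}{X_i} = \frac{\gamma\, T_i^{1-\gamma}}{10\log^{18}\Delta} > \frac{\gamma\,\Delta^{(1-\gamma)\epsilon/2}}{10\log^{18}\Delta} > \Delta^{(1-\gamma)\epsilon/3}
\]
for $\Delta$ large.
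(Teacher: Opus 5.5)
Your ratio items (v)--(vii), your geometric-sum bound on $\prod_j \keep_j$ in (iii) (essentially the paper's argument), and (viii) are fine. Items (i) and (ii), however, are not proved. Item (i) asserts $\keep_i \ge \keep_1$, and your derivation of the first inequality in (ii) assumes $\keep_j \ge \keep_1$ for all $j \le i$ as the inductive hypothesis. Yet your inductive step only yields $\keep_{i+1} \ge \exp(-(1+\alpha)\eta r_1) > e^{-K}$. Knowing that $\keep_{i+1}$ and $\keep_1$ both exceed the same number says nothing about their order. Carrying only this common lower bound is also not ``equivalent'': it gives (ii) with $\keep_1$ replaced by the strictly smaller constant $\exp(-(1+\alpha)K/(1+\epsilon))$, which is weaker than what is stated. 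Moreover, the fixed-$\alpha$ form of Lemma~\ref{lem:1-x LB} can never give even $\keep_2 \ge \keep_1$. Comparing $\exp(-(1+\alpha)\eta r_2)$ with $\keep_1 < \exp(-\eta r_1)$ would need $r_2/r_1 \le 1/(1+\alpha)$, whereas $r_2/r_1 = 1-\Theta(1/\log\Delta)$.

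The missing idea, which is how the paper proves (i), is to use the sharp form $1-y > \exp(-y(1+y))$ with $y = \eta L_{i+1}^{-1}$, which is $\Delta^{-\Omega(1)}$. This gives $\keep_{i+1} > \exp(-\eta r_{i+1}(1+\eta L_{i+1}^{-1}))$, to be compared with $\keep_i < \exp(-\eta r_i)$. Then $\keep_{i+1} > \keep_i$ follows from $r_{i+1}(1+\eta L_{i+1}^{-1}) < r_i$. That holds because $r_{i+1}/r_i = (1-\eta\keep_i)\frac{1+x}{1-x}$, and the contraction $\eta\keep_i \ge Ke^{-K}/\log\Delta$ dominates $O(\log^{-2}\Delta) + \eta L_{i+1}^{-1}$. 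With this, your induction for (ii) closes.

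Two smaller points. First, in (iv) at $i=i^*$, the one-step bound $T_{i^*} \ge T_{i^*-1}e^{-K}/2$ together with $T_{i^*-1} > \frac{10}{8}\Delta^{\epsilon/2}$ only gives $T_{i^*} > \frac58 e^{-K}\Delta^{\epsilon/2}$. Either use the polynomial slack in your (iii) estimate, or note, as the paper does, that $r_{i^*} > \frac18(1-\eta) > \frac1{10}$, whence $T_{i^*} > L_{i^*}/10 > \Delta^{\epsilon/2}$. Second, $(\epsilon-3K)/(1+\epsilon) > \epsilon/2$ holds only for $\epsilon < 0.994$, not for every $\epsilon<1$. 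This is harmless if you assume $\epsilon$ small from the start; the paper's own computation at this point also tacitly needs $\epsilon$ small.
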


See Section~\ref{sec: proof of properties} for the proof of Lemma~\ref{lemma: properties}.
The main result regarding Algorithms~\ref{algorithm: nibble}~and~\ref{algorithm: trim} is the following:

\begin{proposition}\label{prop: nibble}
    The following holds for all $i < i^*$.
    Suppose that $P(i)$ holds.
    Then there exists an outcome $\phi_i$ of Algorithm~\ref{algorithm: nibble} such that $P(i+1)$ holds after performing the trimming steps of Algorithm~\ref{algorithm: trim}.
\end{proposition}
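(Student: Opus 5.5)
The plan is the standard nibble-plus-Local-Lemma argument. For each of the six properties in $P(i+1)$, I define a bad event localized at a vertex (or at a pair, or at a set $X \in \mathcal X$) and a color. I then show three things: each bad event has probability at most $\exp(-\Delta^{\Omega(\epsilon)})$; each event is determined by the coin flips in a bounded-radius neighbourhood; and each event is mutually independent of all but $\mathrm{poly}(\Delta)$ others. Condition~\eqref{item:main-2c} bounds by $\Delta^N$ the number of sets $X$ containing a vertex, so this count stays polynomial. Lemma~\ref{lem:LLL} then yields an outcome $\phi_i$ avoiding all bad events. Trimming step~\ref{step:trim} converts $|L_{i+1}(v)|\ge L_{i+1}$ into equality. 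Step~\ref{step:prune-edges} only deletes edges, and deleting edges can only decrease $t,q,q^X,b$; the one exception is the difference in (P6), which I handle by noting that pruned edges lie in neither $T$ nor $B$.

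\textbf{Expectations.} With $P(i)$ in hand, $\Eq_i(v,c)=(1-\eta L_i^{-1})^{T_i-t_i(v,c)}$. So for every $c\in L_i(v)$,
\[
\Pr\bigl(c\in L_{i+1}(v)\bigr)=(1-\eta L_i^{-1})^{t_i(v,c)}\,\Eq_i(v,c)=\keep_i .
\]
Hence $\E|L_{i+1}(v)|=L_i\keep_i$. For $u\in T_i(v,c)$, the event that $u$ remains uncolored with $c\in L_{i+1}(u)\cap L_{i+1}(v)$ has probability $\keep_i(1-\eta\keep_i)(1+o(\log^{-2}\Delta))$. Here I use the low-codegree structure: the two "keep" events share only the common neighbours of $u$ and $v$, whose contribution is a factor $1+O(\eta Q_i/L_i)$, negligible by Lemma~\ref{lemma: properties}\ref{item:T-and-Q} and~\ref{item:r}.

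The same computation gives $\E[t_{i+1}]\le T_i\keep_i(1-\eta\keep_i)(1+\tfrac12\log^{-2}\Delta)$. It also gives the analogous bounds for $q_{i+1}$ and $q^X_{i+1}$, with $Q_i$ in place of $T_i$, and for $b^X_{i+1}$ from below, with $B_i$. For $|X_{i+1}(c)|$, each $v\in X_i(c)$ survives with probability at most $\keep_i$, and $X_{i+1}=T_{i+1}^\gamma$ shrinks more slowly than $X_i\keep_i$, so the expectation is fine. The margin $1\pm\log^{-2}\Delta$ built into the recursions absorbs all error terms.

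\textbf{Concentration.} I use Lemma~\ref{lem:conc-ineq} throughout. List sizes are easy: membership of a color in $L_{i+1}(v)$ is $1$-verifiable (a neighbour assigned $c$, or the equalizing coin), and each trial affects at most one color. For $q$ and $q^X$ I concentrate only from above, counting the \emph{non}-surviving and colored vertices via witnesses, exactly as in \cite{BDMW}. For (P6), I bound $t_{i+1}-b^X_{i+1}$ from above by concentrating $|T_{i+1}(v,c)\setminus B^X|$ directly.

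The main obstacle is the upper tail of $t_{i+1}(v,c)$ and $|X_{i+1}(c)|$, because codegrees among friends can be huge. For $t_{i+1}$ I write $t_{i+1}(v,c)$ as the number of $u\in T_i(v,c)$ that are colored or lose $c$, and lower-bound that count. The witness for $u$ losing $c$ is a neighbour $w$ of $u$ assigned $c$. A trial at $w$ serves as witness for all $u\in T_i(v,c)\cap N(w)$. If $w$ is a stranger of $v$, there are at most $q_i(v,w,c)\le Q_i$ such $u$, which is an acceptable Lipschitz constant since $Q_i=T_i/\log^{18}\Delta$. If $w$ is a friend of $v$, then $w$ lies in $\bigcup_{X\in\mathcal X_v}X$, and in one round at most $O(1)$ friends per $X$ are assigned $c$ except on an exceptional event $\Omega^*$; that event has probability $\exp(-\Delta^{\Omega(1)})$, because $|X_i(c)|\le X_i$ and $\eta X_i/L_i\ll1$. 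Outside $\Omega^*$, friends contribute at most $|\mathcal X_v|\cdot O(1)\cdot\Delta\le \Delta^{1+(1-\gamma)\epsilon/12}$ in total, which is still too large. So instead I simply discard friend witnesses and use only stranger witnesses. The friends cover at most $|\mathcal X_v|X_i$ vertices, and by Lemma~\ref{lemma: properties}\ref{item:Q-and-X} this is at most $Q_i\Delta^{-(1-\gamma)\epsilon/4}$ times their degree fraction, so they shift the expectation by a negligible amount.

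For $|X_{i+1}(c)|$ I count $v\in X_i(c)$ that lose $c$. I restrict the witnesses to neighbours inside $B^X$, which is legitimate by (P6) and Lemma~\ref{lemma: properties}\ref{item:B-and-T}, and here each witness covers at most $Q_i$ pairs by (P4). The expected loss uses the $B_i$ lower bound: at least a $\gamma$ fraction of $t_i(v,c)$ lies in $B^X$. This yields survival probability at most roughly $(1-\eta/L_i)^{B_i}\le\keep_i^{\gamma}(1+o(\log^{-2}\Delta))$, which matches $X_{i+1}=T_{i+1}^\gamma$. I expect this last matching of exponents, together with the requirement that $B_i$ itself concentrates from below, to be the most delicate step.
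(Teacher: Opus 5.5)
Your Local Lemma framework, the trimming argument and the list-size analysis agree with the paper. Your treatment of $t_{i+1}(v,c)$ and (P6) is a valid Mahdian-style alternative to Corollary~\ref{cor:great-expectations}. A stranger $w$ of $v$ touches at most $q_i(v,w,c)\le Q_i=\gamma T_i/(10\log^{18}\Delta)$ vertices of $T_i(v,c)$. Discarding friend witnesses is cheap, because at most $|\mathcal X_v|X_i\le Q_i\Delta^{-(1-\gamma)\epsilon/4}$ friends of $v$ have $c$ in their lists. You would still need the paper's split $Z_1-Z_2$ with an exceptional event on repeated colours, since ``$u$ is coloured'' has no small witness. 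Your $\exp(-\Delta^{\Omega(\epsilon)})$ tail bounds are also too optimistic (Lipschitz constants of order $T_i/\log^{18}\Delta$ give only $\exp(-\mathrm{polylog}\,\Delta)$), but $\exp(-\log^2\Delta)$ suffices.

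The genuine gap is in the quantities whose sets are \emph{smaller} than the codegree scale: $|X_{i+1}(c)|$, $q_{i+1}$ and $q^X_{i+1}$. This is the technical core of the paper.
For $X_{i+1}(c)$ you say ``each witness covers at most $Q_i$ pairs by (P4)''. But (P4) bounds the common $B^X$-neighbours of a \emph{pair} of vertices of $X$, not the number of vertices of $X_i(c)$ adjacent to a \emph{single} witness; one $w\in B^X$ may be adjacent to all of $X_i(c)$. Even at face value, a Lipschitz constant $Q_i$ is useless, since $|X_i(c)|\le X_i<Q_i\Delta^{-(1-\gamma)\epsilon/3}$.
The same obstruction makes stranger witnesses fail for $S=Q_i(u,v,c)$, and citing BDMW does not close it. There every pair has small codegree, whereas $Q_i(u,v,c)$ and $Q^X_i(u,v,c)$ can contain friend pairs with codegree close to $\Delta$.

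The missing idea is Lemma~\ref{lem:great-expectations}, which turns pair-codegree bounds \emph{inside} $S$ into a bound on high-degree witnesses. Suppose all pairs in $S$ have at most $\alpha/(4\log^{18}\Delta)$ common neighbours in $R$. Then K\H{o}v\'ari--S\'os--Tur\'an with $s=2$ shows that at most $\alpha/\log^2\Delta$ vertices of $R$ have $|S|/\log^8\Delta$ or more neighbours in $S$. Ignoring them as witnesses only enlarges the surviving set and costs a factor $1+O(\log^{-2}\Delta)$ in expectation. Afterwards every witness touches at most $|S|/\log^8\Delta$ vertices of $S$, and Lemma~\ref{lem:conc-ineq} applies at scale $|S|/\log^2\Delta$.
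For $X_i(c)$ take $R=B^X$ and $\alpha=B_i$, using (P4) for friend pairs, (P3) for stranger pairs, and $Q_i<B_i/(5\log^{18}\Delta)$.
For $q$ and $q^X$ (with $R=V(G)$, $\alpha=T_i$) you must first split $S$ into classes of mutual strangers, as in Lemma~\ref{lem:stranger-partition}. By (P5) and Condition~\eqref{item:main-2b}, each vertex has at most $X_i\Delta^{(1-\gamma)\epsilon/12}$ friends in $S$. Hajnal--Szemer\'edi then gives classes of size at least $|S|/(X_i\Delta^{(1-\gamma)\epsilon/10})\gg\log^{16}\Delta$, using $Q_i/X_i>\Delta^{(1-\gamma)\epsilon/3}$.

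Separately, your final matching for $X_{i+1}$ is off. The quantity $(1-\eta L_i^{-1})^{B_i}\ge\keep_i^\gamma$ exceeds $X_{i+1}/X_i=\keep_i^\gamma(1-\eta\keep_i)^\gamma(1+\log^{-2}\Delta)^\gamma$ by a factor $1+\Theta(1/\log\Delta)$. So you must also use the factor $1-\eta\keep_i$ coming from $v$ being coloured. Its surplus $(1-\eta\keep_i)^{1-\gamma}$ is what absorbs the loss from $B_i=(\gamma-\log^{-2}\Delta)T_i$.
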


We defer the proof of Proposition~\ref{prop: nibble} to Section~\ref{sec: proof of prop nibble}.
Next, with Lemma~\ref{lemma: properties} in mind, we show that $i^*$ is small.

\begin{lemma}
\label{lem:r-rate}
$i^* \leq \log^{3/2}\Delta$.
\end{lemma}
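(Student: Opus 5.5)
We argue by contradiction. Suppose $i^* > \log^{3/2}\Delta$, and set $m \coloneqq \lfloor \log^{3/2}\Delta \rfloor$. Since $m < i^*$, every index $i \le m$ satisfies $i \le \min\{\log^{3/2}\Delta, i^*\}$, so Lemma~\ref{lemma: properties} applies throughout this range. By the minimality of $i^*$ we also have $L_m < 8T_m$, that is, $r_m = T_m/L_m > 1/8$. The aim is to contradict this using the decay estimate \ref{item:r}.

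Item~\ref{item:r} gives
\[
r_m \le r_1 \left(1 - \eta \keep_1\right)^{m-1} \exp\left(4(m-1)\log^{-2}\Delta\right).
\]
We bound each factor separately.

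\begin{itemize}
\item First factor: $r_1 = T_1/L_1 = \Delta \big/ \big((1+\epsilon)\Delta/\log\Delta\big) < \log\Delta$.
\item Second factor: by \ref{item:keep} we have $\keep_1 > e^{-K}$. Using $1-x \le e^{-x}$ and $\eta = K/\log\Delta$,
\[
\left(1-\eta\keep_1\right)^{m-1} \le \exp\left(-(m-1)\frac{K e^{-K}}{\log\Delta}\right) = \exp\left(-\Theta\big(K e^{-K}\log^{1/2}\Delta\big)\right).
\]
\item Third factor: $\exp\left(4(m-1)\log^{-2}\Delta\right) \le \exp\left(4\log^{-1/2}\Delta\right) = 1+o(1)$.
\end{itemize}

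Combining the three bounds,
\[
r_m \le (1+o(1))\,\log\Delta\cdot \exp\left(-c\,K e^{-K}\sqrt{\log\Delta}\right)
\]
for an absolute constant $c>0$. Since $K = \epsilon/1000$ is fixed, the term $\exp(-cKe^{-K}\sqrt{\log\Delta})$ decays faster than any power of $\log\Delta$. Hence $r_m \to 0$, and in particular $r_m < 1/8$ once $\Delta \ge \Delta_0$ is large enough in terms of $\epsilon$. This contradicts $r_m > 1/8$, so $i^* \le \log^{3/2}\Delta$.

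The only point needing care is that the argument is not circular: Lemma~\ref{lemma: properties} is stated only for $i \le \min\{\log^{3/2}\Delta, i^*\}$. Working under the contradiction hypothesis $m < i^*$ is exactly what keeps every index we use inside that range. The remaining estimates are routine, and we take \ref{item:r} and \ref{item:keep} as given from Lemma~\ref{lemma: properties}.
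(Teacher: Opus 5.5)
Your proof is correct and follows the paper's argument. Assuming $i^* > \log^{3/2}\Delta$ forces $r_i > \frac18$ throughout the range; then Lemma~\ref{lemma: properties}~\ref{item:r} combined with \ref{item:keep} drives $r_i$ to $o(1)$, a contradiction. The only difference is that the paper evaluates at $i = \log^{4/3}\Delta + 1$ (obtaining decay $\exp(-K\keep_1\log^{1/3}\Delta)$) while you use $m = \lfloor \log^{3/2}\Delta\rfloor$, which is an immaterial choice.
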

\begin{proof}
Suppose that $i^* > \log^{3/2}\Delta$.
In particular, we have
$r_i > \frac 18$ for each $i \in [\log^{3/2} \Delta]$.
Setting $i = \log^{4/3}\Delta  + 1$ in Lemma~\ref{lemma: properties}~\ref{item:r},
we have 
\begin{eqnarray*}
r_{i} &\leq&   r_1 \left (1 - \eta \keep_1 \right)^{\log^{4/3} \Delta}  \exp ( 4 \log^{4/3} \Delta \log^{-2} \Delta) \\
&<&   \log \Delta \cdot \exp \left ( -K  \keep_1 \log^{1/3} \Delta \right ) .
\end{eqnarray*}
By Lemma~\ref{lemma: properties}~\ref{item:keep}, and since $K$ is a positive constant bounded away from zero, we have that 
\[r_{i}  = o(1) < \frac 18,\]
a contradiction.
\end{proof}

We note that, as a result of Lemma~\ref{lem:r-rate}, all conclusions of Lemma~\ref{lemma: properties} hold
for each $i \in [i^*]$.
Before we prove the auxiliary results in this section, let us show how they imply Theorem~\ref{thm:general}.

\begin{proof}[Proof of Theorem~\ref{thm:general}]
    By Lemma~\ref{lemma: P(1)} and Proposition~\ref{prop: nibble}, $P(i)$ holds for all $i \in [i^*]$.
    Furthermore, by definition of $i^*$, we have $L_{i^*} \geq 8T_{i^*}$.
    In particular, we may apply Proposition~\ref{prop: final blow} to the subgraph induced by the uncolored vertices with respect to the coloring $\phi\coloneqq \bigcup_{i = 1}^{i^*-1}\phi_i$ and the list assignment $L_{i^*}(\cdot)$.
    Let $\psi$ be the corresponding coloring.
    The desired coloring of $G$ is $\phi\cup \psi$.
\end{proof}

\subsection{Proof of Lemma~\ref{lemma: properties}}\label{sec: proof of properties}
\stepcounter{ForClaims} \renewcommand{\theForClaims}{\ref{lemma: properties}}

    We begin with a proof of the inequalities in~\ref{item:keep},~\ref{item:r},~\ref{item:B-and-T}, and~\ref{item:T-and-Q},
    which do not require induction on $i$.
    By
    Lemma~\ref{lem:1-x LB} and the fact that $\eta L_1^{-1}$ is sufficiently small,
    \[
        \keep_1 = (1 - \eta L_1^{-1} )^{T_1} > \exp \left ( -(1+\epsilon) \frac{\eta T_1}{L_1} \right )
        = e^{-K}.\] 
        
    For $i \geq 1$, we have 
    \[(1 - \eta \keep_1)^{i-1} \leq \exp ( - \eta (i-1) e^{-K} ) \leq \exp( - 4 (i-1) \log^{-2} \Delta ),\]
    where the last inequality follows for $\Delta$ sufficiently large.
    With this in hand, we have
    \[
         r_1 \left ( 1 - \eta \keep_1 \right)^{i-1} \exp \left ( 4 (i-1) \log^{-2} \Delta \right )  \leq r_1  = \frac{\log \Delta}{1+\epsilon} < \log \Delta.
    \]

    For~\ref{item:B-and-T}, note the following:
    \[\frac{B_i}{T_i} = \frac{B_1}{T_1} = \gamma - \log^{-2}\Delta.\]

    Finally, we have
    \[\frac{T_i}{Q_i} = \frac{T_1}{Q_1} = \frac{10}{\gamma} \log^{18}\Delta,\]
    completing the proof of~\ref{item:T-and-Q}.

    Now, we prove the rest of the inequalities by induction on $i$.
    When $i=1$, 
   ~\ref{item:L} and~\ref{item:T}
    hold as $L_i$ and $T_i$ are both of the order $\Delta^{1 - o(1)}$;
   ~\ref{item:B-and-Q} holds as $\frac{B_1}{Q_1} = (10 - o(1)) \log^{18} \Delta$;
   ~\ref{item:Q-and-X} holds as $\frac{Q_1}{X_1} = \Delta^{1 - \gamma - o(1)}$.
    Now, suppose that $1 \leq i < \min\{\log^{3/2} \Delta,\, i^*\}$ and that the statement holds for all values $1 \leq j \leq i$. 
    (Note that $r_j > \frac 18$ for every $1 \leq j \leq i$ since $i < i^*$.)
    We aim to show that the inequalities in the lemma hold for $i+1$.
    We do so through a sequence of claims.

    First, let us show that~\ref{item:keep} holds.
    \begin{claim}
        \label{claim:keep}
        The inequality $\keep_{i+1} > \keep_i \geq   \keep_1$ holds.
    \end{claim}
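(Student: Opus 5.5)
Since $\keep_j = (1-\eta L_j^{-1})^{T_j} = (1-\eta/L_j)^{r_j L_j}$, I would show the monotonicity through the ratio $r_j = T_j/L_j$, deriving the claim from the recursive definitions of $L_{i+1}$ and $T_{i+1}$ rather than from any probabilistic statement. The inequality $\keep_i \geq \keep_1$ is the inductive hypothesis (item \ref{item:keep} for $i$), so only $\keep_{i+1} > \keep_i$ needs proof.

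The first step is the recursion for the ratio:
\[
r_{i+1} = \frac{T_{i+1}}{L_{i+1}} = r_i\,(1-\eta\keep_i)\,\frac{1+\log^{-2}\Delta}{1-\log^{-2}\Delta}.
\]
Using $\keep_i > e^{-K}$ (induction) and $\eta = K/\log\Delta$, we have $\eta\keep_i \geq K e^{-K}/\log\Delta$, which is much larger than $3\log^{-2}\Delta$ for $\Delta$ large. Hence $(1-\eta\keep_i)(1+\log^{-2}\Delta)/(1-\log^{-2}\Delta) < 1$, and so $r_{i+1} < r_i$. Along the way I would also note that $L_{i+1} < L_i$, since $\keep_i<1$.

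Next I would compare $\keep_{i+1}$ with $\keep_i$ via logarithms:
\[
-\log \keep_j = T_j \cdot\bigl(-\log(1-\eta/L_j)\bigr) = r_j\,\eta\,\bigl(1 + O(\eta/L_j)\bigr).
\]
Writing $f(x) = -x^{-1}\log(1-x) = 1 + x/2 + \cdots$, we get $-\log\keep_j = \eta r_j f(\eta/L_j)$, and $f$ is increasing in $x$. This creates a tension: $r_{i+1} < r_i$ helps, but $L_{i+1} < L_i$ makes $\eta/L_{i+1}$ larger, which hurts.

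The main obstacle is therefore to show that the decrease in $r$ dominates the increase in $f(\eta/L_j)$. The ratio $r_{i+1}/r_i$ is at most $1 - \Theta(\log^{-1}\Delta)$. On the other hand, $f(\eta/L_{i+1})/f(\eta/L_i) \le 1 + O(\eta/L_{i+1})$. By item \ref{item:L} for $i$, together with $L_{i+1} \geq L_i e^{-K}/2$, this is $1 + O(\Delta^{-\epsilon/2})$, which is negligible compared with $\log^{-1}\Delta$. Hence $-\log\keep_{i+1} < -\log\keep_i$, i.e. $\keep_{i+1} > \keep_i$. If item \ref{item:L} is not yet available at index $i+1$, I would use only the bound at index $i$ together with the one-step shrink factor $\keep_i(1-\log^{-2}\Delta)$, which avoids circularity.
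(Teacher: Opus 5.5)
Your proposal is correct and follows essentially the paper's argument. Like the paper, you reduce the claim to $-\log\keep_{i+1} \le \eta r_{i+1}(1+\eta L_{i+1}^{-1})$ and $-\log\keep_i \ge \eta r_i$, and then verify $(1-\eta\keep_i)(1+\eta L_{i+1}^{-1})\frac{1+\log^{-2}\Delta}{1-\log^{-2}\Delta}<1$ using $\keep_i>e^{-K}$ and the inductive lower bound on $L_i$. The only cosmetic difference is that you bound $\eta/L_{i+1}$ by $O(\Delta^{-\epsilon/2})$, whereas the paper settles for $2\log^{-2}\Delta$.
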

    \begin{claimproof}   
    We recall $\keep_{i+1} = (1 - \eta L_{i+1}^{-1})^{T_{i+1}}$. Using Lemma~\ref{lem:1-x LB},
    \[\keep_{i+1} > \exp \left ( - \eta  T_{i+1}  L_{i+1}^{-1} ( 1 + \eta L_{i+1}^{-1}) \right ).\]
    To prove the claim, it suffices to show that 
    \[
     \exp \left ( - \eta  T_{i+1}  L_{i+1}^{-1} ( 1 +  \eta L_{i+1}^{-1}) \right )
     \geq  \exp \left ( - \eta T_i L_i^{-1} \right )
    > \left ( 1 - \eta  L_i^{-1} \right )^{T_i} =  \keep_i.
    \]
    Equivalently, it suffices to show that $
    T_{i+1} L_{i+1}^{-1} (1 +
    \eta L_{i+1}^{-1}) < T_i L_i^{-1}$
    or that 
    \begin{equation}
    \label{eq:keep-calc}
        1 > ( 1 + \eta L_{i+1}^{-1}) \frac{T_{i+1}}{T_i}   \cdot  \frac{L_i }{L_{i+1}} = (1 - \eta \keep_i) ( 1+  \eta L_{i+1}^{-1}) \frac{1 + \log^{-2} \Delta }{1 - \log^{-2} \Delta }.
    \end{equation}

    By the induction hypothesis, $\keep_i \geq \keep_1 > \exp(-K) > \frac 23$.
    As $L_i > \Delta^{\epsilon/2}$ by the induction hypothesis, we have
    \[L_{i+1} > \frac 12 L_i, \qquad \text{and} \qquad 1 + \eta L_{i+1}^{-1} <  1 + L_i^{-1} < 1 + 2\log^{-2} \Delta. \]
    Putting all this together and applying Lemma~\ref{lem:bernoulli}, we have
    \[ ( 1 + \eta L_{i+1}^{-1})  \frac{1 + \log^{-2} \Delta }{1 - \log^{-2} \Delta } < 
     (1 + 2 \log^{-2} \Delta  )^3 < \exp (6 \log^{-2} \Delta  ).\]
    Therefore,  as $\keep_i > \frac 23$,  the expression on the RHS of~\eqref{eq:keep-calc} is at most 
    \[
         \exp\left (- \frac 12 \eta +  6 \log^{-2} \Delta  \right )  =\exp \left ( - \frac{K}{2 \log \Delta}  + 6 \log^{-2} \Delta  \right ) < 1,
    \]
    as desired.
    \end{claimproof}

The next claim shows that~\ref{item:r} holds.    
    \begin{claim} 
    \label{claim:ri}
    The inequality $r_{i+1} < r_1 \left ( 1 - \eta \keep_1 \right)^i \exp \left ( 4 i \log^{-2} \Delta \right )$ holds.
    \end{claim}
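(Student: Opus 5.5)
The claim follows from a one-step recursion for $r_{i+1}/r_i$ combined with the induction hypothesis \ref{item:r} at $i$. By the definitions of $T_{i+1}$ and $L_{i+1}$, the common factor $\keep_i$ cancels, giving
\[
\frac{r_{i+1}}{r_i} = \frac{T_{i+1}}{T_i}\cdot\frac{L_i}{L_{i+1}} = (1-\eta\keep_i)\,\frac{1+\log^{-2}\Delta}{1-\log^{-2}\Delta}.
\]
So the whole argument reduces to bounding these two factors separately. There is no need to analyse $\keep_i$ beyond its monotonicity.

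For the first factor, Claim~\ref{claim:keep} (already established for this induction step) gives $\keep_i \geq \keep_1$. Hence $1-\eta\keep_i \leq 1-\eta\keep_1$.

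For the second factor, write $x=\log^{-2}\Delta$. I would use $\frac{1}{1-x} < 1+2x$, which follows from Lemma~\ref{lem:bernoulli} with $r=1$ since $x<\tfrac12$. Then
\[
\frac{1+x}{1-x} < (1+x)(1+2x) < 1+4x \leq e^{4x},
\]
where $1+3x+2x^2<1+4x$ holds because $x<\tfrac12$. This gives
\[
r_{i+1} < r_i\,(1-\eta\keep_1)\exp(4\log^{-2}\Delta).
\]

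Finally, I substitute the induction hypothesis $r_i \leq r_1(1-\eta\keep_1)^{i-1}\exp(4(i-1)\log^{-2}\Delta)$ and multiply the factors together. This yields exactly the bound claimed for $r_{i+1}$.

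The inequality $r_{i+1}\le r_1<\log\Delta$ then follows verbatim from the computation at the start of the proof of Lemma~\ref{lemma: properties}. That computation showed $(1-\eta\keep_1)^{i}\le \exp(-4i\log^{-2}\Delta)$ for $\Delta$ large, so the product $(1-\eta\keep_1)^{i}\exp(4i\log^{-2}\Delta)$ is at most $1$.

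I see no real obstacle here. The only point needing care is to compare against $\keep_1$ rather than $\keep_i$ (via Claim~\ref{claim:keep}), so that the bound telescopes into a clean power.
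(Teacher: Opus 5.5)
Your proof is correct and follows the paper's argument. The paper likewise writes $r_{i+1} = r_i(1-\eta\keep_i)\frac{1+\log^{-2}\Delta}{1-\log^{-2}\Delta}$, bounds the last factor (it uses $(1+2\log^{-2}\Delta)^2 \leq \exp(4\log^{-2}\Delta)$ instead of your $1+4\log^{-2}\Delta$), and uses $\keep_j \geq \keep_1$; the only cosmetic difference is that the paper unrolls the recursion down to $r_1$, whereas you invoke \ref{item:r} at step $i$, and the two are equivalent.
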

    \begin{claimproof}
    Using the recurrence relations for $T_i$ and $L_i$,
    we have
    \begin{eqnarray*}
    r_{i+1} = \frac{T_{i+1}}{L_{i+1}} & = &\frac{T_i \left ( 1 - \eta \keep_i \right )}{L_i} \cdot \frac{1 + \log^{-2} \Delta }{1 - \log^{-2} \Delta } <  r_i \left (1 - \eta \keep_i \right ) (1 + 2\log^{-2} \Delta )^2 .
    \end{eqnarray*}
This implies that
    \[r_{i+1} < r_1   \prod_{j=1}^i \left (1 - \eta \keep_i \right ) \prod_{j=1}^i (1 + 2\log^{-2} \Delta )^2 . \]
    By the induction hypothesis, $\keep_i \geq \keep_1$. Therefore,
    \begin{equation}
    \label{eq:r+1 UB}
        r_{i+1} < r_1 \left ( 1 - \eta \keep_1 \right)^i \exp \left ( 4 i \log^{-2} \Delta \right ),
    \end{equation}
    as desired.
\end{claimproof}

To prove~\ref{item:L} and~\ref{item:T}, we need the following claim.
\begin{claim}
\label{claim:keep-prod}
    $\prod_{j=1}^i \keep_j > \Delta^{\frac 23 \epsilon - 1}$.
\end{claim}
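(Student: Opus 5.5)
The plan is to bound each factor $\keep_j$ from below by an exponential and then sum the exponents using the geometric decay of $r_j$. By Lemma~\ref{lem:1-x LB}, and since $\eta L_j^{-1}$ is tiny (by the induction hypothesis~\ref{item:L}, $L_j > 10\Delta^{\epsilon/2}$), we get for each $j \le i$
\[
\keep_j = (1-\eta L_j^{-1})^{T_j} > \exp\left(-\eta r_j (1+\eta L_j^{-1})\right) \geq \exp\left(-(1+\Delta^{-\epsilon/3})\,\eta r_j\right).
\]
Hence it suffices to show that $(1+\Delta^{-\epsilon/3})\,\eta\sum_{j=1}^i r_j < (1-\tfrac23\epsilon)\log\Delta$.

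Next, bound $\sum_j r_j$ using~\ref{item:r}, which holds for all $j \le i$ by induction (and for $j=i+1$ by Claim~\ref{claim:ri}, though only $j\le i$ is needed). Since $j \le i < \log^{3/2}\Delta$, the correction factor satisfies $\exp(4(j-1)\log^{-2}\Delta) \le \exp(4\log^{-1/2}\Delta) = 1+o(1)$. Therefore
\[
\sum_{j=1}^i r_j \le (1+o(1))\, r_1 \sum_{j\ge 0}(1-\eta\keep_1)^j = (1+o(1))\frac{r_1}{\eta \keep_1}.
\]
Consequently the exponent is at most $(1+o(1))\, r_1/\keep_1$. Substituting $r_1 = \log\Delta/(1+\epsilon)$ and $\keep_1 > e^{-K}$ from~\ref{item:keep}, this is at most
\[
(1+o(1))\,e^{K}\,\frac{\log\Delta}{1+\epsilon}.
\]

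The remaining step, which is the only place the constants matter, is checking that $e^{K}/(1+\epsilon) < 1 - \tfrac23\epsilon$ with room to absorb the $(1+o(1))$ factor. Since $K=\epsilon/1000$, we have $e^K \le 1+\epsilon/500$ for $\epsilon<1$. Also $(1-\tfrac23\epsilon)(1+\epsilon) = 1+\tfrac13\epsilon-\tfrac23\epsilon^2$, which exceeds $1+\epsilon/500$ by a constant margin depending only on $\epsilon$; this margin is positive for $\epsilon\in(0,1)$ because $\tfrac13\epsilon - \tfrac23\epsilon^2 - \tfrac{\epsilon}{500} > 0$ requires $\epsilon < \tfrac12 - \tfrac{3}{1000}$. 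For larger $\epsilon$ this is a potential obstacle; there I would instead use $\keep_1 \ge (1-\eta L_1^{-1})^{T_1}$ more carefully, or note that the bound in the claim is only needed in the weaker form $\Delta^{\epsilon/2+o(1)}$ for~\ref{item:L} and~\ref{item:T}, which follows from the same computation, since $(1+\epsilon/500)/(1+\epsilon) < 1-\epsilon/2 + \epsilon^2/2$ comfortably. I expect this constant bookkeeping, rather than the summation, to be the delicate point; with $\Delta$ large, the $o(1)$ terms are absorbed into the margin, giving $\prod_{j\le i}\keep_j > \Delta^{\frac23\epsilon-1}$.
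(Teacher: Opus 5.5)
Your argument is essentially the paper's: bound each $\keep_j$ below by $\exp(-(1+o(1))\eta r_j)$ via Lemma~\ref{lem:1-x LB}, sum the geometric bound from~\ref{item:r} to get an exponent of about $r_1/\keep_1 \le e^{K} r_1$, and compare constants. Your bookkeeping is slightly tighter, losing only $e^{K}$ where the paper loses $e^{3K}$.

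Your worry about larger $\epsilon$ is justified, and it is not an artifact of the estimates. Since $\keep_j \le e^{-\eta r_j}$ and $r_{j+1} \ge (1-\eta) r_j$, one gets $\prod_{j\le i}\keep_j \le e^{-(r_1-r_{i+1})}$. At $i=i^*-1$ this is at most $e^{1/8}\Delta^{-1/(1+\epsilon)}$, which is smaller than $\Delta^{\frac{2}{3}\epsilon-1}$ once $\epsilon>\frac{1}{2}$. So the claim as stated is false in that range, and no sharper estimate of $\keep_1$ can rescue it. The paper's own proof tacitly needs small $\epsilon$ as well: its step $\frac{1}{1+\epsilon}<1-\frac{4}{5}\epsilon$ already requires $\epsilon<\frac{1}{4}$.

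Your second idea, proving only what Claim~\ref{claim:Li+1} actually uses, is the right kind of repair. However, the inequality to verify is $(1+o(1))e^{K}/(1+\epsilon) \le 1-\frac{\epsilon}{2}-\delta$ for some fixed $\delta>0$, which absorbs the $\log\Delta$ and constant losses there. It is not the inequality with $+\frac{\epsilon^2}{2}$ that you wrote. With $e^{K}\le 1+\epsilon/500$, this condition holds for all $\epsilon\le 0.99$.
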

\begin{claimproof}
    By Lemma~\ref{lem:1-x LB},
    $\keep_j  = (1 - \eta L_j^{-1} )^{T_j} > \exp(-e^K \eta r_j)$ for each $1 \leq j \leq i$.
    Therefore,
    \[
           \prod_{j=1}^i \keep_j >  \prod_{j=1}^i \exp \left ( - e^K \eta  r_j\right).
    \]
    By the induction hypothesis and since $i < \log^{3/2}\Delta$, we have
    $r_{j} < r_1 \left ( 1 - \eta\keep_1 \right)^{j-1} e^K $ for each $1 \leq j \leq i$. Therefore,
        \begin{align*}
         \prod_{j=1}^i \keep_j  &>   \prod_{j=1}^i \exp \left ( - e^{2K} \eta r_1 \left ( 1 - \eta  \keep_1 \right)^{j-1}  \right ) \\
        &= \exp\left(-e^{2K} \eta r_1  \sum_{j = 1}^i \left(1 - \eta\keep_1\right)^{j - 1} \right) \\
        &>  \exp\left(- e^{2K} \eta 
        r_1\sum_{j = 1}^\infty \left(1 - \eta \keep_1\right)^{j - 1} \right) \\
        &= \exp\left(-e^{2K} \eta 
         \left( \eta \keep_1\right)^{-1} r_1 \right) \\
        &>  \exp\left(-e^{3K}   r_1 \right) & \text{(since $\keep_1 > e^{-K}$) } \\ 
        &= \exp\left(- \frac{e^{3K}  \log \Delta}{1 + \epsilon} \right) & \text{(since $r_1 = \frac{ \log \Delta}{1 + \epsilon}$)}.
    \end{align*}
    As $\frac{1}{1 + \epsilon} < 1 - \frac{4}{5} \epsilon < \exp\left(-\frac{4}{5} \epsilon\right)$
    and $K = \frac{1}{1000} \epsilon$,
    Lemma~\ref{lem:1-x LB} implies
    \[
        \frac{e^{3K} }{1 + \epsilon} < \exp\left(3K - \frac{4}{5}\epsilon \right ) =  \exp \left ( - \frac{199}{250} \epsilon \right ) < 1 - \frac 23 \epsilon .
    \]
    It follows that 
    \[
         \prod_{j=1}^i \keep_j  > \exp\left(-\left (1 - \frac{2}{3} \epsilon\right) \log \Delta\right) =  \Delta^{\frac 23 \epsilon  - 1 },
    \]
    as claimed.
\end{claimproof}

The following claim shows that~\ref{item:L} holds.

\begin{claim}
\label{claim:Li+1}
    The inequality $L_{i+1} > 10 \Delta^{\epsilon/2}$ holds.
\end{claim}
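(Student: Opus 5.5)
We unroll the recurrence for $L_{i+1}$ and then apply Claim~\ref{claim:keep-prod}. By definition,
\[
L_{i+1} = L_1 \prod_{j=1}^{i} \keep_j \left(1 - \frac{1}{\log^2 \Delta}\right)^{i}.
\]
Each factor is bounded below separately.

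First, $L_1 = (1+\epsilon)\frac{\Delta}{\log \Delta} > \frac{\Delta}{\log \Delta}$. Second, Claim~\ref{claim:keep-prod} (available since $i < \min\{\log^{3/2}\Delta, i^*\}$ in the induction) gives
\[
\prod_{j=1}^i \keep_j > \Delta^{\frac 23\epsilon - 1}.
\]
Third, since $i < \log^{3/2}\Delta$, Lemma~\ref{lem:bernoulli} yields
\[
\left(1 - \log^{-2}\Delta\right)^{i} \geq 1 - i\log^{-2}\Delta \geq 1 - \log^{-1/2}\Delta > \tfrac12
\]
for $\Delta$ large. Multiplying the three bounds,
\[
L_{i+1} > \frac{\Delta}{\log \Delta}\cdot \Delta^{\frac23 \epsilon - 1}\cdot \frac12 = \frac{\Delta^{\frac 23\epsilon}}{2\log\Delta}.
\]

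It remains to check that $\frac{\Delta^{2\epsilon/3}}{2\log \Delta} > 10\Delta^{\epsilon/2}$. This is equivalent to $\Delta^{\epsilon/6} > 20\log\Delta$, which holds once $\Delta \geq \Delta_0$ is chosen large in terms of $\epsilon$.

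The only step with content is Claim~\ref{claim:keep-prod}, which is already established. There is no real obstacle beyond confirming that its hypotheses hold at stage $i$ and absorbing the polylogarithmic losses into the gap $\Delta^{\epsilon/6}$.
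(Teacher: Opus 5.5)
Your proof is correct and follows the paper's own argument. You unroll the recurrence, invoke Claim~\ref{claim:keep-prod}, bound $(1-\log^{-2}\Delta)^i > \frac12$ using $i \le \log^{3/2}\Delta$, and absorb the $\log\Delta$ loss into the $\Delta^{\epsilon/6}$ gap. The only cosmetic differences are that you use Bernoulli's inequality (Lemma~\ref{lem:bernoulli}) where the paper uses the exponential bound of Lemma~\ref{lem:1-x LB}, and that you discard the harmless factor $(1+\epsilon)$ from $L_1$.
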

\begin{claimproof}
    Using the recurrence $L_{i+1} = L_i \keep_i( 1 - \log^{-2} \Delta )$,
    we have 
    \[L_{i+1} = L_1 \prod_{j=1}^i\left ( \keep_j ( 1 - \log^{-2} \Delta ) \right ) .\]
    Furthermore, by Lemma~\ref{lem:1-x LB}
    we have 
    \[\prod_{j = 1}^i \left(  1 - \log^{-2} \Delta \right) > \exp\left(-2 i \log^{-2} \Delta \right) > \frac 12 ,\] 
    since $ i \leq \log^{3/2} \Delta$ and $\Delta$ is sufficiently large.
    Therefore, by Claim~\ref{claim:keep-prod}, we have
    \[L_{i+1} > \frac 12 L_1 \prod_{j=1}^i \keep_j > \frac 12 L_1 \Delta^{\frac 23 \epsilon - 1} = \frac{ (1 + \epsilon ) \Delta^{\frac 23 \epsilon } }{2 \log \Delta} > 10 \Delta^{\epsilon/2} ,\]
    as desired.
    \end{claimproof}

The next claim shows that~\ref{item:T} holds:
\begin{claim}\label{claim: T i+1}
    The inequality $T_{i+1} > \Delta^{\epsilon / 2}$ holds.
\end{claim}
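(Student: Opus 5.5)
The plan is to write $T_{i+1} = r_{i+1} L_{i+1}$ and combine the lower bound on $L_{i+1}$ with the fact that $r_{i+1} \ge r_{i^*-1}$-type quantities stay bounded below. Since $i < i^*$, we have $r_{i} > \frac18$, but we need $r_{i+1}$, so a direct product formula is cleaner. Unrolling the recurrence $T_{j+1} = T_j \keep_j (1-\eta\keep_j)(1+\log^{-2}\Delta)$ gives
\[
T_{i+1} = T_1 \prod_{j=1}^i \keep_j \cdot \prod_{j=1}^i (1-\eta\keep_j)\cdot (1+\log^{-2}\Delta)^i \;\ge\; \Delta \prod_{j=1}^i \keep_j \cdot (1-\eta)^i .
\]

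The first step is to invoke Claim~\ref{claim:keep-prod}, which gives $\prod_{j=1}^i \keep_j > \Delta^{\frac23\epsilon - 1}$, so that $\Delta\prod_{j=1}^i\keep_j > \Delta^{\frac23\epsilon}$. The second step is to control the extra factor $(1-\eta)^i$. Here $\eta = K/\log\Delta$ and $i < \log^{3/2}\Delta$, so by Lemma~\ref{lem:1-x LB}, $(1-\eta)^i > \exp(-2\eta i) > \exp(-2K\log^{1/2}\Delta) = \Delta^{-o(1)}$. In fact, the sharper bound $\prod_j(1-\eta\keep_j) \ge \prod_j(1-\eta\keep_1)$ is unnecessary; the crude bound suffices because $\exp(-2K\sqrt{\log \Delta})$ is subpolynomial.

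Combining, $T_{i+1} > \Delta^{\frac23\epsilon}\exp(-2K\sqrt{\log\Delta}) > \Delta^{\epsilon/2}$ for $\Delta$ sufficiently large, since $\Delta^{\epsilon/6}$ dominates $\exp(2K\sqrt{\log\Delta})$. The only mildly delicate point is that the loss from the $(1-\eta\keep_j)$ factors accumulates over up to $\log^{3/2}\Delta$ rounds; the main obstacle would be if this loss were polynomial in $\Delta$. The bound $i<\log^{3/2}\Delta$ with $\eta\sim 1/\log\Delta$ keeps it at $\exp(O(\sqrt{\log\Delta}))$, which is negligible against the $\Delta^{\epsilon/6}$ slack.

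An alternative, if desired, is to use $T_{i+1} = r_{i+1}L_{i+1}$ together with $r_{i+1} \ge r_i(1-\eta)\cdot\frac{1+\log^{-2}\Delta}{1-\log^{-2}\Delta} > \frac18(1-\eta)$. Combined with $L_{i+1} > 10\Delta^{\epsilon/2}$ from Claim~\ref{claim:Li+1}, this gives $T_{i+1} > \frac{10}{8}(1-\eta)\Delta^{\epsilon/2} > \Delta^{\epsilon/2}$ immediately, and this is likely the intended one-line route.
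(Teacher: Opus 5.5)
Your proposal is correct, and the closing ``alternative'' is exactly the paper's proof. The paper writes $T_{i+1}=L_{i+1}r_{i+1}=L_{i+1}r_i(1-\eta\keep_i)\frac{1+\log^{-2}\Delta}{1-\log^{-2}\Delta}>L_{i+1}/10$, using $r_i>\frac18$ (because $i<i^*$) and $\eta\keep_1=o(1)$. It then applies $L_{i+1}>10\Delta^{\epsilon/2}$ from Claim~\ref{claim:Li+1}; the factor $10$ there is chosen precisely to absorb $r_{i+1}>\frac1{10}$.

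Your main route is genuinely different. By unrolling the $T$-recurrence you bypass both the ratio $r_{i+1}$ and Claim~\ref{claim:Li+1}. You spend the $\Delta^{\epsilon/6}$ of slack in Claim~\ref{claim:keep-prod} directly, paying only $(1-\eta)^i\ge\exp(-2K\sqrt{\log\Delta})$ for the factors $1-\eta\keep_j$. That bound is legitimate because the induction runs over $i<\log^{3/2}\Delta$.

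The paper's version is a one-liner, but it depends on the stopping condition $r_i>\frac18$ and on the constant $10$ built into Claim~\ref{claim:Li+1}. Yours never uses a lower bound on $r_i$. It needs only the product estimate already proved and the cap on the number of rounds, at the cost of one extra (harmless) estimate on the accumulated $(1-\eta\keep_j)$ losses.
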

\begin{claimproof}
    Noting that $r_i > \frac 18$ and $\eta \keep_1 = o(1)$, we have
    \[
        T_{i+1} = L_{i+1}r_{i+1}       
        = L_{i+1}r_i (1 - \eta \keep_i) \cdot \frac{1 + \log^{-2} \Delta }{1 - \log^{-2} \Delta } > \frac{L_{i+1}}{10},
    \]
    for $\Delta$ sufficiently large.
    Thus, as $L_{i+1} > 10 \Delta^{\epsilon/2}$ by Claim~\ref{claim:Li+1},
    $T_{i+1} > \Delta^{\epsilon/2}$.
\end{claimproof}

To show~\ref{item:B-and-Q},
we note that the inequality $\frac{B_{i+1}}{Q_{i+1}} > 5 \log^{18} \Delta$ follows from~\ref{item:T-and-Q} and the fact that $\frac{B_i}{T_i} > \frac 12 \gamma$ by~\ref{item:B-and-T} for $\Delta$ sufficiently large.

Finally, using the inequality~\ref{item:T} for $i+1$ as proven in Claim~\ref{claim: T i+1}, we have
\[  
    \frac{Q_{i+1}}{X_{i+1}} = \frac{\gamma T_{i+1} / (10 \log^{18} \Delta)}{T_{i+1}^{\gamma} } = \frac{\gamma T_{i+1}^{1-\gamma} }{10 \log^{18} \Delta} > \Delta^{(1 - \gamma) \epsilon / 3},
\]
for $\Delta$ sufficiently large, completing the proof that~\ref{item:Q-and-X} holds for $i+1$. Lemma~\ref{lemma: properties} follows by induction.

We conclude this subsection with a lemma that gives a very rough estimate of how much each of our values changes from one iteration to the next.
By Lemmas~\ref{lemma: properties}, Lemma~\ref{lem:r-rate},
and the fact that $\Delta$ is sufficiently large, we see that for $i \in [i^* - 1]$, $T_i$, $L_i$, $X_i$, $B_i$, and $Q_i$ are on the order of $\Delta^{\Omega(1)}$. 
Furthermore, by Lemma~\ref{lemma: properties} and Lemma~\ref{lem:1-x LB}, we have
\[e^{-K} < \keep_i = (1 - \eta L_i^{-1})^{T_i} < \exp(-\frac 18 \eta) < 1 - \log^{-2} \Delta.\]
With this in hand, the following inequalities are easily observable:
\begin{lemma}
    \label{lem:rough-inequalities}
    For each $i \in [i^*-1]$ and symbol $A \in \{B,L,Q,T,X\}$, $A_{i+1} \leq A_i \leq 2A_{i+1}$.
\end{lemma}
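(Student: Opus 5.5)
The plan is to read each recurrence as a one-step multiplicative update and bound the multiplier from both sides by constants in $[\frac12,1]$. The recurrences for $L,T,Q,B$ have the form $A_{i+1} = A_i \cdot m_i^A$, where
\[
m_i^L = \keep_i\,(1-\log^{-2}\Delta), \qquad m_i^T = m_i^Q = m_i^B = \keep_i\,(1-\eta\keep_i)(1+\log^{-2}\Delta).
\]
It therefore suffices to show that $\frac12 \le m_i^A \le 1$ for each $i\in[i^*-1]$. The case $A = X$ reduces to $T$, since $X_i = T_i^\gamma$. From $T_{i+1}\le T_i\le 2T_{i+1}$ we get $X_{i+1}\le X_i \le 2^\gamma X_{i+1}\le 2X_{i+1}$, because $\gamma\in(0,1)$.

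For the upper bounds, I will use the displayed estimate $\keep_i < 1-\log^{-2}\Delta$. Then $m_i^L\le\keep_i<1$ immediately. For the others, $1-\eta\keep_i\le 1$, so
\[
m_i^T \le \keep_i(1+\log^{-2}\Delta) < (1-\log^{-2}\Delta)(1+\log^{-2}\Delta) < 1.
\]
The main point to verify is the strict bound $\keep_i<1-\log^{-2}\Delta$. It follows from $r_i>\frac18$ for $i<i^*$, which gives
\[
\keep_i \le \exp(-\eta r_i) < \exp\!\left(-\frac{K}{8\log\Delta}\right),
\]
and this is at most $1-\log^{-2}\Delta$ for large $\Delta$, since $\frac{K}{8\log\Delta}\gg \log^{-2}\Delta$.

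For the lower bounds, Lemma~\ref{lemma: properties}\ref{item:keep} (valid for all $i\le i^*$ by Lemma~\ref{lem:r-rate}) gives $\keep_i > e^{-K}$. Since $K=\epsilon/1000<10^{-3}$, we have $e^{-K}>0.99$. Also $\eta = K/\log\Delta = o(1)$ and $\log^{-2}\Delta = o(1)$. Hence $m_i^L > 0.99(1-o(1)) > \frac12$ and $m_i^T > 0.99(1-o(1)) > \frac12$. Consequently $A_i \le 2A_{i+1}$ for every symbol $A$.

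Nothing here is a genuine obstacle: the whole argument is bookkeeping of these factors. The only care needed is to confirm that the cited bounds on $\keep_i$ and $r_i$ hold up to index $i^*-1$, and this is exactly what Lemma~\ref{lem:r-rate} guarantees.
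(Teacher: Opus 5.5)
Your proposal is correct and matches the paper's argument. The paper likewise sandwiches $\keep_i$ between $e^{-K}$ and $\exp(-\eta/8) < 1-\log^{-2}\Delta$, using $r_i > \frac18$ for $i < i^*$, and reads off the one-step multipliers. Your handling of $X_i = T_i^{\gamma}$ via the $T$ case simply spells out the step the paper calls ``easily observable.''
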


\subsection{Proof of Proposition~\ref{prop: nibble}}\label{sec: proof of prop nibble}

In this section, we will prove Proposition~\ref{prop: nibble} which states that if $i < i^*$ and $P(i)$ holds then $P(i+1)$ holds as well.
Note that as a result of Lemma~\ref{lem:r-rate}, we may assume all conclusions of Lemma~\ref{lemma: properties} hold for $i < i^*$.
For the proof, let $G_i$ be the graph induced by the uncolored vertices in $G$ with respect to the partial coloring $\cup_{j < i}\phi_j$ and let $L_i\,:\,V(G_i) \to 2^{\mathbb N}$ be the associated list assignment.
A key step in our proof is to show that the desired conditions hold locally with high probability.

\begin{lemma}\label{lemma: locally conditions hold}
    The following hold:
    \begin{enumerate}[label=(\roman*)]
        \item\label{item: L conc} For each $v \in V(G_i)$, $\Pr(|L_{i+1}(v)| < L_{i+1}) < \exp(-\log^2 \Delta)$.
        \item\label{item: T conc} For each $v \in V(G_i)$ and $c \in L_i(v)$, $\Pr(t_{i+1}(v,c) > T_{i+1}) < \exp(-\log^2 \Delta)$.
        \item\label{item: Q conc 1} For each stranger pair $u,v \in V(G_i)$ and color $c \in L_i(u) \cap L_i(v)$,
        \[\Pr(q_{i+1}(u,v,c)  > Q_{i+1}) < \exp(-\log^2 \Delta).\]
        \item\label{item: Q conc 2} For each $X \in \mathcal X$, each distinct friend pair $u,v \in X$, and color $c \in L_i(u) \cap L_i(v)$, \[\Pr\left(q^X_{i+1}(u,v,c) > Q_{i+1}\right) < \exp(-\log^2 \Delta).\]
        \item\label{item: X conc} For each $X \in \mathcal X$ and color $c$, $\Pr(|X_{i+1}(c)| > X_{i+1}) < \exp(-\log^2 \Delta)$.
        \item\label{item: B conc} For each $X \in \mathcal X$ of size at least $2$, $v \in X$, and $c \in L_i(v)$, \[\Pr\left(t_{i+1}(v,c) - b^X_{i+1}(v,c) > T_{i+1} - B_{i+1}\right) < \exp(-\log^2 \Delta).\]
    \end{enumerate}
\end{lemma}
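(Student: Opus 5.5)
For each of the six items, we compute (or bound) the expectation of the random variable after the Nibble step and show it lies below the target by a factor of roughly $(1\pm\log^{-2}\Delta)$. We then obtain concentration from Lemma~\ref{lem:conc-ineq} or Lemma~\ref{lem:chernoff}. The trim step only decreases lists and deletes edges, so it only helps for (ii)--(vi). For (i) we use the trim step in the form ``at least $L_{i+1}$'': it suffices to show that $|L_{i+1}(v)|\ge L_{i+1}$ before trimming.

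\textbf{Expectations.} For $c\in L_i(v)$, the colour $c$ survives at $v$ exactly when no $u\in T_i(v,c)$ is activated and assigned $c$, and the equalizing coin succeeds. By $P(i)$ and independence this has probability
\[
\left(1-\eta L_i^{-1}\right)^{t_i(v,c)}\,\Eq_i(v,c)=\keep_i .
\]
Hence $\E|L_{i+1}(v)|=L_i\keep_i$, which exceeds $L_{i+1}$ by a factor $(1-\log^{-2}\Delta)^{-1}$.

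For $u\in T_i(v,c)$ to remain in $T_{i+1}(v,c)$, two things must happen: $u$ is not coloured, and $c$ survives at $u$. Conditioning on $c$ surviving at $u$, we lower-bound the probability that $u$ is coloured by $\eta\keep_i(1-o(\log^{-2}\Delta))$. The event ``$u$ is activated and its assigned colour $c'$ survives'' involves the survival of $c'$ at $u$, which is nearly independent of the survival of $c$. The only correlation comes through common neighbours activated with $c$ or $c'$, and that contributes a factor $1-O(\eta/L_i)$. This gives $\E\, t_{i+1}(v,c)\le T_i\keep_i(1-\eta\keep_i)(1+o(\log^{-2}\Delta))$.

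The same per-vertex bound applied to $Q_i$, $Q_i^X$, $B^X_i$ and to $T_i-B_i$ gives the expectations for (iii), (iv) and (vi). For (vi), the set $T_i(v,c)\setminus B^X$ has size at most $T_i-B_i$, and its survivors behave identically. For (v), each $w\in X_i(c)$ stays with probability at most $\keep_i$, so $\E|X_{i+1}(c)|\le X_i\keep_i$. Since $\keep_i\le 1$ and $X_{i+1}=T_{i+1}^\gamma$ is comparable to $X_i$ times $\keep_i^\gamma$, this leaves a polynomial margin: $X_i\keep_i\le X_{i+1}\keep_i^{1-\gamma}(1-\eta\keep_i)^{-\gamma}\cdots$, and since $\keep_i\le 1-\log^{-2}\Delta$ there is slack.

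\textbf{Concentration.} For (i), the quantity $|L_{i+1}(v)|$ is determined by the choices at $N(v)$ and the coin flips at $v$. A colour is lost when some neighbour is assigned it or the coin fails. This is $(1,1)$-observable in the ``lost colours'' counting: each neighbour's assignment witnesses at most one lost colour. Lemma~\ref{lem:conc-ineq} with $\Omega^*=\emptyset$ and $\tau=L_{i+1}\log^{-2}\Delta/2\gg\sqrt{L_i}\log\Delta$ then gives the bound, using Lemma~\ref{lemma: properties}\ref{item:L}.

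For (ii)--(iv) and (vi) we count the vertices $u$ that \emph{leave} $T_i(v,c)$. Either $u$ loses $c$, witnessed by one activated neighbour of $u$ assigned $c$, or $u$ is coloured. Being coloured is verified by $u$'s own choice plus the non-assignment of its colour around it; this non-event is handled by writing the count as total minus lost. The witness $w$ for ``$u$ loses $c$'' lies in $N(u)$ and can serve at most as many $u\in T_i(v,c)$ as $|N(w)\cap T_i(v,c)|$, which is a codegree-type quantity and can be large for friends $w$ of $v$. We therefore declare exceptional the outcome $\Omega^*$ in which more than $\log^{3}\Delta$ vertices of some $X\in\mathcal X_v$ (or of some neighbourhood) are activated and assigned $c$. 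By Lemma~\ref{lem:chernoff} and $P(i)$(P5) this has tiny probability, since $\E\le X_i\eta/L_i\ll1$. Outside $\Omega^*$:
\begin{itemize}
\item friends $w$ of $v$ lie in $\bigcup_{X\in\mathcal X_v}X$, which contributes at most $\Delta^{(1-\gamma)\epsilon/12}\log^3\Delta\cdot\Delta^{\gamma}$-type bounded witnesses;
\item strangers $w$ have codegree with $v$ at most $Q_i$ by (P3).
\end{itemize}
So $d=Q_i\,\mathrm{polylog}$ and $r=O(1)$. Lemma~\ref{lemma: properties}\ref{item:T-and-Q},\ref{item:Q-and-X} give $\sqrt{rd\,T_i}\ll T_i\log^{-2}\Delta$, because $T_i/Q_i=\Theta(\log^{18}\Delta)$. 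The friend-block contribution is absorbed since $X_i\Delta^{(1-\gamma)\epsilon/12}\ll Q_i\Delta^{-(1-\gamma)\epsilon/4}$. For $Q^X$ and $B^X$ we use (P4) in place of (P3).

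\textbf{Main obstacle.} I expect (v), the concentration of $|X_{i+1}(c)|$, to be the main obstacle. Here the expectation $X_i$ may be much smaller than the codegree $Q_i$, so the stranger argument fails. The plan is to use $B^X$. A vertex $w\in X$ loses $c$ predominantly because of neighbours in $B^X_i(w,c)$: by (P6), at least $B_i\approx\gamma T_i$ of its $c$-neighbours lie in $B^X$, while pairs in $X$ have at most $Q_i$ common $c$-neighbours in $B^X$ by (P4) and (P3). I will upper-tail bound $|X_{i+1}(c)|$ by the number of $w\in X_i(c)$ having no activated-with-$c$ neighbour in $B^X_i(w,c)$. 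These events are nearly independent across $w$: by inclusion--exclusion, with overlaps at most $Q_i\ll B_i/\log^{18}\Delta$, their joint probabilities factor up to $1+o(1)$. A Chernoff/Janson-type upper-tail or a second-moment plus Lemma~\ref{lem:conc-ineq} bound then applies, with $\Omega^*$ controlling multiply-activated common neighbours. The survival probability $(1-\eta/L_i)^{B_i}\approx\keep_i^{\gamma}$ matches $X_{i+1}/X_i$ up to $\mathrm{polylog}$ slack. The remaining worry is whether $X_i$ is large enough, $\ge\Delta^{\Omega(1)}$, for an $\exp(-\log^2\Delta)$ tail. If it is not, the trivial bound $|X_{i+1}(c)|\le|X_i(c)|\le X_i$ combined with Lemma~\ref{lem:rough-inequalities} must be handled by showing that a $\Delta^{\Omega(1)}$ margin exists.
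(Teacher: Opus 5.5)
Your treatment of (i) matches the paper's. Items (ii) and (vi), where the tracked sets have size $\Theta(T_i)$, can be rescued Mahdian-style, but not with the exceptional set you describe. An activated friend $w$ of $v$ assigned $c$ witnesses the loss of $c$ at every vertex of $T_i(w,c)\cap T_i(v,c)$, and this set can have size $\Theta(T_i)$. Capping the \emph{number} of such $w$ does not cap this per-trial count $d$. Nor can ``some friend of $v$ is activated with $c$'' be declared exceptional, since its probability is only polynomially small. The correct move is to drop friend-caused losses from the subtracted count, which is harmless for an upper tail and costs a factor $(1-\eta L_i^{-1})^{-|\mathcal X_v|X_i}$ in expectation.

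The genuine gap is in (iii), (iv) and (v). The sets $Q_i(u,v,c)$ and $Q^X_i(u,v,c)$ have size at most $Q_i$, which is exactly your bound on the reach of a stranger witness. So $d\approx Q_i\ge|S|$, hence $\sqrt{rd\,\E[Z]}\gtrsim|S|$, and Lemma~\ref{lem:conc-ineq} gives nothing at precision $|S|/\log^2\Delta$. Your check $\sqrt{rd\,T_i}\ll T_i\log^{-2}\Delta$ only covers $T_i$-sized sets. You saw this obstacle for (v) but not for (iii)--(iv). The remedies you list for (v) (pairwise near-independence, second moment, Janson) give at best polynomially small tails, not $\exp(-\log^2\Delta)$. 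An $\Omega^*$ on multiply-activated common neighbours again does not limit how many $w\in X_i(c)$ a single activated vertex of $B^X$ can kill.

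The missing idea is the paper's Lemma~\ref{lem:great-expectations}, which uses codegrees of \emph{pairs inside} $S$ rather than codegrees between a witness and the centre.
\begin{enumerate}
\item For (ii)--(iv) and (vi), split $S$ into parts of mutual strangers (Lemma~\ref{lem:stranger-partition}). This works because the friend-degree inside $S$ is at most $X_i\Delta^{(1-\gamma)\epsilon/12}$ by condition (2b) of Theorem~\ref{thm:general} and (P5). For $S=X_i(c)$, instead take $R=B^X$ and use (P3) and (P4). Either way, every pair in $S$ has at most $\alpha/(4\log^{18}\Delta)$ common $c$-neighbours in $R$.
\item Lemma~\ref{lem:KST} then shows that the set $R_{high}$ of vertices with at least $|S|/\log^{8}\Delta$ neighbours in $S$ has size at most $\alpha/\log^{2}\Delta$.
\item Ignoring $R_{high}$ lowers the survival exponent only from $\alpha$ to $\alpha(1-\log^{-2}\Delta)$. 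Every remaining witness lies in $O(|S|/\log^{8}\Delta)$ witness sets.
\item Lemma~\ref{lem:conc-ineq} therefore gives a tail $\exp(-\Omega(\log^4\Delta))$ whenever $|S|\ge\log^{16}\Delta$, no matter how small $|S|$ is relative to $Q_i$.
\end{enumerate}

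Three smaller points on (v):
\begin{enumerate}
\item (P6) gives only $T_i-t_i(w,c)+b^X_i(w,c)\ge B_i$, not $b^X_i(w,c)\ge B_i$. So the equalizing coin flip must be part of the event.
\item You must also require $w$ to stay uncolored. The factor $(1-\eta L_i^{-1})^{B_i}\approx\keep_i^{\gamma}$ alone exceeds $X_{i+1}/X_i$ by a factor of about $(1-\eta\keep_i)^{-\gamma}=1+\Theta(1/\log\Delta)$. The true margin is $(1-\eta\keep_i)^{1-\gamma}$, not polynomial.
\item The worry about small $X_i(c)$ vanishes once you reduce to the case $|X_i(c)|>X_{i+1}\ge X_i/2=T_i^{\gamma}/2$.
\end{enumerate}
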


Before we prove the above result, let us show how it implies Proposition~\ref{prop: nibble}.

\begin{proof}[Proof of Proposition~\ref{prop: nibble}]
    We define the following bad events, which contain all undesirable outcomes that may occur after iteration $i$ of the nibble. For each bad event, we also specify a \emph{center}.
    \begin{itemize}
        \item For each $v \in V(G_i)$, let $\mathcal L(v)$ be the event that $|L_{i+1}(v)| < L_{i+1}$.
        Let $v$ be the center of $\mathcal L(v)$.
        \item For each $v \in V(G_i)$ and for each $c \in L_{i}(v)$, let $\mathcal T(v,c)$ be the event that $t_{i+1}(v,c) > T_{i+1}$. Let $v$ be the center of $\mathcal T(v,c)$.
        \item For each $X \in \mathcal X$ and $c \in \bigcup_{v \in X} L(v)$, let $\mathcal E_X(c)$ be the event that $|X_{i+1}(c)| > X_{i+1}$.
        Let some vertex $v \in X$ be the center of $\mathcal E_X(c)$.
        \item For each $X \in \mathcal X$, vertex $v \in X \cap V(G_i)$, and $c \in L_{i}(v)$, let $\mathcal B^X(v,c)$ be the event that $t_{i+1}(v,c) - b^X_{i+1}(v,c) > T_{i+1} - B_{i+1}$.
        Let $v$ be the center of $\mathcal B^X(v,c)$.
        \item For each stranger pair $u,v \in V(G_i)$, and for each $c \in L_{i}(u) \cap L_{i}(v)$, 
        if $Q_{i}(u,v,c)$ is nonempty, let $\mathcal Q(u,v,c)$ be the event that $q_{i+1} (u,v,c) > Q_{i+1}$.
        Similarly, for each $X \in \mathcal X$ and distinct friend pair $u,v \in X$,
        and for each $c \in L_{i}(u) \cap L_{i}(v)$ such that $Q_{i}(u,v,c)$ is nonempty,
         let $\mathcal Q(u,v,c)$ be the event that $q_{i+1} (u,v,c) > Q_{i+1}$.
         Let $v$ be the center of $\mathcal Q(u,v,c)$.
    \end{itemize}
    It is easy to see that if no bad event occurs, then $P(i+1)$ holds.
    In particular, since the sets $T_{i+1}(v,c)$, $T_{i+1}(v,c) \setminus B^X_{i+1}(v,c)$, $X_{i+1}(c)$, and $Q_{i+1}(u,v,c)$ considered in $P(i+1)$
    cannot increase in size during the $i$-th application of Algorithm~\ref{algorithm: trim},
    an outcome of the $i$-th application of Algorithm~\ref{algorithm: nibble} with no bad event listed above implies that each of these sets is small enough to satisfy $P(i+1)$.
    Furthermore, if $|L_{i+1}(v)| \geq L_{i+1}$ for each vertex $v$ that is uncolored after the $i$-th application of Algorithm~\ref{algorithm: nibble},
    then after the $i$-th application of Algorithm~\ref{algorithm: trim}, we will have $|L_{i+1}(v)| = L_{i+1}$.
    
    Now, we use the Lov\'asz Local Lemma (Lemma~\ref{lem:LLL}) to show that with positive probability, no bad event occurs.
    We note that by Lemma~\ref{lemma: locally conditions hold}
    the probability of each bad event is at most $\exp(-\log^2 \Delta)$.
    Next, we estimate the number of bad events for which each vertex $v \in V(G)$ is the center. For each $v \in V(G)$,
    $v$ is the center of at most $k$ events $\mathcal T(v,c)$,
    at most $\Delta^{O(1)}$ events $\mathcal B^X(v,c)$,
    at most $\Delta^{O(1)}$ events $\mathcal E_X(c)$,
    and one event $\mathcal L(v)$.
    Furthermore, since a bad event $\mathcal Q(u,v,c)$ is only defined if $Q_i(u,v,c)$ is nonempty (which requires that $u$ and $v$ have at least one common neighbor),
    $v$ is the center of at most $k \Delta^2$ bad events $\mathcal Q(u,v,c)$.
    Therefore, each $v \in V(G)$ is the center of at most $\Delta^{O(1)}$ bad events.
    Furthermore, it is easy to verify that for each bad event $\mathcal E$ with center $v$, $\mathcal E$
    depends entirely on
    the activations and coin flips of vertices $w$
    for which $\dist(v,w) \leq 5$; therefore, $\mathcal E$ is mutually independent with all but at most $\Delta^{O(1)}$ other bad events.
    Thus, by the Lov\'asz Local Lemma (Lemma~\ref{lem:LLL}),
    since $4 \Delta^{O(1)} \exp(-\log^2 \Delta) < 1$,
    it holds with positive probability that no bad event occurs, and hence that $P(i+1)$ holds.
\end{proof}

The following general lemma will be key to the proof of Lemma~\ref{lemma: locally conditions hold}.
As the proof is rather involved, we defer it to Section~\ref{sec:great-expectations}.

\begin{lemma}\label{lem:great-expectations}
    Let $R, S \subseteq V(G)$ and $i \in [i^*-1]$ be such that $P(i)$ holds.
    Let $\alpha \geq 4 \log^{18} \Delta$ be an integer.
    Fix a color $c$, and for each $v \in S$,
    write $N_R(v,c) = T_i(v,c) \cap R$.
    Suppose that the following hold:
    \begin{enumerate}[label=(C\arabic*)]
        \item For each $v \in S$, $c \in L_i(v)$;
        \item\label{item: bounds on |S|} $\log^{16} \Delta \leq |S| \leq T_i$;
        \item\label{item: bound on alpha} $T_i -t_i(v,c) + |N_R(v,c)| \geq \alpha$ for each $v \in S$;
        \item\label{item: codegree bound great expectations}
        For each pair $u,v \in S$, $|N_R(u,c) \cap N_R(v,c)  | \leq \frac{\alpha}{4 \log^{18} \Delta}$.
    \end{enumerate}
    Let $S' \subseteq S$ be the set 
    of vertices $v \in S$ for which the following hold throughout iteration $i$ of Algorithm~\ref{algorithm: nibble}:
    \begin{itemize}
        \item $v$ is not colored during Step~\ref{step: color};
        \item no $u \in N_R(v,c)$ is activated and assigned $c$; and 
        \item $(v,c)$ survives its equalizing coin flip.
    \end{itemize}
    Then, 
    \[
    \Pr \left (|S'| \geq \left ( 1 - \eta L_i^{-1} \right)^{\alpha}(1 - \eta \keep_i)|S| + \frac{|S|}{2 \log^{2} \Delta} \right  ) < \exp(-\log^3 \Delta).
    \]
\end{lemma}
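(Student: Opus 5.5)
The plan is to bound $\mathbb E|S'|$ from above and then prove an upper-tail bound by writing $|S'|$ as a combination of random variables that fit Lemma~\ref{lem:conc-ineq} after removing a small set of exceptional outcomes. Both steps should hold up to a multiplicative error $1+O(\log^{-3}\Delta)$ and an additive error $|S|/(4\log^2\Delta)$. The lower bound $|S|\ge \log^{16}\Delta$ in \ref{item: bounds on |S|} is what makes such an additive error affordable.

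\emph{Expectation.} Fix $v\in S$ and consider the event $A_v\cap B_v$. Here $A_v$ is the event that no $u\in N_R(v,c)$ is activated and assigned $c$, and $B_v$ is the event that $(v,c)$ survives its coin flip.
\begin{itemize}
\item Each $u$ is activated with probability $\eta$ and, by (P1), assigned $c$ with probability $1/L_i$. Activations are independent, so $\Pr(A_v)=(1-\eta L_i^{-1})^{|N_R(v,c)|}$.
\item By (P2), $\Eq_i(v,c)=(1-\eta L_i^{-1})^{T_i-t_i(v,c)}$.
\item Hence, by \ref{item: bound on alpha}, $\Pr(A_v\cap B_v)=(1-\eta L_i^{-1})^{T_i-t_i(v,c)+|N_R(v,c)|}\le (1-\eta L_i^{-1})^{\alpha}$.
\end{itemize}
Next I need $\Pr(v \text{ colored}\mid A_v\cap B_v)\ge \eta\keep_i(1-o(\log^{-2}\Delta))$. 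The event that $v$ is colored depends only on $v$'s own trial and on the trials of $N(v)$. Conditioning on $A_v$ only forbids some neighbors from choosing $c$, so I will condition further on $v$ being activated with some $c'\neq c$. Then $\Pr(v\text{ colored})\ge \eta(1-1/L_i)\min_{c'}(1-\eta L_i^{-1})^{t_i(v,c')}\ge \eta\keep_i(1-o(1/\log^2\Delta))$ by (P2). The conditioning on $A_v$ changes a neighbor's chance of choosing $c'$ by only a factor $1+O(\eta/L_i)$. This gives
\[
\mathbb E|S'|\le (1-\eta L_i^{-1})^{\alpha}(1-\eta\keep_i)|S|+|S|/\log^{3}\Delta .
\]

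\emph{Concentration.} Write $|S'|=Z_1-Z_2$, where
\begin{itemize}
\item $Z_1=\#\{v\in S: A_v\cap B_v\}$, and $Z_2=\#\{v\in S: A_v\cap B_v,\ v\text{ colored}\}$;
\item $Z_1=|S|-Y_1$, where $Y_1$ counts $v$ for which $A_v$ or $B_v$ fails, which is $1$-verifiable (witness: one $u$ assigned $c$, or $v$'s coin);
\item $Z_2=Y_2-Y_3$, where $Y_2$ counts $v$ with $A_v\cap B_v$ and $v$ activated, and $Y_3$ counts those $v$ among them that also have a neighbor assigned the same color.
\end{itemize}
I will show that $Y_1$ and $Y_2$ are concentrated from below and $Y_3$ from above, each within $|S|/(8\log^2\Delta)$. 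Indicators containing $A_v\cap B_v$ (an absence event) are not directly verifiable. I will handle them by expressing them through differences of verifiable counts, for example $Y_2=\#\{v \text{ activated}\}-\#\{v \text{ activated, } A_v\cap B_v \text{ fails}\}$, and concentrating each term.

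The Lipschitz parameter $d$ in Definition~\ref{defn: rd observable} is the main obstacle. A single vertex $u$ assigned $c$ can lie in $N_R(v,c)$ for many $v\in S$, and $|S|$ may be only polylogarithmic, so the naive $d$ is too large. The first attempt will be to declare exceptional those outcomes in which some $u$ assigned $c$ belongs to more than $D:=|S|/\log^{8}\Delta$ of the sets $N_R(v,c)$, $v\in S$.

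To show that such $u$ are rare I will use the codegree condition \ref{item: codegree bound great expectations}. Call $u$ \emph{heavy} if $m_u:=|\{v\in S: u\in N_R(v,c)\}|>D$. Double counting pairs gives $\sum_u\binom{m_u}{2}\le \binom{|S|}{2}\alpha/(4\log^{18}\Delta)$, so the number of heavy $u$ is at most $|S|^2\alpha/(D^2\log^{18}\Delta)$. By \ref{item: bound on alpha}, $\alpha\le 2T_i\le 2L_i\log\Delta$, so the expected number of heavy $u$ assigned $c$ is $O(\log^{-1}\Delta)$.

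Three steps remain: replace ``no heavy $u$ is assigned $c$'' by a Chernoff bound (Lemma~\ref{lem:chernoff}); absorb the effect of non-heavy vertices into a sum where each trial affects at most $D$ indicators; and bound the few heavy vertices' contribution directly, using that with probability $1-\exp(-\log^3\Delta)$ at most $\log^3\Delta$ of them are assigned $c$. Similar exceptional sets, namely many neighbors of vertices of $S$ assigned a common color, control $Y_3$.

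Finally I apply Lemma~\ref{lem:conc-ineq} with $r\le 2$, $d\le D$ and $\tau=|S|/(8\log^2\Delta)$. This requires $\tau\gg\sqrt{rd\,|S|}$, and $\sqrt{rd|S|}\le |S|\log^{-4}\Delta$ by the choice of $D$, so the condition holds. The resulting tail bound is $\exp(-\Omega(|S|/(\log^{4}\Delta\cdot D)))+4\Pr(\Omega^*)$. If this is not yet $\exp(-\log^3\Delta)$, I would tune $D$ against the $\log^{16}\Delta$ lower bound on $|S|$. Combining the bounds on $Y_1,Y_2,Y_3$ with the expectation estimate then gives the lemma.
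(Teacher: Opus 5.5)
Your expectation estimate is fine. The concentration step, however, has a genuine gap in how you handle heavy vertices.

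The event that some heavy $u$ is activated and assigned $c$ has probability of order $\eta|R_{high}|/L_i$. Whenever a heavy vertex exists this is at least $\eta/L_i$, which is only polynomially small, so it cannot be placed in $\Omega^*$. Knowing that at most $\log^3\Delta$ heavy vertices receive $c$ does not let you ``bound their contribution directly'' either, because a single heavy vertex can destroy $A_v$ for every $v\in S$. This happens in the actual application: with $S=T_i(v,c)$ and $R=V(G)$, the vertex $v$ itself lies in $N_R(s,c)$ for every $s\in S$.

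Worse, your decomposition $|S'|=|S|-Y_1-Y_2+Y_3$ needs a lower tail for $Y_2$, and heavy vertices push $Y_2$ down. If such an all-covering $u$ is assigned $c$, then $Y_2=0$. On the other hand, $\mathbb E[Y_2]=\eta\sum_{v\in S}\Pr(A_v\cap B_v)\ge \eta\keep_i|S|\ge Ke^{-K}|S|/\log\Delta$, which is much larger than $\tau=|S|/(8\log^2\Delta)$. Hence $\Pr(Y_2<\mathbb E[Y_2]-\tau)\ge \eta/L_i$, and the claimed bound for $Y_2$ at the level $\exp(-\log^3\Delta)$ is false. The individual terms of your decomposition are not concentrated, even though $|S'|$ is.

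The missing idea is monotonicity. You only need an upper bound on $|S'|$, and a heavy vertex receiving $c$ can only remove vertices from $S'$. So discard heavy vertices before doing anything probabilistic.

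The paper does exactly this. It lets $R_{high}$ be the set of $r\in R$ lying in at least $h=|S|/\log^8\Delta$ of the sets $N_R(s,c)$. It then works with the superset $S''\supseteq S'$, which only requires that no $u\in N_R(v,c)\setminus R_{high}$ be assigned $c$. Your double-counting bound (the paper uses K\H{o}v\'ari--S\'os--Tur\'an with $s=2$, which is the same thing) is then used deterministically rather than probabilistically. It gives $|R_{high}|\le \alpha/\log^2\Delta$, so (C3) still holds with $\alpha'=\alpha(1-\log^{-2}\Delta)$. The resulting loss, $(1-\eta L_i^{-1})^{\alpha'}\le (1-\eta L_i^{-1})^{\alpha}\bigl(1+2\eta\alpha/(L_i\log^2\Delta)\bigr)$, fits inside the $|S|/(4\log^2\Delta)$ budget because $2\eta\alpha/L_i\le 2Kr_i/\log\Delta<1/8$.

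After this reduction, every vertex trial lies in at most $h$ witness sets for the ``$A$ fails'' part. The paper then writes $|S''|=Z_1-Z_2$, where $Z_1$ counts the uncolored $v\in S$ and $Z_2$ counts the uncolored $v\in S$ for which some light $u$ receives $c$ or $(v,c)$ loses its coin flip. Both are directly verifiable, with no absence events. $\Omega^*$ only needs to control how many vertices of $S$ receive a common color, and Lemma~\ref{lem:conc-ineq} applies with $(2,\log^4\Delta)$- and $(3,2|S|/\log^8\Delta)$-observability respectively.

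Once you pass to $S''$, your own decomposition would also go through. The paper's choice of base event (``uncolored'' rather than $A_v\cap B_v$) simply avoids the auxiliary differences of verifiable counts.
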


In some of our concentration arguments, we aim to apply Lemma~\ref{lem:great-expectations} with $R = V(G)$, and $\alpha = T_i$.
However,
since some vertex pairs $w,w' \in S$ may be friends,
we cannot guarantee that $|N_R(w,c) \cap N_R(w',c)| \leq \frac{\alpha}{4 \log^{18} \Delta}$.
Therefore, we partition $S$ into sets of mutual strangers, and we apply Lemma~\ref{lem:great-expectations} to each part.
The following lemma shows that we can partition any set into ``large'' sets of mutual strangers.

\begin{lemma}
\label{lem:stranger-partition}
    Let $i \in [i^*-1]$ be such that $P(i)$ holds.    Let $A \subseteq V(G_i)$, and suppose that $c \in L_i(v)$ for each $v \in A$.
    Then, $A$ can be partitioned into sets of mutual strangers, with each part of size at least  $\frac{|A|}{X_i \Delta^{(1 - \gamma) \epsilon/10} }$.
\end{lemma}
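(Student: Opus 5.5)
The plan is to view the friend relation restricted to $A$ as a graph, bound its maximum degree using Property $P(i)$ together with Condition~\eqref{item:main-2b}, and then apply the Hajnal--Szemer\'edi theorem (Lemma~\ref{lem:HS}) to obtain an equitable coloring. Each color class of such a coloring is then a set of mutual strangers.

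Let $F_A$ be the graph on vertex set $A$ in which distinct $u,v \in A$ are adjacent precisely when they are friends. Fix $v \in A$.
\begin{itemize}
\item By Condition~\eqref{item:main-2b}, every friend of $v$ lies in $\bigcup_{X \in \mathcal X_v} X$, where $|\mathcal X_v| \leq \Delta^{(1-\gamma)\epsilon/12}$.
\item Each friend $u \in A$ of $v$ is uncolored, since $A \subseteq V(G_i)$, and satisfies $c \in L_i(u)$ by hypothesis. Hence if $u \in X$, then $u \in X_i(c)$.
\item By (P5) of $P(i)$, $|X_i(c)| \leq X_i$ for each $X \in \mathcal X_v$.
\end{itemize}
It follows that $v$ has at most $D \coloneqq X_i \Delta^{(1-\gamma)\epsilon/12}$ friends in $A$, so $\Delta(F_A) \leq D$.

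By Lemma~\ref{lem:HS}, $F_A$ has an equitable $(D+1)$-coloring. Its color classes are independent in $F_A$, so each consists of mutual strangers, and each has size at least $\lfloor |A|/(D+1) \rfloor$. Some classes may be empty; we simply discard them.

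It remains to check the size bound, which involves a small case distinction because the floor could vanish.
\begin{itemize}
\item If $|A| \leq X_i \Delta^{(1-\gamma)\epsilon/10}$, the required lower bound $|A|/(X_i\Delta^{(1-\gamma)\epsilon/10})$ is at most $1$. In this case we partition $A$ into singletons, which are trivially sets of mutual strangers.
\item Otherwise, $|A| > X_i\Delta^{(1-\gamma)\epsilon/10} \geq D+1$ for $\Delta$ large. Then $\lfloor |A|/(D+1)\rfloor \geq |A|/(2(D+1))$. Moreover, $2(D+1) \leq 4 X_i \Delta^{(1-\gamma)\epsilon/12} \leq X_i \Delta^{(1-\gamma)\epsilon/10}$, since $\Delta^{(1-\gamma)\epsilon/60} \geq 4$ once $\Delta \geq \Delta_0$.
\end{itemize}
In both cases every part has size at least $|A|/(X_i\Delta^{(1-\gamma)\epsilon/10})$.

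The only real content of the argument is the degree bound on $F_A$. There, the point to be careful about is that friendships are counted only among uncolored vertices having $c$ in their lists, which is exactly what (P5) controls through the sets $X_i(c)$. The remainder is routine bookkeeping of exponents.
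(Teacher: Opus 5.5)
Your proposal is correct and matches the paper's proof: form the friendship graph on $A$, bound its maximum degree by $X_i\Delta^{(1-\gamma)\epsilon/12}$ using Condition~\eqref{item:main-2b} together with (P5), and apply the Hajnal--Szemer\'edi theorem. Your explicit handling of the floor and of empty color classes tidies up a step that the paper glosses over.
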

\begin{proof}
    Let $H$ be an auxiliary graph with vertex set $V(H) = A$, where two vertices $w,w' \in A$ are adjacent in $H$ if and only if $w$ and $w'$ are friends.
    Given a vertex $w \in A$,
    Condition~\eqref{item:main-2b} of Theorem~\ref{thm:general}
    implies that
    there is a family $\mathcal X_w \subseteq \mathcal X$ of size at most $\Delta^{(1 - \gamma )\epsilon/12}$
    such that each friend of $w$ is contained in a set $X \in \mathcal X_w$.
    For each $X \in \mathcal X_w$,
    $|X_i(c)| \leq X_i$ by Property $P(i)$,
    so $w$ is friends with at most $X_i \Delta^{(1 - \gamma )\epsilon/12}$ vertices $w' \in A$.
    Therefore, the maximum degree of $H$ is at most 
    $X_i \Delta^{(1 - \gamma )\epsilon/12}$.
By the Hajnal-Szemer\'edi Theorem (Lemma~\ref{lem:HS}),
$H$ has an equitable coloring with at most $X_i \Delta^{(1 - \gamma) \epsilon/12}  + 1 < X_i \Delta^{(1 - \gamma) \epsilon/10} $ colors, so each color class has size at least $\frac{|A|}{X_i \Delta^{(1 - \gamma) \epsilon/10} }$.
Each color class of $H$ corresponds to a part in a partition of $A$ into sets of mutual strangers.
\end{proof}

As a corollary to Lemma~\ref{lem:great-expectations}~and~\ref{lem:stranger-partition}, we obtain the following result.
The proof follows by applying Lemma~\ref{lem:great-expectations} with $R = V(G)$ and $\alpha = T_i$ to each part from Lemma~\ref{lem:stranger-partition} and then taking a union bound.

\begin{corollary}\label{cor:great-expectations}
    Let $S \subseteq V(G)$ and $i \in [i^*-1]$ be such that $P(i)$ holds.
    Fix a color $c$.
    Suppose that the following hold:
    \begin{itemize}
        \item For each $v \in S$, $c \in L_i(v)$;
        \item $X_i\Delta^{(1-\gamma)\epsilon/8 } \leq |S| \leq T_i$; 
    \end{itemize}
    Let $S' \subseteq S$ be the set 
    of vertices $v \in S$ for which the following hold throughout iteration $i$ of Algorithm~\ref{algorithm: nibble}:
    \begin{itemize}
        \item $v$ is not colored during Step~\ref{step: color};
        \item no $u \in N(v,c)$ is activated and assigned $c$; and 
        \item $(v,c)$ survives its equalizing coin flip.
    \end{itemize}
    Then, 
    \[
    \Pr \left (|S'| \geq 
    \keep_i
    (1 - \eta \keep_i)|S| + \frac{|S|}{2 \log^{2} \Delta} \right  ) < \exp(-\log^2 \Delta).
    \]
\end{corollary}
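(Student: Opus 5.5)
Our plan is to partition $S$ with Lemma~\ref{lem:stranger-partition}, apply Lemma~\ref{lem:great-expectations} to each part with $R = V(G)$ and $\alpha = T_i$, and recombine the part-wise bounds with a union bound.

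\textbf{Partition.} Apply Lemma~\ref{lem:stranger-partition} to $A = S$; this is allowed because $c \in L_i(v)$ for every $v \in S$. It gives a partition $S = S_1 \sqcup \cdots \sqcup S_m$ into sets of mutual strangers, with
\[
|S_j| \geq \frac{|S|}{X_i \Delta^{(1-\gamma)\epsilon/10}}.
\]
Using the lower bound $|S| \geq X_i \Delta^{(1-\gamma)\epsilon/8}$, this yields $|S_j| \geq \Delta^{(1-\gamma)\epsilon/40}$, which exceeds $\log^{16}\Delta$ for $\Delta$ large. Also $|S_j| \leq |S| \leq T_i$, so \ref{item: bounds on |S|} holds for each part. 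The number of parts is at most $|S| \leq T_i \leq \Delta$.

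\textbf{Hypotheses of Lemma~\ref{lem:great-expectations}.} Fix a part $S_j$ and take $R = V(G)$, $\alpha = T_i$, so that $N_R(v,c) = T_i(v,c)$. Condition (C1) holds by assumption. For \ref{item: bound on alpha}, note $T_i - t_i(v,c) + t_i(v,c) = T_i = \alpha$. We have $\alpha \geq 4\log^{18}\Delta$ by Lemma~\ref{lemma: properties}\ref{item:T}.

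For \ref{item: codegree bound great expectations}, distinct $u,v \in S_j$ are strangers, so by $P(i)$\ref{item:stranger-Q},
\[
|T_i(u,c)\cap T_i(v,c)| = q_i(u,v,c) \leq Q_i = \frac{\gamma T_i}{10\log^{18}\Delta} \leq \frac{\alpha}{4\log^{18}\Delta},
\]
using Lemma~\ref{lemma: properties}\ref{item:T-and-Q} and $\gamma < 1$. The case $u = v$ is not needed, since the condition concerns pairs. If the lemma were read to include $u = v$, we would restrict attention to distinct pairs, which is all its proof uses.

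\textbf{Identifying the sets and combining.} Since $(1-\eta L_i^{-1})^{T_i} = \keep_i$, Lemma~\ref{lem:great-expectations} gives, for each $j$,
\[
|S_j'| < \keep_i(1-\eta\keep_i)|S_j| + \frac{|S_j|}{2\log^2\Delta}
\]
except with probability $\exp(-\log^3\Delta)$. The set $S_j'$ defined there is exactly $S' \cap S_j$, because with $R = V(G)$ the middle condition reads "no $u \in T_i(v,c)$ is activated and assigned $c$." An activated $u \in N(v)$ assigned $c$ with $c \notin L_i(u)$ is impossible, so this is the same as the corollary's condition on $N(v,c)$.

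Summing over the parts, the bound is linear in $|S_j|$, so the right-hand sides add up to exactly $\keep_i(1-\eta\keep_i)|S| + |S|/(2\log^2\Delta)$. Hence the event in the corollary forces some part to violate its bound. A union bound over at most $\Delta$ parts gives total failure probability at most $\Delta\exp(-\log^3\Delta) < \exp(-\log^2\Delta)$.

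\textbf{Main obstacle.} The only delicate point is checking that the parts are large enough for \ref{item: bounds on |S|}. This is exactly why the hypothesis carries the factor $\Delta^{(1-\gamma)\epsilon/8}$: it must dominate the $\Delta^{(1-\gamma)\epsilon/10}$ loss from the equitable coloring in Lemma~\ref{lem:stranger-partition}. The rest is bookkeeping.
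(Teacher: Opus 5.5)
Your proposal is correct and matches the paper's proof: you partition $S$ into mutual-stranger parts via Lemma~\ref{lem:stranger-partition}, apply Lemma~\ref{lem:great-expectations} to each part with $R = V(G)$ and $\alpha = T_i$ (bounding codegrees by $Q_i$ through $P(i)$), and finish with a union bound over the parts. Your part-size computation $|S_j| \geq \Delta^{(1-\gamma)\epsilon/8 - (1-\gamma)\epsilon/10} = \Delta^{(1-\gamma)\epsilon/40}$ correctly uses the hypothesis exponent $1/8$, whereas the paper's text slips and writes $1/10$ at that step.
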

\begin{proof}
    Using Lemma~\ref{lem:stranger-partition}, we partition $S$ into sets of mutual strangers, with each part of size at least $\frac{|S|}{X_i \Delta^{(1-\gamma)\epsilon/10}}$.
    Consider an arbitrary part $S_0$ in this partition.
    As $|S| \geq X_i \Delta^{(1-\gamma)\epsilon/10}$, it follows that $|S_0| \geq \Delta^{(1-\gamma)\epsilon/40} \gg \log^{16} \Delta$ vertices. We let $R = V(G)$ and $\alpha = T_i$, and we observe that 
    $T_i - t_i(v,c) + |N_R(v,c)| = T_i = \alpha$ for each $v \in S_0$.
    As $S_0$ is a set of mutual strangers, \[|N(u,c) \cap N(v,c)| = |Q_i(u,v,c)| \leq Q_i = \frac{\gamma T_i }{10 \log^{18} \Delta }< \frac{\alpha}{4 \log^{18} \Delta}.\]

    Now, by Lemma~\ref{lem:great-expectations}, it holds with probability at least $1 - \exp(-\log^3 \Delta)$ that
    \[
    |S' \cap S_0| < \keep_i (1-\eta \keep_i) \left ( 1 + \frac{1}{2\log^2 } \right ) |S_0|.
    \]
    The corollary follows by taking a union bound and summing over all parts $S_0$.
\end{proof}

For our concentration arguments in the remainder of this paper,
we
define the following probability spaces:
\begin{enumerate}
    \item For each $v \in V(G_i)$, we let $(\Omega_v,\Sigma_v,\Pr_v)$ denote the probability space corresponding to Steps~\ref{step:activate}~and~\ref{step:assignment} of Algorithm~\ref{algorithm: nibble}. More formally, let $\Omega_{v} = L_i(v) \cup \{0\}$. 
    The element $0$ of $\Omega_{v}$ satisfies $\Pr_{v}(0) = 1 - \eta$ and corresponds to an outcome of Step~\ref{step:activate} in which $v$ is not activated.
    Then, $\Sigma_{v} = 2^{\Omega_{v}}$ is the set of probability-measurable subsets of $\Omega_{v}$, and $\Pr_{v}\,:\,\Sigma_{v} \rightarrow [0,1]$ gives the associated probabilities.

    \item Furthermore, for each  $v \in V(G_i)$ and $c \in L_i(v)$, we let $(\Omega_{v,c}, \Sigma_{v,c}, \Pr_{v,c})$ denote the probability space describing Step~\ref{step:coin_flip} for $(v,c)$ during the Algorithm~\ref{algorithm: nibble}.
    In particular, $\Omega_{v,c} = \{0,1\}$, where we let the element $1$ correspond to the outcome in which $(v,c)$ survives its coin flip, and we let $0$ correspond to the outcome in which $(v,c)$ does not survive its coin flip.
    As such, $\Pr_{v,c}(0) = 1- \Eq_i(v,c)$,
    and $\Pr_{v,c}(1) = \Eq_i(v,c)$.
    We write $\Sigma_{v,c} = 2^{\Omega_{v,c}}$ for the set of probability-measurable subsets of $\Omega_{v,c}$, and $\Pr_{v,c}\,:\,\Sigma_{v, c} \rightarrow [0,1]$ gives the associated probabilities. 
\end{enumerate}
Let $(\Omega, \Sigma, \Pr)$ be the product of these spaces. \label{prob_space}

We are now ready to complete the proof of Lemma~\ref{lemma: locally conditions hold}.
We will consider each item separately.

\paragraph{Proof of Lemma~\ref{lemma: locally conditions hold}~\ref{item: L conc}: concentration of $|L_{i+1}(v)|$.}

We will prove this through a sequence of lemmas.
We begin by computing $\E[|L_{i+1}(v)|]$.

\begin{lemma}\label{lemma: exp L_i}
    $\E[|L_{i+1}(v)|] = \keep_i L_i$.
\end{lemma}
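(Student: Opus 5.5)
By linearity of expectation, I will write $|L_{i+1}(v)| = \sum_{c \in L_i(v)} \mathbf 1[c \in L_{i+1}(v)]$, where $L_{i+1}(v)$ is the list right after Algorithm~\ref{algorithm: nibble} and before trimming. It then suffices to show that $\Pr(c \in L_{i+1}(v)) = \keep_i / \dots$ in the normalized sense, namely
\[\Pr(c \in L_{i+1}(v)) = \left(1 - \eta L_i^{-1}\right)^{T_i}\]
for each $c \in L_i(v)$. Summing over the $|L_i(v)| = L_i$ colors, which is property (P1) of $P(i)$, then gives $\keep_i L_i$.

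To compute this probability, I note that $c$ survives in $L_{i+1}(v)$ exactly when two independent things happen. First, $c$ is not deleted in Step~\ref{step:assignment}, which means no $u$ with $v \in T_i(u,c)$ is activated and assigned $c$. Second, $(v,c)$ survives its equalizing coin flip in Step~\ref{step:coin_flip}.

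For the first event, I need the symmetry of the relation $u \in T_i(v,c)$ versus $v \in T_i(u,c)$. Both conditions say that $u,v$ are adjacent in the current uncolored graph, $u$ and $v$ are uncolored, and $c \in L_i(u) \cap L_i(v)$. So the vertices whose assignment can delete $c$ from $L_{i+1}(v)$ are precisely those in $T_i(v,c)$, and there are $t_i(v,c)$ of them. Each such $u$ is independently activated and assigned $c$ with probability $\eta/|L_i(u)| = \eta L_i^{-1}$, again using (P1). So the first event has probability $(1-\eta L_i^{-1})^{t_i(v,c)}$.

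For the second event, (P2) gives $t_i(v,c) \le T_i$, so the minimum in the definition of $\Eq_i(v,c)$ is attained by the second term. The survival probability is therefore $(1-\eta L_i^{-1})^{T_i - t_i(v,c)}$. The coin flip is independent of activations and assignments, so the two factors multiply to $(1-\eta L_i^{-1})^{T_i} = \keep_i$.

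One subtlety is whether $v$ itself being colored affects membership. Since the lemma concerns $L_{i+1}(v)$ as a list (defined for every $v\in V(G_i)$ via the copy in Step~\ref{step:copy}), I interpret it as not conditioning on $v$ remaining uncolored. Step~\ref{step: color} does not delete colors, so it does not affect the list. I expect the main point needing care to be exactly this bookkeeping: the deletion rule of Step~\ref{step:assignment} and the symmetry of $T_i$. The rest is a direct computation.
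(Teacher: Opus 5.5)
Your proof is correct and follows the paper's argument. You use linearity of expectation over $c \in L_i(v)$ and factor $\Pr(c \in L_{i+1}(v))$ into two independent events: no $u \in T_i(v,c)$ is activated and assigned $c$, and $(v,c)$ survives its equalizing coin flip. This gives $(1-\eta L_i^{-1})^{t_i(v,c)}(1-\eta L_i^{-1})^{T_i - t_i(v,c)} = \keep_i$ via (P1). Your explicit checks of the symmetry of the $T_i$ relation and of (P2) fixing the minimum in $\Eq_i(v,c)$ are details the paper leaves implicit; just delete the stray ``$\keep_i/\dots$'' phrase in your first paragraph.
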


\begin{proof}
    Consider an arbitrary color $c \in L_i(v)$.
    Note that $c\in L_{i+1}(v)$ if and only if (1)~no vertex in $T_i(v, c)$ is activated and assigned the color $c$, and (2)~the pair $(v, c)$ survives its equalizing coin flip.
    As all activations, assignments, and coin flips are independent, we have
    \[\Pr(c\in L_{i+1}(v)) = (1 - \eta\,L_i^{-1})^{T_i - t_i(v, c)}\prod_{u \in T_i(v, c)}(1 - \eta\,|L_i(u)|^{-1}).\]
    By property $P(i)$ all lists have size $L_i$ and so the above simplifies to
    \[(1 - \eta\,L_i^{-1})^{T_i - t_i(v, c)}\prod_{u \in T_i(v, c)}(1 - \eta\,|L_i|^{-1}) = \keep_i.\]
    By linearity of expectation, we have $\E[|L_{i+1}(v)|] = \keep_i L_i$ as claimed.
\end{proof}

Next, we show that $|L_{i+1}(v)|$ is concentrated, completing the proof of Lemma~\ref{lemma: locally conditions hold}~\ref{item: L conc}.

\begin{lemma}\label{lemma: conc L_i}
    $\Pr(|L_{i+1}(v)| < L_{i+1}) \leq \exp(-\log^2 \Delta)$.
\end{lemma}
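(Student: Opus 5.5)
We will prove $\Pr(|L_{i+1}(v)| < L_{i+1}) \leq \exp(-\log^2 \Delta)$ by a lower-tail concentration bound for $|L_{i+1}(v)|$ around its mean, which Lemma~\ref{lemma: exp L_i} gives as $\keep_i L_i$. Since $L_{i+1} = \keep_i L_i (1 - \log^{-2}\Delta)$, it suffices to show that $|L_{i+1}(v)|$ falls below its mean by $\tau \coloneqq \keep_i L_i \log^{-2}\Delta$ with probability at most $\exp(-\log^2\Delta)$.

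Rather than concentrating $|L_{i+1}(v)|$ directly, we work with the complementary count. Write $|L_{i+1}(v)| = L_i - Z_1 - Z_2$, where:
\begin{itemize}
\item $Z_1$ is the number of colors $c \in L_i(v)$ such that some $u \in T_i(v,c)$ is activated and assigned $c$;
\item $Z_2$ is the number of colors $c \in L_i(v)$ not counted in $Z_1$ whose equalizing coin flip fails.
\end{itemize}
Both variables are sums of indicators on the product space defined above, with $\Omega^* = \emptyset$. It suffices to show that each $Z_j$ exceeds its expectation by more than $\tau/2$ with probability at most $\frac12\exp(-\log^2\Delta)$.

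For $Z_1$, each indicator is $1$-verifiable: the witness is a single vertex $u$ activated with color $c$. A single coordinate $\Omega_u$ can witness at most one color, so $Z_1$ is $(1,1)$-observable. Lemma~\ref{lem:conc-ineq} applies provided $\tau > 96\sqrt{\E[Z_1]} + 128$. This holds because $\E[Z_1] \leq L_i$ and $\tau = \Theta(L_i/\log^2\Delta)$ with $L_i > 10\Delta^{\epsilon/2}$ by Lemma~\ref{lemma: properties}. The resulting tail bound is $\exp(-\Omega(\tau^2/L_i)) = \exp(-\Omega(L_i\log^{-4}\Delta))$, which is much smaller than $\exp(-\log^2\Delta)$.

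For $Z_2$, the coin flips are independent of everything else. One option is to condition on the activation and assignment outcomes and apply the Chernoff bound (Lemma~\ref{lem:chernoff}) to the independent flips. More uniformly, we can treat it as an indicator verified by the flip coordinate $(v,c)$ together with all of $T_i(v,c)$, but a failure event needs only the flip coordinate and the non-activation of $T_i(v,c)$. Non-activation is not witnessable by a few coordinates, so a cleaner route is to count all failed flips directly: $Z_2' = \#\{c : (v,c) \text{ fails}\}$ is a sum of independent indicators, and Chernoff applies immediately.

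This gives an upper-tail bound on the following quantity, which dominates the number of colors deleted:
\[
L_i - |L_{i+1}(v)| \leq Z_1 + Z_2'.
\]
We then need
\[
\E[Z_1] + \E[Z_2'] \leq L_i - \keep_i L_i + \text{small}.
\]
This is where care is needed, because $Z_1$ and $Z_2'$ overlap. Each color's deletion probability is exactly $1 - \keep_i$, whereas the sum of the marginals overcounts by $\sum_c \Pr(\text{both})$, which is of order $\eta(1-\Eq_i)$ per color and is not negligible.

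The main obstacle is therefore this overlap. To handle it, we will instead define $Z_2$ as the number of colors whose flip fails and which no $u \in T_i(v,c)$ has been assigned. We keep the verifiability issue by choosing exceptional outcomes and bounding $Z_2$ by conditioning: given the assignments, $Z_2$ is a sum of independent coin flips over the surviving colors. The Chernoff bound, conditionally, then yields deviation at most $\tau/2$ from its conditional mean with the required probability. The conditional mean is a linear function of which colors were hit, so it is itself concentrated via the same $(1,1)$-observability argument used for $Z_1$. Combining these bounds gives $|L_{i+1}(v)| \geq \keep_i L_i - \tau = L_{i+1}$ with probability at least $1 - \exp(-\log^2\Delta)$.
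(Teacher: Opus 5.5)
Your final step does not follow from the tools you cite. After conditioning on the activations and assignments, the conditional mean of your last $Z_2$ is $M=\sum_{c}(1-\Eq_i(v,c))\,\mathbf{1}[c\text{ not hit}]$. So the quantity whose upper tail you must control is
\[
Z_1+M=\sum_{c\in L_i(v)}\bigl(1-\Eq_i(v,c)\bigr)+\sum_{c\in L_i(v)}\Eq_i(v,c)\,\mathbf{1}[c\text{ hit}],
\]
which is a sum of hit indicators with \emph{varying real weights} (they depend on $t_i(v,c)$). Lemma~\ref{lem:conc-ineq} is stated only for sums of binary $r$-verifiable variables, so ``the same $(1,1)$-observability argument used for $Z_1$'' does not apply as claimed. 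You would need an extra reduction. One option is to round the weights down to multiples of $1/\log^3\Delta$ and write the sum as an average of $\log^3\Delta$ unweighted threshold sums $\sum_{c:\,\Eq_i(v,c)\ge j/\log^3\Delta}\mathbf{1}[c\text{ hit}]$, each of which is $(1,1)$-observable; the rounding error $L_i/\log^3\Delta$ is much smaller than $\tau$. The phrase ``choosing exceptional outcomes'' is also never made concrete, though none are needed. As written, then, the argument is incomplete, although it can be repaired.

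More importantly, the whole detour comes from a misdiagnosis. You never need to witness non-activation, because you never need to split deletions into disjoint causes. Write $L_i-|L_{i+1}(v)|=\sum_{c\in L_i(v)}Y_c$ with $Y_c=\mathbf{1}[c\text{ deleted}]$. The event ``$c$ is deleted'' is the \emph{union} of two events, each certified by a single coordinate: ``some $u\in T_i(v,c)$ is activated and assigned $c$'' and ``$(v,c)$ fails its flip''. Take $R_c(\omega)=\{u\}$ or $\{(v,c)\}$ according to which cause occurs (either one if both do); then $Y_c$ is $1$-verifiable. Each $\omega_u$ witnesses at most the single color assigned to $u$, and each flip coordinate witnesses only its own color. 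Hence $L_i-|L_{i+1}(v)|$ is $(1,1)$-observable with $\Omega^*=\emptyset$, and its mean is exactly $(1-\keep_i)L_i$ by Lemma~\ref{lemma: exp L_i}. One application of Lemma~\ref{lem:conc-ineq} with $\tau=\keep_iL_i/\log^2\Delta$, using $L_i>10\Delta^{\epsilon/2}$, then gives the bound. This is precisely the paper's proof, and the overlap problem you identified never arises.
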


\begin{proof}
    Consider
    the random variable $Z = L_i - |L_{i+1}(v)|$. By Lemma~\ref{lemma: exp L_i}, we have
    \[\E[Z] = L_i - \E[ |L_{i+1}(v)| ] = L_i - \keep_i L_i = (1 - \keep_i)L_i.\]
    Note that if $|L_{i+1}(v)| < L_{i+1}$, then 
    \[Z - \E[Z] = \keep_i L_i - |L_{i+1}(v)| > \frac{\keep_iL_i}{\log^2\Delta}.\]
    Therefore, if $|L_{i+1}(v)| < L_{i+1}$, then $|Z - \E[Z]| > \keep_iL_i/\log^2\Delta$. 
    Hence, it suffices to show that 
    \[
    \Pr(|Z - \E[Z]| > \keep_i L_i/\log^2\Delta) \leq \exp(-\log^2 \Delta).
    \]

    To concentrate $Z$, we will employ Lemma~\ref{lem:conc-ineq}.
    To this end, recall the probability space $(\Omega, \Sigma, \Pr)$ defined on page~\pageref{prob_space}. We let $\Omega^* = \emptyset$.    
    First, we show that $Z$ is $(1, 1)$-observable (see Definition~\ref{defn: rd observable}).
    We have that $Z = \sum_{c \in L_i(v)} Y_c$,
    where $Y_c$ for $c \in L_i(v)$ is a binary random variable such that $Y_c = 1$ if and only if $c \notin L_{i + 1}(v)$ after Algorithm~\ref{algorithm: nibble}. For each $c \in L_i(v)$, we have that $Y_c$ is $1$-verifiable with verifier $R_c$ defined as follows:
    \begin{itemize}
        \item if $Y_c(\omega) = 0$, then $R_c(\omega) = \emptyset$;
        \item if some vertex $u \in T_i(v, c)$ is activated and assigned $c$, then we let $R_c(\omega) = \{u\}$;
        \item if $(v, c)$ loses its coin flip in Step~\ref{step:coin_flip} of Algorithm~\ref{algorithm: nibble}, we let $R_c(\omega) = \{(v,c)\}$.
    \end{itemize}    
    To finish showing that $Z$ is $(1, 1)$-observable, we must show the following:
    \begin{itemize}
        \item for every $\omega \in \Omega \setminus \Omega^*$ and $u \in V(G)$, $u \in R_c(\omega)$ for at most $1$ value of $c \in L_i(v)$, and
        \item for every $\omega \in \Omega \setminus \Omega^*$ and $c' \in L_i(v)$, $(v, c') \in R_c(\omega)$ for at most $1$ value of $c \in L_i(v)$. 
    \end{itemize}
    To this end, fix $\omega \in \Omega \setminus \Omega^*$. Now fix $u \in V(G)$. 
    Since $u$ is assigned at most one color under $\omega$, $u$ belongs to $R_c(\omega)$ for at most one color $c \in L_i(v)$.
    Now fix $c' \in L_i(v)$. 
    By definition of $R_c(\omega)$, clearly $(v, c') \in R_c(\omega)$ only if $c = c'$.
    Therefore,
    $(v, c') \in R_c(\omega)$ for at most $1$ value of $c \in L_i(v)$. This completes the proof that $Z$ is $(1, 1)$-observable.

    We aim to apply Lemma~\ref{lem:conc-ineq} with $r =d=1$ and $\tau = \frac{\keep_iL_i}{\log^{2} \Delta}$. We have that $\E[Z] \leq L_i$. Additionally, $\keep_i > \exp(-K)$ by Lemma~\ref{lemma: properties}~\ref{item:keep}, so $\tau  = \omega(\sqrt{L_i})$,
    implying~\eqref{eqn:conc-tau-sqrt} is satisfied.
    Therefore, as $\tau < L_i$ and $L_i > \Delta^{\epsilon/2}$ by Lemma~\ref{lemma: properties}~\ref{item:L}, we have
    \begin{align*}
         \Pr(|Z - \E[Z]| > \tau ) &\leq 4 \exp \left ( - \frac{\tau^2}{8(2)(1) (4\E[Z] + \tau )} \right ) \\
         &\leq  4 \exp \left(- \frac{L_i}{80 \log^{20} \Delta }\right) \\
         &< \exp \left(-\log^2\Delta  \right ),
    \end{align*}
    as desired.
\end{proof}

\paragraph{Proof of Lemma~\ref{lemma: locally conditions hold}~\ref{item: T conc}: concentration of $t_{i+1}(v, c)$.}

By Property $P(i)$, $t_i(v,c) \leq T_i$.
As $T_{i+1}(v, c) \subseteq T_i(v, c)$, 
there is nothing to prove in the case that $t_i(v,c) \leq T_{i+1}$. Therefore, we may assume that $t_i(v,c) > T_{i+1} > \frac 12 T_i$, where the last inequality follows by Lemma~\ref{lem:rough-inequalities}.

We apply Corollary~\ref{cor:great-expectations} with $S = T_i(v, c)$, so that $|S| \leq T_i$ and $t_{i+1}(v,c) = |S'|$.
By Lemma~\ref{lemma: properties} items~\ref{item:T-and-Q}~and~\ref{item:Q-and-X}, we have
\[\frac{|S|}{X_i} \geq \frac{T_i}{2X_i} = \frac 12 \cdot \frac{T_i}{Q_i} \cdot \frac{Q_i}{X_i}  
 > \Delta^{(1-\gamma)\epsilon / 3}.\]
Thus, $t_{i+1}(v,c)$ is concentrated from above by Corollary~\ref{cor:great-expectations}.

\paragraph{Proof of Lemma~\ref{lemma: locally conditions hold}~\ref{item: Q conc 1}~and~\ref{item: Q conc 2}: concentration of $q_{i+1}(u,v,c)$.}
By Property $P(i)$, $q_i(u,v,c) \leq Q_i$.
As $Q_{i+1}(u, v, c) \subseteq Q_i(u, v, c)$, 
there is nothing to prove in the case that $q_i(u,v,c) \leq Q_{i+1}$. Therefore, we may assume that $q_i(u,v,c) > Q_{i+1} > \frac 12 Q_i$, where the last inequality follows by Lemma~\ref{lem:rough-inequalities}.

We apply Corollary~\ref{cor:great-expectations} with $S = Q_i(u, v, c)$, so that $|S| \leq Q_i < T_i$ by Lemma~\ref{lemma: properties}~\ref{item:T-and-Q} and
$q_{i+1}(u,v,c) = |S'|$.
By Lemma~\ref{lemma: properties}~\ref{item:Q-and-X}, we have
\[\frac{|S|}{X_i} \geq \frac{Q_i}{2X_i} > \frac 12 \Delta^{(1-\gamma)\epsilon / 3}.\]
Thus, $q_{i+1}(u,v,c)$ is concentrated from above by Corollary~\ref{cor:great-expectations}.

The proof of Lemma~\ref{lemma: locally conditions hold}~\ref{item: Q conc 2} is identical to the above \textit{mutatis mutandis} (replacing $Q_i$ with $Q_i^X$ etc.) and so we omit the details.

\paragraph{Proof of Lemma~\ref{lemma: locally conditions hold}~\ref{item: X conc}: concentration of $|X_{i+1}(c)|$.}

As $X_{i+1}(c) \subseteq X_i(c)$, there is nothing to prove when $|X_{i}(c)| \leq X_{i+1}$. Therefore, we may assume that $|X_{i}(c)| > X_{i+1} > \frac 12 X_i$, where the last inequality follows by Lemma~\ref{lem:rough-inequalities}.

We aim to apply Lemma~\ref{lem:great-expectations} 
with $S = X_i(c)$ and $R = B^X_i(v,c)$.
Note that $|S| > \frac 12 X_i = \Delta^{\Omega(1)}$ by Lemma~\ref{lemma: properties}~\ref{item:T}.
Observe that by Property $P(i)$,
for each $v \in X_i(c)$, we have
\[T_i - t_i(v,c) + |N_R(v,c)| = T_i - t_i(v,c) + b^X_i(v,c) \geq B_i.\]
Additionally, by Lemma~\ref{lemma: properties}~items~\ref{item:T} and 
\ref{item:B-and-T}, we have $B_i \gg \log^{18}\Delta$.
Therefore, we may take $\alpha = B_i$. 
For each pair $u,v \in X_i(c)$, 
if $u$ and $v$ are friends, then Property $P(i)$~\ref{item:friend-QX} implies that 
\[
|N_R(u,c) \cap N_R(v,c)| = |B^X_i(u,c) \cap B^X_i(v,c)| = |Q_i^X(u,v,c)| \leq Q_i;
\]
otherwise, if $u$ and $v$ are strangers, then Property $P(i)$ implies that 
\[
|N_R(u,c) \cap N_R(v,c)| \leq |N(u,c) \cap N(v,c)| = |Q_i(u,v,c)| \leq Q_i.
\]
As $Q_i  \leq \frac{B_i}{5\log^{18} \Delta}$
by
Lemma~\ref{lemma: properties}~\ref{item:B-and-Q}, $|N_R(u,c) \cap N_R(v,c)| < \frac{\alpha}{4 \log^{18} \Delta}$.

Letting $S'$ be defined as in Lemma~\ref{lem:great-expectations}, with probability at least $1 - \exp(-\log^3 \Delta)$, we have
\[
|S'| < (1 - \eta L_i^{-1} )^{B_i} ( 1 - \eta \keep_i) |S| + \frac{|S|}{2 \log^{2} \Delta} .
\]
Since $X_{i+1}(c) \subseteq S'$,
and since $|X_i(c)| \leq X_i$ by Property $P(i)$,
this implies that 
with probability at least $1 - \exp(-\log^3 \Delta)$,  
\begin{equation}
\label{eqn:XiUB}
|X_{i+1}(c)| \leq (1 - \eta L_i^{-1} )^{B_i} ( 1 - \eta \keep_i) X_i + \frac{X_i}{2 \log^{2} \Delta} .
\end{equation}
It remains to show that the upper bound in~\eqref{eqn:XiUB} is at most $X_{i+1}$.
Since $B_i <\gamma T_i < T_i$ by Lemma~\ref{lemma: properties}~\ref{item:B-and-T},
it follows from Lemma~\ref{lem:bernoulli} and Lemma 
\ref{lemma: properties}~\ref{item:r} that 
\[(1 - \eta L_i^{-1} )^{B_i} > 1-\eta r_i
> 1-K > \frac 45.\]
Also, by Lemma~\ref{lemma: properties}~\ref{item:keep}, 
$1 - \eta \keep_i > \frac 45$.
It follows that
\[(1 - \eta L_i^{-1} )^{B_i} ( 1 - \eta \keep_i) \geq \frac{1}{2}.\]
In particular, the upper bound in~\eqref{eqn:XiUB} is at most
\begin{align*}
 &(1 - \eta L_i^{-1} )^{B_i}  ( 1 - \eta \keep_i)\left (1 + \frac{1}{\log^2 \Delta} \right ) X_i  
\\ &\qquad = (1 - \eta L_i^{-1} )^{(\gamma-\log^{-2}\Delta) T_i} ( 1 - \eta \keep_i)\left (1 + \frac{1}{\log^2 \Delta} \right ) T_i^{\gamma} .
\end{align*}
We write $Z = 
(1-\eta/L_i)^{-T_i \log^{-2}\Delta }\left (1+\log^{-2}\Delta \right )^{1-\gamma}
(1-\eta \keep_i)^{1-\gamma}
$
and observe that the above is at most
\begin{equation}
\label{eqn:XZ}
 (1 - \eta L_i^{-1} )^{\gamma T_i} ( 1 - \eta \keep_i)^{\gamma} \left (1 + \frac{1}{\log^2 \Delta} \right )^{\gamma} T_i^{\gamma} Z = T_{i+1}^{\gamma} Z =X_{i+1} Z.
\end{equation}
Finally, we note that 
\begin{align*}
    Z &< \exp\left ( \frac{2\eta T_i}{L_i \log^2 \Delta}  
+ \frac{1-\gamma}{\log^2 \Delta} - \eta (1-\gamma) \keep_i
\right )
\\ &=\exp\left ( \frac{2\eta r_i}{ \log^2 \Delta}  
+ \frac{1-\gamma}{\log^2 \Delta} - \eta (1-\gamma) \keep_i
\right ) <1,
\end{align*}
where the last inequality holds since $r_i < \log \Delta$ by Lemma~\ref{lemma: properties}~\ref{item:r}, so $\eta (1-\gamma) \keep_i$ dominates the other terms inside the exponential for $\Delta$ sufficiently large. 
Combining~\eqref{eqn:XiUB} and~\eqref{eqn:XZ},
it follows that  $|X_{i+1}(c)| < X_{i+1}$
with probability at least $1 - \exp(-\log^3 \Delta)$.

\paragraph{Proof of Lemma~\ref{lemma: locally conditions hold}~\ref{item: B conc}: concentration of $t_{i+1}(v, c) - b^X_{i+1}(v,c)$.}

We observe that for each $u \in B_i^X(v,c) \setminus B_{i+1}^X(v,c)$,
either $u$
was colored during iteration $i$ of Algorithm~\ref{algorithm: nibble}, or $c$ was deleted from $L_{i+1}(u)$;
in both cases, $u \not \in T_{i+1}(v,c)$.
Therefore,
$T_{i+1}(v,c) \setminus B_{i+1}^X(v,c) \subseteq  T_i(v,c) \setminus B_i^X(v,c)$, implying that $t_{i+1}(v,c) - b_{i+1}^X(v,c) \leq t_i(v,c) - b_i^X(v,c)$.
As a consequence, if $t_i(v,c) - b_i^X(v,c) \leq T_{i+1} - B_{i+1}$, then there is nothing to prove;
therefore, we assume that 
$t_i(v,c) - b_i(v,c) > T_{i+1} - B_{i+1}$.

We aim to apply Corollary~\ref{cor:great-expectations} with $S = T_i(v,c) \setminus B_i^X(v,c)$, so that
$|S| < T_i$ and
 $|S'| = t_{i+1}(v,c) -b_{i+1}^X(v,c)$.
By Lemma~\ref{lem:rough-inequalities} and Lemma~\ref{lemma: properties}~items~\ref{item:B-and-T},
\ref{item:T-and-Q},  
and~\ref{item:Q-and-X}, we have
\[
\frac{|S|}{X_i} > \frac{T_{i+1} - B_{i+1}}{X_i} > \frac{(1-\gamma)T_{i+1}}{X_i} > \frac{(1-\gamma)T_i}{2X_i} =\frac{(1-\gamma)}{2} \frac{T_i}{Q_i}\cdot \frac{Q_i}{X_i}
\gg \Delta^{(1-\gamma)\epsilon/3}. \]
Thus, $t_i(v,c) - b_{i+1}^X(v,c)$ is concentrated from above by Corollary~\ref{cor:great-expectations}.

\section{Proof of Lemma~\ref{lem:great-expectations}}\label{sec:great-expectations}

We restate the lemma for the reader's convenience.

\begin{lemma*}[Restatement of Lemma~\ref{lem:great-expectations}]
    Let $R, S \subseteq V(G)$ and $i \in [i^*-1]$ be such that $P(i)$ holds.
    Let $\alpha \geq 4 \log^{18} \Delta$ be an integer.
    Fix a color $c$, and for each $v \in S$,
    write $N_R(v,c) = T_i(v,c) \cap R$.
    Suppose that the following hold:
    \begin{enumerate}[label=(C\arabic*)]
        \item For each $v \in S$, $c \in L_i(v)$;
        \item $\log^{16} \Delta \leq |S| \leq T_i$;
        \item $T_i -t_i(v,c) + |N_R(v,c)| \geq \alpha$ for each $v \in S$;
        For each pair $u,v \in S$, $|N_R(u,c) \cap N_R(v,c)  | \leq \frac{\alpha}{4 \log^{18} \Delta}$.
    \end{enumerate}
    Let $S' \subseteq S$ be the set 
    of vertices $v \in S$ for which the following hold throughout iteration $i$ of Algorithm~\ref{algorithm: nibble}:
    \begin{itemize}
        \item $v$ is not colored during Step~\ref{step: color};
        \item no $u \in N_R(v,c)$ is activated and assigned $c$; and 
        \item $(v,c)$ survives its equalizing coin flip.
    \end{itemize}
    Then,
    \[
    \Pr \left (|S'| \geq \left ( 1 - \eta L_i^{-1} \right)^{\alpha}(1 - \eta \keep_i)|S| + \frac{|S|}{2 \log^{2} \Delta} \right  ) < \exp(-\log^3 \Delta).
    \]
\end{lemma*}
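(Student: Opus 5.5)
The plan is to first bound $\E|S'|$ and then concentrate $|S'|$ from above using Lemma~\ref{lem:conc-ineq} with a suitable exceptional set. The expectation is done vertex by vertex. Fix $v \in S$. The event that no $u \in N_R(v,c)$ is activated and assigned $c$ has probability $(1-\eta L_i^{-1})^{|N_R(v,c)|}$, since all lists have size $L_i$ by $P(i)$. The equalizing coin flip survives with probability $(1-\eta L_i^{-1})^{T_i - t_i(v,c)}$, independently. By (C3) the product of these two probabilities is at most $(1-\eta L_i^{-1})^{\alpha}$.

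Next I handle the event that $v$ is not colored. Condition on the first two events and on the activations of vertices outside $N_R(v,c)$. Then $v$ is colored with probability roughly $\eta \keep_i$: it is activated with probability $\eta$, and given the color it receives, that color survives the choices of its neighbours in $T_i(v,\cdot)$ with probability about $\keep_i$. Conditioning that no vertex of $N_R(v,c)$ picks $c$ only raises the chance that other colors survive, so the estimate is fine. The remaining loss is the case where $v$ picks $c$ itself, which has probability $\eta/L_i$ and is negligible. This gives $\Pr(v\in S')\le (1-\eta L_i^{-1})^{\alpha}(1-\eta\keep_i)(1+o(\log^{-2}\Delta))$. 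Summing over $v \in S$ shows that $\E|S'|$ lies below the threshold minus $|S|/(4\log^2\Delta)$.

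For concentration, the issue is that $S'$ is not monotone: "not colored" is witnessed by a neighbour choosing the same color, so it is more natural to work with complements. I would write $|S'| = |S| - Z_1 - Z_2 - Z_3$, where:
\begin{itemize}
\item $Z_1$ counts $v$ for which some $u\in N_R(v,c)$ is activated and assigned $c$, or whose coin flip fails;
\item $Z_2$ counts $v$ that are colored;
\item $Z_3$ is a correction for overlaps.
\end{itemize}
It is cleaner to bound $|S|-|S'|$ from below, that is, to show $|S\setminus S'|$ is concentrated. The complement event for each $v$ has a small witness: either a single vertex $u\in N_R(v,c)$ assigned $c$, or the coin $(v,c)$. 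The "$v$ is colored" part needs $v$'s own choice together with all neighbours not choosing that color, which is not small-witness. So for that part I would instead lower-bound the uncolored-event complement via $v$ activated with color $c'$ and no conflicting neighbour, and treat it as an upper-tail quantity using Talagrand in the other direction, with verifier $\{v\}$ plus the non-conflict. Since non-conflict is not certifiable, I would switch to counting activated vertices of $S$ (a sum of independent indicators, concentrated by Chernoff, Lemma~\ref{lem:chernoff}). From that count I subtract those activated vertices whose color is blocked, which is an upper-tail count with witness $\{v,u\}$ where $u$ is a blocking neighbour.

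The key place where (C4) enters is the $d$ parameter. A vertex $u$ assigned $c$ lies in $N_R(v,c)$ for many $v\in S$, potentially up to $|S|$ of them, so the witness multiplicity is unbounded. I would therefore define $\Omega^*$ as the outcomes in which some vertex $u$ is blocking for too many $v \in S$. More precisely, I would use (C4) and a Chernoff bound to show that with probability at least $1-\exp(-\log^4\Delta)$, the number of vertices $u$ activated with color $c$ in $\bigcup_{v\in S} N_R(v,c)$ is close to its mean. Then (C4) gives that the sets $N_R(v,c)$ overlap little, so by inclusion–exclusion (a Bonferroni bound on the union) the number of $v$ hit is nearly its expectation $\sum_v (1-(1-\eta/L_i)^{|N_R(v,c)|})$ up to a relative error of $O(\log^{-2}\Delta)$. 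I expect this step to be the main obstacle: I need to control $\sum_u (\#\{v: u\in N_R(v,c)\})^2$ by $|S|^2\alpha/(4\log^{18}\Delta)$ using (C4), and to show that on $\Omega\setminus\Omega^*$ each blocking $u$ covers at most $d=\log^{O(1)}\Delta$ vertices of $S$. Once that holds, Lemma~\ref{lem:conc-ineq} with $r=2$, $d=\log^{O(1)}\Delta$ and $\tau=|S|/(4\log^2\Delta)$ applies, using $|S|\ge \log^{16}\Delta$ from (C2). This yields failure probability at most $\exp(-\log^3\Delta)$, which together with the expectation bound proves the lemma.
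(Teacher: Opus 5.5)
Your expectation estimate is essentially the paper's, except that the correlation runs the other way. Conditioning on no $u\in N_R(v,c)$ taking $c$ makes those $u$ slightly \emph{more} likely to take $v$'s colour $d$, so it lowers $\Pr(v\text{ is coloured})$. The loss is only a factor $\bigl(1-\frac{\eta^2}{(L_i-\eta)^2}\bigr)^{T_i}>1-L_i^{-1}$, so this is harmless.

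The genuine gap is in the concentration step, exactly where you flag it. Whether $u$ blocks $v$ is decided by a single trial, ``$u$ is activated and assigned $c$''. That trial has probability $\eta L_i^{-1}$, which is only polynomially small in $\Delta$. By contrast, the number $d_S(u)$ of $v\in S$ with $u\in N_R(v,c)$ is deterministic. Condition (C4) bounds codegrees of pairs in $S$, not $d_S(u)$, and nothing prevents a vertex $u\in R$ from lying in $N_R(v,c)$ for every $v\in S$.

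So no exceptional set can ensure that every blocker covers only $\log^{O(1)}\Delta$ vertices of $S$. It would have to contain the event that such a $u$ takes $c$, forcing $\Pr(\Omega^*)\ge \eta L_i^{-1}$. Lemma~\ref{lem:conc-ineq} then gives nothing below $4\eta L_i^{-1}\gg \exp(-\log^3\Delta)$.

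Your Chernoff-plus-Bonferroni route does not escape this, for three reasons:
\begin{itemize}
\item The number of $c$-activated vertices in $\bigcup_{v}N_R(v,c)$ says nothing about whether the high-degree ones are among them.
\item The first Bonferroni sum $\sum_{v}b(v)$, with $b(v)$ the number of blockers in $N_R(v,c)$, jumps by $d_S(u)$ when one such $u$ takes $c$.
\item A two-term Bonferroni bound is in any case only accurate up to a constant fraction of $|S|$, not up to $O(|S|/\log^2\Delta)$. The neglected terms have mean of order $(\eta r_i)^3|S|$, and $\eta r_i$ can be of order $K$.
\end{itemize}

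The missing idea is a \emph{deterministic} truncation. Since only an upper bound on $|S'|$ is needed, let $R_{high}$ be the set of $r\in R$ with $c\in L_i(r)$ and at least $|S|/\log^8\Delta$ neighbours in $S$. Bound instead the superset $S''\supseteq S'$ in which only vertices of $N_R(v,c)\setminus R_{high}$ count as blockers.

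Your own cherry count $\sum_u\binom{d_S(u)}{2}\le\binom{|S|}{2}\frac{\alpha}{4\log^{18}\Delta}$ gives $|R_{high}|\le \alpha/\log^2\Delta$; the paper phrases this as K\H{o}v\'ari--S\'os--Tur\'an with $s=2$. By (C3), the exponent in the expectation then drops only from $\alpha$ to $\alpha(1-\log^{-2}\Delta)$. This costs $O(|S|/\log^2\Delta)$ because $\eta\alpha/L_i\le \eta r_i<K$.

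Every remaining blocker lies in at most $|S|/\log^8\Delta$ witness sets. Lemma~\ref{lem:conc-ineq} then works with $d\approx 2|S|/\log^8\Delta$ (not polylogarithmic), since $\tau^2/(rd|S|)=\Omega(\log^4\Delta)$ for $\tau=|S|/(8\log^2\Delta)$.

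Two smaller repairs are also needed.
\begin{itemize}
\item The witness $\{v,u\}$ for ``$v$ is activated but blocked'' has unbounded multiplicity unless you exclude outcomes in which some colour is assigned to about $\log^4\Delta$ vertices of $S$. This is the paper's $\Omega^*$, which has probability at most $\exp(-\frac12\log^4\Delta)$ by Chernoff.
\item Your overlap term $Z_3$ (coloured and blocked) is no more certifiable than ``coloured''. The paper avoids it by writing $|S''|=Z_1-Z_2$. Here $Z_1$ counts the uncoloured $v\in S$, and $Z_2$ counts the uncoloured $v\in S$ that are blocked by $N_R(v,c)\setminus R_{high}$ or lose their coin flip. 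Both are sums of indicators with witnesses of size at most $3$.
\end{itemize}
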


In our calculations, we use the fact that due to Lemma~\ref{lemma: properties}~\ref{item:L}
and the fact that $\Delta$ is sufficiently large,
$L_i$ is also sufficiently large.

For the rest of this section, we assume that $P(i)$ holds.
Rather than working with $S'$ directly, we define a new set $S''$ containing $S'$ as follows.
We let $R_{high} \subseteq R$ denote the set of vertices $r \in R$ with $c \in L_i(r)$ and at least $h \coloneqq \frac{|S|}{\log^{8} \Delta}$ neighbors in $S$.
Let $S'' \subseteq S$ be the set of vertices $v$ for which the following hold after Algorithm~\ref{algorithm: nibble}:
\begin{itemize}
    \item $v$ is not colored during Step~\ref{step: color};
    \item no $u \in N_R(v,c) \setminus R_{high}$ is activated and assigned $c$; and
    \item $(v,c)$ survives its equalizing coin flip in Step~\ref{step:coin_flip}.
\end{itemize}
Clearly $S' \subseteq S''$ and so $\Pr(S' \geq t) \leq \Pr(S'' \geq t)$.
In particular, it suffices to show that
\begin{align}\label{eq: S''}
    \Pr \left (|S''| \geq \left ( 1 - \eta L_i^{-1} \right)^{\alpha}(1 - \eta \keep_i)|S| + \frac{|S|}{2 \log^{2} \Delta} \right  ) < \exp(-\log^3 \Delta),
\end{align}
which is the focus of the remainder of this section.
Our proof follows a sequence of lemmas.
First, we show that $R_{high}$ does not contain too many vertices.

\begin{lemma}\label{lemma: R high not large}
    $|R_{high}| \leq \dfrac{\alpha}{\log^{2} \Delta}$.
\end{lemma}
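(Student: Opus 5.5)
We bound $|R_{high}|$ by double counting the pairs $(r,v)$ with $r \in R_{high}$, $v \in S$, and $r \in N_R(v,c)$. Let $M$ be the number of such pairs. Since $R_{high}$ is defined by $r$ having $c \in L_i(r)$ and at least $h = |S|/\log^8\Delta$ neighbors in $S$, we take ``neighbors'' to mean neighbors $v\in S$ with $c\in L_i(v)$; this holds automatically by (C1). Such neighbors satisfy $r \in T_i(v,c)\cap R = N_R(v,c)$, so $M \ge |R_{high}|\,h$. (If the definition of $T_i(v,c)$ also involves the pruned edges, an edge $rv$ with $c$ in both lists survives the trimming step, so nothing is lost.)

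For the upper bound, we look at a single $v\in S$ and control $|N_R(v,c)\cap R_{high}|$ through the codegree condition (C4), using a Cauchy--Schwarz / inclusion--exclusion style count, as in standard arguments bounding high-degree sets. Concretely, set $m = |R_{high}|$ and count triples. Every $r\in R_{high}$ has at least $h$ neighbors in $S$, so
\[
\sum_{r\in R_{high}} \binom{d_S(r)}{2} \ \ge\ m\binom{h}{2}.
\]
Each pair $u,v\in S$ contributes at most $|N_R(u,c)\cap N_R(v,c)| \le \alpha/(4\log^{18}\Delta)$ to the left-hand side, which gives
\[
m\binom{h}{2} \le \binom{|S|}{2}\frac{\alpha}{4\log^{18}\Delta}.
\]
Since $h = |S|/\log^8\Delta \ge \log^8\Delta$ by (C2), we have $\binom{h}{2}\ge h^2/3$. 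This yields
\[
m \le \frac{3|S|^2}{2h^2}\cdot\frac{\alpha}{4\log^{18}\Delta} = \frac{3\log^{16}\Delta}{8\log^{18}\Delta}\,\alpha < \frac{\alpha}{\log^2\Delta},
\]
which is the claim.

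The main point to check is that the codegree in (C4) really counts the common $R$-neighbors relevant here, i.e.\ that a vertex $r\in R_{high}$ adjacent to both $u$ and $v$ with $c$ in all three lists lies in $N_R(u,c)\cap N_R(v,c)$. This follows from the definitions of $T_i(\cdot,c)$. The only other ingredient is $h\ge 2$ (indeed $h\ge\log^8\Delta$), so that the binomial lower bound is valid. The pair sum also includes pairs with $u=v$ only if we are careless; we restrict to distinct pairs, which is exactly what the identity $\sum_r\binom{d_S(r)}{2}$ counts. The powers of $\log\Delta$ fit comfortably: the $\log^{16}$ coming from $h^{-2}|S|^2$ against the $\log^{18}$ in (C4) leaves the required $\log^{-2}$ factor.
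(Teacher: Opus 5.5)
Your proof is correct and is essentially the paper's argument. The paper forms the same bipartite incidence graph between $R_{high}$ and $S$, lower-bounds its edge count by $|R_{high}|h$, and upper-bounds it with the K\H{o}v\'ari--S\'os--Tur\'an bound (Lemma~\ref{lem:KST}) with $s=2$, using the codegree condition (C4). Your cherry count $\sum_{r}\binom{d_S(r)}{2}$ is exactly the standard proof of that $s=2$ bound, done directly with the minimum degree $h$ instead of being cited. This gives the slightly better constant $\frac{3}{8}$ and avoids the off-by-one bookkeeping between ``codegree at most $t$'' and $K_{2,t+1}$-freeness.
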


\begin{proof}
    Define an auxiliary bipartite graph $H$
    with partite sets $A$ and $B$
    as follows.
    For each vertex $r \in R_{high}$, add a vertex $a_r$ to $A$.
    For each vertex
    $s \in S$, add an edge $b_s$ to $B$. 
    If $r \in R_{high}$ and $s \in S$ have the relationship $r \in T_i(s,c)$,
    add the edge $a_r b_s$ to $H$.
    Since each vertex $r \in R_{high}$ has at least $h$ neighbors in $S$, each corresponding $a_r \in A$ has at least $h$ neighbors in $B$.
    Therefore $|E(H)| \geq |A|h = |R_{high}|h$.
    Also, by condition~\ref{item: codegree bound great expectations} in Lemma~\ref{lem:great-expectations}, for each pair $b,b' \in B$,
    $b$ and $b'$ have at most $\frac{\alpha}{4 \log^{18} \Delta}$ common neighbors in $A$.
    Thus, by applying Lemma~\ref{lem:KST} with $s = 2$, $t = \frac{\alpha}{4 \log^{18} \Delta}$, $n = |B| = |S|$, and $m = |A| = |R_{high}|$, we have
    \[|R_{high}| h \leq |E(H)| < \sqrt t |S| |R_{high}|^{1/2} + |R_{high}|.\]
    If $|R_{high}| > \sqrt{t} |S| |R_{high}|$, then $\frac{|S|}{\log^8 \Delta} = h < 2$, a contradiction since $|S| \geq \log^{16} \Delta$. Therefore,
    \[
    |R_{high}|h < 2  \sqrt t |S| |R_{high}|^{1/2} \implies
    |R_{high}| < 4t (|S|^2/h^2) = 4t \log^{16}\Delta = \frac{\alpha}{\log^{2} \Delta},
    \]
    as desired.
\end{proof}

We write $\alpha' = \alpha\left ( 1  - \frac{1}{\log^{2} \Delta } \right )$, and observe that for each $v \in S$, we have
\begin{align}\label{eq: alpha'}
    T_i - t_i(v,c) + |N_R(v,c) \setminus R_{high}| \geq \alpha'
\end{align}
as a result of Lemma~\ref{lemma: R high not large} and condition~\ref{item: bound on alpha} of Lemma~\ref{lem:great-expectations}.
Next, we compute an upper bound on the expected value of $|S''|$.

\begin{lemma}\label{lemma: exp S''}
    $\E[|S''|] \leq |S| (1 - \eta L_i^{-1} )^{\alpha} (1 - \eta\keep_i) + \dfrac {|S|}{4 \log^{2} \Delta}$.
\end{lemma}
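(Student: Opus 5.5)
The plan is to bound $\Pr(v \in S'')$ separately for each $v \in S$ and then sum over $v$ by linearity of expectation. Fix $v \in S$. Let $A_v$ be the event that no $u \in N_R(v,c) \setminus R_{high}$ is activated and assigned $c$, and that $(v,c)$ survives its equalizing coin flip. Let $C_v$ be the event that $v$ is colored in Step~\ref{step: color}. Then $\Pr(v \in S'') = \Pr(A_v) - \Pr(A_v \cap C_v)$.

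The factor $\Pr(A_v)$ is computed exactly. Activations, assignments and coin flips are independent, and by $P(i)$ every list has size $L_i$. Hence
\[
\Pr(A_v) = (1-\eta L_i^{-1})^{T_i - t_i(v,c) + |N_R(v,c)\setminus R_{high}|} \le (1-\eta L_i^{-1})^{\alpha'}
\]
by \eqref{eq: alpha'}. To pass from $\alpha'$ to $\alpha$, I use Lemma~\ref{lem:bernoulli} and $\alpha \le T_i$ (here I assume $\alpha \le T_i$, which holds in every application since $\alpha \in \{T_i, B_i\}$ there):
\[
(1-\eta L_i^{-1})^{-\alpha/\log^2\Delta} \le 1 + \frac{2\eta T_i}{L_i \log^2\Delta} = 1 + \frac{2\eta r_i}{\log^2 \Delta} \le 1 + \frac{2K}{\log^2\Delta}.
\]
The last step uses $\eta r_i < K$, from Lemma~\ref{lemma: properties}~\ref{item:r}. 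Therefore $\Pr(A_v) \le (1-\eta L_i^{-1})^{\alpha}\bigl(1 + 2K\log^{-2}\Delta\bigr)$.

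For the subtracted term I decompose $C_v$ by the color $v$ receives. The event that $v$ is colored with $c$ is disjoint from nothing useful, so I simply drop it: this only lowers $\Pr(A_v\cap C_v)$, and hence only raises the resulting upper bound on $\Pr(v\in S'')$. For $c' \in L_i(v)\setminus\{c\}$, the event that $v$ is colored with $c'$ and $A_v$ holds requires four things. First, $v$ is activated and assigned $c'$, with probability $\eta/L_i$. Second, $(v,c)$ survives its coin flip. Third, every $u \in T_i(v,c')\cap N_R(v,c)\setminus R_{high}$ avoids both $c$ and $c'$, each with probability $1 - 2\eta/L_i \ge (1-\eta/L_i)^2(1 - 2\eta^2/L_i^2)$. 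Fourth, every other relevant $u$ avoids the single relevant color. All of these are independent, so
\[
\Pr(A_v \cap \{v \text{ colored } c'\}) \ge \frac{\eta}{L_i}\,\Pr(A_v)\,(1-\eta L_i^{-1})^{t_i(v,c')}\Bigl(1-\frac{2\eta^2}{L_i^2}\Bigr)^{T_i} \ge \frac{\eta}{L_i}\,\Pr(A_v)\,\keep_i\,\bigl(1 - 2\eta^2 r_i/L_i\bigr),
\]
where I used $t_i(v,c')\le T_i$. Summing over the $L_i - 1$ colors $c' \neq c$ gives $\Pr(A_v \cap C_v) \ge \eta\keep_i \Pr(A_v)(1 - O(1/L_i))$. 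Consequently $\Pr(v\in S'') \le \Pr(A_v)\bigl(1-\eta\keep_i + O(\eta/L_i)\bigr)$.

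Combining the two estimates,
\[
\Pr(v \in S'') \le (1-\eta L_i^{-1})^{\alpha}(1-\eta\keep_i) + \frac{3K}{\log^2\Delta} + O(L_i^{-1}).
\]
Since $K = \epsilon/1000 < 1/20$ and $L_i > \Delta^{\epsilon/2}$ by Lemma~\ref{lemma: properties}~\ref{item:L}, the error terms total less than $\frac{1}{4\log^2\Delta}$. Summing over $v \in S$ yields the lemma.

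The main obstacle is the correlation between $C_v$ and $A_v$. Conditioning on neighbors not receiving $c$ slightly increases the chance that they receive $c'$, which works against $v$ being colored. This is exactly why I lower-bound the joint probability color by color via the $(1-2\eta/L_i)$ factor, rather than multiplying the marginals; the resulting loss is only a multiplicative $1 - O(\eta^2 T_i/L_i^2)$.
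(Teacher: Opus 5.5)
Your proof is correct and is essentially the paper's argument. Writing $\Pr(v\in S'')=\Pr(A_v)-\Pr(A_v\cap C_v)$ and lower-bounding the subtracted term color by color, with a second-order correction for vertices in both $T_i(v,c')$ and $N_R(v,c)\setminus R_{high}$, is what the paper does after conditioning on whether $v$ is activated; its factor $M^{N'}$ plays the role of your $(1-2\eta^2/L_i^2)^{T_i}$. One small point: you need not assume $\alpha\le T_i$, since it follows from condition (C3) together with $|N_R(v,c)|\le t_i(v,c)$.
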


\begin{proof}
    For each $v \in S$, we estimate the probability that $v \in S''$. To this end, we fix $v \in S$. We condition on two cases based on the activation status of $v$ during Step~\ref{step:activate} of Algorithm~\ref{algorithm: nibble}.

    First, we condition on the event that $v$ is not activated. In this case, $v \in S''$ if and only if the following events occur:
    \begin{itemize}
        \item $\mathcal F_1$: for each vertex $u \in N_R(v,c) \setminus R_{high}$, $c$ is not assigned to $u$, and 
        \item $\mathcal F_2$: $(v, c)$ survives its equalizing coin flip. 
    \end{itemize}
    We observe that events $\mathcal F_1$ and $\mathcal F_2$ are independent.
    We have
    \[
        \Pr(\mathcal F_1) =
        \left ( 1 - \eta L_i^{-1} \right)^{ |N_R(v,c) \setminus R_{high}|} 
    \]
    and 
    \[\Pr(\mathcal F_2) = \Eq_i(v,c) = \left(1 - \eta L_i^{-1}\right)^{T_i - t_i(v,c)}.\]
    Hence, the conditional probability that $v \in S''$ is 
    \begin{equation*}
        \Pr(\mathcal F_1) \Pr(\mathcal F_2) =  \left(1 -  \eta L_i^{-1}\right)^{|N_R(v,c) \setminus R_{high}| + T_i - t_i(v,c) } .
    \end{equation*}
    By~\eqref{eq: alpha'}, we have
    \begin{equation}
        \label{eqn:Y,T_i}
         \left(1 -  \eta L_i^{-1}\right)^{|N_R(v,c) \setminus R_{high}| + T_i - t_i(v,c) } \leq \left(1 -  \eta L_i^{-1}\right)^{\alpha'}.
    \end{equation}
    Therefore, writing $Y = \left(1 -  \eta L_i^{-1}\right)^{\alpha'}$, we have
     \begin{equation}
    \label{eq:1st-case-UB}
        \Pr(\mathcal F_1)  \Pr(\mathcal F_2) \leq Y.
    \end{equation}

    Next, we condition on the event that $v$ is activated. In this case, $v \in S''$ 
    if and
    only if the following events occur during Algorithm~\ref{algorithm: nibble}:
    \begin{itemize}
        \item 
        $\mathcal E_1$: $v$ is assigned some color $d \in L_i(v)$;
        \item 
        $\mathcal E_2$: a vertex $w \in T_i(v, d)$ is activated and assigned the same color $d$;
        \item 
        $\mathcal E_3$: no vertex $u \in N_R(v,c) \setminus R_{high}$ is assigned color $c$;
        \item 
        $\mathcal E_4$: $(v,c)$ survives its equalizing coin flip. 
    \end{itemize}
    We aim to estimate $\Pr(\mathcal E_1 \cap \mathcal E_2 \cap \mathcal E_3 \cap \mathcal E_4)$.
    To this end, we partition $\mathcal E_1$ into the disjoint event set $\{\mathcal A_d : d \in L_i(v)\}$, where $\mathcal A_d$ is the event that $v$ is assigned the color $d$.
    From this partition, we simplify the probability to
    \begin{equation*}
        \Pr(\mathcal E_1 \cap \mathcal E_2 \cap \mathcal E_3 \cap \mathcal E_4) = \sum_{d \in L_i(v)} \Pr(\mathcal A_d \cap  \mathcal E_2 \cap \mathcal E_3 \cap \mathcal E_4).
    \end{equation*}
    Fix a color $d \in L_i(v)$.
    We aim to estimate 
    \begin{equation}
    \label{eqn:d234-original}
        \Pr(\mathcal A_d \cap  \mathcal E_2 \cap \mathcal E_3 \cap \mathcal E_4)= \Pr(\mathcal A_d) \Pr(\mathcal E_2 \cap \mathcal E_3 \mid \mathcal A_d )  \Pr(\mathcal E_4),
    \end{equation}
    where equality holds from the fact that $\mathcal E_4$ is independent of $\mathcal A_d$, $\mathcal E_2$, and $\mathcal E_3$.    
    Rather than estimating each of the factors in~\eqref{eqn:d234-original}, we take advantage of the fact that 
    \begin{eqnarray*}
    \Pr(\mathcal E_3) = \Pr(\mathcal E_3 \mid \mathcal A_d) &=& \Pr(\mathcal E_2 \cap \mathcal E_3 \mid \mathcal A_d ) + \Pr(\overline{\mathcal E_2} \cap \mathcal E_3  \mid \mathcal A_d) \\
     &=& \Pr(\mathcal E_2 \cap \mathcal E_3 \mid \mathcal A_d ) +  \Pr(\overline{\mathcal E_2 } \mid \mathcal A_d) \Pr(\mathcal E_3 \mid \mathcal A_d \cap \overline{\mathcal E_2}).
    \end{eqnarray*}
    Therefore, 
    \[\Pr(\mathcal E_2 \cap \mathcal E_3 \mid \mathcal A_d ) = \Pr(\mathcal E_3) - \Pr(\mathcal E_3 \mid \mathcal A_d \cap \overline{\mathcal E_2}) \Pr(\overline{\mathcal E_2 } \mid \mathcal A_d),\]
    and hence
    \begin{equation}
    \label{eq:rearranged-A234}
        \Pr(\mathcal A_d \cap  \mathcal E_2 \cap \mathcal E_3 \cap \mathcal E_4) = \Pr(\mathcal A_d) (\Pr(\mathcal E_3) - \Pr(\mathcal E_3 \mid \mathcal A_d \cap \overline{\mathcal E_2}) \Pr(\overline{\mathcal E_2 } \mid \mathcal A_d) )
        \Pr(\mathcal E_4).
    \end{equation}

    Now, we estimate the probabilities in ~\eqref{eq:rearranged-A234}.
    First, we have
    \begin{equation}
    \label{eq:Ad}
        \Pr(\mathcal A_d) = \frac{1}{|L_i(v)|} = \frac{1}{L_i}
    \end{equation}
    and
    \begin{equation}
    \label{eq:E3}
        \Pr(\mathcal E_3) = (1 - \eta L_i^{-1})^{|N_R(v,c) \setminus R_{high}|}.
    \end{equation}
    Next, we estimate $\Pr(\mathcal E_3 \mid \mathcal A_d \cap \overline{\mathcal E_2})$.
    We recall that $\overline{\mathcal E_2}$ is the event that ``no $w \in T_i(v,d)$ is assigned $d$.''
    We consider an uncolored neighbor $u \in N_R(v)$.
    Let $B_c$ be the event that $u$ is activated and assigned $c$, and let $B_d$ be the event that $u$ is activated and assigned $d$.
    Thus,
    \begin{eqnarray}
    \notag
        \Pr(B_c \mid \mathcal A_d \cap \overline{\mathcal E_2}) &=& \Pr(B_c \mid \mathcal A_d \cap \overline{B_d}) = \frac{\Pr(B_c \cap \overline{B_d} \mid \mathcal A_d) }{  \Pr( \overline{B_d}\mid \mathcal A_d)}  \\
        &\leq& 
        \label{eqn:Bc}
        \frac{\Pr(B_c \mid \mathcal A_d) }{  \Pr( \overline{B_d}\mid \mathcal A_d)} 
        = \frac{\eta L_i^{-1} }{1 - \eta L_i^{-1}} = \frac{\eta}{L_i - \eta}.
    \end{eqnarray}
    Therefore, by~\eqref{eqn:Bc}, we have
    \begin{equation}
    \label{eq:E3-LB}
        \Pr(\mathcal E_3 \mid \mathcal A_d \cap \overline{\mathcal E_2}) \geq \left ( 1 - \frac{\eta}{L_i - \eta} \right )^{ |N_R(v,c) \setminus R_{high}|}.
    \end{equation}

    Next,
    \begin{equation}
    \label{eq:E2-LB}
        \Pr(\overline{\mathcal E_2} \mid \mathcal A_d) = (1 - \eta L_i^{-1} )^{t_i(v,d)}  \geq \keep_i.
    \end{equation}

    Finally, 
    \begin{equation}
    \label{eq:E4}
         \Pr(\mathcal E_4) = \Eq_i(v,c) = \left(1 - \eta L_i^{-1}\right)^{T_i - t_i(v,c)}
    \end{equation}

        Combining~\eqref{eq:rearranged-A234},~\eqref{eq:Ad},
       ~\eqref{eq:E3},~\eqref{eq:E3-LB},~\eqref{eq:E2-LB}, and~\eqref{eq:E4}, $\Pr(\mathcal A_d \cap  \mathcal E_2 \cap \mathcal E_3 \cap \mathcal E_4)$ is at most
        \begin{equation}
        \label{eq:bad-UB}
         \frac{1}{L_i} \left ( (1 - \eta L_i^{-1} )^{ |N_R(v,c) \setminus R_{high}|}  -  \left ( 1 - \frac{\eta}{L_i - \eta} \right )^{ |N_R(v,c) \setminus R_{high}|} \keep_i \right ) \left(1 - \eta L_i^{-1}\right)^{T_i - t_i(v,c)}.
        \end{equation}
        Let $M \coloneqq 1 - \frac{\eta^2}{(L_i - \eta)^2}$, so that 
        $1 - \frac{\eta}{L_i - \eta} = M(1 - \eta L_i^{-1})$.
        Using the fact that $\frac{T_i}{L_i} = r_i \leq r_1$  and $4\eta^2 r_1 < 4 \eta^2 \log \Delta = o(1)$ by Lemma~\ref{lemma: properties}~\ref{item:r}, we have
        \[
            M^{ |N_R(v,c) \setminus R_{high}|} \geq M^{T_i} > \exp \left ( - \frac{2 T_i \eta^2 }{(L_i - \eta)^2}  \right ) > \exp \left ( - \frac{4\eta^2  r_1 }{L_i}  \right )  > 1 - \frac{1}{L_i}.
        \]
        With the above in hand, continuing from~\eqref{eq:bad-UB} and writing $N' = |N_R(v,c) \setminus R_{high}|$, we have
        \begin{align*}
        \Pr(\mathcal A_d \cap  \mathcal E_2 \cap \mathcal E_3 \cap \mathcal E_4) &\leq
        \frac{1}{L_i} (1 - \eta L_i^{-1} )^{ N'} \left ( 1 -  
        M^{N'} 
        \keep_i
        \right ) \left(1 - \eta L_i^{-1}\right)^{T_i - t_i(v,c)} \\
        &\leq \frac{1}{L_i} \left ( 1  -  
        \left (1 - \frac{1}{L_i} \right )  \keep_i  \right ) \left(1 - \eta L_i^{-1}\right)^{ N' + T_i - t_i(u, c)} .
        \end{align*}
        As
        $(1 - \eta L_i^{-1})^{ N' + T_i - t_i(v,c)} \leq (1 - \eta L_i^{-1})^{\alpha'} = Y$ by~\eqref{eqn:Y,T_i}, we have
        \[ 
       \Pr(\mathcal A_d \cap  \mathcal E_2 \cap \mathcal E_3 \cap \mathcal E_4)  \leq    \frac{1}{L_i} \left ( 1  -  
        \left ( 1 - \frac{1}{L_i} \right ) \keep_i  \right ) Y <  \frac{1}{L_i} \left ( 1 - \keep_i + L_i^{-1} \right ) Y .
        \]
        Summing the upper bound above over all $d \in L_i(v)$, we conclude
        \begin{equation*}
        \Pr(\mathcal E_1 \cap  \mathcal E_2 \cap \mathcal E_3 \cap \mathcal E_4 ) \leq  \left ( 1 - \keep_i + L_i^{-1} \right ) Y.
        \end{equation*}
        Combining the above with~\eqref{eq:1st-case-UB} and using linearity of expectation, we have
        \begin{align}
            \E[|S''|] &\leq |S|\left((1-\eta)\,Y + \eta\left ( 1 - \keep_i + L_i^{-1} \right ) Y\right) \nonumber \\
            &\leq |S|Y(1 - \eta\keep_i + \eta L_i^{-1}) \nonumber \\
            &\leq |S| (1 - \eta L_i^{-1} )^{\alpha'} (1 - \eta\keep_i) + |S| L_i^{-1}, \label{eqn:ES' pre}
        \end{align}
        where we use the fact that $Y, \eta \leq 1$.
        Recall that by Lemma~\ref{lemma: properties}~\ref{item:L}, $L_i > \Delta^{\epsilon/2} > 8 \log^{2} \Delta$.
        Furthermore, by definition of $\alpha'$, we have
        \[(1 - \eta L_i^{-1} )^{\alpha'} = (1 - \eta L_i^{-1} )^{\alpha - \frac{\alpha}{\log^{2} \Delta}} \leq (1 - \eta L_i^{-1})^{\alpha} \left (1 + \frac{2 \eta \alpha}{L_i \log^{2} \Delta} \right ).\]
        Therefore, we may further simplify~\eqref{eqn:ES' pre} to
        \begin{align*}
            \E[|S''|] &\leq |S| (1 - \eta L_i^{-1} )^{\alpha} (1 - \eta\keep_i)\left (1 + \frac{2 \eta \alpha}{L_i \log^{2} \Delta} \right ) + \frac{|S|}{8\log^2\Delta} \\
            &\leq |S| (1 - \eta L_i^{-1} )^{\alpha} (1 - \eta\keep_i) + \frac{2 \eta \alpha |S|}{L_i \log^{2} \Delta}+ \frac{|S|}{8\log^2\Delta} \\
            &\leq |S| (1 - \eta L_i^{-1} )^{\alpha} (1 - \eta\keep_i) + \frac{|S|}{4\log^2\Delta},
        \end{align*}
        where the last step follows since
        \[\frac{2 \eta \alpha}{L_i} \leq 
        \frac{2 \eta T_i}{L_i} = \frac{2Kr_i}{\log \Delta} < \frac{1}{8}\]
        by Lemma~\ref{lemma: properties}~\ref{item:r}, condition~\ref{item: bounds on |S|} of Lemma~\ref{lem:great-expectations}, and by definition of $K$.
        This completes the proof of Lemma~\ref{lemma: exp S''}.
\end{proof}

It remains to concentrate $|S''|$ from above, i.e., prove~\eqref{eq: S''}. As mentioned earlier, this implies the conclusion of Lemma~\ref{lem:great-expectations}.
We observe that $|S''| = Z_1 - Z_2$,
where $Z_1$ and $Z_2$ are defined as follows: 
\begin{itemize}
    \item $Z_1$ is the number of vertices $v \in S$ that are not colored during Step~\ref{step: color}, and
    \item $Z_2$ is the number of vertices $v \in S$ 
    that are not colored during Step~\ref{step: color} and
    for which one of the following occurs:
    \begin{itemize}
        \item some $u \in N_R(v,c) \setminus R_{high}$ is activated and assigned $c$, or
        \item $(v,c)$ does not survive its coin flip.
    \end{itemize}
\end{itemize}
 
We show that $Z_1$ and $Z_2$ are concentrated about their means, which implies that $|S''|$ is concentrated from above.
To this end, recall the probability space $(\Omega, \Sigma, \Pr)$ defined on page~\pageref{prob_space}.
In order to concentrate $|S''|$, we aim to apply Lemma~\ref{lem:conc-ineq}.
To this end, we let $\Omega^* \subseteq \Omega$ be the set of exceptional outcomes
in which for some color  $c' \in \bigcup_{v \in S} L_i(v)$, 
more than $ \log^4 \Delta - 1$ vertices in $S$ are activated and assigned $c'$.
We begin by showing $\Pr(\Omega^*)$ is small.
    
\begin{lemma}\label{lemma: omega*}
    $\Pr(\Omega^*) \leq \exp(-\frac 12 \log^4 \Delta)$.
\end{lemma}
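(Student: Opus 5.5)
We bound $\Pr(\Omega^*)$ with a union bound over colors and a direct tail estimate for a sum of independent indicators. Fix a color $c' \in \bigcup_{v \in S} L_i(v)$. For each $v \in S$ with $c' \in L_i(v)$, let $I_v$ be the indicator that $v$ is activated and assigned $c'$. These events depend only on the independent spaces $(\Omega_v,\Sigma_v,\Pr_v)$, so the $I_v$ are mutually independent. By Property $P(i)$, $|L_i(v)| = L_i$, so each has probability exactly $\eta L_i^{-1}$. Setting $Y_{c'} = \sum_v I_v$, we get
\[\mu \coloneqq \E[Y_{c'}] \le \frac{\eta |S|}{L_i} \le \frac{\eta T_i}{L_i} = \frac{K r_i}{\log \Delta} < K < 1,\]
using $|S| \le T_i$ from condition (C2) and $r_i < \log\Delta$ from Lemma~\ref{lemma: properties}~\ref{item:r}.

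Next, let $m = \lceil \log^4 \Delta - 1\rceil$, so the bad event for $c'$ is $Y_{c'} \ge m$. Rather than apply Lemma~\ref{lem:chernoff} with a large $\delta$, we bound this elementarily by independence:
\[\Pr(Y_{c'} \ge m) \le \binom{|S|}{m}\left(\frac{\eta}{L_i}\right)^m \le \left(\frac{e\,|S|\eta}{m L_i}\right)^m \le \left(\frac{e}{m}\right)^m .\]
This is at most $\exp(-m(\log m - 1)) \le \exp(-\log^4\Delta \cdot 3\log\log\Delta)$ for $\Delta$ large. Lemma~\ref{lem:chernoff} with $1+\delta = m/\mu$ gives the same bound, since $\delta^2/(2+\delta) \ge \delta/2$ once $\delta \ge 2$, yielding at least $\exp(-(m-1)/2)$.

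Finally, we take a union bound over all colors $c'$. The number of such colors is at most $|S| L_i \le T_i L_i \le \Delta^2$. Hence
\[\Pr(\Omega^*) \le \Delta^2 \exp\left(-\tfrac{m-1}{2}\right) \le \exp\left(2\log\Delta - \tfrac{1}{2}\log^4\Delta + 1\right),\]
and the sharper binomial bound leaves plenty of slack. The only delicate point is making the exponent at least $\tfrac12\log^4\Delta$ after absorbing the polynomial union-bound factor $\Delta^2$. The Chernoff form gives only about $\tfrac12 \log^4 \Delta - O(1)$, so we rely on the binomial estimate, whose exponent is of order $\log^4\Delta\log\log\Delta$. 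That comfortably absorbs the $\Delta^2$ factor and gives $\Pr(\Omega^*) \le \exp(-\tfrac12\log^4\Delta)$.
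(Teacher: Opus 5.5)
Your proof is correct and has the same structure as the paper's: bound the per-color mean using $|S|\le T_i$, apply a tail bound for the independent indicators, and union bound over at most $|S|L_i\le\Delta^2$ colors. The only difference is that you use the first-moment estimate $\binom{|S|}{m}(\eta/L_i)^m\le(e/m)^m$ in place of Lemma~\ref{lem:chernoff}.

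Two side remarks. Your mean bound $\mu<K$ via $r_i<\log\Delta$ is exactly what Lemma~\ref{lemma: properties}~\ref{item:r} gives; the paper's stated inequality $L_i>\frac18 T_i$ does not follow from it, though its Chernoff argument survives with $\mu=K$. Also, Chernoff is not actually too weak here: using $\delta^2/(2+\delta)\ge\delta-2$ instead of $\ge\delta/2$ gives an exponent of $\log^4\Delta-O(1)$, which absorbs the $\Delta^2$ factor.
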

\begin{proof}
    We fix $c' \in \bigcup_{v \in S} L_i(v)$, and we bound the probability that 
    more than $ \log^4 \Delta  - 1$ vertices $v \in S$
    are activated and assigned $c'$.
    By Lemma~\ref{lemma: properties}~\ref{item:r}, we have $L_i > \frac 18 T_i$.
    The probability that a given vertex
    $v \in S$ is activated and assigned $c'$ is at most 
    $\eta L_i^{-1} < \frac{8K}{T_i  \log \Delta }$. Since $|S| \leq T_i$,
    the expected number of vertices
    in $S$ activated and assigned $c'$ is less than $\frac{8K}{\log \Delta} < \frac{1}{2 \log \Delta}$.
    Using the Chernoff bound (Lemma~\ref{lem:chernoff}) with $\mu = \frac{1}{2 \log \Delta}$ and $\delta = (2-\epsilon) \log^5 \Delta $,
    the probability that the number of vertices
    activated with color $c'$ is more than $\lfloor \log^4 \Delta \rfloor - 1$ is bounded above by $\exp\left (- \frac 23 \log^4 \Delta\right )$. 
    Taking a union bound over all colors $c' \in \bigcup_{v \in S} L(v)$, we have
    \[\Pr(\Omega^*) < |S|k \exp\left(- \frac 23 \log^4 \Delta\right) < \exp \left (-\frac 12 \log^4 \Delta \right ),\]
    as desired.
\end{proof}

With the above in hand, let us first concentrate $Z_1$.

\begin{lemma}\label{lemma: Z_1 conc}
    $\Pr\left (|Z_1 - \E[Z_1]| > \dfrac{|S|}{8 \log^{2} \Delta} \right  ) \leq \dfrac 12 \exp\left(- \log^3 \Delta\right)$.
\end{lemma}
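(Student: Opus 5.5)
We will concentrate $Z_1$ by applying Lemma~\ref{lem:conc-ineq} to the complementary count. Write $Z_1 = |S| - W$, where $W$ is the number of vertices $v \in S$ that are colored during Step~\ref{step: color}. It suffices to concentrate $W$. We use the product space $(\Omega,\Sigma,\Pr)$ from page~\pageref{prob_space} and the exceptional set $\Omega^*$ defined just above, for which Lemma~\ref{lemma: omega*} gives $\Pr(\Omega^*) \leq \exp(-\frac12\log^4\Delta)$.

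The natural decomposition is $W = \sum_{v \in S} Y_v$, where $Y_v$ indicates that $v$ is colored. The difficulty is that ``$v$ is colored'' is a negative event: it requires that no neighbor in $T_i(v,d)$ receives $d$. That event is not verifiable by few coordinates. So we instead concentrate the positive complement inside the activated set. Write $W = W_1 - W_2$, where
\begin{itemize}
\item $W_1$ counts the vertices $v \in S$ that are activated;
\item $W_2$ counts the vertices $v \in S$ that are activated, say with color $d$, and have some $w \in T_i(v,d)$ activated and assigned $d$.
\end{itemize}

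$W_1$ is a sum of independent indicators with mean $\eta|S|$. By Chernoff (Lemma~\ref{lem:chernoff}, applied to both tails) and $|S| \geq \log^{16}\Delta$, it deviates by more than $|S|/(16\log^2\Delta)$ with probability $\exp(-\Omega(|S|\eta/\log^4\Delta))$. This is at most $\frac18\exp(-\log^3\Delta)$, since $|S|\eta/\log^4\Delta \geq K\log^{11}\Delta$.

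For $W_2$, each indicator is $2$-verifiable. Its witness is $\{v,w\}$ for one such $w$, and fixing these two coordinates preserves the event. It remains to bound, for $\omega \notin \Omega^*$, how many $v \in S$ can use a given coordinate $u$ in their witness. Coordinate $u$ appears as $v$ itself at most once. It appears as a witness $w$ only for activated $v \in S$ assigned the same color as $u$. Outside $\Omega^*$, fewer than $\log^4\Delta$ vertices of $S$ receive any given color. So $W_2$ is $(2,\log^4\Delta)$-observable with respect to $\Omega^*$.

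We then apply Lemma~\ref{lem:conc-ineq} with $\tau = |S|/(16\log^2\Delta)$. Condition~\eqref{eqn:conc-tau-sqrt} holds for three reasons:
\begin{itemize}
\item $\sqrt{rd\,\E[W_2]} \leq \sqrt{2\log^4\Delta\,|S|} = o(\tau)$, because $|S| \geq \log^{16}\Delta$;
\item $128\,rd = O(\log^4\Delta) = o(\tau)$;
\item $8\Pr(\Omega^*)\sup W_2 \leq 8T_i\exp(-\frac12\log^4\Delta) = o(1)$.
\end{itemize}
The resulting tail bound is $4\exp(-\tau^2/(8rd(4\E[W_2]+\tau))) + 4\Pr(\Omega^*)$. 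Since $\E[W_2] \leq |S|$, the exponent is of order $|S|/\log^{8}\Delta \geq \log^8\Delta$. So both terms are at most $\frac18\exp(-\log^3\Delta)$.

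By the triangle inequality, with probability at least $1 - \frac12\exp(-\log^3\Delta)$ we have $|W - \E[W]| \leq |S|/(8\log^2\Delta)$, and hence the same bound for $Z_1$. The main point to check carefully is the observability count for $W_2$. The witness vertex $w$ ranges over $T_i(v,d)$, which need not lie in $S$, so the bound must come from how many vertices of $S$ share $w$'s color. That is exactly what the definition of $\Omega^*$ controls, which is why $\Omega^*$ was defined relative to colors assigned in $S$.
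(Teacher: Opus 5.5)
Your proof is correct and is essentially the paper's argument: the paper applies Lemma~\ref{lem:conc-ineq} directly to $Z_1$ with the same exceptional set $\Omega^*$. It uses the verifier $\{v\}$ when $v$ is not activated and $\{v,u\}$ when $v$ and a neighbor $u$ receive the same color, which gives the same $(2,\log^4\Delta)$-observability bound. Your split $Z_1 = |S| - W_1 + W_2$ only peels off the non-activation witnesses and handles them by Chernoff, which is harmless but unnecessary; note that the paper's Lemma~\ref{lem:chernoff} states only the upper tail, so apply it to the number of non-activated vertices of $S$ rather than to $W_1$ directly.
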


\begin{proof}\stepcounter{ForClaims} \renewcommand{\theForClaims}{\ref{lemma: Z_1 conc}}
    Recall that $Z_1$ denotes the number of vertices in $S$ that remain uncolored after Algorithm~\ref{algorithm: nibble}.
    We may write $Z_1$ as a sum of binary random variables $Y_v$ for $v \in S$, where $Y_v = 1$ if and only if $v$ is uncolored after Algorithm~\ref{algorithm: nibble}.
    We will concentrate $Z_1$ through a sequence of claims.

    \begin{claim}
        $Y_v$ is $2$-verifiable.
    \end{claim}
    \begin{claimproof}
        Consider an event $Y_v = 1$.
        We define a verifier $R_v$ for each outcome $\omega \in \Omega \setminus \Omega^*$ satisfying $Y_v(\omega) = 1$ as follows:
        \begin{equation}
        \label{eqn:Re}
        R_v(\omega) = \begin{cases}
           \{v \}  & \text{if $\Omega_v = 0$, i.e.~$v$ is not activated;}   \\
          \{v,u \} & \text{if } \omega_v = \omega_u = c' \text{ for some $u \in N(v)$ and color $c'$.}
        \end{cases}
        \end{equation}
        We also let 
        $R_v(\omega) = \emptyset$ whenever $Y_v(\omega) = 0$.
        In the second case, when $v$ and a neighbor $u \in N(v)$ are both activated and assigned $c'$, $u$ is chosen arbitrarily from all vertices $u' \in N(v)$ for which $\omega_{u'} = \omega_v$.
        Since the fact that $v$ is uncolored after Algorithm~\ref{algorithm: nibble} is certified by either (1) the non-activation of $v$, or (2) the assignment of a common color $c'$ to $v$ and some neighbor $u \in N(v)$, 
        the function $R_v(\cdot)$ satisfies the conditions of Definition~\ref{defn: r verifiable}.
    \end{claimproof}

    \begin{claim}\label{claim: Z_1 obs}
        $Z_1$ is $(2,\log^4 \Delta)$-observable with respect to the verifier function $R_v$.
    \end{claim}
    \begin{claimproof}
    
    We fix an outcome $\omega \in \Omega \setminus \Omega^*$.
    First, we observe that each vertex-color pair $(v,c')$ belongs to no set $R_u(\omega)$ for $u \neq v$.
    It follows that each vertex-color pair belongs to at most one witness set $R_v(\omega)$.
    Therefore, to prove $(2,\log^4 \Delta)$-observability,
    it suffices to show that the following holds:
    for each $u \in V(G)$, $u \in R_v(\omega)$ for at most $\log^4 \Delta$ vertices $v \in S$.

    To this end, consider
    an arbitrary vertex $u \in V(G)$. 
    If $u$ is not activated, then $u$ cannot certify that $Y_v = 1$ for any $v \neq u$. Therefore, 
    in this case,
    $u \in R_v(\omega)$ only if $u  = v$.
    If $u$ is activated and assigned some color $c'$, then $u$ 
    may belong to $R_u(\omega)$.
    Furthermore, for each $v \in S \setminus \{u\}$, $u$ can certify that $Y_v = 1$ only if $v$ is activated and assigned $c'$. 
    Therefore, $u \in R_v(\omega)$ only if $u = v$ or $\omega_v = c'$.
    As $\omega \in \Omega \setminus \Omega^*$, the number of vertices $v \in S$ that are activated and assigned $c'$ is at most $ \log^4 \Delta - 1$; therefore, $u \in R_v(\omega)$ for at most $\log^4 \Delta $ vertices $v \in S$, completing the proof.
    \end{claimproof}

    With the above claims in hand, we aim to apply Lemma~\ref{lem:conc-ineq} with $\tau =  \frac{|S|}{8 \log^{2} \Delta}$, $r = 2$, and $d = \log^4 \Delta$.
    To this end, we check that~\eqref{eqn:conc-tau-sqrt} holds.
    We observe that 
    $|S| \gg d$, and hence $\sqrt{|S| d} \geq d$.
    Thus, since $Z_1 \leq |S| \leq \Delta$,
    we have 
    \begin{eqnarray}
    \notag
        96 \sqrt{rd \E[Z_1]} + 128 rd + 8 \Pr(\Omega^*)\sup(Z_1) &=& O\left(\sqrt{|S|d}\right) + 8 \exp \left (- \frac 12 \log^4 \Delta\right ) |S|  \\
        \label{eqn:tau-sqrt}
        &=& O\left(\sqrt{|S| d }\right).
    \end{eqnarray}
    By condition~\ref{item: bounds on |S|} of Lemma~\ref{lem:great-expectations}, $|S| \geq \log^{16} \Delta$
    and so it follows that
    $\tau \gg \sqrt{|S| d}$.
    In particular,~\eqref{eqn:conc-tau-sqrt} holds, and we may apply Lemma~\ref{lem:conc-ineq}.
    
    As $Z_1 \leq |S|$, we have $8rd(4\E[Z_1] + \tau) < |S| \log^8 \Delta$. Therefore,
    \begin{align*}
        \Pr \left (|Z_1 - \E[Z_1]| > \frac{|S|}{8 \log^{2} \Delta} \right ) &\leq 4 \exp \left (- \frac{ |S|^{2} }{64 |S| \log^{12} \Delta} \right ) + 4 \exp \left (-\frac 12 \log^4 \Delta \right ) \\
        &\leq \frac 12 \exp\left(- \log^3 \Delta\right),
    \end{align*}
    completing the proof.
\end{proof}

We employ a similar strategy to concentrate $Z_2$.

\begin{lemma}\label{lemma: Z_2 conc}
    $\Pr\left (|Z_2 - \E[Z_2]| > \dfrac{|S|}{8 \log^{2} \Delta} \right  ) < \dfrac 12 \exp\left(- \log^3 \Delta\right)$.
\end{lemma}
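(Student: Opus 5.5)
We mirror the proof of Lemma~\ref{lemma: Z_1 conc}, applying Lemma~\ref{lem:conc-ineq} to $Z_2$ with the same exceptional set $\Omega^*$, the same probability space, and the choices $\tau = \frac{|S|}{8\log^2\Delta}$, $r = 3$, and $d = O(\log^{12}\Delta)$. We write $Z_2 = \sum_{v\in S} W_v$, where $W_v=1$ if and only if $v$ is uncolored during Step~\ref{step: color} and, in addition, either some $u \in N_R(v,c)\setminus R_{high}$ is activated and assigned $c$, or $(v,c)$ loses its equalizing coin flip.

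\emph{Step 1: $W_v$ is $3$-verifiable.} For $\omega\in\Omega\setminus\Omega^*$ with $W_v(\omega)=1$, the witness set is the union of two parts.
\begin{itemize}
\item A certificate that $v$ is uncolored, exactly as in \eqref{eqn:Re}: either $\{v\}$ if $v$ is not activated, or $\{v,u'\}$ where $u' \in N(v)$ shares the assigned color of $v$.
\item A certificate of the second condition: either the coordinate $(v,c)$ of the coin flip, or a single vertex $u\in N_R(v,c)\setminus R_{high}$ with $\omega_u = c$.
\end{itemize}
This gives at most $3$ coordinates. Fixing these coordinates clearly forces $W_v=1$, since both conditions are monotone in the witnessed coordinates.

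\emph{Step 2: observability.} Fix $\omega\notin\Omega^*$ and bound, for each coordinate, the number of $v\in S$ whose witness set contains it.
\begin{itemize}
\item A coin-flip coordinate $(v',c')$ lies only in $R_{v'}$.
\item A vertex $u$ used in an ``uncolored'' certificate contributes to at most $\log^4\Delta$ witness sets, exactly as in Claim~\ref{claim: Z_1 obs}.
\item A vertex $u$ used as a ``$c$-blocker'' is the main obstacle, since one activated $u$ with $\omega_u=c$ could a priori witness $W_v$ for all of its neighbours in $S$.
\end{itemize}
This is precisely why $R_{high}$ was removed. Every such $u$ lies in $R\setminus R_{high}$ with $c\in L_i(u)$, so by definition $u$ has fewer than $h=|S|/\log^8\Delta$ neighbours in $S$. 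Hence $u$ lies in at most $h+\log^4\Delta$ witness sets. So $Z_2$ is $(3,d)$-observable with $d = h + \log^4\Delta \le 2|S|/\log^8\Delta$. We should choose the blocker $u$ canonically, say the first in a fixed order, although this is not even needed for the count.

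\emph{Step 3: apply Lemma~\ref{lem:conc-ineq}.} We have $rd\,\E[Z_2]\le 6|S|^2/\log^8\Delta$, so $\sqrt{rd\E[Z_2]} = O(|S|/\log^4\Delta)$. Also $rd = O(|S|/\log^8\Delta)$. Finally, $8\Pr(\Omega^*)\sup Z_2 \le 8|S|e^{-\frac12\log^4\Delta}$ by Lemma~\ref{lemma: omega*}. All three terms are $o(\tau)$, so \eqref{eqn:conc-tau-sqrt} holds. The tail bound becomes
\[
4\exp\!\left(-\frac{\tau^2}{8\cdot 3 d(4|S|+\tau)}\right) = \exp\!\left(-\Omega\!\left(\frac{|S|^2/\log^4\Delta}{|S|^2/\log^8\Delta}\right)\right) = \exp(-\Omega(\log^4\Delta)).
\]
Adding $4\Pr(\Omega^*) \le 4e^{-\frac12\log^4\Delta}$ keeps the total below $\frac12\exp(-\log^3\Delta)$ for $\Delta$ large. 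Note that here the lower bound $|S|\ge\log^{16}\Delta$ is not even needed, since $h$ scales with $|S|$. The only delicate point is the choice of the threshold $h$, which is large enough for Lemma~\ref{lemma: R high not large} yet small enough to keep $d\tau^{-1}$ polylogarithmically small.
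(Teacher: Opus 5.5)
Your proposal is correct and is essentially the paper's own proof. You use the same witness sets $R_v\cup R'_v$: the uncolored certificate from \eqref{eqn:Re}, plus either the coin flip $(v,c)$ or one blocker in $N_R(v,c)\setminus R_{high}$. You also get the same observability bound $d = 2|S|/\log^{8}\Delta$ from the definition of $R_{high}$, and apply Lemma~\ref{lem:conc-ineq} in the same way to get an exponent of order $\log^{4}\Delta$.

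Two slips need fixing. First, the $d = O(\log^{12}\Delta)$ in your opening paragraph should read $d = O(|S|/\log^{8}\Delta)$. Second, your closing remark that the lower bound on $|S|$ is not needed is wrong:
\begin{itemize}
\item your own inequality $h+\log^{4}\Delta \le 2|S|/\log^{8}\Delta$ already uses $|S|\ge \log^{12}\Delta$;
\item since $d\ge \log^{4}\Delta$ in any case, the exponent in Lemma~\ref{lem:conc-ineq} is at most $\tau/(8rd)\le |S|/(192\log^{6}\Delta)$.
\end{itemize}
So a polylogarithmic lower bound on $|S|$, as provided by condition (C2), is genuinely required.
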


\begin{proof}\stepcounter{ForClaims} \renewcommand{\theForClaims}{\ref{lemma: Z_2 conc}}
    Recall that $Z_2$ is the number of vertices $v \in S$ that remain uncolored after Algorithm~\ref{algorithm: nibble},
    and for which one of the following occurs:
    \begin{itemize}
        \item some $u \in N_R(v,c) \setminus R_{high}$ is activated and assigned the color $c$, or
        \item $(v,c)$ does not survive its coin flip.
    \end{itemize}
    The variable $Z_2$ is a sum of binary random variables $Y_v$ for $v \in S$, where $Y_v = 1$ if and only if $v$ remains uncolored after Algorithm~\ref{algorithm: nibble} and one of the two events above occurs.
    We will concentrate $Z_2$ through a sequence of claims.

    \begin{claim}
        $Y_v$ is $3$-verifiable.
    \end{claim}
    \begin{claimproof}
        For each $v \in S$ and $\omega \in \Omega \setminus \Omega^*$ satisfying $Y_v(\omega) = 1$, we define
        $R_v(\omega)$ as in~\eqref{eqn:Re}.
        We also define, when $Y_v(\omega) = 1$,
        \[
        R'_v(\omega) = \begin{cases}
            \{(v,c)\} & \omega_{(v,c)} = 0 \\
            \{u\} & \omega_{(v,c)} = 1 \text{ and $\omega_u = c$ for some $u \in N_R(v,c) \setminus R_{high}$}.
        \end{cases}
        \]
        We let $R'_v(\omega) = \emptyset$ when $Y_v(\omega) = 0$.
        Finally, we define $R_v^*(\omega) = R_v(\omega) \cup R'_v(\omega)$.
        We note that whenever $Y_v(\omega) = 1$,
        the trial or trials specified by $R_v(\omega)$ certify that $v$ is uncolored, and the trial specified by $R'_v(\omega)$ certifies either (1) some $w \in N_R(v,c) \setminus R_{high}$ is activated and assigned $c$ during Algorithm~\ref{algorithm: nibble}, or (2) $(v,c)$ does not survive the equalizing coin flip during Algorithm~\ref{algorithm: nibble}. Therefore, the verifier function $R_v^*$ satisfies the conditions of Definition~\ref{defn: r verifiable}, and $Y_v$ is $3$-verifiable.
    \end{claimproof}
        
    \begin{claim}
        $Z_2$ is $(3,d)$-observable for $d = \frac{2 |S|}{\log^{8} \Delta}$ with respect to the verifier function $R_v^*$.
    \end{claim}

    \begin{claimproof}
        Fix an outcome $\omega \in \Omega \setminus \Omega^*$, 
        and count the number of sets $R^*_v(\omega)$ to which a given element may belong.
        As argued in the proof of Claim~\ref{claim: Z_1 obs}, each vertex $u \in V(G)$ and vertex-color pair $(u,c')$
        belongs to at most $\log^4 \Delta$ sets $R_v(\omega)$.
        Furthermore, clearly a vertex-color pair $(u,c')$ belongs to $R_v'(\omega)$ only if $u=v$.
        Finally, a vertex $u \in V(G)$ belongs to $R'_v(\omega)$ only if $u \in N_R(v,c) \setminus R_{high}$.
        As each $u \in N_R(v,c) \setminus R_{high}$ has at most $h = \frac{|S|}{\log^{8} \Delta} $ neighbors in $S$,
        each vertex and each vertex-color pair belongs to 
        at most 
        $\frac{|S|}{\log^{8} \Delta}$ sets $R_v'(\omega)$, and hence at most $\frac{|S|}{\log^{8} \Delta} + \log^4 \Delta < \frac{2 |S|}{\log^{8} \Delta}$ sets $R_v^*(\omega)$.
        Hence, $Z_2$ is $(3,d)$-observable.
    \end{claimproof}

    With the above claims in hand, we aim to apply Lemma~\ref{lem:conc-ineq} with $\tau =  \frac{|S|}{8 \log^{2} \Delta}$, $r = 3$, and with $d = \frac{2 |S|}{\log^{8} \Delta}$.
    As $|S| \gg \sqrt{|S| d}$, an identical argument as in~\eqref{eqn:tau-sqrt} shows that
    $\tau$ satisfies~\eqref{eqn:conc-tau-sqrt}.
    As $|Z_2| \leq |S|$ and $\tau < |S|$, we have $8rd(4 \E[|Z_2|] + \tau) < \frac{240 |S|^2}{\log^{8} \Delta}$.
    Therefore, 
    \begin{eqnarray}
    \notag
    \Pr \left (|Z_1 - \E[Z_1]| >  \frac{|S|}{8 \log^{2} \Delta} \right )  & \leq &
    4 \exp \left ( - \frac{|S|^2 \log^{8} \Delta } {64 \cdot 240 \log^{4} \Delta  |S|^2 } \right ) + 4\Pr(\Omega^*) \\
    \notag 
    & < & 4 \exp \left ( - \frac{\log^{4} \Delta }{15360} \right ) + 4 \exp\left (- \frac 12 \log^4 \Delta \right ) \\
    \notag
    &< & \frac 12 \exp(-\log^3 \Delta),
    \end{eqnarray}
    completing the proof.
\end{proof}

We are ready to finish the proof of~\eqref{eq: S''}, which implies Lemma~\ref{lem:great-expectations}.
By the triangle inequality, as $|S''|= Z_1 - Z_2$, we have
\[||S''| - \E[|S''| ] | = |Z_1 - Z_2 - \E[Z_1] + \E[Z_2]   | \leq |Z_1 - \E[Z_1]| + |Z_2 - \E[Z_2]|. \]
Therefore, by Lemmas~\ref{lemma: Z_1 conc}~and~\ref{lemma: Z_2 conc}, and writing $\tau =  \frac{|S|}{8 \log^{2} \Delta}$, we have
\begin{eqnarray*}
    \Pr\left (|S''| > \E[|S''|] + \frac{|S|}{4 \log^{2} \Delta}  \right ) &\leq& \Pr \left (Z_1 > \E[Z_1] + \tau \right ) + \Pr \left ( Z_2 < \E[Z_2] - \tau \right ) \\
    &\leq& \Pr \left (|Z_1 - \E[Z_1]|  >  \tau \right ) + \Pr \left ( |Z_2 - \E[Z_2]| > \tau \right ) \\
    & \leq  & \exp\left(-\log^3 \Delta\right).
\end{eqnarray*}
By the above and Lemma~\ref{lemma: exp S''}, we have
\begin{align*}
    \Pr \left ( |S''| > |S| (1 - \eta L_i^{-1} )^{\alpha} (1 - \keep_i) + \frac{|S|}{2 \log^{2} \Delta}   \right) &\leq  \Pr\left (|S''| > \E[|S''|] + \frac{|S|}{4 \log^{2} \Delta}  \right ) \\
    & \leq \exp\left(-\log^3 \Delta\right),
\end{align*}
completing the proof.

\subsection*{Acknowledgements}

We thank Daniel Cranston for helpful comments on an earlier draft of this manuscript.

\printbibliography

@book {MolloyReed,
    AUTHOR = {Molloy, Michael and Reed, Bruce},
     TITLE = {Graph colouring and the probabilistic method},
    SERIES = {Algorithms and Combinatorics},
    VOLUME = {23},
 PUBLISHER = {Springer-Verlag, Berlin},
      YEAR = {2002},
     PAGES = {xiv+326},
      ISBN = {3-540-42139-4},
   MRCLASS = {05-02 (05C15 05C80 60-02 60C05)},
  MRNUMBER = {1869439},
MRREVIEWER = {P.\ Mark\ Kayll},
       DOI = {10.1007/978-3-642-04016-0},
       URL = {https://doi.org/10.1007/978-3-642-04016-0},
}

@inproceedings {Mahdian,
    AUTHOR = {Mahdian, Mohammad},
     TITLE = {The strong chromatic index of {$C_4$}-free graphs},
 BOOKTITLE = {Proceedings of the {N}inth {I}nternational {C}onference
              ``{R}andom {S}tructures and {A}lgorithms'' ({P}oznan, 1999)},
   JOURNAL = {Random Structures Algorithms},
  FJOURNAL = {Random Structures \& Algorithms},
    VOLUME = {17},
      YEAR = {2000},
    NUMBER = {3-4},
     PAGES = {357--375},
      ISSN = {1042-9832,1098-2418},
   MRCLASS = {05C15 (05C80)},
  MRNUMBER = {1801139},
MRREVIEWER = {R.\ H.\ Schelp},
       DOI = {10.1002/1098-2418(200010/12)17:3/4<357::AID-RSA9>3.0.CO;2-Y},
       URL =
              {https://doi.org/10.1002/1098-2418(200010/12)17:3/4<357::AID-RSA9>3.0.CO;2-Y},
}

@article {Bernshteyn,
    AUTHOR = {Bernshteyn, Anton},
     TITLE = {The {J}ohansson-{M}olloy theorem for {DP}-coloring},
   JOURNAL = {Random Structures Algorithms},
  FJOURNAL = {Random Structures \& Algorithms},
    VOLUME = {54},
      YEAR = {2019},
    NUMBER = {4},
     PAGES = {653--664},
      ISSN = {1042-9832,1098-2418},
   MRCLASS = {05C15 (05D40)},
  MRNUMBER = {3957361},
MRREVIEWER = {Niranjan\ Balachandran},
       DOI = {10.1002/rsa.20811},
       URL = {https://doi.org/10.1002/rsa.20811},
}

@article {MolloyTF,
    AUTHOR = {Molloy, Michael},
     TITLE = {The list chromatic number of graphs with small clique number},
   JOURNAL = {J. Combin. Theory Ser. B},
  FJOURNAL = {Journal of Combinatorial Theory. Series B},
    VOLUME = {134},
      YEAR = {2019},
     PAGES = {264--284},
      ISSN = {0095-8956},
   MRCLASS = {05C15 (05C69)},
  MRNUMBER = {3906639},
MRREVIEWER = {Hsin-Hao Lai},
       DOI = {10.1016/j.jctb.2018.06.007},
       _URL = {https://doi-org.proxy.lib.sfu.ca/10.1016/j.jctb.2018.06.007},
}

@article {Johansson,
    AUTHOR = {Anders Johansson},
     TITLE = {Asymptotic choice number for triangle free graphs},
      YEAR = {1996},
}

@article {JohanssonKr,
    AUTHOR = {Anders Johansson},
     TITLE = {The choice number of sparse graphs},
      YEAR = {1996},
}

@book {Mitzenmacher,
    AUTHOR = {Mitzenmacher, Michael and Upfal, Eli},
     TITLE = {Probability and computing},
   EDITION = {Second},
      NOTE = {Randomization and probabilistic techniques in algorithms and
              data analysis},
 PUBLISHER = {Cambridge University Press, Cambridge},
      YEAR = {2017},
     PAGES = {xx+467},
      ISBN = {978-1-107-15488-9},
   MRCLASS = {68-01 (60C05 60G42 60J10 60K25 62H30 68W20 68W40)},
  MRNUMBER = {3674428},
}

@article {Hajnal,
    AUTHOR = {Kierstead, H. A. and Kostochka, A. V.},
     TITLE = {A short proof of the {H}ajnal-{S}zemer\'edi theorem on
              equitable colouring},
   JOURNAL = {Combin. Probab. Comput.},
  FJOURNAL = {Combinatorics, Probability and Computing},
    VOLUME = {17},
      YEAR = {2008},
    NUMBER = {2},
     PAGES = {265--270},
      ISSN = {0963-5483,1469-2163},
   MRCLASS = {05C15},
  MRNUMBER = {2396352},
MRREVIEWER = {Jian-Liang\ Wu},
       DOI = {10.1017/S0963548307008619},
       URL = {https://doi.org/10.1017/S0963548307008619},
}

@inproceedings{KST,
  title={On a problem of Zarankiewicz},
  author={K{\H{o}}v{\'a}ri, P and T S{\'o}s, Vera and Tur{\'a}n, P{\'a}l},
  booktitle={Colloquium Mathematicum},
  volume={3},
  pages={50--57},
  year={1954},
  organization={Polska Akademia Nauk}
}

@article{anderson2023colouring,
  title={Colouring graphs with forbidden bipartite subgraphs},
  author={Anderson, James and Bernshteyn, Anton and Dhawan, Abhishek},
  journal={Combinatorics, Probability and Computing},
  volume={32},
  number={1},
  pages={45--67},
  year={2023},
  publisher={Cambridge University Press}
}

@misc{HurleyPirot,
      title={Colouring locally sparse graphs with the first moment method}, 
      author={François Pirot and Eoin Hurley},
      year={2021},
      eprint={2109.15215},
      archivePrefix={arXiv},
      primaryClass={math.CO},
      url={https://arxiv.org/abs/2109.15215}, 
}

@article {Kang,
    AUTHOR = {Hurley, Eoin and de Joannis de Verclos, R\'emi and Kang, Ross
              J.},
     TITLE = {An improved procedure for colouring graphs of bounded local
              density},
   JOURNAL = {Adv. Comb.},
  FJOURNAL = {Advances in Combinatorics},
      YEAR = {2022},
     PAGES = {Paper No. 7, 33},
      ISSN = {2517-5599},
   MRCLASS = {05C15 (05C35 05C55)},
  MRNUMBER = {4499710},
}

@article {Bonamy,
    AUTHOR = {Bonamy, Marthe and Perrett, Thomas and Postle, Luke},
     TITLE = {Colouring graphs with sparse neighbourhoods: bounds and
              applications},
   JOURNAL = {J. Combin. Theory Ser. B},
  FJOURNAL = {Journal of Combinatorial Theory. Series B},
    VOLUME = {155},
      YEAR = {2022},
     PAGES = {278--317},
      ISSN = {0095-8956,1096-0902},
   MRCLASS = {05C15 (05D40)},
  MRNUMBER = {4392275},
MRREVIEWER = {Niranjan\ Balachandran},
       DOI = {10.1016/j.jctb.2022.01.009},
       URL = {https://doi.org/10.1016/j.jctb.2022.01.009},
}

@article{BruhnJoos,
  title={A stronger bound for the strong chromatic index},
  author={Bruhn, Henning and Joos, Felix},
  journal={Combinatorics, Probability and Computing},
  volume={27},
  number={1},
  pages={21--43},
  year={2018},
  publisher={Cambridge University Press}
}

@article{andersen1992strong,
  title={The strong chromatic index of a cubic graph is at most 10},
  author={Andersen, Lars D{\o}vling},
  journal={Discrete Mathematics},
  volume={108},
  number={1-3},
  pages={231--252},
  year={1992},
  publisher={Elsevier}
}

@article{horak1993induced,
  title={Induced matchings in cubic graphs},
  author={Hor{\'a}k, Peter and Qing, He and Trotter, William T},
  journal={Journal of Graph Theory},
  volume={17},
  number={2},
  pages={151--160},
  year={1993},
  publisher={Wiley Online Library}
}

@article{cranston2006strong,
  title={Strong edge-coloring of graphs with maximum degree 4 using 22 colors},
  author={Cranston, Daniel W},
  journal={Discrete Mathematics},
  volume={306},
  number={21},
  pages={2772--2778},
  year={2006},
  publisher={Elsevier}
}

@article{faudree1990strong,
  title={The strong chromatic index of graphs},
  author={Faudree, Ralph J and Schelp, Richard H and Gy{\'a}rf{\'a}s, Andr{\'a}s and Tuza, Zsolt},
  journal={Ars Combinatoria},
  volume={29},
  pages={205--211},
  year={1990},
  publisher={CHARLES BABBAGE RES CTR PO BOX 272 ST NORBERT POSTAL STATION, WINNIPEG MB~…}
}

@article{kaiser2014distance,
  title={The distance-t chromatic index of graphs},
  author={Kaiser, Tom{\'a}{\v{s}} and Kang, Ross J},
  journal={Combinatorics, Probability and Computing},
  volume={23},
  number={1},
  pages={90--101},
  year={2014},
  publisher={Cambridge University Press}
}

@article{kim1995brooks,
  title={On Brooks' theorem for sparse graphs},
  author={Kim, Jeong Han},
  journal={Combinatorics, Probability and Computing},
  volume={4},
  number={2},
  pages={97--132},
  year={1995},
  publisher={Cambridge University Press}
}

@article{davies2020graph,
  title={Graph structure via local occupancy},
  author={Davies, Ewan and Kang, Ross J and Pirot, Fran{\c{c}}ois and Sereni, Jean-S{\'e}bastien},
  journal={arXiv preprint arXiv:2003.14361},
  year={2020}
}

@article{dhawan2025bounds,
  title={Bounds for the Independence and Chromatic Numbers of Locally Sparse Graphs},
  author={Dhawan, Abhishek},
  journal={Annals of Combinatorics},
  pages={1--28},
  year={2025},
  publisher={Springer},
  doi={https://doi.org/10.1007/s00026-025-00778-7}
}

@article{anderson2025coloring,
  title={Coloring graphs with forbidden almost bipartite subgraphs},
  author={Anderson, James and Bernshteyn, Anton and Dhawan, Abhishek},
  journal={Random Structures \& Algorithms},
  volume={66},
  number={4},
  pages={e70012},
  year={2025},
  publisher={Wiley Online Library}
}

@inproceedings{PS15,
  title={Fast distributed coloring algorithms for triangle-free graphs},
  author={Pettie, Seth and Su, Hsin-Hao},
  booktitle={International Colloquium on Automata, Languages, and Programming},
  pages={681--693},
  year={2013},
  organization={Springer}
}

@ARTICLE{BollobasBound,
	AUTHOR = "Bollob\'{a}s, B.",
	TITLE = "{Chromatic number, girth and maximal degree}",
	JOURNAL = "Discrete Math.",
	date = "1978",
	volume = {24},
	pages = {311--314},
}

@article{Achlioptas,
	author = {D. Achlioptas and A. Coja-Oghlan},
	title = {Algorithmic barriers from phase transitions},
	journaltitle = {IEEE Symposium on Foundations of Computer Science (FOCS)},
	date = {2008},
	pages = {793--802},
	addendum = {Full version: \url{https://arxiv.org/abs/0803.2122}},
}

@article{Zdeborova,
	author = {L. Zdeborov{\'{a}} and F. Krz{\k{a}}ka{\l}a},
	title = {Phase transitions in the coloring of random graphs},
	journaltitle = {Phys. Rev. E},
	date = {2007},
	volume = {76},
	pages = {031131},
}

@article{RV,
	author = {M. Rahman and B. Vir{\'{a}}g},
	title = {Local algorithms for independent sets are half-optimal},
	journaltitle = {Ann. Probab.},
	volume = {45},
	number = {3},
	pages = {1543--1577},
	date = {2017},
}

@article{wein2020optimal,
  title={Optimal low-degree hardness of maximum independent set},
  author={Wein, Alexander S},
  journal={Mathematical Statistics and Learning},
  volume={4},
  number={3},
  pages={221--251},
  year={2022}
}

@article{bandeira2018notes,
  title={Notes on computational-to-statistical gaps: predictions using statistical physics},
  author={Bandeira, Afonso and Perry, Amelia and Wein, Alexander S},
  journal={Portugaliae mathematica},
  volume={75},
  number={2},
  pages={159--186},
  year={2018}
}

@article{gamarnik2022disordered,
  title={Disordered systems insights on computational hardness},
  author={Gamarnik, David and Moore, Cristopher and Zdeborov{\'a}, Lenka},
  journal={Journal of Statistical Mechanics: Theory and Experiment},
  volume={2022},
  number={11},
  pages={114015},
  year={2022},
  publisher={IOP Publishing}
}

@article{gamarnik2025turing,
  title={Turing in the Shadows of Nobel and Abel: An Algorithmic Story Behind Two Recent Prizes},
  author={Gamarnik, David},
  journal={Notices of the American Mathematical Society},
  volume={72},
  number={05},
  year={2025},
  publisher={American Mathematical Society}
}

@article{gamarnik2021overlap,
  title={The overlap gap property: A topological barrier to optimizing over random structures},
  author={Gamarnik, David},
  journal={Proceedings of the National Academy of Sciences},
  volume={118},
  number={41},
  year={2021},
  publisher={National Acad Sciences}
}

@ARTICLE{AKSConjecture,
	AUTHOR = "Alon, N. and Krivelevich, M. and Sudakov, B.",
	TITLE = "{Coloring graphs with sparse neighborhoods}",
	JOURNAL = "J. Comb. Theory",
	series = {B},
	date = "1999",
	volume = {77},
	pages = {73--82},
}

@inproceedings{Nibble,
  title={Graph and hypergraph colouring via nibble methods: A survey},
  author={Kang, Dong Yeap and Kelly, Tom and K{\"u}hn, Daniela and Methuku, Abhishek and Osthus, Deryk},
  booktitle={European Congress of Mathematics},
  pages={771--823},
  year={2023}
}

@article{vizing1976coloring,
  title={Coloring the vertices of a graph in prescribed colors},
  author={V.G. Vizing},
  journal={Diskret. Analiz},
  volume={29},
  number={3},
  pages={10},
  year={1976}
}

@article{erdos1979choosability,
  title={Choosability in graphs},
  author={P. Erd{\H{o}}s and A.L. Rubin and H. Taylor},
  journal={Congr. Numer},
  volume={26},
  number={4},
  pages={125--157},
  year={1979}
}

@article{haxell2001note,
  title={A note on vertex list colouring},
  author={Haxell, P.E.},
  journal={Comb. Probab. Comput.},
  volume={10},
  number={4},
  pages={345--347},
  date={2001},
}

@article {Hoory,
    AUTHOR = {Hoory, Shlomo and Linial, Nathan and Wigderson, Avi},
     TITLE = {Expander graphs and their applications},
   JOURNAL = {Bull. Amer. Math. Soc. (N.S.)},
  FJOURNAL = {American Mathematical Society. Bulletin. New Series},
    VOLUME = {43},
      YEAR = {2006},
    NUMBER = {4},
     PAGES = {439--561},
      ISSN = {0273-0979,1088-9485},
   MRCLASS = {68Q15 (00-02 05C25 05C80 60G50 68Q17 68R10)},
  MRNUMBER = {2247919},
MRREVIEWER = {Mark\ R.\ Jerrum},
       DOI = {10.1090/S0273-0979-06-01126-8},
       URL = {https://doi-org.proxy2.library.illinois.edu/10.1090/S0273-0979-06-01126-8},
}

@misc{BDMW,
      title={Toward Vu's conjecture}, 
      author={Peter Bradshaw and Abhishek Dhawan and Abhishek Methuku and Michael C. Wigal},
      year={2025},
      eprint={2508.16818},
      archivePrefix={arXiv},
      primaryClass={math.CO},
      url={https://arxiv.org/abs/2508.16818}, 
}

@misc{delcourt2022finding,
  title={Finding an almost perfect matching in a hypergraph avoiding forbidden submatchings},
  author={Delcourt, Michelle and Postle, Luke},
  year={2022},
  eprint={2204.08981},
  archivePrefix={arXiv},
  primaryClass={math.CO},
  url={https://arxiv.org/abs/2204.08981}, 
}

@misc{li2022chromatic,
  title={The chromatic number of triangle-free hypergraphs},
  author={Li, Lina and Postle, Luke},
  year={2022},
  eprint={2202.02839},
  archivePrefix={arXiv},
  primaryClass={math.CO},
  url={https://arxiv.org/abs/2202.02839}, 
}

@article{cranston2023coloring,
  title={Coloring, List Coloring, and Painting Squares of Graphs (and Other Related Problems)},
  author={Cranston, Daniel W},
  journal={The Electronic Journal of Combinatorics},
  pages={DS25--Apr},
  year={2023}
}

@article{huang2018strong,
  title={Strong Chromatic Index of Graphs With Maximum Degree Four},
  author={Huang, Mingfang and Santana, Michael and Yu, Gexin},
  journal={The Electronic Journal of Combinatorics},
  volume={25},
  number={3},
  pages={3--31},
  year={2018}
}

@article{vu2002general,
  title={A general upper bound on the list chromatic number of locally sparse graphs},
  author={Vu, Van H},
  journal={Combinatorics, Probability and Computing},
  volume={11},
  number={1},
  pages={103--111},
  year={2002},
  publisher={Cambridge University Press}
}

@article{bondy1974cycles,
  title={Cycles of even length in graphs},
  author={Bondy, John A and Simonovits, Mikl{\'o}s},
  journal={Journal of Combinatorial Theory, Series B},
  volume={16},
  number={2},
  pages={97--105},
  year={1974},
  publisher={Elsevier}
}

\end{document}